\theoremstyle{plain}
\newtheorem*{thmx}{Theorem}
\newtheorem{thm}{Theorem}[section]
\newtheorem{prop}[thm]{Proposition}
\newtheorem{lem}[thm]{Lemma}
\newtheorem{cor}[thm]{Corollary}
\theoremstyle{definition}
\newtheorem{defin}[thm]{Definition}
\theoremstyle{remark}
\newtheorem{rem}[thm]{Remark}
\begin{document}

\newcommand{\Ker}{\mathop{\mathrm{Ker}}\nolimits}
\newcommand{\Image}{\mathop{\mathrm{Im}}\nolimits}
\newcommand{\Coker}{\mathop{\mathrm{Coker}}\nolimits}
\newcommand{\hofib}{\mathop{\mathrm{hofib}}\nolimits}
\newcommand{\rk}{\mathop{\mathrm{rk}}\nolimits}
\newcommand{\sk}{\mathop{\mathrm{sk}}\nolimits}
\newcommand{\fr}{\mathrm{fr}}
\newcommand{\pr}{\mathop{\mathrm{pr}}\nolimits}
\newcommand{\lcm}{\mathop{\mathrm{lcm}}\nolimits}
\newcommand{\id}{\mathop{\mathrm{id}}\nolimits}
\newcommand{\ad}{\mathop{\mathrm{ad}}\nolimits}
\newcommand{\interior}{\mathop{\mathrm{int}}\nolimits}
\newcommand{\Hom}{\mathop{\mathrm{Hom}}\nolimits}
\newcommand{\Tor}{\mathop{\mathrm{Tor}}\nolimits}
\newcommand{\Ext}{\mathop{\mathrm{Ext}}\nolimits}
\newcommand{\Aut}{\mathop{\mathrm{Aut}}\nolimits}
\newcommand{\RU}{\mathop{\mathrm{RU}}\nolimits}
\newcommand{\st}{\mathop{\mathrm{st}}\nolimits}
\newcommand{\HH}{\mathcal{H}}
\newcommand{\Z}{\mathbb{Z}}
\newcommand{\R}{\mathbb{R}}
\newcommand{\C}{\mathbb{C}}
\newcommand{\quat}{\mathbb{H}}
\newcommand{\oct}{\mathbb{O}}
\newcommand{\UM}{\underline{M}}
\newcommand{\UMp}{\underline{M}{}'}
\newcommand{\UMpp}{\underline{M}{}''}
\newcommand{\UMs}{\underline{M}{}^*}
\newcommand{\UtM}{\underline{\tilde{M}}}
\newcommand{\UtMp}{\underline{\tilde{M}}{}'}
\newcommand{\UbM}{\underline{\bar{M}}}
\newcommand{\UbMp}{\underline{\bar{M}}{}'}
\newcommand{\UbMpp}{\underline{\bar{M}}{}''}
\newcommand{\UN}{\underline{N}}
\newcommand{\UNs}{\underline{N}{}^*}
\newcommand{\UbN}{\underline{\bar{N}}}
\newcommand{\UbNp}{\underline{\bar{N}}{}'}
\newcommand{\UH}{\underline{H}}
\newcommand{\UHp}{\underline{H}{}'}
\newcommand{\UR}{\underline{R}}
\newcommand{\UE}{\underline{E}}
\newcommand{\UEp}{\underline{E}{}'}
\newcommand{\UEs}{\underline{E}{}^*}
\newcommand{\UbE}{\underline{\bar{E}}}
\newcommand{\UbEs}{\underline{\bar{E}}{}^*}
\newcommand{\UV}{\underline{V}}
\newcommand{\UVp}{\underline{V}{}'}
\newcommand{\UbV}{\underline{\bar{V}}}
\newcommand{\UbVp}{\underline{\bar{V}}{}'}
\newcommand{\UtVp}{\underline{\tilde{V}}{}'}
\newcommand{\UbY}{\underline{\bar{Y}}}
\newcommand{\UbYp}{\underline{\bar{Y}}{}'}
\newcommand{\UbZ}{\underline{\bar{Z}}}
\newcommand{\UbZp}{\underline{\bar{Z}}{}'}
\newcommand{\gc}{\mathfrak{g}}
\newcommand{\gcb}{\bar{\mathfrak{g}}}
\newcommand{\gs}{\mathscr{g}}
\newcommand{\s}{\mathscr{s}}
\newcommand{\si}{\mathrel{\underset{^{\scriptstyle{s}}}{\cong}}}
\newcommand{\Str}{\mathcal{S}}
\newcommand{\SI}{\Str^{\st}}

\title{Extended surgery theory for simply-connected $4k$-manifolds}
\author{Csaba Nagy}
\email{nagy@mpim-bonn.mpg.de}
\address{Max Planck Institute for Mathematics, Vivatsgasse 7, 53111 Bonn, Germany}

\begin{abstract}
Kreck proved that two $2q$-manifolds are stably diffeomorphic if and only if they admit normally bordant normal $(q{-}1)$-smoothings over the same normal $(q{-}1)$-type $(B,\xi)$. We show that stable diffeomorphism can be replaced by diffeomorphism if the normal smoothings have isomorphic Q-forms (which consists of the intersection form of the manifold and the induced homomorphism on $H_q$), when the manifolds are simply-connected, $q=2k$ is even and $H_q(B)$ is free. This proves a special case of Crowley's Q-form conjecture. The basis of the proof is the construction of an \emph{extended surgery obstruction} associated to a normal bordism. 

As an application, we identify the inertia group of a $(2k{-}1)$-connected $4k$-manifold with the kernel of a certain bordism map. By the calculations of Senger-Zhang and earlier results, these kernels are now known in all cases. For $k=2,4$, the combination of these results determines the inertia groups. 

We also obtain, for a simply-connected $4k$-manifold $M$ with normal $(2k{-}1)$-type $(B,\xi)$ such that $H_{2k}(B)$ is free, an algebraic description of the stable class of $M$, that is, the set of diffeomorphism classes of manifolds stably diffeomorphic to $M$. Using this description, we explicitly compute the stable class of manifolds $M$ with rank-$2$ hyperbolic intersection form.
\end{abstract}

\maketitle

\section{Introduction}

\subsection{Main results}

Diffeomorphism classification of high-dimensional smooth manifolds is one of the basic problems of differential topology. While classical surgery theory provides a way for obtaining a diffeomorphism classification from a homotopy classification, the latter is only known for a few special classes of manifolds, leaving the problem largely open.

In the case of even dimensional manifolds, classification up to a weaker equivalence relation, stable diffeomorphism, has proven to be more attainable. Recall that two closed connected smooth $2q$-manifolds $M$ and $N$ are stably diffeomorphic if there are integers $k, l \geq 0$ such that $M \# k(S^q \times S^q)$ is diffeomorphic to $N \# l(S^q \times S^q)$. By Kreck's theorem \cite[Corollary 3]{kreck99} we have the following: 
\begin{thmx}[Kreck \cite{kreck99}]
A pair of $2q$-manifolds are stably diffeomorphic if and only if they have the same normal $(q{-}1)$-type and they admit bordant normal $(q{-}1)$-smoothings over it.
\end{thmx}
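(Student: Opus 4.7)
The argument is Kreck's, from \cite{kreck99}; I sketch both directions.

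For the forward implication, I would verify that connected sum with $S^q \times S^q$ preserves both the normal $(q-1)$-type and the bordism class of the normal $(q-1)$-smoothing. The normal $(q-1)$-type is the Moore--Postnikov factorization of the stable normal Gauss map at level $q-1$, and this is unaffected by a $(q-1)$-connected, stably parallelizable summand. Moreover, $M$ is normally bordant to $M \# (S^q \times S^q)$ over its normal $(q-1)$-type via the trace of a trivial $q$-surgery on an interior point. Composing such bordisms with the bordism arising from the assumed diffeomorphism between $M \# k(S^q \times S^q)$ and $N \# l(S^q \times S^q)$ yields the required normal bordism between $M$ and $N$.

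For the reverse implication---the substantive direction---I would start with a normal bordism $\bar{W}$ between normal $(q-1)$-smoothings of $M$ and $N$ over $(B,\xi)$, and modify $\bar{W}$ by interior surgery until it becomes an $h$-cobordism between stabilizations of $M$ and $N$. The first step is surgery below the middle dimension: since the reference maps on both boundary components are already $(q-1)$-equivalences, one successively kills the kernel of $\pi_i(\bar{W}) \to \pi_i(B)$ for $i < q$ by surgery on embedded spheres, an operation unobstructed by general position since the relevant dimensions lie well below the middle of the $(2q+1)$-manifold $\bar{W}$.

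After this reduction, the only remaining homological obstruction lies in the kernel module $K_q(\bar{W}; \Z[\pi_1 B])$, which is finitely generated stably free by Poincar\'e--Lefschetz duality and carries a geometric intersection/self-intersection pairing. By attaching trivial $q$-handles in the interior of $\bar{W}$---equivalently, on the boundary, replacing $M$ and $N$ by $M \# k(S^q \times S^q)$ and $N \# l(S^q \times S^q)$---one stabilizes this kernel until it becomes a free hyperbolic module. A Lagrangian half of the hyperbolic basis can then be represented by disjointly embedded framed $q$-spheres in $\bar{W}$, and framed surgery along them produces a cobordism that is $q$-connected over $B$ on both sides, hence an $h$-cobordism between the stabilized manifolds; a final stabilization absorbs any Whitehead torsion, and the $s$-cobordism theorem then yields a diffeomorphism.

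The main obstacle is the middle-dimensional step: arranging that, after stabilization, a Lagrangian basis of $K_q(\bar{W})$ is realised by disjointly and trivially embedded framed spheres on which surgery is admissible. The self-intersection and intersection obstructions must be cleared via Whitney-type moves (together with further stabilization), and this is precisely where \emph{stable} diffeomorphism sidesteps the finer $L$-theoretic obstructions that would appear in a genuine diffeomorphism classification via classical surgery.
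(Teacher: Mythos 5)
This statement is not proved in the paper at all: it is quoted as background from \cite[Corollary 3]{kreck99}, so your sketch has to be measured against Kreck's own argument. Your forward direction is essentially right (connected sum with $S^q \times S^q$ changes neither the normal $(q{-}1)$-type nor the normal bordism class, since $S^q \times S^q$ minus a disc is $(q{-}1)$-connected and stably parallelisable and $M$ is normally bordant to $M \# (S^q \times S^q)$ via the trace of a trivial surgery), and the reduction of the converse by surgery below the middle dimension to a $q$-connected normal bordism $F : W \rightarrow B$ is also the first step of Kreck's proof.

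The gap is in your middle-dimensional step, which imports even-dimensional surgery machinery into the $(2q{+}1)$-dimensional bordism $W$. The kernel $K_q(W)$ carries no intersection or self-intersection pairing: $q$-cycles in a $(2q{+}1)$-manifold have no well-defined intersection numbers and generically embedded $q$-spheres are already disjoint, so ``stabilise until the kernel is free hyperbolic, embed a Lagrangian via Whitney moves and surger it'' is not a meaningful plan here. The real difficulty in this odd-dimensional setting is of formation type: surgering generators of $K_q(W)$ creates new middle-dimensional classes coming from the co-core spheres, and deciding whether $W$ can be traded for an h-cobordism (even between stabilised boundaries) is precisely the content of Kreck's obstruction in $l_{2q+1}$ -- and of the quasi-formation machinery this paper builds in Sections \ref{s:l-mon}--\ref{s:obstr-def}; you give no argument that the relevant obstruction becomes elementary after adding trivial handles, and your appeal to ``a final stabilisation absorbs any Whitehead torsion'' is a further unproved claim needed for arbitrary $\pi_1$. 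Kreck's actual proof avoids all of this: after surgery below the middle dimension one arranges, by handle trading, that $W$ is built from $M \times I$ using handles of index $q$ and $q{+}1$ only; the attaching $(q{-}1)$-spheres of the $q$-handles are null-homotopic in $M$ (because $\pi_{q-1}(M) \rightarrow \pi_{q-1}(W)$ is an isomorphism, both groups being identified with $\pi_{q-1}(B)$), hence trivially embedded since they lie below the middle dimension, with framings controlled by compatibility with $\xi$; the middle level of the handle decomposition is therefore simultaneously $M \# k(S^q \times S^q)$ and, reading the $(q{+}1)$-handles upside down, $N \# l(S^q \times S^q)$. No h-cobordism is constructed and no s-cobordism theorem is invoked, which is exactly why the stable statement holds with no middle-dimensional obstruction.
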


\noindent
(See \cite[Section 2]{kreck99} and Section \ref{ss:nm} for the definition of normal smoothings etc., and our conventions.)

In this paper we will consider simply-connected $2q$-manifolds, with $q \geq 2$ even. Our main result is that the addition of a single homological invariant is enough to obtain a classification up to diffeomorphism (h-cobordism, if $q=2$), when the following condition is satisfied:
\begin{flalign} \label{free}
\text{The normal $(q{-}1)$-type of the manifolds has free homology in degree $q$.} && \tag{$\ast$}
\end{flalign}
Namely, we define the \emph{Q-form} associated to a closed, oriented, simply-connected $2q$-manifold $M$, with $q$ even, and a map $f : M \rightarrow B$ (for example, when $f$ is a normal $(q{-}1)$-smoothing):
\[
E_q(M,f) = (H_q(M), \lambda_M, H_q(f))
\] 
which consists of the intersection form $\lambda_M : H_q(M) \times H_q(M) \rightarrow \Z$ of $M$ together with the induced homomorphism $H_q(f) : H_q(M) \rightarrow H_q(B)$ (see Section \ref{ss:qf}). This is an example of an \emph{extended quadratic form} over the abelian group $H_q(B)$ (see Section \ref{s:eqf}). Isomorphism of extended quadratic forms over a fixed abelian group is defined in the obvious way, and we have the following: 

\begin{thm} \label{thm:main}
Suppose that $q \geq 2$ is even. A pair of simply-connected $2q$-manifolds satisfying \eqref{free} are diffeomorphic (h-cobordant, if $q=2$) if and only if they have the same normal $(q{-}1)$-type and they admit bordant normal $(q{-}1)$-smoothings with isomorphic Q-forms over it.
\end{thm}

Theorem \ref{thm:main} follows from Theorem \ref{thm:qfc} below, which proves Crowley's Q-form conjecture (see \cite[Problem 11]{bowden-cdfrt20}) in the special case when $H_q(B)$ is free. In fact we get a stronger statement, also showing that any normal bordism between the normal smoothings is bordant to an h-cobordism.

\begin{thm}[Q-form conjecture for simply-connected $B$ and even $q$, with free $H_q(B)$] \label{thm:qfc}
Let $B$ be a simply-connected space, $\xi$ a stable bundle over it and $q \geq 2$ an even integer. Let $M_0$ and $M_1$ be two closed oriented $2q$-manifolds. Let $f_i : M_i \rightarrow B$ be a normal $(q{-}1)$-smoothing over $(B,\xi)$ ($i=0,1$), and $F_0 : W_0 \rightarrow B$ a normal bordism between $f_0$ and $f_1$. Suppose that $H_q(B)$ is free. If $E_q(M_0,f_0) \cong E_q(M_1,f_1)$, then $F_0$ is normally bordant (rel boundary) to a normal bordism $F' : W' \rightarrow B$ such that $W'$ is an h-cobordism. 
\end{thm}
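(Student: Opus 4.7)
The plan is to refine Kreck's modified surgery argument, using the Q-form as the additional data needed to avoid stabilization. As a first step, apply Kreck's standard procedure for surgery below the middle dimension to $W_0$ rel $\partial$: surgeries along embedded spheres of dimension $<q$ in the interior of $W_0$ replace $F_0$ by a normally bordant bordism (still called $F_0 : W_0 \to B$) that is $q$-connected, without altering the boundary smoothings $f_0, f_1$ or the Q-form data. After this reduction, the remaining obstruction to $W_0$ being an h-cobordism lives in middle dimension.

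Next, organize the middle-dimensional data. Write $K_q(W_0) := \ker H_q(F_0)$ and $K_q(M_i) := \ker H_q(f_i)$. Simple-connectedness of $B$ and freeness of $H_q(B)$ cause the relevant homology sequences to split, giving $H_q(W_0) \cong K_q(W_0) \oplus H_q(B)$ with $K_q(W_0)$ finitely generated and free, and corresponding splittings for $H_q(M_i)$. The intersection form on $W_0$ restricts to $K_q(W_0)$, and the boundary inclusions $K_q(M_i) \to K_q(W_0)$ fit into a Mayer--Vietoris-style picture whose algebraic structure determines, up to a bordism move, whether $W_0$ is an h-cobordism.

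The central step is to construct an \emph{extended surgery obstruction} $\theta(F_0)$: an element of a set of Witt-type equivalence classes of extended quadratic forms (as developed in Section \ref{s:eqf}), built from the kernel $K_q(W_0)$, the restricted intersection form, and the two boundary inclusions $K_q(M_i) \to K_q(W_0)$. By design, $\theta(F_0)$ vanishes precisely when a further normal bordism rel $\partial$ converts $W_0$ into an h-cobordism. The Q-form isomorphism $E_q(M_0, f_0) \cong E_q(M_1, f_1)$, combined with the splittings of $H_q(W_0)$ and $H_q(M_i)$, provides exactly the algebraic identification needed to show $\theta(F_0) = 0$. The main obstacle is two-fold: (a) \emph{defining} $\theta$ so that its vanishing detects precisely the h-cobordism obstruction --- not merely the weaker stable one captured by Kreck's theorem; and (b) \emph{realizing} the algebraic trivialization geometrically, translating each elementary algebraic move into a normal bordism of $F_0$ rel $\partial$. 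The freeness of $H_q(B)$ enters crucially here, ruling out torsion phenomena that would otherwise force stabilization.

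Once $\theta(F_0)$ is geometrically trivialized, producing $F' : W' \to B$ with $K_q(W') = 0$, Poincar\'e--Lefschetz duality and universal coefficients force $K_{q+1}(W') = 0$ (using freeness of $K_q$), so $F'$ is a $(q+1)$-equivalence. Combined with $\pi_1(W') = 1$, this makes $W'$ an h-cobordism (for $q = 2$ this is the best one can claim in the smooth category, matching the statement).
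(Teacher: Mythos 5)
There is a genuine gap: the ``central step'' of your proposal --- constructing an obstruction $\theta(F_0)$ that (a) is elementary/trivial exactly when $F_0$ can be surgered rel boundary into an h-cobordism and (b) is killed by a Q-form isomorphism --- is asserted (``by design'', ``provides exactly the algebraic identification needed'') rather than carried out, and the two ``obstacles'' you list are precisely the content of the theorem, occupying Sections \ref{s:eqf}--\ref{s:obstr-comp} of the paper. Worse, the specific setting you choose is the one the paper deliberately abandons: you propose to build $\theta$ from the surgery kernel $K_q(W_0)=\Ker H_q(F_0)$ with the restricted intersection form and the inclusions $K_q(M_i)\to K_q(W_0)$. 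That is Kreck's modified surgery framework (\cite[Proposition 8]{kreck99}), and it is known to yield only stable/cancellation results: the restriction of $\lambda_{M_i}$ to $\Ker H_q(f_i)$ is degenerate, so an isomorphism of Q-forms $E_q(M_0,f_0)\cong E_q(M_1,f_1)$ does not translate into the kind of lagrangian-splitting statement needed to make a kernel-based obstruction elementary without stabilising. The paper's key move is the opposite one: the obstruction $[\theta_{W,F}]$ is defined on $H_q(\partial U)$ for a regular neighbourhood $U$ of the $q$-skeleton of $W$, so that the map to $H_q(B)$ is \emph{surjective} and the full nonsingular intersection form of the closed manifold $\partial U$ is available; the resulting quasi-formation is full and geometric, and it is exactly this fullness that feeds into Theorem \ref{thm:fund-met} (uniqueness of full geometric metabolic forms up to stable isomorphism), Theorem \ref{thm:ru-wall} (the extension of Wall's Lemma 6.2 to metabolic forms via $\RU_{\st}$), Theorem \ref{thm:jacobi}, the computation of Theorem \ref{thm:theta-calc}, and the vanishing $L_{2q+1}(B,\xi)\cong 0$ for free $H_q(B)$ (Theorem \ref{thm:l-group-zero}). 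None of this machinery, nor any substitute for it, appears in your outline; in particular obstacle (b), the geometric realisation of algebraic moves (Lemma \ref{lem:theta-surg-real} and Proposition \ref{prop:bord-hcob-elem} in the paper), is the hardest technical point and is left entirely open.

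Two smaller issues. First, your obstruction is framed as a ``Witt-type'' class that ``vanishes''; in the actual structure the invariant lives in a \emph{monoid} $\ell_{2q+1}(B,\xi)$, not a group, and the relevant condition is membership in the subsemigroup of elementary elements, which is \emph{not} the same as being the zero element (the zero element $[\UM;L,L]$ is not elementary when $H_q(B)\neq 0$); a group-valued Witt-style formulation would lose exactly the distinction the argument turns on. Second, the final paragraph is essentially fine in spirit (if one produces a $q$-connected $F'$ with $H_q(F')$ injective, duality and the freeness hypotheses do force $H_*(W',M_i)=0$), but this is the easy direction; the theorem stands or falls on the unproved middle step.
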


\begin{rem} \label{rem:nec}
In the other direction, if there is a normal bordism $F' : W' \rightarrow B$ between $f_0$ and $f_1$ such that $W'$ is an h-cobordism, then the inclusions $M_i \rightarrow W'$ determine a homotopy equivalence $H : M_0 \rightarrow M_1$ such that $f_0 \simeq f_1 \circ H : M_0 \rightarrow B$. This shows that the condition $E_q(M_0,f_0) \cong E_q(M_1,f_1)$ is necessary.
\end{rem}

To prove Theorem \ref{thm:qfc} we develop a new version of surgery theory. This is the main achievement of the paper, and we will summarise it in Section \ref{ss:est} below. We will also present two applications of Theorem \ref{thm:qfc}. 

In the first application we consider the classification of $(q{-}1)$-connected $2q$-manifolds for $q=4,8$, more specifically, the computation of their inertia groups. Recall that $\Theta_n$ denotes the group of homotopy $n$-spheres, and the \emph{inertia group} of an $n$-manifold $M$ is the subgroup
\[
I(M) = \{ \Sigma \in \Theta_n \mid M \# \Sigma \approx M \} \leq \Theta_n \text{\,.}
\]
That is, the inertia group of a manifold is its stabiliser under the action of $\Theta_n$, which acts on $n$-manifolds by taking connected sum.

Wall \cite{wall62} started the classification of $(q{-}1)$-connected $2q$-manifolds based on the action of $\Theta_{2q}$, for $q \geq 3$. Two manifolds are in the same orbit of $\Theta_{2q}$ if and only if they become diffeomorphic after cutting out a disc $D^{2q}$. Thus the set of orbits can be understood by studying punctured manifolds, which have a relatively simple handlebody decomposition. Wall has identified a set of invariants that classify $(q{-}1)$-connected $2q$-manifolds with boundary a homotopy $(2q{-}1)$-sphere up to diffeomorphism. In particular, this includes the above mentioned punctured manifolds, hence the given invariants classify closed $(q{-}1)$-connected $2q$-manifolds up to connected sum with homotopy $2q$-spheres. For $q=4$ (resp.\ $8$) the relevant invariants are the intersection form together with the Pontryagin class $p_1$ (resp.\ $p_2$). 

In order to precisely characterise the set of orbits of the action of $\Theta_{2q}$, one needs to determine which of the possible values of the invariants can be realised by closed $(q{-}1)$-connected $2q$-manifolds. The answer is now known for all $q \neq 63$, see \cite{burklund-senger24} for an overview. The $q=4,8$ cases were solved by Wall \cite{wall62} as follows. There exists a closed $3$-connected $8$-manifold (resp.\ $7$-connected $16$-manifold) with a given intersection form of signature $\sigma$ and Pontryagin class $p_1$ (resp.\ $p_2$) if and only if $\frac{1}{8} \left[ \left( \frac{p_1}{2} \right)^{\smash{2}} - \sigma \right]$ (resp.\ $\frac{1}{8} \left[ \left( \frac{p_2}{6} \right)^{\smash{2}} - \sigma \right]$) is divisible by $28$ (resp.\ $8128$).

Finally, we need to understand each individual orbit. This requires computing the inertia groups of $(q{-}1)$-connected $2q$-manifolds, which we will discuss in more detail in Section \ref{ss:ig}. Early progress in this area showed that for most values of $q$ the inertia groups of all $(q{-}1)$-connected $2q$-manifolds are trivial (see \cite{wall62b}, \cite{stolz85}), while \cite{kramer-stolz07} proved that some $8$- and $16$-dimensional manifolds, including ${\quat}P^2$ and ${\oct}P^2$, have non-trivial inertia groups. Stable homotopy computations together with modified surgery theory have led to the determination of inertia groups for all $q \neq 4,8$, see \cite{senger-zhang25}.

By combining Theorem \ref{thm:qfc} and the calculations of Senger-Zhang \cite{senger-zhang25}, we can determine the inertia groups for $q = 4,8$ as follows. See Theorems \ref{thm:bordism-ker}--\ref{thm:inert-ker} for more details, \cite[Section 7]{senger-zhang25} for the treatment of Senger-Zhang, and the announcement \cite{ann-3c8m} in the $q=4$ case.

\begin{thm} \label{thm:inert-pontr}
The inertia group of a $3$-connected $8$-manifold (resp.\ $7$-connected $16$-manifold) is trivial if and only if its Pontryagin class $p_1$ (resp.\ $p_2$) is divisible by $8$ (resp.\ $24$). 
\end{thm}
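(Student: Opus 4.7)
The plan is to combine three results already in place: Theorem \ref{thm:inert-ker}, which identifies $I(M)$ with $\Ker(\eta_{2q}^A)$ for $A = \Image(\pi_q(\nu_M))$; Theorem \ref{thm:bordism-ker}, which computes this kernel; and the Bott-Milnor formula $d_M = a_j(2j-1)!\alpha_M$ stated above, which converts the condition on $A$ into a condition on the Pontryagin class. Since all three inputs are at hand, the proof will be a short computation.

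For $q = 4$ (respectively $q = 8$), we have $q \equiv 4$ (resp.\ $q \equiv 0$) mod $8$ and $q \geq 3$, so Theorem \ref{thm:inert-ker} applies and gives $I(M) = \Ker(\eta_{2q}^A)$, where $A = \Image(\pi_q(\nu_M)) = \alpha_M \Z \leq \pi_q(BSO) \cong \Z$. Specialising Theorem \ref{thm:bordism-ker} to these two values of $q$, we conclude that $\Ker(\eta_{2q}^A)$ is trivial if and only if $A \leq 4\Z$, equivalently $4 \mid \alpha_M$. Thus $I(M) = 0$ if and only if $\alpha_M$ is divisible by $4$.

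It remains to rewrite the divisibility of $\alpha_M$ as a divisibility condition on the relevant Pontryagin class. For a $(q{-}1)$-connected $2q$-manifold with $q = 4j$, the class $p_j(M) \in H^{4j}(M)$ sits in a free abelian group, so its divisibility $d_M$ is defined, and the Bott-Milnor formula yields: for $q = 4$ (so $j = 1$, $a_1 = 2$, $(2j{-}1)! = 1$) that $d_M = 2\alpha_M$, hence $4 \mid \alpha_M$ if and only if $8 \mid d_M$, i.e.\ $p_1(M)$ is divisible by $8$; and for $q = 8$ (so $j = 2$, $a_2 = 1$, $(2j{-}1)! = 6$) that $d_M = 6\alpha_M$, hence $4 \mid \alpha_M$ if and only if $24 \mid d_M$, i.e.\ $p_2(M)$ is divisible by $24$.

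Since this is a direct corollary of the theorems already listed, no serious technical obstacle is expected. The only points requiring care are matching $q$ to the correct Pontryagin degree $j = q/4$ and keeping the parity factor $a_j$ straight between the two cases.
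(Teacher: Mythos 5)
Your proposal is correct and follows exactly the route the paper takes: the theorem is stated as a direct consequence of Theorem \ref{thm:inert-ker}, Theorem \ref{thm:bordism-ker}, and the Bott--Milnor divisibility formula $d_M = a_j(2j{-}1)!\alpha_M$, and your case check ($d_M = 2\alpha_M$ for $q=4$, $d_M = 6\alpha_M$ for $q=8$, so $4 \mid \alpha_M$ becomes $8 \mid d_M$ resp.\ $24 \mid d_M$) matches the paper's computation.
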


\noindent
Note that $\Theta_8 \cong \Theta_{16} \cong \Z_2$, so when the inertia group is non-trivial, it is equal to $\Theta_8$ or $\Theta_{16}$. 
Theorem \ref{thm:inert-pontr} thus completes the final step in the classification of $3$-connected $8$-manifolds and $7$-connected $16$-manifolds.

In the second application we consider the \emph{stable class} $\Str^{\st}(M)$ of a manifold $M$, see Section \ref{ss:sc}. It is the set of diffeomorphism classes (s-cobordism classes in dimension $4$) within the stable diffeomorphism class of $M$. In other words, the stable class measures the difference between the stable diffeomorphism and diffeomorphism classifications. Theorem \ref{thm:main} shows that the Q-form is the extra (and only) invariant responsible for this difference, and this allows us to obtain an algebraic description of the stable class, see Theorem \ref{thm:sc-bij}. To illustrate the use of this description in explicit computations, we consider manifolds whose intersection form is hyperbolic of rank $2$. 

\begin{thm} \label{thm:hyp-sss}
Let $M$ be a simply-connected $2q$-manifold, with $q \geq 2$ even. Let $(B,\xi)$ be its normal $(q{-}1)$-type, and $f : M \rightarrow B$ a normal $(q{-}1)$-smoothing. Suppose that $H_q(B)$ is free, $\rk H_q(M)=2$ and the induced intersection form on $H_q(M) / \Tor(H_q(M)) \cong \Z^2$ is isomorphic to $\bigl[ \begin{smallmatrix} 0 & 1 \\ 1 & 0 \end{smallmatrix} \bigr]$. Then
\[
|\Str^{\st}(M)| = 
\begin{cases}
1 & \text{if $\rk H_q(B)=0$ or $2$} \\
1 & \text{if $\rk H_q(B)=1$ and $|ab| \leq 1$} \\
2^{r-1} & \text{if $\rk H_q(B)=1$ and $|ab| \geq 2$}
\end{cases}
\]
where in the $\rk H_q(B)=1$ case we fix an identification $H_q(B) \cong \Z$ and elements $x,y \in H_q(M)$ such that $[x],[y]$ is a basis of $H_q(M) / \Tor(H_q(M))$ in which the intersection form is given by the matrix $\bigl[ \begin{smallmatrix} 0 & 1 \\ 1 & 0 \end{smallmatrix} \bigr]$, we define $a = f_*(x), b=f_*(y) \in H_q(B) \cong \Z$, and $r$ is the number of primes dividing $|ab|$.
\end{thm}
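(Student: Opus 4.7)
The plan is to deduce this theorem by direct combination of four earlier results: Theorems \ref{thm:sss-si-bij}, \ref{thm:sc-si-bij}, \ref{thm:si-hyp}, and Proposition \ref{prop:si-action}. Since $H_q(B)$ is free by hypothesis, Theorem \ref{thm:sss-si-bij} provides a bijection $\Str^{\st}(M,f) \cong \SI(E_q(M,f))$, and Theorem \ref{thm:sc-si-bij} gives $\Str^{\st}(M) \cong \SI(E_q(M,f)) / A^{(B,\xi)}_{[M,f]}$. Proposition \ref{prop:si-action}, applied in the rank-$2$ hyperbolic setting, asserts that every automorphism of $H_q(B)$ acts trivially on $\SI(E_q(M,f))$, so in particular $A^{(B,\xi)}_{[M,f]}$ acts trivially. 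Hence $|\Str^{\st}(M,f)| = |\Str^{\st}(M)| = |\SI(E_q(M,f))|$ automatically, and it remains only to read off $|\SI(E_q(M,f))|$.

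Next I would apply Theorem \ref{thm:si-hyp} and split on $\rk H_q(B)$. If $\rk H_q(B) = 0$, then freeness forces $H_q(B) = 0$; the homomorphism component of $E_q(M,f)$ is zero, and the remaining rank-$2$ hyperbolic intersection form is stably unique, giving a single class. If $\rk H_q(B) = 2$, then $H_q(f)$ has maximal rank on the free part, and a rigidity argument within Theorem \ref{thm:si-hyp} should again leave a single class. The substantive case is $\rk H_q(B) = 1$, where after identifying $H_q(B) \cong \Z$ the free-part data is captured by the pair $(a,b) = (f_*(x), f_*(y))$. For $|ab| \leq 1$ the pair is determined up to hyperbolic automorphisms of $H_q(M)$, giving one class. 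For $|ab| \geq 2$, the stable invariants from Theorem \ref{thm:si-hyp} should correspond bijectively to divisors of $ab$ built from subsets of its $r$ prime factors, producing $2^r$ choices modulo the $\Z/2$-symmetry $(x,y) \leftrightarrow (y,x)$ on the hyperbolic basis, for a final count of $2^{r-1}$.

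The heart of the argument, and the main obstacle, lies in Theorem \ref{thm:si-hyp} itself: one must construct a complete family of stable-isomorphism invariants for extended quadratic forms over $H_q(B)$ with rank-$2$ hyperbolic free part, show that they are preserved by stabilisation with $\UH_{2k}$, and compute them explicitly as functions of $(a,b)$ in the $\rk H_q(B)=1$ case so that the divisor-of-$ab$ count emerges. Proposition \ref{prop:si-action} likewise requires verifying that the induced $\Aut(H_q(B))$-action on these invariants is trivial in this hyperbolic setting. Once these two algebraic inputs are in hand, Theorem \ref{thm:hyp-sss} follows by purely formal combination of the cited results, so at the level of this theorem the work is bookkeeping.
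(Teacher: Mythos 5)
Your proposal matches the paper's own argument: Theorem \ref{thm:hyp-sss} is obtained exactly by combining Theorems \ref{thm:sss-si-bij} and \ref{thm:sc-si-bij} with the algebraic computations of Theorem \ref{thm:si-hyp} and Proposition \ref{prop:si-action}, the latter two being proved separately in Section \ref{s:sss}. The only (minor) point worth making explicit is that $E_q(M,f)$ satisfies the hypotheses of Theorem \ref{thm:si-hyp}, i.e.\ that $H_q(f)$ is surjective because $f$ is a normal $(q{-}1)$-smoothing (Lemma \ref{lem:qf-full}); otherwise this is the same bookkeeping the paper does.
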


Conway-Crowley-Powell-Sixt \cite[Proposition 2.2, Theorem 3.3]{CCPS23} constructed two families of manifolds with rank-$2$ hyperbolic intersection form, $N_{a,b}$ and $M_{a,b}$, and gave bounds for the size of their stable class. The manifolds $N_{a,b}$ are simply-connected, $4k$-dimensional and stably parallelisable with homology concentrated in even dimensions, while the $M_{a,b}$ are $(4k-1)$-connected $8k$-manifolds. Theorem \ref{thm:hyp-sss} strengthens these results by determining the exact size of the stable class, and covering a more general class of manifolds.

\subsection{Background}

Kreck's theorem is the basis of a strategy for classifying even-dimensional manifolds up to diffeomorphism, which starts with a stable diffeomorphism classification. Explicit stable diffeomorphism classification has been obtained for some classes of manifolds, for example see Hambleton-Kreck-Teichner \cite{HKT09}, Kasprowski-Land-Powell-Teichner \cite{KLPT17} for $4$-manifolds with certain fundamental groups. Then the next step is understanding manifolds in a given stable diffeomorphism class, ie.\ determining the stable class. The main tool for this is Kreck's modified surgery obstruction (see \cite[Theorem 4]{kreck99}).

In some cases it can be used to show that the diffeomorphism classification is determined by the stable diffeomorphism classification. By \cite[Theorem D]{kreck99} two simply-connected $2q$-manifolds, with $q$ odd, are diffeomorphic if and only if they are stably diffeomorphic and have the same Euler-characteristic. This is not true for arbitrary fundamental group and $q$. In general, the stable class can be arbitrarily large, or infinite, as shown by examples constructed in Conway-Crowley-Powell-Sixt \cite{CCPS23}, \cite{CCPS21} (including the manifolds $N_{a,b}$ and $M_{a,b}$ mentioned above).

Nevertheless, under some assumptions, the stable diffeomorphism classification still determines the diffeomorphism classification for manifolds that are already sufficiently stabilised. It follows from \cite[Corollary 4]{kreck99} that stably diffeomorphic $2q$-manifolds with finite fundamental group and the same Euler-characteristic are in fact diffeomorphic if they split off two copies of $S^q \times S^q$, or just one copy, if $q$ is even and the manifolds are simply-connected. These cancellation results have been generalised to other fundamental groups (see eg.\ Crowley-Sixt \cite{crowley-sixt11}), and also strengthened in the special case of $4$-manifolds (where diffeomorphism is replaced by s-cobordism or homeomorphism), see eg.\ Hambleton-Kreck \cite{hambleton-kreck93b}, Hambleton \cite{hambleton23}. 

There are few results where unstable classification has been obtained from the stable classification without assuming that the manifolds are already stabilised. As, in general, the stable and unstable classifications do not coincide, additional invariants are needed for the latter. Wall \cite{wall64}, Freedman-Quinn \cite{freedman-quinn90} and Hambleton-Kreck \cite{hambleton-kreck93c} showed that stably homeomorphic $4$-manifolds with trivial or cyclic fundamental group are classified up to s-cobordism/homeomorphism by their equivariant intersection form. However, by Kreck-Schafer \cite{kreck-schafer84}, this is not true for arbitrary $4$-manifolds. Our Theorem \ref{thm:main} provides a new condition of this type for simply-connected $2q$-manifolds satisfying \eqref{free}, for all even $q \geq 2$ (which, in the $q=2$ case, recovers the earlier results on spin simply-connected smooth $4$-manifolds). This can be used to complete the second step of this classification strategy and determine the stable class of a manifold.

\subsection{Extended surgery theory} \label{ss:est}

We will prove Theorem \ref{thm:qfc} (the special case of the Q-form conjecture), and hence Theorem \ref{thm:main}, by introducing and studying a new type of surgery obstruction associated to a normal bordism.

We start the proof by applying surgery below the middle dimension to replace $F_0$ with a $q$-connected normal bordism $F : W \rightarrow B$. Note that for this $F$ Kreck's modified surgery obstruction is defined \cite[Section 6]{kreck99}, with the property that the obstruction is elementary if and only if $W$ is bordant (over $(B,\xi)$, rel boundary) to an h-cobordism. It is defined using the intersection form on $H_q(\partial U)$, where $U \subset W$ is a collection of embedded copies of $S^q \times D^{q+1}$ such that $\Image(H_q(U) \rightarrow H_q(W))$ is the surgery kernel $\Ker(H_q(F) : H_q(W) \rightarrow H_q(B))$. Then \cite[Proposition 8]{kreck99} relates the obstruction to the intersection form on $\Ker(H_q(f_0) : H_q(M_0) \rightarrow H_q(B))$. This is the basis of \cite[Theorem D]{kreck99} and other cancellation results. 

That is not what we will use though, instead, we will define a new obstruction $[\theta_{W,F}]$ (see Section \ref{s:obstr-def}). While the definition of $[\theta_{W,F}]$ will follow the overall logic of \cite{kreck99}, we will make significant changes. The main novelty of $[\theta_{W,F}]$, compared to the modified surgery obstruction and the obstructions in classical surgery theory (see Browder \cite{browder72}, Wall \cite{wall-scm}), is that it is no longer defined on the surgery kernel. Quite the opposite, we will use a codimension-$0$ submanifold $U \subset W$ such that the map $H_q(U) \rightarrow H_q(W)$ (and hence the composition $H_q(U) \rightarrow H_q(W) \rightarrow H_q(B)$) is surjective. Furthermore, the monoid $\ell_{2q+1}(B,\xi)$ in which $[\theta_{W,F}]$ lives is no longer determined by the fundamental group alone, but also depends on $H_q(B)$ and the Wu class $v_q(\xi)$. Its elements are represented by quasi-formations on extended quadratic forms over $H_q(B)$ (see Sections \ref{s:eqf} and \ref{s:l-mon}). 

We will define a group $L_{2q+1}(B,\xi)$, which is the analogue of the classical L-group, and prove that it is the group of invertible elements of $\ell_{2q+1}(B,\xi)$. This will rely on a generalisation of Wall's \cite[Lemma 6.2]{wall-scm} to metabolic forms (see Theorem \ref{thm:ru-wall}). Moreover, we will show that if $H_q(B)$ is free, then $L_{2q+1}(B,\xi)$ is trivial.

We will also define the subsemigroup of elementary elements of $\ell_{2q+1}(B,\xi)$, and prove that $F$ is normally bordant to an h-cobordism if and only if $[\theta_{W,F}]$ is elementary. Along with its definition, this step is where the main technical difficulty of working with $[\theta_{W,F}]$ lies (see Lemma \ref{lem:theta-surg-real}), due to our choice of a more complicated $U$. 

Then, similarly to \cite[Proposition 8]{kreck99}, we will relate $[\theta_{W,F}]$ to the the Q-form of $f_0$. However, since the intersection form of $M_0$ is nonsingular (in contrast to its restriction to $\Ker H_q(f_0)$), we will find that $[\theta_{W,F}]$ has better algebraic properties. In fact, under some assumptions, we will be able to compute $[\theta_{W,F}]$ in terms of the Q-forms of $f_0$ and $f_1$, up to the action of $L_{2q+1}(B,\xi)$ (see Theorem \ref{thm:theta-calc}). In the case when $H_q(B)$ is free and the Q-forms are isomorphic, this will allow us to deduce that $[\theta_{W,F}]$ is elementary and complete the proof of Theorem \ref{thm:qfc} (see Section \ref{s:qfc}).

\subsection{Inertia groups of highly-connected manifolds} \label{ss:ig}

As an application, next we consider the inertia groups of $(q{-}1)$-connected $2q$-manifolds. When $q=4$ or $8$, we will use Theorem \ref{thm:qfc} to identify the inertia group with the kernel of a bordism map.

Given an oriented manifold $M$, let $\nu_M$ denote the stable normal bundle of $M$, and also its classifying map $\nu_M : M \rightarrow BSO$. If $M$ is $(q{-}1)$-connected and $A = \Image(\pi_q(\nu_M): \pi_q(M) \rightarrow \pi_q(BSO))$, then the normal $(q{-}1)$-type of $M$ is $(BSO\left< q, A\right>,\xi_{q,A})$, see Definition \ref{def:BSO-qa} and Proposition~\ref{prop:nm-unique}.

\begin{defin} \label{def:eta-A}
For a subgroup $A \leq \pi_q(BSO)$ let $\eta_{2q}^A : \Theta_{2q} \rightarrow \Omega_{2q}(BSO\left< q, A\right>; \xi_{q,A})$ be the map which sends a homotopy $2q$-sphere $\Sigma$ to the bordism class of a normal map $\Sigma \rightarrow BSO\left< q, A\right>$. (This map is a well-defined homomorphism by Propositions \ref{prop:nm-unique} and \ref{prop:eta1}.)
\end{defin}

It follows from a standard obstruction theory argument that if $M$ is a $(q{-}1)$-connected $2q$-manifold and $A = \Image(\pi_q(\nu_M))$, then $I(M) \leq \Ker(\eta_{2q}^A)$, see Proposition \ref{prop:inert-upper}. Based on this observation, the inertia group $I(M)$ can be determined by completing the following two steps: 
\begin{compactenum}[1.]
\item Computing $\Ker(\eta_{2q}^A)$. 
\item Showing that $I(M) = \Ker(\eta_{2q}^A)$.
\end{compactenum}

The subgroup $\Ker(\eta_{2q}^A)$ is now known for every $q$ and $A$, due to the work of several authors. The final cases were resolved by Senger-Zhang \cite{senger-zhang25}, see \cite{senger-zhang25} also for the history and earlier results. In particular, $\Ker(\eta_{2q}^A) = 0$ for every $A$ if $q \neq 4,8,9$, and then the inertia group is also trivial by the relation $I(M) \leq \Ker(\eta_{2q}^A)$. Moreover, Kreck's \cite[Theorem D]{kreck99} implies that if $q$ is odd, then $I(M) = \Ker(\eta_{2q}^A)$, see Theorem \ref{thm:inert-lower} a). Hence the second step is settled and the inertia group is determined by the above results in all cases, except when $q=4,8$. 

In these cases $\Ker(\eta_{2q}^A)$ is computed as follows: 

\begin{thm}[$q=4$: Senger-Zhang {\cite[Theorem 7.14]{senger-zhang25}}, Crowley-Nagy \cite{ann-3c8m}, $q=8$: Senger-Zhang {\cite[Theorem 7.15]{senger-zhang25}}, Crowley-Olbermann] \label{thm:bordism-ker}
Suppose that $q = 4$ or $8$ and $A \leq \pi_q(BSO) \cong \Z$. Then $\Ker(\eta_{2q}^A) = 0$ if $A \leq 4\Z$ and $\Ker(\eta_{2q}^A) = \Theta_{2q} \cong \Z_2$ if $A \not\leq 4\Z$.
\end{thm}

Now the Q-form conjecture can be used to show that $I(M) = \Ker(\eta_{2q}^A)$, see Theorem \ref{thm:inert-lower} b). Namely, given a $(q{-}1)$-connected $2q$-manifold $M$ with $A = \Image(\pi_q(\nu_M))$, if $\Sigma \in \Ker(\eta_{2q}^A)$, then the (unique) normal $(q{-}1)$-smoothings of $M$ and $M \# \Sigma$ over $(BSO\left< q, A\right>, \xi_{q,A})$ are normally bordant. If $q$ is even, then their Q-forms are defined and isomorphic, and, unless $q \equiv 2$ mod $8$, every subgroup of $\pi_q(BSO)$ is free, so the manifolds satisfy \eqref{free}. Therefore by Theorem \ref{thm:qfc}, $M$ is diffeomorphic to $M \# \Sigma$. These arguments are summarised as follows: 

\begin{thm} \label{thm:inert-ker}
Suppose that $q \equiv 0$, $4$ or $6$ mod $8$. Let $M$ be a $(q{-}1)$-connected $2q$-manifold and $A = \Image(\pi_q(\nu_M): \pi_q(M) \rightarrow \pi_q(BSO))$. Then 
\[
I(M) = \Ker(\eta_{2q}^A : \Theta_{2q} \rightarrow \Omega_{2q}(BSO\left< q, A\right>; \xi_{q,A})) .
\]
\end{thm}

So if $q=4,8$, then the combination of Theorems \ref{thm:bordism-ker} and \ref{thm:inert-ker} determines the inertia groups. Theorem \ref{thm:inert-pontr} is then obtained by rephrasing the condition of Theorem \ref{thm:bordism-ker} on $A = \Image \pi_q(\nu_M)$ in terms of a Pontryagin class of $M$ (see Section \ref{s:inertia}).

\subsection{The stable smoothing set and stable class} \label{ss:sc}

We can also apply Theorem \ref{thm:qfc} to compute the stable smoothing set and the stable class of a manifold. Fix a $2q$-manifold $M$, let $(B,\xi)$ be its normal $(q{-}1)$-type, and let $f : M \rightarrow B$ be a normal $(q{-}1)$-smoothing. 

\begin{defin}
The \emph{stable class} of $M$, denoted $\Str^{\st}(M)$, is the set of s-cobordism classes of $2q$-manifolds $N$ such that $N$ is stably diffeomorphic to $M$ and $\chi(N)=\chi(M)$ (where $\chi(M)$ denotes the Euler-characteristic of $M$). 

The \emph{stable smoothing set} of $M$, denoted $\Str^{\st}(M,f)$, consists of equivalence classes of normal $(q{-}1)$-smoothings $g : N \rightarrow B$ such that $g$ is normally bordant to $f$ and $\chi(N)=\chi(M)$. Two such normal $(q{-}1)$-smoothings, $g : N \rightarrow B$ and $g' : N' \rightarrow B$, are equivalent if there is a normal bordism $G : W \rightarrow B$ between them such that $W$ is an s-cobordism between $N$ and $N'$.
\end{defin}

Note that the normal $(q{-}1)$-smoothing $f$ is unique up to an automorphism of $(B,\xi)$ (see Section \ref{ss:nm}), so if $f' : M \rightarrow B$ is another normal $(q{-}1)$-smoothing, then $\Str^{\st}(M,f) \cong \Str^{\st}(M,f')$.

By Kreck's theorem a manifold $N$ represents an element of $\Str^{\st}(M)$ if and only if there is a normal map $g : N \rightarrow B$ that represents an element of $\Str^{\st}(M,f)$. Hence forgetting the normal map defines a surjective map $\Str^{\st}(M,f) \rightarrow \Str^{\st}(M)$. Since for a fixed $N$ the normal map $g : N \rightarrow B$ is unique up to an automorphism of $(B,\xi)$, the fibres of the forgetful map $\Str^{\st}(M,f) \rightarrow \Str^{\st}(M)$ are the orbits of the action of $\Aut(B,\xi)_{[M,f]}$ on $\Str^{\st}(M,f)$, where $\Aut(B,\xi)_{[M,f]}$ denotes the stabiliser of the normal bordism class of $f : M \rightarrow B$ in $\Aut(B,\xi)$. Therefore $\Str^{\st}(M) \cong \Str^{\st}(M,f) / \Aut(B,\xi)_{[M,f]}$. An automorphism of $(B,\xi)$ determines an automorphism of $H_q(B)$, and we will denote the image of the map $\Aut(B,\xi)_{[M,f]} \rightarrow \Aut(H_q(B))$ by $A^{(B,\xi)}_{[M,f]}$.

From now on assume that $q$ is even and $M$ is simply-connected. We will next consider extended quadratic forms over abelian groups, see Section \ref{s:eqf} for the definitions. For example, the Q-form $E_q(N,g)$ of a normal $(q{-}1)$-smoothing $g : N \rightarrow B$ is an extended quadratic form over $H_q(B)$.

\begin{defin}
Let $\UE$ be an extended quadratic form over an abelian group $Q$. We define $\SI(\UE)$ to be the set of isomorphism classes of extended quadratic forms $\UEp$ over $Q$ such that $\UE \oplus \UH_{2k} \cong \UEp \oplus \UH_{2k}$ for some $k \geq 0$.
\end{defin}

\begin{thm} \label{thm:sc-bij}
a) Taking the Q-form of a normal $(q{-}1)$-smoothing induces a well-defined surjection $E_q : \Str^{\st}(M,f) \rightarrow \SI(E_q(M,f))$. If $H_q(B)$ is free, then this map is a bijection. 

b) The group $A^{(B,\xi)}_{[M,f]}$ acts on $\SI(E_q(M,f))$, and the map $E_q$ induces a surjection $\Str^{\st}(M) \rightarrow \SI(E_q(M,f)) / A^{(B,\xi)}_{[M,f]}$. If $H_q(B)$ is free, then this map is a bijection. 
\end{thm}

Thus, when $H_q(B)$ is free, the Q-form conjecture reduces the computation of the stable smoothing set and the stable class to algebraic problems. The former only depends on the Q-form of a normal $(q{-}1)$-smoothing $f : M \rightarrow B$, for the latter we further need to know the subgroup $A^{(B,\xi)}_{[M,f]} \leq \Aut(H_q(B))$ and understand its action on $\SI(E_q(M,f))$. 
Theorem \ref{thm:sc-bij} will be proved in Section \ref{s:sss}. There we also prove Theorem \ref{thm:hyp-sss} by solving the corresponding algebraic problem.

\quad

The paper is organised as follows. Sections \ref{s:eqf}--\ref{s:l-mon} are concerned with the algebra underlying our constructions, introducing extended quadratic forms over an abelian group, the group $\RU_{\st}(\UM,L)$ and the monoid $\ell_{2q+1}(Q,v)$, respectively. Section \ref{s:nm-qf} contains preliminaries for the geometric results. In Section \ref{s:obstr-def} we define the extended surgery obstruction associated to a normal bordism, and prove some of its main properties. In Section \ref{s:obstr-comp} we give a formula for the surgery obstruction in terms of the Q-forms of $f_0$ and $f_1$. In Section \ref{s:qfc} we apply the results of the earlier sections to prove Theorem \ref{thm:qfc}. Finally, Sections \ref{s:inertia} and \ref{s:sss} contain the proofs of Theorems \ref{thm:inert-ker}, \ref{thm:inert-pontr}, \ref{thm:sc-bij} and \ref{thm:hyp-sss}. 

This paper is based on Chapter 4 of the author's thesis \cite{csn-thesis}. Here we will often omit straightforward arguments, and refer to \cite{csn-thesis} for those details. Compared to \cite{csn-thesis}, a new Section \ref{s:ru} is added, introducing the groups $\RU_{\st}(\UM,L)$, which allow us to prove Theorem \ref{thm:jacobi} (\cite[Theorem 4.2.21]{csn-thesis}) without the assumption that $Q$ is free. Sections \ref{s:l-mon} and \ref{s:obstr-comp} are updated accordingly. Section \ref{s:inertia} is based on \cite[Section 5.2.3]{csn-thesis}.

All abelian groups will be assumed to be finitely generated and all manifolds will be assumed to be smooth, compact (closed, unless indicated otherwise) and oriented.

\subsection*{Acknowledgements}

I would like to thank my PhD supervisor, Diarmuid Crowley, for his ongoing support and helpful comments on the paper. I am also grateful to Mark Powell for discussions on the groups $\RU_{\st}(\UM,L)$. I thank the anonymous referee for the many helpful suggestions. 
I was supported by the Melbourne Research Scholarship and the EPSRC New Investigator grant EP/T028335/2.

\section{Extended quadratic forms} \label{s:eqf}

In this section we define and study (symmetric) extended quadratic forms over an abelian group. 

\begin{defin}
Let $Q$ be an abelian group. An \emph{extended quadratic form over $Q$} is a triple
\[
\UM = (M, \lambda, \mu)
\]
where $M$ is an abelian group, $\lambda : M \times M \rightarrow \Z$ is a symmetric bilinear function, and $\mu : M \rightarrow Q$ is a homomorphism. 
\end{defin}

\begin{defin}
Let $\UM$ be an extended quadratic form as above. 
\begin{compactitem}
\item $\UM$ is called \emph{free}, if the abelian group $M$ is free.
\item $\UM$ is \emph{nonsingular}, if the adjoint of $\lambda$, $\ad \lambda : M \rightarrow \Hom(M, \Z) = M^*$ induces an isomorphism $M / \Tor M \cong M^*$.
\item A subgroup $V \leq M$ is called a \emph{half-rank direct summand}, if it is a direct summand in $M$ and $\rk V = \frac{1}{2} \rk M$.
\item A subgroup $L \leq M$ is a \emph{free lagrangian}, if it is a free half-rank direct summand in $M$, $\lambda \big| _{L \times L} = 0$ and $\mu \big| _L = 0$. 
\item A subgroup $L \leq M$ is a \emph{T-lagrangian}, if it is a half-rank direct summand in $M$, $\Tor M \leq L$, $\lambda \big| _{L \times L} = 0$ and $\mu \big| _L = 0$. 
\item $\UM$ is \emph{metabolic}, if it is nonsingular and there is a T-lagrangian $L \leq M$. 
\item $\UM$ is \emph{hyperbolic}, if it is free, nonsingular and there are two lagrangians $L, L' \leq M$ such that $M = L \oplus L'$ (as an internal direct sum).
\item $\UM$ is \emph{full}, if $\mu$ is surjective.
\item $\UM$ is \emph{even}, if $\lambda(x,x)$ is even for every $x \in M$.
\item $\UM$ is \emph{geometric} with respect to a homomorphism $v : Q \rightarrow \Z_2$, if for every $x \in M$ we have $\varrho_2 \circ \lambda(x,x) = v \circ \mu(x)$, where $\varrho_2 : \Z \rightarrow \Z_2$ is reduction mod $2$.
\item The \emph{rank} of $\UM$, denoted by $\rk \UM$, is the rank of the group $M$. 
\end{compactitem}
\end{defin}

If $\UM$ is free, T-lagrangians coincide with free lagrangians, and we simply call them lagrangians.

\begin{rem} \label{rem:eqf}
Baues \cite{baues94} and Ranicki \cite{ranicki01} introduced the notion of an extended quadratic form over a quadratic form parameter $P$. Using the terminology and notation from Crowley-Nagy \cite{dc-csn-eqf}, extended quadratic forms over an abelian group $A$, as defined here, are equivalent to extended quadratic forms over the quadratic form parameter $Q^+ \oplus A$. More specifically, the form $(M, \lambda, \mu)$ over $A$ corresponds to the form $(M, \lambda, (q_{\lambda},\mu))$ over $Q^+ \oplus A$, where $q_{\lambda} : M \rightarrow \Z$ is defined by $q_{\lambda}(x) = \lambda(x,x)$. 

Given a symmetric quadratic form parameter $P$, there is a canonical ``extended symmetrisation" morphism of quadratic form parameters $P \rightarrow Q^+ \oplus SP$. It induces a map that sends forms over $P$ to forms over $Q^+ \oplus SP$. By \cite[Lemma 2.29]{dc-csn-eqf} this map is a bijection between forms over $P$ and those forms over $Q^+ \oplus SP$ that correspond to geometric (with respect to $v_P : SP \rightarrow \Z_2$) forms over the abelian group $SP$. For example, ordinary quadratic forms used in classical simply-connected surgery are forms over the quadratic form parameter $Q_+$, and they are equivalent to geometric extended quadratic forms over the abelian group $SQ_+ = 0$ (with respect to $v_{Q_+} = 0 : 0 \rightarrow \Z_2$).
\end{rem}

\subsection{Preliminaries}

In this section we will collect a few lemmas which are well-known for ordinary ``symmetric forms"  (ie.\ pairs $(M, \lambda)$, where $M$ is an abelian group and $\lambda : M \times M \rightarrow \Z$ is a symmetric bilinear function), see eg.\ \cite{milnor-husemoller73}, and can be directly applied in the extended setting (see \cite[Lemmas 4.1.10--13]{csn-thesis}). Before those, we recall the following elementary lemma, which we will often use to check that a subgroup of an abelian group is a direct summand. 

\begin{lem} \label{lem:summand}
Let $A$ be an abelian group and $B \leq A$ a subgroup. The following are equivalent:
\begin{compactenum}
\item $B$ is a direct summand in $A$ and $\Tor A \leq B$.
\item $A / B$ is free.
\item For every $a \in A$ and $k \in \Z \setminus \{ 0 \}$ if $ka \in B$ then $a \in B$. \qed
\end{compactenum}
\end{lem}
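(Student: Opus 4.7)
The plan is to prove the equivalence by showing the cycle of implications $(1) \Rightarrow (2) \Rightarrow (3) \Rightarrow (1)$, using throughout the paper's standing convention that all abelian groups are finitely generated (so any torsion-free finitely generated abelian group is free).

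For $(1) \Rightarrow (2)$, I would pick a direct sum decomposition $A = B \oplus C$ and use the canonical isomorphism $A/B \cong C$. The hypothesis $\Tor A \leq B$ forces $C$ to be torsion-free, and finite generation then makes $C$ free; hence $A/B$ is free.

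For $(2) \Rightarrow (3)$, the argument is immediate: if $A/B$ is free then it is torsion-free, so $k \cdot [a] = [ka] = 0$ in $A/B$ with $k \neq 0$ forces $[a] = 0$, i.e.\ $a \in B$.

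For $(3) \Rightarrow (1)$, I would first observe that condition (3) is exactly the assertion that $A/B$ is torsion-free. By finite generation this makes $A/B$ free, so the short exact sequence $0 \to B \to A \to A/B \to 0$ splits, giving $A = B \oplus C$ with $C \cong A/B$ and exhibiting $B$ as a direct summand. To see that $\Tor A \leq B$, take any $a \in \Tor A$; then $ka = 0 \in B$ for some nonzero $k$, so by (3) we have $a \in B$. No step here is substantive — the whole lemma is a routine application of the structure theorem for finitely generated abelian groups, and the only thing to keep track of is that finite generation is being used at the one place where torsion-free is upgraded to free.
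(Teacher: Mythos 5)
Your proof is correct: the cycle $(1)\Rightarrow(2)\Rightarrow(3)\Rightarrow(1)$ goes through as you describe, and you correctly isolate the single substantive input, namely that finite generation (the paper's standing assumption) upgrades ``torsion-free'' to ``free'' so that the sequence $0 \to B \to A \to A/B \to 0$ splits. The paper itself states this lemma with no proof (it is marked as immediate), so there is nothing to compare against; your write-up is exactly the routine argument the author is implicitly invoking.
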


\begin{defin}
Let $\UM = (M, \lambda, \mu)$ be a nonsingular form, and $X \subseteq M$ a subset. Then $X^{\perp}$ denotes the subgroup $\{ y \in M \mid \lambda(x,y) = 0 \text{ for all $x \in X$} \}$.
\end{defin}

For any subset $X$, the subgroup $X^{\perp}$ contains $\Tor M$ and it is a direct summand in $M$. Moreover, if $X$ is a subgroup of $M$, then $\rk X + \rk X^{\perp} = \rk M$. Therefore, if $X$ is a subset of $M$ such that $\mu \big| _X = 0$, then $X$ is a T-lagrangian if and only if $X = X^{\perp}$.

\begin{lem} \label{lem:nonsing-summand}
Let $\UM = (M, \lambda, \mu)$ be a free nonsingular form. If $X \leq M$ is a subgroup such that $\lambda \big| _{X \times X}$ is nonsingular, then $M = X \oplus X^{\perp}$ (as an internal direct sum).
\qed
\end{lem}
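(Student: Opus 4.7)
The assertion is purely about the bilinear form $\lambda$ --- the homomorphism $\mu$ plays no role in the statement, so the proof reduces to the classical fact for symmetric bilinear forms on free abelian groups. Since $M$ is free, the subgroup $X$ is free, and the nonsingularity hypothesis on $\lambda|_{X \times X}$ means that the adjoint map
\[
\ad(\lambda|_{X \times X}) : X \longrightarrow X^* = \Hom(X, \Z)
\]
is an isomorphism (there is no torsion to quotient by).

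The plan is to verify the two defining conditions of an internal direct sum. First, to show $X \cap X^\perp = 0$, take $y \in X \cap X^\perp$. Then $\lambda(y,x) = 0$ for every $x \in X$, i.e.\ the image of $y$ under $\ad(\lambda|_{X \times X})$ is zero in $X^*$. By injectivity of the adjoint, $y = 0$.

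Second, to show $X + X^\perp = M$, fix $m \in M$ and consider the homomorphism $\varphi_m : X \to \Z$ defined by $\varphi_m(x) = \lambda(m,x)$. Since $\ad(\lambda|_{X \times X})$ is surjective onto $X^*$, there exists $x_0 \in X$ with $\lambda(x_0, x) = \varphi_m(x) = \lambda(m,x)$ for all $x \in X$. Then $m - x_0 \in X^\perp$ by construction, so $m = x_0 + (m - x_0) \in X + X^\perp$.

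Combining the two steps yields $M = X \oplus X^\perp$ as an internal direct sum. There is no real obstacle here; the only subtlety worth flagging is that freeness of $M$ is essential, as it eliminates any torsion interference in identifying $X/\Tor X$ with $X$ and in equating nonsingularity with the adjoint being an isomorphism onto $X^*$.
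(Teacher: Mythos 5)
Your proof is correct and is exactly the standard argument the paper implicitly relies on: the lemma is stated without proof as a well-known fact about symmetric bilinear forms (with reference to Milnor--Husemoller), and your two steps (injectivity of the adjoint of $\lambda\big|_{X \times X}$ gives $X \cap X^{\perp} = 0$, surjectivity gives $X + X^{\perp} = M$ via the functional $\lambda(m,\cdot)\big|_X$) are precisely that classical proof, correctly using freeness of $X \leq M$ to identify nonsingularity with the adjoint being an isomorphism onto $X^*$.
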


\begin{lem} \label{lem:perp-nonsing}
Let $\UM = (M, \lambda, \mu)$ be a free nonsingular form. Let $X, Y \leq M$ be subgroups such that $M = X \oplus Y$ (as an internal direct sum). Then 

a) $\ad \lambda$ defines an isomorphism $Y^{\perp} \cong X^*$.

b) $M = X^{\perp} \oplus Y^{\perp}$ (as an internal direct sum). 

c) And if $X \leq X^{\perp}$ (hence $X \oplus Y^{\perp}$ is a subgroup of $M$ by part b)), then $\lambda \big| _{(X \oplus Y^{\perp}) \times (X \oplus Y^{\perp})}$ is nonsingular.
\qed
\end{lem}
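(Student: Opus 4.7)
For part (a), the plan is to restrict the isomorphism $\ad \lambda : M \rightarrow M^*$ (which is an isomorphism by freeness of $M$ combined with nonsingularity) to $Y^\perp$. The splitting $M = X \oplus Y$ dualises to $M^* \cong X^* \oplus Y^*$, where $X^* \leq M^*$ is identified with the functionals vanishing on $Y$. An element $m$ lies in $Y^\perp$ exactly when $\ad \lambda(m)|_Y = 0$, i.e.\ when $\ad \lambda(m) \in X^*$. So $\ad \lambda$ restricts to an isomorphism $Y^\perp \xrightarrow{\sim} X^*$.

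For part (b), I would apply part (a) symmetrically to also obtain $\ad \lambda : X^\perp \xrightarrow{\sim} Y^*$. Then $\ad \lambda(X^\perp + Y^\perp) = Y^* + X^* = M^*$, and injectivity of $\ad \lambda$ gives $X^\perp + Y^\perp = M$. For the intersection, $X^\perp \cap Y^\perp = (X + Y)^\perp = M^\perp$, which is zero since $\ad \lambda$ is injective. Combined, this yields the internal direct sum decomposition $M = X^\perp \oplus Y^\perp$.

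For part (c), first observe that $Z = X \oplus Y^\perp$ is a genuine internal direct sum: the hypothesis $X \leq X^\perp$ combined with (b) gives $X \cap Y^\perp \leq X^\perp \cap Y^\perp = 0$. To establish nonsingularity of $\lambda|_Z$, I would use (a) to pick a basis $e_1, \ldots, e_k$ of $X$ together with its $\lambda$-dual basis $f_1, \ldots, f_k$ of $Y^\perp$, which exists because $\ad \lambda$ identifies $Y^\perp$ with $X^*$ (pick $f_j$ as the preimage of the dual basis vector $e_j^* \in X^*$). In the basis $\{e_1, \ldots, e_k, f_1, \ldots, f_k\}$ of $Z$, the Gram matrix of $\lambda|_Z$ has the block form $\bigl[\begin{smallmatrix} 0 & I_k \\ I_k & C \end{smallmatrix}\bigr]$ for some symmetric $k \times k$ integer matrix $C$, which has determinant $\pm 1$, giving the desired nonsingularity.

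The main obstacle is part (c). The key observation driving it is that (a) forces $Y^\perp$ to have the same rank as $X$ and to admit a $\lambda$-dual basis to any chosen basis of $X$; after this identification the block Gram matrix has unit determinant regardless of $C$. Parts (a) and (b), by comparison, amount to routine bookkeeping with the isomorphism $\ad \lambda : M \xrightarrow{\sim} M^*$ induced by nonsingularity.
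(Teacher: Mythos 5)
Your proposal is correct. The paper states this lemma without proof, as one of the facts that are ``well-known for ordinary symmetric forms'' (cf.\ the reference to Milnor--Husemoller), and your argument is precisely the standard one: parts a) and b) are the expected bookkeeping with the isomorphism $\ad \lambda : M \rightarrow M^*$ and the dual splitting $M^* \cong X^* \oplus Y^*$, and part c) via a basis of $X$ together with its $\lambda$-dual basis of $Y^{\perp}$, giving a Gram matrix $\bigl[ \begin{smallmatrix} 0 & I_k \\ I_k & C \end{smallmatrix} \bigr]$ of determinant $\pm 1$, correctly fills in the omitted details.
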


\begin{lem} \label{lem:lagr-dual}
Let $\UM = (M, \lambda, \mu)$ be a metabolic form. If $L \leq M$ is a T-lagrangian and $N \leq M$ is a direct complement of $L$, then $\ad \lambda$ defines an isomorphism $N \cong L^*$.
\qed
\end{lem}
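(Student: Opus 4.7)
The map under discussion is $\alpha : N \to L^* = \Hom(L, \Z)$ sending $n \in N$ to the functional $\ell \mapsto \lambda(n, \ell)$. My plan is to show that $\alpha$ is bijective, with injectivity coming from the T-lagrangian condition $L = L^\perp$ and surjectivity coming from the nonsingularity hypothesis via a lifting argument.

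For injectivity, suppose $\alpha(n) = 0$, so $\lambda(n, \ell) = 0$ for every $\ell \in L$. As noted in the preliminaries to this section, for a T-lagrangian the conditions $\mu|_L = 0$, $\lambda|_{L \times L} = 0$, and half-rank direct-summand together with $\Tor M \leq L$ force $L = L^\perp$. Hence $n \in L^\perp = L$, and since $n \in N$ and $M = L \oplus N$ is an internal direct sum, we conclude $n = 0$.

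For surjectivity, given $f \in L^*$, I use the splitting $M = L \oplus N$ to extend $f$ to $\tilde{f} \in M^* = \Hom(M, \Z)$ by setting $\tilde{f}|_N = 0$. The nonsingularity of $\UM$ states that $\ad \lambda : M \to M^*$ induces an isomorphism $M / \Tor M \cong M^*$, so $\ad \lambda$ is surjective onto $M^*$ (the quotient by $\Tor M$ is harmless because $M^*$ is torsion-free). Thus $\tilde{f} = \lambda(m, -)$ for some $m \in M$. Writing $m = \ell + n$ with $\ell \in L$, $n \in N$ and using $\lambda|_{L \times L} = 0$, evaluation at any $\ell' \in L$ gives $f(\ell') = \tilde{f}(\ell') = \lambda(\ell, \ell') + \lambda(n, \ell') = \lambda(n, \ell') = \alpha(n)(\ell')$, so $\alpha(n) = f$.

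I do not expect a serious obstacle here; the statement is the natural extended-form analogue of the classical fact recorded in Lemma \ref{lem:perp-nonsing}(a), and the proof carries over essentially verbatim. The only mild subtlety is recognising that although $\ad \lambda$ itself has kernel $\Tor M$ (and so is not injective in general), its image is all of $M^*$, which is what the lifting step actually needs. The condition $\mu|_L = 0$ from the definition of a T-lagrangian plays no role in the argument — only the vanishing $\lambda|_{L\times L} = 0$ and the direct-summand property are used.
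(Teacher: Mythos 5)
Your proof is correct: injectivity via $L = L^{\perp}$ (which holds for a T-lagrangian in a nonsingular form, as noted in the paper's preliminaries) and surjectivity via extending $f$ by zero on $N$ and lifting through the surjection $\ad\lambda : M \rightarrow M^*$ is exactly the standard argument. The paper states this lemma without proof as a well-known fact carried over from ordinary symmetric forms, and your verification is the intended one; your closing observation that $\mu\big|_L = 0$ is not used is also accurate.
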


\begin{lem} \label{lem:met-basis}
Let $\UM = (M, \lambda, \mu)$ be a free metabolic form of rank $2k$, and let $L \leq M$ be a lagrangian. Then $M$ has a basis $e_1, e_2, \ldots , e_k, f_1, f_2, \ldots , f_k$ such that $L = \left< e_1, e_2, \ldots , e_k \right>$, and in this basis $\lambda$ is given by the block matrix
\[
\begin{bmatrix}
0 & I_k \\
I_k & D
\end{bmatrix}
\]
where $I_k$ is the $k \times k$ identity matrix and $D$ is a diagonal matrix such that each of its entries is either $0$ or $1$.
\qed
\end{lem}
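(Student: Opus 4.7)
The plan is to first use Lemma \ref{lem:lagr-dual} to produce a partial basis of $M$ that is dual to a chosen basis of $L$, and then to correct it by a unipotent change of variable so that the diagonal block takes the desired canonical form.

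First, I pick any basis $e_1, \ldots, e_k$ of $L$ and a direct complement $N \leq M$ of $L$ (which exists since $L$ is a direct summand by definition of a lagrangian). Because $\UM$ is free and metabolic and $L$ is a lagrangian, Lemma \ref{lem:lagr-dual} applies, giving an isomorphism $N \to L^*$ sending $y \mapsto \lambda(-,y)$. Pulling back the basis of $L^*$ dual to $e_1, \ldots, e_k$ yields elements $g_1, \ldots, g_k \in N$ forming a basis of $N$ with $\lambda(e_i,g_j)=\delta_{ij}$. Together with $e_1, \ldots, e_k$, these form a basis of $M$, and in it the matrix of $\lambda$ is
\[
\begin{bmatrix} 0 & I_k \\ I_k & C \end{bmatrix},
\]
where $C = (\lambda(g_i,g_j))$ is a symmetric integer matrix.

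Next, I replace $g_i$ by $f_i = g_i + \sum_j a_{ij} e_j$ for an integer matrix $A = (a_{ij})$. Since $\lambda|_{L \times L} = 0$, a short computation gives $\lambda(e_i,f_j) = \delta_{ij}$ and $\lambda(f_i,f_j) = C_{ij} + a_{ij} + a_{ji}$; the change-of-basis matrix relating $(e_i,g_i)$ and $(e_i,f_i)$ is unipotent, hence in $GL_{2k}(\Z)$, so $(e_1,\ldots,e_k,f_1,\ldots,f_k)$ is still a basis of $M$ with $L = \langle e_1,\ldots,e_k\rangle$. The matrix of $\lambda$ in the new basis is then
\[
\begin{bmatrix} 0 & I_k \\ I_k & C + A + A^T \end{bmatrix}.
\]
It remains to choose $A$ so that $D := C + A + A^T$ is diagonal with entries in $\{0,1\}$. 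For the off-diagonal entries this is easy: taking $a_{ij} = -C_{ij}$ for $i<j$ and $a_{ij}=0$ for $i\ge j$ makes $(A+A^T)_{ij} = -C_{ij}$ for $i \neq j$ while leaving the diagonal of $C$ unchanged. Then adding a further diagonal matrix $A'$ with $a'_{ii} = -\lfloor C_{ii}/2 \rfloor$ replaces each diagonal entry $C_{ii}$ by its residue mod $2$, producing the required $D$.

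There is no real obstacle here; the only things that require any verification are the compatibility of the unipotent substitution with keeping $(e_i,f_i)$ a basis and with fixing the $(L,N)$ block as $I_k$, both of which follow immediately from $\lambda|_{L \times L}=0$. The essential input is Lemma \ref{lem:lagr-dual}, which provides the dual basis and thereby reduces the problem to the standard fact that over $\Z$ any symmetric matrix can be normalized modulo the subgroup $\{A+A^T\}$ to a $\{0,1\}$-diagonal matrix.
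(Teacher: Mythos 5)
Your proof is correct. The paper states this lemma with \qed and gives no argument (the surrounding text explains that such facts about symmetric bilinear forms are well known), so there is no paper proof to compare against; your route is the standard one, and every step checks out: Lemma~\ref{lem:lagr-dual} supplies the dual basis $g_1,\ldots,g_k$ of a complement $N$, the substitution $f_i = g_i + \sum_j a_{ij}e_j$ is unipotent and changes the lower-right block by $A + A^T$ while fixing $L$ and the $I_k$ blocks, and your choice of $A$ (strictly upper-triangular part $-C_{ij}$, diagonal part $-\lfloor C_{ii}/2\rfloor$) reduces $C$ to a $\{0,1\}$ diagonal. It is worth noting that the same normalization is later carried out recursively in the proof of Theorem~\ref{thm:fund-met}~a), where elements $\bar f_i$ are built from $f_i$ by subtracting integer multiples of the $e_j$'s; your closed-form matrix version is an equivalent, non-recursive formulation of that argument.
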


\subsection{Basic constructions} \label{ss:constr}

\begin{defin}
Let $Q$ and $N$ be abelian groups, let $\UM = (M, \lambda, \mu)$ be an extended quadratic form over $Q$, and let $h : N \rightarrow M$ be a homomorphism. The \emph{pullback} of $\UM$ by $h$ is 
\[
h^*(\UM) = (N, h^*\lambda, h^*\mu)
\] 
where $h^*\lambda(x,y) = \lambda(h(x),h(y))$ and $h^*\mu = \mu \circ h$, this is also an extended quadratic form over $Q$. 
\end{defin}

\begin{defin} \label{def:eqf-mor}
Let $\UM$ and $\UN$ be extended quadratic forms over some abelian group $Q$. A \emph{morphism} $h : \UN \rightarrow \UM$ of extended quadratic forms is a homomorphism $h : N \rightarrow M$ such that $\UN = h^*(\UM)$. 
\end{defin}

If $\UM = (M, \lambda_M, \mu_M)$ and $\UN = (N, \lambda_N, \mu_N)$, then $\UN = h^*(\UM)$ if and only if $\lambda_M(h(x),h(y)) = \lambda_N(x,y)$ and $\mu_M(h(x)) = \mu_N(x)$ for every $x,y \in N$. 

Composition of morphisms can be defined in the obvious way, and extended quadratic forms over a fixed group $Q$ together with the above defined morphisms form a category. In this category a morphism $h : \UN \rightarrow \UM$ is an isomorphism if and only if the underlying homomorphism $h : N \rightarrow M$ is an isomorphism.

\begin{defin}
The \emph{standard rank-$2k$ hyperbolic form} is $\UH_{2k} = (\Z^{2k}, \bigl[ \begin{smallmatrix} 0 & I_k \\ I_k & 0 \end{smallmatrix} \bigr], 0)$. 
\end{defin}

Since the third component of $\UH_{2k}$ is zero, it can be regarded as an extended quadratic form over any abelian group $Q$.

\begin{lem} \label{lem:hyp-equiv}
Let $\UM = (M, \lambda, \mu)$ be an extended quadratic form over some abelian group $Q$. The following are equivalent: 
\begin{compactenum}
\item \label{el:he1} $\UM$ is hyperbolic. 
\item \label{el:he2} $\UM$ is free, metabolic, even and $\mu = 0$.
\item \label{el:he3} $\UM$ is isomorphic to $\UH_{2k}$, where $2k = \rk \UM$. 
\end{compactenum}
\end{lem}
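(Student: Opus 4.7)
The plan is to prove the cyclic chain of implications $(3) \Rightarrow (1) \Rightarrow (2) \Rightarrow (3)$; all three implications should be short, with the work concentrated in the last step where we invoke the already-stated basis lemma.

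For $(3) \Rightarrow (1)$, I would just observe that in $\UH_{2k} = (\Z^{2k}, \bigl[ \begin{smallmatrix} 0 & I_k \\ I_k & 0 \end{smallmatrix} \bigr], 0)$ the subgroups spanned by the first $k$ and the last $k$ standard basis vectors are complementary lagrangians (the form $\lambda$ vanishes on each of them, $\mu=0$ trivially vanishes, and each is a direct summand of half rank), and nonsingularity is visible from the block matrix. These properties transfer under isomorphism.

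For $(1) \Rightarrow (2)$, suppose $M = L \oplus L'$ with $L, L'$ lagrangians. Freeness and nonsingularity are part of the definition of hyperbolic, and since $\UM$ is free any free lagrangian is a T-lagrangian, so $\UM$ is metabolic. For evenness, write $x = l + l'$ with $l \in L$, $l' \in L'$; then $\lambda(x,x) = \lambda(l,l) + 2\lambda(l,l') + \lambda(l',l') = 2\lambda(l,l')$, which is even. For $\mu = 0$, the same decomposition gives $\mu(x) = \mu(l) + \mu(l') = 0$.

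For $(2) \Rightarrow (3)$, the key input is Lemma \ref{lem:met-basis}: since $\UM$ is free and metabolic, picking any (T-)lagrangian $L$ yields a basis $e_1, \ldots, e_k, f_1, \ldots, f_k$ of $M$ with $L = \langle e_1, \ldots, e_k \rangle$ in which $\lambda$ has matrix $\bigl[ \begin{smallmatrix} 0 & I_k \\ I_k & D \end{smallmatrix} \bigr]$ for some diagonal $D$ with entries in $\{0,1\}$. Evenness forces $\lambda(f_j, f_j) = D_{jj}$ to be even, so $D = 0$. Then the linear isomorphism $M \to \Z^{2k}$ sending $e_i, f_i$ to the standard basis pulls $\lambda$ back to $\bigl[ \begin{smallmatrix} 0 & I_k \\ I_k & 0 \end{smallmatrix} \bigr]$ and, since $\mu = 0$ on both sides, is a morphism, hence an isomorphism, of extended quadratic forms $\UM \cong \UH_{2k}$. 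The only mildly non-routine point is noting that in the free case T-lagrangians and free lagrangians coincide, so Lemma \ref{lem:met-basis} applies directly; aside from that the argument is entirely mechanical.
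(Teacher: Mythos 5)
Your proof is correct and follows the same route as the paper: the same cyclic chain of implications $(1)\Rightarrow(2)\Rightarrow(3)\Rightarrow(1)$, with the $(2)\Rightarrow(3)$ step resting on Lemma \ref{lem:met-basis} and evenness killing the diagonal block $D$. You have simply written out the routine verifications that the paper leaves implicit.
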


\begin{proof}
$\eqref{el:he1} \Rightarrow \eqref{el:he2} \Rightarrow \eqref{el:he3} \Rightarrow \eqref{el:he1}$. The $\eqref{el:he2} \Rightarrow \eqref{el:he3}$ step follows from Lemma \ref{lem:met-basis}.
\end{proof}

\begin{rem}
If $Q=0$, then automatically $\mu=0$, and conversely if $\mu=0$, then $Q$ can be replaced with $0$. Further, an extended quadratic form $\UM$ over $0$ is even if and only if it is geometric (ie.\ it corresponds to an ordinary quadratic form). Thus Lemma \ref{lem:hyp-equiv} adapts the classical result that an ordinary quadratic form is hyperbolic if and only if it is metabolic (see eg.\ \cite[Theorem 11.51]{ranicki02}) to the setting of extended quadratic forms.
\end{rem}

\begin{defin}
Let $\UM = (M, \lambda_M, \mu_M)$ and $\UN = (N, \lambda_N, \mu_N)$ be extended quadratic forms over some abelian group $Q$. Their \emph{direct sum} is 
\[
\UM \oplus \UN = (M \oplus N, \lambda_M \oplus \lambda_N, \mu_M \oplus \mu_N)
\] 
where $(\lambda_M \oplus \lambda_N)((x_1,y_1),(x_2,y_2)) = \lambda_M(x_1,x_2) + \lambda_N(y_1,y_2)$ and $(\mu_M \oplus \mu_N)(x,y) = \mu_M(x) + \mu_N(y)$, this is also an extended quadratic form over $Q$.  
\end{defin}

\begin{defin}
Let $\UM = (M, \lambda, \mu)$ be an extended quadratic form over some abelian group $Q$. We define
\[
\begin{aligned}
-\UM &= (M, -\lambda, -\mu) \\
\UMs &= (M, \lambda, -\mu)
\end{aligned}
\] 
where $(-\lambda)(x,y) = -\lambda(x,y)$ and $(-\mu)(x) = -\mu(x)$.  
\end{defin}

If $\UM$ and $\UN$ are free/nonsingular/metabolic/hyperbolic/even/geometric (with respect to some homomorphism), then the same property also holds for $\UM \oplus \UN$, $-\UM$ and $\UMs$. If $\UM$ is full, then $\UM \oplus \UN$, $-\UM$ and $\UMs$ are full too.

\begin{defin}
Let $I : A \rightarrow B$ be an isomorphism of abelian groups. We define the \emph{diagonal} and \emph{anti-diagonal with respect to $I$}:  
\[
\begin{aligned}
\Delta_I &= \{ (x,I(x)) \mid x \in A \} \leq A \oplus B \\
\Delta^*_I &= \{ (x,-I(x)) \mid x \in A \} \leq A \oplus B
\end{aligned}
\] 
\end{defin}

\begin{lem} \label{lem:diag-lagr}
Let $\UM = (M, \lambda_M, \mu_M)$ and $\UN = (N, \lambda_N, \mu_N)$ be free extended quadratic forms over some abelian group $Q$ and let $I : \UM \rightarrow \UN$ be an isomorphism. Then 

a) $\Delta_I$ is a lagrangian in $\UM \oplus (-\UN)$.

b) $\Delta^*_I$ is a lagrangian in $\UM \oplus (-\UNs)$.
\qed
\end{lem}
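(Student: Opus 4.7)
The plan is to verify the four conditions in the definition of a lagrangian directly: freeness, being a half-rank direct summand, vanishing of the intersection form, and vanishing of $\mu$. The statement is essentially a formal unwinding of definitions, and the sign conventions in $-\UN$ and $-\UNs$ are precisely what is needed for the relevant cross terms to cancel.

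First I would record the content of the hypothesis. Since $I : \UM \to \UN$ is a morphism of extended quadratic forms, Definition \ref{def:eqf-mor} gives $\lambda_N(I(x),I(y)) = \lambda_M(x,y)$ and $\mu_N(I(x)) = \mu_M(x)$ for all $x,y \in M$; moreover $I$ is an isomorphism of the underlying abelian groups, so $\rk M = \rk N$. Both $\Delta_I$ and $\Delta^*_I$ are isomorphic to $M$ via projection onto the first factor, so each is free of rank $\rk M = \tfrac12 \rk(M \oplus N)$. For the direct summand property I would invoke Lemma \ref{lem:summand}: the homomorphism $\varphi : M \oplus N \to N$ defined by $\varphi(x,y) = y - I(x)$ (resp.\ $y + I(x)$) is surjective with kernel $\Delta_I$ (resp.\ $\Delta^*_I$), so $(M \oplus N)/\Delta_I \cong N$ (resp.\ $(M \oplus N)/\Delta^*_I \cong N$) is free.

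For part a), the form $\UM \oplus (-\UN)$ has intersection form $\lambda_M \oplus (-\lambda_N)$ and $\mu$-map $\mu_M \oplus (-\mu_N)$. On a pair of generators $(x,I(x)), (y,I(y)) \in \Delta_I$ the intersection form evaluates to $\lambda_M(x,y) - \lambda_N(I(x),I(y)) = 0$ and $\mu$ evaluates to $\mu_M(x) - \mu_N(I(x)) = 0$, both by the morphism property.

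For part b), I would note that in $\UM \oplus (-\UNs)$ the two sign flips (the star and the minus sign) applied to $\mu_N$ cancel, so the form has intersection pairing $\lambda_M \oplus (-\lambda_N)$ and $\mu$-map $\mu_M \oplus \mu_N$. On $(x,-I(x)), (y,-I(y)) \in \Delta^*_I$ the intersection form gives $\lambda_M(x,y) - \lambda_N(-I(x),-I(y)) = \lambda_M(x,y) - \lambda_N(I(x),I(y)) = 0$ by bilinearity, and $\mu$ gives $\mu_M(x) + \mu_N(-I(x)) = \mu_M(x) - \mu_N(I(x)) = 0$. There is no genuine obstacle in this proof; the only thing to be careful about is tracking which of the operations $-(\cdot)$ and $(\cdot)^*$ negates $\lambda$ versus $\mu$, so that the sign cancellations in parts a) and b) are applied correctly.
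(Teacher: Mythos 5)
Your proof is correct and is exactly the straightforward verification the paper has in mind (the paper omits the details, deferring to \cite[Lemma 4.1.22]{thesis-8m}): identify $\Delta_I$ and $\Delta^*_I$ with $M$ for freeness and half-rank, use the quotient-is-free criterion of Lemma \ref{lem:summand} for the direct summand property, and check the vanishing of $\lambda$ and $\mu$ from the morphism identities, with the correct observation that $-\UNs = (N,-\lambda_N,\mu_N)$ in part b). No issues.
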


Verifying that $\Delta_I$ and $\Delta^*_I$ have the required properties is straightforward; or see \cite[Lemma 4.1.22]{csn-thesis}. See also \cite[Lemmas 4.1.23-25]{csn-thesis} for more detailed proofs of Lemmas \ref{lem:met-star-isom}-\ref{lem:met-isom} below.

\begin{lem} \label{lem:met-star-isom}
Let $\UM = (M, \lambda, \mu)$ be an extended quadratic form. The homomorphism $-\id : \UM \rightarrow \UMs$ is an isomorphism of extended quadratic forms. 
\qed
\end{lem}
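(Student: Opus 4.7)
The plan is to verify directly from Definition \ref{def:eqf-mor} that the homomorphism $-\id : M \to M$ satisfies $\UM = (-\id)^*(\UMs)$, which will show it is a morphism of extended quadratic forms; since the underlying group homomorphism is already an isomorphism of abelian groups, this will automatically be an isomorphism in the category of extended quadratic forms. So the entire task reduces to checking two pointwise identities.

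First I would unpack what $(-\id)^*(\UMs)$ is. By definition, $\UMs = (M, \lambda, -\mu)$, and the pullback by $-\id$ has underlying group $M$, bilinear form $(x,y) \mapsto \lambda(-x,-y)$, and homomorphism $x \mapsto (-\mu)(-x) = -\mu(-x)$. I need to show these equal $\lambda$ and $\mu$ respectively.

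For the bilinear form, bilinearity of $\lambda$ gives $\lambda(-x,-y) = (-1)(-1)\lambda(x,y) = \lambda(x,y)$. For the homomorphism component, since $\mu$ is a group homomorphism we have $\mu(-x) = -\mu(x)$, so $-\mu(-x) = -(-\mu(x)) = \mu(x)$. Both conditions hold, so $-\id$ is a morphism $\UM \to \UMs$. Because $-\id : M \to M$ is a bijection (indeed its own inverse), the concluding remark after Definition \ref{def:eqf-mor} shows it is an isomorphism of extended quadratic forms. There is no genuine obstacle here; the statement is essentially a bookkeeping observation that will be used later to relate $-\UM$ and $\UMs$ via the composition $(-\id)$ with the identity-on-underlying-group isomorphism $\UMs \to -(-\UMs) = \UM$-flavored manipulations.
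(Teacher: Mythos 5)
Your verification is correct and is exactly the elementary check the paper leaves implicit (it cites the thesis for details): pulling back $\UMs = (M,\lambda,-\mu)$ by $-\id$ gives $\lambda(-x,-y)=\lambda(x,y)$ and $-\mu(-x)=\mu(x)$, so $(-\id)^*(\UMs)=\UM$, and bijectivity of $-\id$ makes it an isomorphism by the remark following Definition~\ref{def:eqf-mor}. The final sentence about composing with an identity-on-underlying-group map $\UMs \to -(-\UMs)$ is garbled and unnecessary, but it is editorial speculation rather than part of the argument, so the proof itself stands.
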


\begin{lem} \label{lem:met-neg-isom}
Let $\UM = (M, \lambda, \mu)$ be a free metabolic form, and let $L \leq M$ be a lagrangian. There is an isomorphism $J : \UM \rightarrow -\UM$ such that $J \big| _L = \id_L$. 
\end{lem}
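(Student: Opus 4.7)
The plan is to construct $J$ explicitly in a suitably chosen basis of $M$. Since $\UM$ is a free metabolic form with lagrangian $L$, Lemma \ref{lem:met-basis} provides a basis $e_1, \dots, e_k, f_1, \dots, f_k$ of $M$ with $L = \langle e_1, \dots, e_k \rangle$ in which $\lambda$ has matrix $\bigl[ \begin{smallmatrix} 0 & I_k \\ I_k & D \end{smallmatrix} \bigr]$, where $D = \mathrm{diag}(d_1, \dots, d_k)$ with $d_i \in \{0,1\}$.

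Since $J$ must restrict to the identity on $L$, I am forced to set $J(e_i) = e_i$. So the construction reduces to choosing values $J(f_j) \in M$ that make $J$ an isomorphism $\UM \to -\UM$. I propose the formula
\[
J(f_j) = d_j e_j - f_j, \qquad j = 1, \dots, k.
\]
The matrix of $J$ in the above basis is $\bigl[ \begin{smallmatrix} I_k & D \\ 0 & -I_k \end{smallmatrix} \bigr]$, which has determinant $\pm 1$, so $J$ is a group automorphism of $M$.

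It then remains to verify the two identities $\lambda(J(x), J(y)) = -\lambda(x, y)$ and $\mu(J(x)) = -\mu(x)$ on basis elements. For $\lambda$, the cases $\lambda(J(e_i), J(e_j))$ and $\lambda(J(e_i), J(f_j))$ are immediate from $J(e_i) = e_i$ and the matrix of $\lambda$, while an elementary expansion gives $\lambda(J(f_i), J(f_j)) = -d_i\delta_{ij} = -\lambda(f_i, f_j)$, with the $d_i d_j$ contribution vanishing because $\lambda(e_i, e_j) = 0$, and the two cross-terms combining with $\lambda(f_i, f_j)$ to leave a single $-d_i \delta_{ij}$. For $\mu$, note that $e_i \in L$ gives $\mu(e_i) = 0$, so $\mu(J(e_i)) = 0 = -\mu(e_i)$ and $\mu(J(f_j)) = d_j\mu(e_j) - \mu(f_j) = -\mu(f_j)$.

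There is no real obstacle here once the basis from Lemma \ref{lem:met-basis} is in hand; the only subtlety is that the diagonal part $D$ need not vanish (so the form need not be hyperbolic), which is why the ``correction term'' $d_j e_j$ is needed in the definition of $J(f_j)$ rather than the naive choice $J(f_j) = -f_j$. This correction lies in $L$, which ensures $J|_L = \id_L$ is preserved while the $\lambda(f_i, f_i) = d_i$ contribution gets the correct sign flip.
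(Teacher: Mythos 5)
Your construction is exactly the one in the paper: take the basis from Lemma \ref{lem:met-basis}, set $J(e_i)=e_i$ and $J(f_i)=d_ie_i-f_i$, and check the identities (the paper leaves the verification implicit, which you carry out correctly). Correct, same approach.
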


\begin{proof}
Let $e_1, e_2, \ldots , e_k, f_1, f_2, \ldots , f_k$ be a basis of $M$ given by Lemma \ref{lem:met-basis}, and let $d_i = \lambda(f_i,f_i)$ (so $d_1, d_2, \ldots , d_k$ are the diagonal entries of $D$). Let $f'_i = d_ie_i - f_i$, and define $J : M \rightarrow M$ by setting $J(e_i)=e_i$ and $J(f_i)=f'_i$. 
\end{proof}

\begin{lem} \label{lem:met-isom}
Let $\UM = (M, \lambda, \mu)$ be a free metabolic form of rank $2k$, and let $L \leq M$ be a lagrangian. There is an isomorphism $I : \UM \oplus \UM \rightarrow \UM \oplus \UH_{2k}$ such that $I(L \oplus L) = L \oplus (\{ 0 \} \times \Z^k)$. 
\end{lem}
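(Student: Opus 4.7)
The plan is to write down an explicit isomorphism on a suitable basis, taking advantage of the normal form for $\UM$ provided by Lemma \ref{lem:met-basis}. First I would fix a basis $e_1, \ldots, e_k, f_1, \ldots, f_k$ of $M$ with $L = \langle e_1, \ldots, e_k\rangle$ in which $\lambda$ is given by the block matrix $\bigl[\begin{smallmatrix} 0 & I_k \\ I_k & D\end{smallmatrix}\bigr]$ with $D = \mathrm{diag}(d_1, \ldots, d_k)$, $d_i \in \{0,1\}$, and write $\UH_{2k} = (\Z^k \oplus \Z^k, \lambda_H, 0)$ with standard basis $a_1, \ldots, a_k, b_1, \ldots, b_k$ so that $\lambda_H(a_i,b_j)=\delta_{ij}$ and $\lambda_H(a_i,a_j)=\lambda_H(b_i,b_j)=0$.

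Next I would define $I : M \oplus M \to M \oplus \Z^{2k}$ on the source basis $(e_i,0), (f_i,0), (0,e_i), (0,f_i)$ by
\[
I(e_i, 0) = e_i - a_i, \quad I(f_i, 0) = f_i - d_i a_i, \quad I(0, e_i) = a_i, \quad I(0, f_i) = f_i + b_i.
\]
The change-of-basis matrix, written in $k \times k$ blocks with rows/columns reordered as $(e,a,f,b)$ resp.\ $((e,0), (0,e), (f,0), (0,f))$, is block upper triangular with $I_k$'s on the diagonal, so $I$ is an isomorphism of abelian groups. The choice of $-a_i$ in the image of $(e_i,0)$ is forced by the requirement $\lambda(I(e_i,0), I(0,f_j)) = 0$, and the correction $-d_ia_i$ in the image of $(f_i,0)$ is then forced by $\lambda(I(f_i,0), I(0,f_j))=0$; the term $f_i$ inside $I(0, f_i)$ is needed to match $\mu(f_i)$ on the $(0,f_i)$ side, since $\mu$ vanishes on the $\UH_{2k}$ summand.

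Then I would verify: (i) $\mu$ is preserved, which is immediate from $\mu(a_i) = \mu(b_i) = 0$ and $\mu(e_i)=0$; (ii) the bilinear form is preserved, which reduces to a finite list of pairings all of the form $\lambda_M(e_i,e_j), \lambda_M(e_i,f_j), \lambda_M(f_i,f_j)$ combined with $\lambda_H$-pairings, and the cross-cancellations (notably $\lambda(e_i, f_j) - \lambda(a_i, b_j) = 0$ and $\lambda(f_i,f_j) - d_i \lambda(a_i, b_j) - d_j\lambda(b_i,a_j) = -d_i\delta_{ij}$ etc.) work out; (iii) $I(L \oplus L) = \langle e_i - a_i\rangle \oplus \langle a_j\rangle = \langle e_i, a_j\rangle = L \oplus (\{0\} \times \Z^k)$, as required.

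There is no genuine obstacle: the proof is essentially bookkeeping, and the main design decision is to arrange the correction terms so that the $\mu$-discrepancy carried by the $f_i$'s is absorbed into a pure change of basis over $\Z$. The $a_i$ and $d_i a_i$ corrections are precisely what is needed to decouple the $M$ and $\UH_{2k}$ summands on the nose while keeping the lagrangian $L \oplus L$ inside $L \oplus \langle a_1, \ldots, a_k\rangle$.
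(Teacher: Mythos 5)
Your proposal is correct and takes essentially the same route as the paper: fix the normal-form basis from Lemma \ref{lem:met-basis} and write down an explicit isomorphism on basis elements (the paper's formulas are $I(e_i)=e_i+b_i$, $I(f_i)=f_i+d_ib_i$, $I(\bar{e}_i)=-b_i$, $I(\bar{f}_i)=f_i-a_i$, which is your map up to exchanging the roles of the two hyperbolic lagrangians and some signs), and all your pairings do check out. One small slip: with the usual convention that $a_1,\ldots,a_k$ span $\Z^k\times\{0\}$, your image $\left< e_i, a_j \right>$ is $L\oplus(\Z^k\times\{0\})$ rather than $L\oplus(\{0\}\times\Z^k)$; this is purely cosmetic and is fixed by swapping the labels $a_i\leftrightarrow b_i$ in your formulas, or by postcomposing with $\id_M\oplus\Sigma$ where $\Sigma$ is the flip automorphism of $\UH_{2k}$.
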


\begin{proof}
Let $e_1, e_2, \ldots , e_k, f_1, f_2, \ldots , f_k$ be a basis of $M$ given by Lemma \ref{lem:met-basis}, and let $d_i = \lambda(f_i,f_i)$. We will also use the notation $e_i$, $f_i$ for the basis of the first component in $M \oplus M$ and $M \oplus \Z^{2k}$, and the basis of the second component of $M \oplus M$ will be denoted by $\bar{e}_i$ and $\bar{f}_i$. Let $a_1, a_2, \ldots , a_k, b_1, b_2, \ldots , b_k$ denote the standard basis of $\Z^{2k}$. With this notation let $I(e_i)=e_i+b_i$, $I(f_i)=f_i+d_ib_i$, $I(\bar{e}_i)=-b_i$ and $I(\bar{f}_i)=f_i-a_i$.
\end{proof}

\subsection{Full geometric metabolic forms}

In this section we prove Theorem \ref{thm:fund-met}, which states that free full geometric metabolic forms are unique up to stable isomorphism. We will need two lemmas.

\begin{defin}
Let $\UM = (M, \lambda, \mu)$ be a metabolic form over an abelian group $Q$. For a T-lagrangian $L \leq M$ we define the homomorphism $\hat{\mu}_L : L^* \rightarrow Q$ as follows.
\[
\xymatrix{
L^* & M^* \ar[l] & M \ar[l]_-{\ad \lambda} \ar[r]^-{\mu} \ar[d] & Q \\ 
 & & M/L \ar[ull]^-{\cong} \ar[ur] & 
}
\]
The composition $M \rightarrow L^*$ of $\ad \lambda$ and the dual of the inclusion $L \rightarrow M$ has kernel $L$, hence induces an isomorphism $M/L \cong L^*$. Since $\mu \big| _L = 0$, $\mu$ induces a map $M/L \rightarrow Q$. The composition of the inverse of the isomorphism $M/L \cong L^*$ and the map $M/L \rightarrow Q$ will be denoted by $\hat{\mu}_L$.
\end{defin}

\begin{lem} \label{lem:isom-extend}
Let $Q$ be an abelian group and $v : Q \rightarrow \Z_2$ a homomorphism. Let $\UM = (M, \lambda, \mu)$ and $\UMp = (M', \lambda', \mu')$ be extended quadratic forms over $Q$. Suppose that they are metabolic, free and geometric (with respect to $v$), and let $L \leq M$ and $L' \leq M'$ be lagrangians. Suppose that $I_0 : L \rightarrow L'$ is an isomorphism of abelian groups. Then there is an isomorphism $I : \UM \rightarrow \UMp$ such that $I \big| _L = I_0$ if and only if $\hat{\mu}'_{L'} = \hat{\mu}_L \circ I_0^* : (L')^* \rightarrow Q$.
\end{lem}

\begin{proof}
If an isomorphism $I$ exists, it induces a commutative diagram 
\[
\xymatrix@=10pt{
L^* & & M^* \ar[ll] & & M \ar[ll]_-{\ad \lambda} \ar[rr]^-{\mu} \ar[dl] \ar[dd]^(.6){I} |!{[dl];[rr]}\hole & & Q \ar@{=}[dd] \\ 
 & & & M/L \ar[ulll]^(.65){\cong} \ar[urrr] \ar[dd] & & & \\
(L')^* \ar[uu]^(.4){I_0^*} & & (M')^* \ar[ll] \ar[uu]^(.4){I^*} |!{[ur];[uull]}\hole & & M' \ar[ll]_(.25){\ad \lambda'} |!{[ul];[dl]}\hole \ar[rr]^-{\mu'} \ar[dl] & & Q \\ 
 & & & M'/L' \ar[ulll]^-{\cong} \ar[urrr] & & & 
}
\]
hence $\hat{\mu}'_{L'} = \hat{\mu}_L \circ I_0^*$.

For the other direction suppose that $\hat{\mu}'_{L'} = \hat{\mu}_L \circ I_0^* : (L')^* \rightarrow Q$. Since $L \leq M$ and $L' \leq M'$ are direct summands, they have direct complements $N \leq M$ and $N' \leq M'$. By Lemma \ref{lem:lagr-dual} $N \cong L^*$, and similarly $N' \cong (L')^*$. Hence $I_0^*$ can be regarded as an isomorphism $I_0^* : N' \rightarrow N$, and the condition means that $\mu' \big| _{N'} = \mu \big| _N \circ I_0^* : N' \rightarrow Q$.

Let $r = \rk N (= \rk N' = \rk L = \rk L')$. Let $f'_1, f'_2, \ldots , f'_r$ be a basis of $N'$, and let $f_i = I_0^*(f'_i)$. The basis $f_1, f_2, \ldots , f_r$ of $N \cong L^*$ determines a dual basis $e_1, e_2, \ldots , e_r$ of $L$ (satisfying $\lambda(e_i,f_j)=0$ if $i \neq j$ and $\lambda(e_i,f_i)=1$). Similarly, $L'$ has a basis $e'_1, e'_2, \ldots , e'_r$ which is dual to the basis $f'_1, f'_2, \ldots , f'_r$ of $N'$. 

The isomorphism $N \cong L^*$ is given by the composition $N \rightarrow M \rightarrow M^* \rightarrow L^*$, hence $f_j \in N$ corresponds to $\ad \lambda(f_j) \big| _L \in L^*$. So we have $\lambda'(f'_j,I_0(e_i)) = (\ad \lambda'(f'_j) \big| _{L'})(I_0(e_i)) = (I_0^*(\ad \lambda'(f'_j) \big| _{L'}))(e_i) = (\ad \lambda(f_j) \big| _L)(e_i) = \lambda(f_j,e_i) = \lambda'(f'_j,e'_i)$ for every $i,j$ (using the definition of $f_j$ and the dual bases). Since $N' \cong (L')^*$, this implies that $I_0(e_i)=e'_i$ for every $i$.

Next we define the elements $\bar{f}_1, \bar{f}_2, \ldots , \bar{f}_r \in M$ recursively. Let $\bar{f}_1 = f_1$, and let $\bar{f}_{i+1} = f_{i+1} - \sum_{j=1}^i \lambda(\bar{f}_j,f_{i+1})e_j - \left\lfloor \frac{\lambda(f_{i+1},f_{i+1})}{2} \right\rfloor e_{i+1}$. By induction we see that $e_1, e_2, \ldots , e_r, \bar{f}_1, \bar{f}_2, \ldots , \bar{f}_i$ is a basis of $L \oplus \left< f_1, f_2, \ldots , f_i \right>$, therefore $e_1, e_2, \ldots , e_r, \bar{f}_1, \allowbreak \bar{f}_2, \ldots , \bar{f}_r$ is a basis of $M$. By construction $\lambda(e_i,\bar{f}_j)=\lambda(e_i,f_j)$ for every $i,j$, $\lambda(\bar{f}_i,\bar{f}_j)=0$ if $i \neq j$, and $\lambda(\bar{f}_i,\bar{f}_i)=0$ if $\lambda(f_i,f_i)$ is even and $\lambda(\bar{f}_i,\bar{f}_i)=1$ if $\lambda(f_i,f_i)$ is odd. Furthermore $\mu(\bar{f}_i)=\mu(f_i)$. We define the elements $\bar{f}'_1, \bar{f}'_2, \ldots , \bar{f}'_r \in M'$ analogously. 

Let $I : M \rightarrow M'$ be the homomorphism given by $I(e_i) = e'_i$ and $I(\bar{f}_i) = \bar{f}'_i$. This is an isomorphism of abelian groups, and $I \big| _L = I_0$. We need to check that $I$ is a morphism of extended quadratic forms. For every $i$ we have $\mu(e_i) = 0 = \mu'(e'_i)$ and $\mu(\bar{f}_i) = \mu(f_i) = \mu \circ I_0^*(f'_i) = \mu'(f'_i) = \mu'(\bar{f}'_i)$, therefore $\mu = \mu' \circ I$. For every $i,j$ $\lambda(e_i,e_j) = 0 = \lambda'(e'_i,e'_j)$. If $i \neq j$, then $\lambda(e_i,\bar{f}_j) = 0 = \lambda'(e'_i,\bar{f}'_j)$ and $\lambda(\bar{f}_i,\bar{f}_j) = 0 = \lambda'(\bar{f}'_i,\bar{f}'_j)$. For every $i$ we have $\lambda(e_i,\bar{f}_i) = 1 = \lambda'(e'_i,\bar{f}'_i)$. Finally, both $\lambda(\bar{f}_i,\bar{f}_i)$ and $\lambda'(\bar{f}'_i,\bar{f}'_i)$ are either $0$ or $1$. And since $\UtM$ and $\UtMp$ are geometric, $\varrho_2 \circ \lambda(\bar{f}_i,\bar{f}_i) = v \circ \mu(\bar{f}_i) = v \circ \mu' \circ I(\bar{f}_i) = v \circ \mu'(\bar{f}'_i) = \varrho_2 \circ \lambda'(\bar{f}'_i,\bar{f}'_i)$, so this means that $\lambda(\bar{f}_i,\bar{f}_i) = \lambda'(\bar{f}'_i,\bar{f}'_i)$. Therefore $\lambda(x,y) = \lambda'(I(x),I(y))$ for every $x,y \in M$, so $I$ is an isomorphism of extended quadratic forms.
\end{proof}

\begin{rem}
Lemma \ref{lem:isom-extend} generalises \cite[Corollary 5.3.1]{wall-scm} (see also Corollary \ref{cor:fund-hyp} below). Indeed, if $Q=0$ (ie.\ in the case of ordinary quadratic forms), $\hat{\mu}_L = \hat{\mu}'_{L'} = 0$, so every isomorphism $I_0$ satisfies the condition, and hence can be extended to an isomorphism $I$. However, if $Q \not\cong 0$, then, depending on $\hat{\mu}_L$ and $\hat{\mu}'_{L'}$, it may happen that no isomorphism $I_0$ satisfies the condition, so there is no isomorphism $I : \UM \rightarrow \UMp$ with $I(L) = L'$. So a geometric metabolic form is essentially unique when $Q=0$, but this is not true over arbitrary pairs $(Q,v)$ (cf.\ \cite[Remark 15.196]{lueck-macko24}).

We will obtain a uniqueness result for arbitrary $(Q,v)$ by requiring the forms to also be full, see Theorem \ref{thm:fund-met}. We will get uniqueness only in a weaker sense (up to stable isomorphism, and instead of extending a given isomorphism between lagrangians, we can only prescribe the image of a lagrangian), but this will still be sufficient for our applications in Section \ref{s:l-mon}.
\end{rem}

An immediate corollary is the following:

\begin{cor} \label{cor:fund-hyp}
Let $\UH = (H, \lambda, \mu)$ and $\UHp = (H', \lambda', \mu')$ be hyperbolic extended quadratic forms over some abelian group $Q$ such that $\rk \UH = \rk \UHp$. Let $L$ and $L'$ be lagrangians in $\UH$ and $\UHp$ respectively. Then there is an isomorphism $I : \UH \rightarrow \UHp$ such that $I(L) = L'$. 
\end{cor}

\begin{proof}
Since $\rk L = \frac{1}{2} \rk H = \frac{1}{2} \rk H' = \rk L'$, there is an isomorphism $I_0 : L \rightarrow L'$. Since $\mu = \mu' = 0$, we also have $\hat{\mu}_L = \hat{\mu}'_{L'} = 0$. So by Lemma \ref{lem:isom-extend} there is an isomorphism $I : \UH \rightarrow \UHp$ such that $I \big| _L = I_0$, in particular $I(L)=L'$.
\end{proof}

We will also need the following lemma, part a) of which can be extracted from the proof of \cite[Proposition 8. ii)]{kreck99}. 

\begin{lem} \label{lem:hom-stab-isom}
Let $F$, $G$ and $A$ be abelian groups, and suppose that $F$ and $G$ are free. Let $f : F \rightarrow A$ and $g : G \rightarrow A$ be surjective homomorphisms. Then 

a) There exist free abelian groups $F'$, $G'$, and an isomorphism $h : F \oplus F' \rightarrow G \oplus G'$ such that $(f+0) = (g+0) \circ h : F \oplus F' \rightarrow A$. 

b) If, in addition, $A$ is free and $\rk F = \rk G$, then there is an isomorphism $h : F \rightarrow G$ such that $f = g \circ h$. 
\end{lem}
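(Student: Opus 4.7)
The plan is to attack part (a) via the pullback construction. Consider $P = F \times_A G = \{(x,y) \in F \oplus G : f(x) = g(y)\}$ with the two coordinate projections $p_F : P \to F$ and $p_G : P \to G$. Surjectivity of $f$ and $g$ makes both $p_F$ and $p_G$ surjective, and their kernels are $\{0\} \times \Ker g$ and $\Ker f \times \{0\}$ respectively, both free as subgroups of the free groups $G$ and $F$. So we obtain two short exact sequences
\[
0 \to \Ker g \to P \to F \to 0 \quad \text{and} \quad 0 \to \Ker f \to P \to G \to 0 .
\]
Since $F$ and $G$ are free, both split, and the resulting isomorphisms $F \oplus \Ker g \cong P \cong G \oplus \Ker f$ compose to give the desired $h$, with $F' := \Ker g$ and $G' := \Ker f$ free.

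To verify the compatibility $(f + 0) = (g + 0) \circ h$, the plan is to choose the first splitting to come from a lift $t : F \to G$ of $f$ along $g$ (which exists because $F$ is free). Then the first isomorphism reads $(x,k) \mapsto (x, t(x)+k)$, so the $G$-component of $h(x,k)$, namely its image under $p_G$, is $t(x)+k$. Since $g \circ t = f$ and $k \in \Ker g$, applying $g$ gives $f(x)$, which is precisely $(f+0)(x,k)$.

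For part (b), freeness of $A$ makes both of the defining sequences split: $F \cong \Ker f \oplus A$ and $G \cong \Ker g \oplus A$ via sections $s_F, s_G$ of $f, g$. The hypothesis $\rk F = \rk G$ then forces $\rk \Ker f = \rk \Ker g$, so the kernels (being free) are isomorphic. Gluing any such isomorphism $\phi : \Ker f \to \Ker g$ with $\id_A$ under the chosen splittings produces an isomorphism $h : F \to G$ with $g \circ h = f$.

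The only mildly delicate point is in part (a): one might be tempted to split $0 \to \Ker f \to F \to A \to 0$ directly and reduce everything to the argument of (b), but this is precisely what fails when $A$ has torsion. The pullback bypasses the issue by transferring the splitting problem to sequences whose quotients $F$ and $G$ are free by hypothesis. Once the pullback picture is in place, both parts become essentially bookkeeping.
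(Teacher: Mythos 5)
Your proof is correct, and part a) takes a genuinely different route from the paper. You prove what is essentially Schanuel's lemma with a compatibility condition over $A$: from the pullback $P = F \times_A G$ you get split exact sequences $0 \to \Ker g \to P \to F \to 0$ and $0 \to \Ker f \to P \to G \to 0$ (splittings exist because $F$ and $G$ are free), hence $F \oplus \Ker g \cong P \cong G \oplus \Ker f$, and your observation that the $G$-component of the second splitting is always $p_G$ makes the identity $(f+0) = (g+0) \circ h$ hold for \emph{any} choice of sections (indeed any section of $p_F$ is of the form $x \mapsto (x, t(x))$ with $t$ a lift of $f$ through $g$, so even the careful choice of $t$ is not strictly needed). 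The paper instead lifts $f$ to $\bar{f} : F \to G$, stabilises $F$ by a free group $F_0$ surjecting onto $\Ker g$ so that $\bar{f} + \bar{f}_0 : F \oplus F_0 \to G$ becomes a split surjection, and then writes down an explicit isomorphism with the larger stabilising summands $F' = F_0 \oplus G$ and $G' = F \oplus F_0$. Your version buys smaller and more canonical stabilisations ($F' = \Ker g$, $G' = \Ker f$) and a more conceptual argument; the paper's is more hands-on, with an explicit formula for $h$, and both suffice for the applications, where only the existence of some free $F'$, $G'$ matters. Your part b) is essentially the same argument as the paper's: split off $A$ using sections of $f$ and $g$, match the free kernels, which have equal rank, and glue with the identity of $A$.
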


\begin{proof}
a) Since $F$ is free and $g$ is surjective, the homomorphism $f$ can be lifted to a homomorphism $\bar{f} : F \rightarrow G$ such that $f = g \circ \bar{f}$. Let $\bar{f}_0 : F_0 \rightarrow \Ker g \subseteq G$ be a surjective homomorphism from some free abelian group $F_0$ to $\Ker g$. Let $F_1 = F \oplus F_0$ and $\bar{f}_1 = \bar{f} + \bar{f}_0 : F_1 = F \oplus F_0 \rightarrow G$. 

We prove that $\bar{f}_1$ is surjective. Let $y \in G$ be an arbitrary element, then there is an $x \in F$ such that $f(x) = g(y)$ (because $f$ is surjective). Let $y_0 = \bar{f}(x) \in \Image \bar{f} \subseteq \Image \bar{f}_1$, then $y-y_0 \in \Ker g = \Image \bar{f}_0 \subseteq \Image \bar{f}_1$, therefore $y = y_0 + (y-y_0) \in \Image \bar{f}_1$. 

Since $G$ is free, $\bar{f}_1$ has a right inverse, ie.\ there is a homomorphism $c : G \rightarrow F_1$ such that $\bar{f}_1 \circ c = \id_G$. This also implies that $F_1 = \Ker \bar{f}_1 \oplus \Image c \cong \Ker \bar{f}_1 \oplus G$. 

Let $F' = F_0 \oplus G$ and $G' = F_1$. Then $F \oplus F' = F \oplus (F_0 \oplus G) \cong F_1 \oplus G$ and $G \oplus G' = G \oplus F_1$, and we can define the homomorphism $h : F_1 \oplus G \rightarrow G \oplus F_1$ by the formula $h(x,y)= (\bar{f}_1(x),x+c(y))$. 

To show that $h$ is an isomorphism, we use the decomposition $F_1 \cong \Ker \bar{f}_1 \oplus G$. For $(x_1,y_2) \in \Ker \bar{f}_1 \oplus G \cong F_1$ we have $\bar{f}_1(x_1,y_2)=y_2$, and for $y \in G$ we have $c(y) = (0,y) \in \Ker \bar{f}_1 \oplus G \cong F_1$. Hence under the decompositions $F \oplus F' \cong F_1 \oplus G \cong (\Ker \bar{f}_1 \oplus G) \oplus G$ and $G \oplus G' = G \oplus F_1 \cong G \oplus (\Ker \bar{f}_1 \oplus G)$ the map $h$ is given by the formula $h((x_1,y_2),y) = (y_2, (x_1,y_2+y))$, therefore it is an isomorphism. 

Finally, for any $x \in F$, $x_0 \in F_0$ and $y \in G$ we have $(g+0) \circ h((x,x_0),y) = g \circ \bar{f}_1(x,x_0) = g \circ (\bar{f} + \bar{f}_0)(x,x_0) = g \circ \bar{f}(x) = f(x) = (f+0)(x,(x_0,y))$, where we used that $g \circ \bar{f}_0 = 0$, because $\Image \bar{f}_0 = \Ker g$. Therefore $(f+0) = (g+0) \circ h$. 

b) Since $A$ is free, there are maps $c : A \rightarrow F$ and $d : A \rightarrow G$ such that $f \circ c = g \circ d = \id_A$. We get the decompositions $F = \Ker f \oplus \Image c$ and $G = \Ker g \oplus \Image d$, where $\Image c \cong \Image d \cong A$. The subgroups $\Ker f$ and $\Ker g$ are free, and $\rk \Ker f = \rk F - \rk A = \rk G - \rk A = \rk \Ker g$, therefore there is an isomorphism $h_0 : \Ker f \rightarrow \Ker g$. Then we can take $h = h_0 \oplus (d \circ f) \big| _{\Image c} : F = \Ker f \oplus \Image c \rightarrow G = \Ker g \oplus \Image d$. 
\end{proof}

\begin{thm} \label{thm:fund-met}
Let $Q$ be an abelian group and $v : Q \rightarrow \Z_2$ a homomorphism. Let $\UM = (M, \lambda, \mu)$ and $\UMp = (M', \lambda', \mu')$ be extended quadratic forms over $Q$. Suppose that they are metabolic, free, full and geometric (with respect to $v$), and let $L \leq M$ and $L' \leq M'$ be lagrangians. Then 

a) For some integers $k,l \geq 0$ there is an isomorphism $I : \UM \oplus \UH_{2k} \rightarrow \UMp \oplus \UH_{2l}$ such that $I(L \oplus (\{ 0 \} \times \Z^k)) = L' \oplus (\{ 0 \} \times \Z^l)$. 

b) If, in addition, $Q$ is free and $\rk \UM = \rk \UMp$, then there is an isomorphism $I : \UM \rightarrow \UMp$ such that $I(L) = L'$. 
\end{thm}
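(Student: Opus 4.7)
My plan is to reduce both parts to the same concrete matrix calculation. Using Lemma \ref{lem:met-basis}, I fix bases $e_1, \ldots, e_r, f_1, \ldots, f_r$ and $e'_1, \ldots, e'_{r'}, f'_1, \ldots, f'_{r'}$ of $M, M'$ putting $\lambda, \lambda'$ into the normal form $\bigl[\begin{smallmatrix} 0 & I \\ I & D\end{smallmatrix}\bigr]$, $\bigl[\begin{smallmatrix} 0 & I \\ I & D'\end{smallmatrix}\bigr]$, with $L = \langle e_i\rangle$, $L' = \langle e'_j\rangle$, and diagonal matrices $D, D'$ whose entries $d_i, d'_j$ are $0$ or $1$. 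Set $N = \langle f_i\rangle$, $N' = \langle f'_j\rangle$, $q_i = \mu(f_i)$, $q'_j = \mu'(f'_j)$; fullness makes $\mu|_N$ and $\mu'|_{N'}$ surjective, and geometricity forces $d_i \equiv v(q_i)$, $d'_j \equiv v(q'_j) \pmod{2}$.

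In such bases any homomorphism $I : M \to M'$ with $I(L) \subseteq L'$ is an integer block matrix $\bigl[\begin{smallmatrix} A & C \\ 0 & B\end{smallmatrix}\bigr]$, and a direct computation shows that $I$ is an isomorphism of extended quadratic forms sending $L$ to $L'$ if and only if $A, B$ are invertible over $\Z$, $B = (A^T)^{-1}$, $B^T q' = q$ (preservation of $\mu$), and $C^T B + B^T C = D - B^T D' B$ (the only remaining block equation for $\lambda$). The last equation admits an integer $C$ exactly when the symmetric right-hand side has even diagonal, and the crux of the proof is that preservation of $\mu$ alone forces this: since $v$ is a homomorphism and $B_{ji}^2 \equiv B_{ji} \pmod 2$,
\[
(B^T D' B)_{ii} \equiv \sum_j B_{ji} d'_j \equiv \sum_j B_{ji} v(q'_j) = v\Bigl(\sum_j B_{ji} q'_j\Bigr) = v(q_i) \equiv d_i \pmod 2 .
\]
So everything reduces to producing $B \in GL(\Z)$ with $B^T q' = q$, i.e.\ an isomorphism of abelian groups $N \to N'$ intertwining $\mu|_N$ and $\mu'|_{N'}$, possibly after stabilization.

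For part b), $Q$ is free and $\rk N = \rk N'$, so Lemma \ref{lem:hom-stab-isom} b) supplies such an isomorphism directly, completing the argument. For part a), I instead apply Lemma \ref{lem:hom-stab-isom} a) to produce free abelian groups $P, P'$ of ranks $k, l$ and an iso $h : N \oplus P \to N' \oplus P'$ compatible with $\mu|_N \oplus 0$ and $\mu'|_{N'} \oplus 0$. In the standard basis of $\UH_{2k}$ (lagrangian $\{0\} \times \Z^k$, complement $\Z^k \times \{0\}$ with zero form), the stabilization $\UM \oplus \UH_{2k}$ is again in the shape of Lemma \ref{lem:met-basis} with enlarged diagonal block $\mathrm{diag}(D, 0_k)$ and $\mu$-vector $(q_1, \ldots, q_r, 0, \ldots, 0) \in Q^{r+k}$; similarly for $\UMp \oplus \UH_{2l}$. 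Applying the matrix construction above with this $h$ as the $B$-block then yields the desired isomorphism $I$ taking $L \oplus (\{0\} \times \Z^k)$ to $L' \oplus (\{0\} \times \Z^l)$.

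The main obstacle --- and the step where geometricity is indispensable --- is the even-diagonal check: without the hypothesis $\lambda(x, x) \equiv v(\mu(x)) \pmod 2$, there is no reason the remaining equation should have an integer solution $C$, and no cleverer choice of $B$ could repair it. Everything else is careful but routine bookkeeping about block matrices and about identifying $\UH_{2k}$ with a free summand of trivial $\mu$-image adjoined to both the lagrangian and its complement.
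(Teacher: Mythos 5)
Your proposal is correct and takes essentially the same route as the paper: both reduce the problem, via Lemma \ref{lem:hom-stab-isom} (part b) directly, part a) after hyperbolic stabilisation), to matching the complements of the lagrangians compatibly with $\mu$, and then use geometricity to adjust the bilinear form by a correction supported in the lagrangian direction. The only difference is bookkeeping: you solve the single symmetric matrix equation $C^TB + B^TC = D - B^TD'B$ with the even-diagonal parity check, whereas the paper performs the equivalent correction through dual bases (Lemma \ref{lem:lagr-dual}) and a recursive change of basis $\bar{f}_i$ on each side before comparing.
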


\begin{proof}
a) Since the homomorphisms $\mu : M \rightarrow Q$ and $\mu' : M' \rightarrow Q$ are surjective, the same is true for the induced maps $M/L \rightarrow Q$ and $M'/L' \rightarrow Q$, and for $\hat{\mu}_L : L^* \rightarrow Q$ and $\hat{\mu}'_{L'} : (L')^* \rightarrow Q$. By Lemma \ref{lem:hom-stab-isom} a) there are free abelian groups $A, A'$ and an isomorphism $h : (L')^* \oplus A' \rightarrow L^* \oplus A$ such that $(\hat{\mu}'_{L'} +0) = (\hat{\mu}_L +0) \circ h : (L')^* \oplus A' \rightarrow Q$. Let $k = \rk A$ and $l = \rk A'$. 

Let $\UtM = (\tilde{M}, \tilde{\lambda}, \tilde{\mu})$ denote the direct sum $\UM \oplus \UH_{2k}$, that is $\tilde{M} = M \oplus \Z^{2k}$, $\tilde{\lambda} = \lambda \oplus \bigl[ \begin{smallmatrix} 0 & I_k \\ I_k & 0 \end{smallmatrix} \bigr]$ and $\tilde{\mu} = \mu \oplus 0$. Similarly let $\UtMp = (\tilde{M}', \tilde{\lambda}', \tilde{\mu}') = \UMp \oplus \UH_{2l}$. Let $\tilde{L} = L \oplus (\{ 0 \} \times \Z^k)$ and $\tilde{L}' = L' \oplus (\{ 0 \} \times \Z^l)$, these are lagrangians in $\UtM$ and $\UtMp$ respectively. Hence the new extended quadratic forms $\UtM$ and $\UtMp$ are again metabolic, free, (full) and geometric. Moreover, we have $\hat{\tilde{\mu}}_{\tilde{L}} = \hat{\mu}_L + 0 : \tilde{L}^* \cong L^* \oplus (\{ 0 \} \times \Z^k)^* \rightarrow Q$ and $\hat{\tilde{\mu}}'_{\tilde{L}'} = \hat{\mu}'_{L'} + 0 : (\tilde{L}')^* \cong (L')^* \oplus (\{ 0 \} \times \Z^l)^* \rightarrow Q$. 

If we fix some isomorphisms $A \cong (\{ 0 \} \times \Z^k)^*$ and $A' \cong (\{ 0 \} \times \Z^l)^*$, then $h$ can be regarded as an isomorphism $h : (\tilde{L}')^* \rightarrow \tilde{L}^*$ such that $\hat{\tilde{\mu}}'_{\tilde{L}'} = \hat{\tilde{\mu}}_{\tilde{L}} \circ h$. Let $I_0 = h^* : \tilde{L} \rightarrow \tilde{L}'$. Then by Lemma \ref{lem:isom-extend} there is an isomorphism $I : \UtM \rightarrow \UtMp$ such that $I \big| _{\tilde{L}} = I_0$, in particular $I(\tilde{L}) = \tilde{L}'$.

b) In this case we can use part b) of Lemma \ref{lem:hom-stab-isom} instead of part a). Hence we can take $A = A' = 0$, so the statement of part a) holds with $k = l = 0$. 
\end{proof}

\section{The group $\RU_{\st}(\UM,L)$} \label{s:ru}

Fix an abelian group $Q$ with a homomorphism $v : Q \rightarrow \Z_2$. In this section every extended quadratic form we consider will be over $Q$, and will be assumed to be free and geometric (with respect to $v$). 

For a metabolic form $\UM$ and a lagrangian $L$ we introduce the group $\RU(\UM,L)$, and its stable version $\RU_{\st}(\UM,L)$. The group $\RU(\UM,L)$ generalises the group $\RU^+(\Z^r)$ used in classical and modified surgery (see \cite[Section 6]{kreck99} and \cite[Section 6]{wall-scm}). In particular, $\RU(\UH_{2k},\{0\} \times \Z^k) = \RU^+(\Z^k)$ (cf.\ \cite[Remark 4.2.23]{csn-thesis}). Note, however, the dependence of $\RU(\UM,L)$ on the lagrangian $L$ (which is implicit in the classical/modified setting, but will be important in our arguments) and that the stable group $\RU_{\st}(\UM,L)$ consists of automorphisms of the same $\UM$, in contrast to $\RU^+(\Z) = \lim_r \RU^+(\Z^r)$.

We will show that $\Phi \oplus \Phi^{-1} \in \RU_{\st}(\UM \oplus \UM, L \oplus L)$ for any $\Phi \in \Aut(\UM)$, which will play a key role in the proof of Theorem \ref{thm:jacobi}.

\begin{defin} \label{def:ru}
Given a metabolic form $\UM$ and a lagrangian $L$ in $\UM$, we define the subgroup $\RU(\UM,L) \leq \Aut(\UM)$, which is generated by the following elements: 
\begin{compactitem}
\item Automorphisms $f : \UM \rightarrow \UM$ such that $f(L)=L$.
\item $I^{-1} \circ (\sigma \oplus \id_{M'}) \circ I$ for every isomorphism $I : \UM \rightarrow \UH_2 \oplus \UMp$ (for some metabolic form $\UMp$) such that $I(L) = (\{ 0 \} \times \Z) \oplus L'$ for some lagrangian $L'$ in $\UMp$, where $\sigma : \Z^2 \rightarrow \Z^2$ is the flip map (ie.\ $\sigma(a,b)=(b,a)$). 
\end{compactitem}
\end{defin}

\begin{defin} \label{def:ru-st}
Given a metabolic form $\UM$ and a lagrangian $L$ in $\UM$, let 
\[
\RU_{\st}(\UM,L) = \{ f \in \Aut(\UM) \mid \exists (\UN,K) : f \oplus \id_N \in \RU(\UM \oplus \UN, L \oplus K) \}
\]
where $\UN$ denotes some metabolic form with a lagrangian $K$.
\end{defin}

If $f \in \RU(\UM,L)$, then $f \oplus \id_N \in \RU(\UM \oplus \UN, L \oplus K)$ for any $(\UN,K)$, showing that $\RU_{\st}(\UM,L)$ is a subgroup of $\Aut(\UM)$, and $\RU(\UM,L) \leq \RU_{\st}(\UM,L)$. 

\begin{lem} \label{lem:ru-eq1}
Suppose that $\UM$ and $\UN$ are metabolic forms with lagrangians $L$ and $K$ respectively. If $F : \UM \rightarrow \UN$ is an isomorphism such that $F(L)=K$, then $\RU(\UM,L) = F^{-1} \circ \RU(\UN,K) \circ F$.
\end{lem}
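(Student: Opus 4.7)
The plan is to verify the equality by showing that conjugation by $F$ maps the generating set of $\RU(\UM,L)$ bijectively onto the generating set of $\RU(\UN,K)$. Since $\RU(\UM,L)$ and $\RU(\UN,K)$ are defined as subgroups generated by explicit families of automorphisms, it suffices to check that for each type of generator $g$ of $\RU(\UM,L)$, the conjugate $F \circ g \circ F^{-1}$ is a generator of $\RU(\UN,K)$ of the same type (and vice versa, applying the same argument to $F^{-1}$).

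First I would handle the type-one generators: if $f \in \Aut(\UM)$ with $f(L) = L$, then $F \circ f \circ F^{-1} \in \Aut(\UN)$ and maps $K = F(L)$ to $F(f(L)) = F(L) = K$, so it is again a type-one generator. Second, for type-two generators, suppose $g = I^{-1} \circ (\sigma \oplus \id_{M'}) \circ I$, where $I : \UM \rightarrow \UH_2 \oplus \UMp$ is an isomorphism satisfying $I(L) = (\{0\}\times\Z) \oplus L'$ for some lagrangian $L'$ in $\UMp$. Set $I' = I \circ F^{-1} : \UN \rightarrow \UH_2 \oplus \UMp$; this is an isomorphism of extended quadratic forms, and
\[
I'(K) = I(F^{-1}(K)) = I(L) = (\{0\}\times\Z) \oplus L',
\]
so $I'$ fits the template of Definition \ref{def:ru}, and the resulting generator is
\[
(I')^{-1} \circ (\sigma \oplus \id_{M'}) \circ I' = F \circ I^{-1} \circ (\sigma \oplus \id_{M'}) \circ I \circ F^{-1} = F \circ g \circ F^{-1}.
\]

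Combining these two observations, conjugation by $F$ sends every generator of $\RU(\UM,L)$ into $\RU(\UN,K)$, hence $F \circ \RU(\UM,L) \circ F^{-1} \subseteq \RU(\UN,K)$, i.e.\ $\RU(\UM,L) \subseteq F^{-1} \circ \RU(\UN,K) \circ F$. Applying the same argument to the isomorphism $F^{-1} : \UN \rightarrow \UM$, which maps $K$ to $L$, gives the reverse inclusion $\RU(\UN,K) \subseteq F \circ \RU(\UM,L) \circ F^{-1}$, and the two combine to yield the claimed equality. There is no real obstacle here: the lemma is essentially a naturality statement, and the only point requiring any care is making sure the lagrangian-preservation condition built into the definition of a type-two generator is preserved under the substitution $I \mapsto I \circ F^{-1}$, which is immediate from $F(L)=K$.
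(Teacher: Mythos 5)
Your proof is correct and follows essentially the same route as the paper, which simply observes that $f$ is a generator of $\RU(\UN,K)$ if and only if $F^{-1}\circ f\circ F$ is a generator of $\RU(\UM,L)$; you have merely spelled out the verification for both types of generators.
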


\begin{proof}
Suppose that $f \in \Aut(\UN)$. Then $f$ is a generator of $\RU(\UN,K)$ if and only if $F^{-1} \circ f \circ F$ is a generator of $\RU(\UM,L)$.
\end{proof}

\begin{lem} \label{lem:ru-eq2}
Suppose that $\UM$ is a metabolic form, and $L$ and $K$ are lagrangians in $\UM$. If there is an $f \in \RU(\UM,L)$ such that $f(L)=K$, then $\RU(\UM,L) = \RU(\UM,K)$.
\end{lem}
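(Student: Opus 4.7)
The plan is to combine Lemma \ref{lem:ru-eq1} with the fact that $\RU(\UM,L)$ is a subgroup of $\Aut(\UM)$ closed under conjugation by its own elements.

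First I would apply Lemma \ref{lem:ru-eq1} to the automorphism $F = f : \UM \to \UM$ (using $\UN = \UM$, $K$ as in the statement), which gives the identity
\[
\RU(\UM,L) = f^{-1} \circ \RU(\UM,K) \circ f ,
\]
or equivalently $\RU(\UM,K) = f \circ \RU(\UM,L) \circ f^{-1}$.

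Next, I would use the hypothesis that $f$ lies in $\RU(\UM,L)$. Since $\RU(\UM,L)$ is a subgroup of $\Aut(\UM)$, conjugation by any of its elements is an inner automorphism of the subgroup, so $f \circ \RU(\UM,L) \circ f^{-1} = \RU(\UM,L)$. Combining this with the previous identity yields $\RU(\UM,L) = \RU(\UM,K)$, as required.

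There is no significant obstacle; the lemma is essentially an immediate corollary of Lemma \ref{lem:ru-eq1} together with the subgroup property. The only point that requires care is to apply Lemma \ref{lem:ru-eq1} in the correct direction (noting that $f$ is an automorphism of $\UM$ with $f(L) = K$, so both $\UM$ and $\UN$ in the statement of that lemma are taken to be $\UM$).
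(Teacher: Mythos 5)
Your proposal is correct and is essentially the same argument as the paper's: apply Lemma \ref{lem:ru-eq1} with $F=f$ to get $\RU(\UM,K) = f \circ \RU(\UM,L) \circ f^{-1}$, and then use $f \in \RU(\UM,L)$ so that this conjugate equals $\RU(\UM,L)$.
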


\begin{proof}
We have $\RU(\UM,K) = f \circ \RU(\UM,L) \circ f^{-1} = \RU(\UM,L)$ by Lemma \ref{lem:ru-eq1} and because $f \in \RU(\UM,L)$. 
\end{proof}

\begin{lem} \label{lem:ru-diag}
Let $\UM$ be an extended quadratic form and $\Phi \in \Aut(\UM)$. Then we have $\Phi \oplus \Phi \in \RU(\UM \oplus (-\UM), \Delta_{\id})$. 
\end{lem}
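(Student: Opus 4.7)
The plan is to recognise that $\Phi \oplus \Phi$ is already of the first type listed among the generators in Definition \ref{def:ru}. So the argument reduces to two routine verifications: first, that $\Phi \oplus \Phi$ is a form automorphism of $\UM \oplus (-\UM)$; second, that $\Phi \oplus \Phi$ preserves the lagrangian $\Delta_{\id}$.

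For the first verification, I would simply compute the two structures termwise on $M \oplus M$, using that $\Phi$ respects $\lambda$ and $\mu$. Concretely, for the bilinear form we get
\[
(\lambda \oplus (-\lambda))\bigl((\Phi(x_1),\Phi(y_1)),(\Phi(x_2),\Phi(y_2))\bigr) = \lambda(x_1,x_2) - \lambda(y_1,y_2),
\]
and the corresponding statement for $\mu \oplus (-\mu)$ is immediate. Since $\Phi \oplus \Phi$ is also a group automorphism of $M \oplus M$, it is an automorphism of $\UM \oplus (-\UM)$.

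For the second verification, note that
\[
(\Phi \oplus \Phi)(\Delta_{\id}) = \{(\Phi(x),\Phi(x)) \mid x \in M\} = \{(y,y) \mid y \in M\} = \Delta_{\id},
\]
because $\Phi$ is bijective on $M$. Together with Lemma \ref{lem:diag-lagr} a) (which certifies that $\Delta_{\id}$ is a lagrangian in $\UM \oplus (-\UM)$, so that $\RU(\UM \oplus (-\UM),\Delta_{\id})$ is defined), this places $\Phi \oplus \Phi$ among the generators of the first type in Definition \ref{def:ru}, and the lemma follows.

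I do not expect any obstacle here: the statement is essentially tautological once the generators are parsed correctly, and its role is to record the observation for later use (presumably in combination with Definition \ref{def:ru-st} and the generators of the second type, to produce the less trivial statement about $\Phi \oplus \Phi^{-1}$ lying in $\RU_{\st}$).
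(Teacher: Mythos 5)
Your proof is correct and is essentially the paper's own argument: the paper also disposes of the lemma in one line by observing that $(\Phi \oplus \Phi)(\Delta_{\id}) = \Delta_{\id}$, so that $\Phi \oplus \Phi$ is a generator of the first type in Definition \ref{def:ru}. Your extra verifications (that $\Phi \oplus \Phi$ is an automorphism of $\UM \oplus (-\UM)$, and the appeal to Lemma \ref{lem:diag-lagr} a)) are routine details the paper leaves implicit.
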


\begin{proof}
Since $\Delta_{\id}$ is the diagonal, $(\Phi \oplus \Phi)(\Delta_{\id}) = \Delta_{\id}$.
\end{proof}

\begin{prop} \label{prop:ru-diag-eq}
Let $\UM$ be a metabolic form and $L$ a lagrangian in $\UM$. Then 
\[
\RU(\UM \oplus (-\UM), \Delta_{\id}) = \RU(\UM \oplus (-\UM), L \oplus L) \text{\,.}
\] 
\end{prop}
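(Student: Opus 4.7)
The plan is to apply Lemma \ref{lem:ru-eq2}: it suffices to exhibit a single $\phi \in \RU(\UM \oplus (-\UM), \Delta_{\id})$ with $\phi(\Delta_{\id}) = L \oplus L$, since then the two subgroups coincide. I will build $\phi$ as a composition of $k$ type~2 generators, where $2k = \rk \UM$, after producing one master orthogonal decomposition $\UM \oplus (-\UM) = U \oplus \UE$ in which $U \cong \UH_{2k}$ and $\Delta_{\id}$ and $L \oplus L$ differ only inside the $U$-factor.

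Using Lemma \ref{lem:met-basis}, fix a basis $e_1,\dots,e_k,f_1,\dots,f_k$ of $\UM$ with $L = \langle e_1, \dots, e_k\rangle$ and form matrix $\bigl[ \begin{smallmatrix} 0 & I_k \\ I_k & D \end{smallmatrix} \bigr]$, with $D$ diagonal and entries $d_i \in \{0,1\}$. Denote the basis of the second summand by primes, and let $U = \langle e_i, \, f_i + f_i' : i = 1, \dots, k\rangle \leq M \oplus M$. A direct computation shows that in the basis $e_1, \dots, e_k, f_1 + f_1', \dots, f_k + f_k'$ the restriction of the form is $\bigl[ \begin{smallmatrix} 0 & I_k \\ I_k & 0 \end{smallmatrix} \bigr]$ (the diagonal entries $d_i - d_i$ cancel) and $\mu$ vanishes on $U$ (using $\mu|_L = 0$ and the sign flip on $-\UM$), so $U \cong \UH_{2k}$ as an extended quadratic form. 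Computing $U^\perp$ explicitly gives $\UE := U^\perp = \langle e_i + e_i', \; d_i e_i + f_i' : i\rangle$, which is free of rank $2k$ and metabolic with lagrangian $\Delta_{\id}^L := \langle e_i + e_i' : i\rangle$. One then verifies that $\UM \oplus (-\UM) = U \oplus \UE$ as extended quadratic forms, and
\[
\Delta_{\id} = \langle f_i + f_i' : i\rangle \oplus \Delta_{\id}^L, \qquad L \oplus L = \langle e_i : i\rangle \oplus \Delta_{\id}^L,
\]
the second identity using $e_i' = (e_i + e_i') - e_i$.

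For each $i$, isolating the $i$-th rank-$2$ summand of $U$ yields an isomorphism $I_i : \UM \oplus (-\UM) \to \UH_2 \oplus \UMp_i$ with $\UMp_i$ metabolic and $I_i(\Delta_{\id}) = (\{0\} \times \Z) \oplus L'_i$ for some lagrangian $L'_i \leq \UMp_i$. Hence $\phi_i := I_i^{-1} \circ (\sigma \oplus \id) \circ I_i$ is a type~2 generator of $\RU(\UM \oplus (-\UM), \Delta_{\id})$; in the decomposition it swaps $e_i \leftrightarrow f_i + f_i'$ and is the identity on the other summands. A straightforward induction on $i$ gives
\[
(\phi_i \circ \cdots \circ \phi_1)(\Delta_{\id}) = \langle e_1, e_1', \ldots, e_i, e_i'\rangle + \langle e_j + e_j', f_j + f_j' : j > i\rangle,
\]
so at step $k$ the right-hand side is exactly $L \oplus L$, and Lemma \ref{lem:ru-eq2} then completes the proof. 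The main technical step is in the second paragraph, namely verifying that $U \cong \UH_{2k}$ (in particular the cancellation $\mu(f_i) + \mu(f_i') = 0$ that makes $\mu|_U$ trivial) and that $\Delta_{\id}^L$ is a lagrangian in $\UE$; once the decomposition is in place, the swap moves finish the job by direct bookkeeping.
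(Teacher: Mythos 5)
Your proof is correct, and it reaches the conclusion by a somewhat different route than the paper, although the underlying mechanism is the same: both arguments produce an element of $\RU(\UM \oplus (-\UM), \Delta_{\id})$ carrying $\Delta_{\id}$ to $L \oplus L$, realised as a product of $k$ hyperbolic ``flip'' (type~2) generators, and then invoke Lemma \ref{lem:ru-eq2}. The paper works externally: it assembles the explicit isomorphism $F : \UM \oplus (-\UM) \rightarrow \UM \oplus \UH_{2k}$ from Lemmas \ref{lem:met-neg-isom} and \ref{lem:met-isom}, computes $F(\Delta_{\id}) = L \oplus (\Z^k \times \{0\})$ and $F(L \oplus L) = L \oplus (\{0\} \times \Z^k)$, applies the flip $\id_M \oplus \Sigma$ in the model $\UM \oplus \UH_{2k}$, and transports back using the conjugation Lemma \ref{lem:ru-eq1}. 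You instead build the splitting internally, taking $U = \left< e_i, f_i + f_i' \right> \cong \UH_{2k}$ and $\UE = U^{\perp}$, so that $\Delta_{\id}$ and $L \oplus L$ visibly differ only by exchanging the two standard lagrangians of $U$; this bypasses Lemmas \ref{lem:met-neg-isom}, \ref{lem:met-isom} and \ref{lem:ru-eq1} entirely and is, if anything, more self-contained (your checks of the form and $\mu$ on $U$, of $U^{\perp}$, and of the two displayed decompositions of $\Delta_{\id}$ and $L \oplus L$ are all correct). The one point you leave implicit, which does hold and is worth a line in a full write-up, is that for each $i$ the fixed lagrangian $\Delta_{\id}$ (not its image under the earlier flips) splits compatibly with $U_i \oplus U_i^{\perp}$, i.e.\ $f_i + f_i' \in U_i$ while $e_i + e_i'$ and all $e_j + e_j'$, $f_j + f_j'$ with $j \neq i$ lie in $U_i^{\perp}$, so that $\Delta_{\id} \cap U_i^{\perp}$ is indeed a lagrangian of the metabolic form $\UMp_i = U_i^{\perp}$ and each $\phi_i$ is a legitimate type~2 generator of $\RU(\UM \oplus (-\UM), \Delta_{\id})$; this is exactly what makes the composition $\phi_k \circ \cdots \circ \phi_1$ lie in that group.
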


\begin{proof}
Denote the rank of $\UM$ by $2k$. Let $e_1, e_2, \ldots , e_k, f_1, f_2, \ldots , f_k$ be a basis of $M$ given by Lemma \ref{lem:met-basis}, and let $d_i = \lambda(f_i,f_i)$. We will use the notation $e_i$, $f_i$ when $M$ is regarded as the underlying abelian group of $\UM$, and the same elements will be denoted by $\bar{e}_i$ and $\bar{f}_i$ when $M$ is regarded as the underlying abelian group of $-\UM$. Let $a_1, a_2, \ldots , a_k, b_1, b_2, \ldots , b_k$ denote the standard basis of $\Z^{2k}$. Let $I : \UM \oplus \UM \rightarrow \UM \oplus \UH_{2k}$ and $J : \UM \rightarrow -\UM$ denote the isomorphisms from Lemmas \ref{lem:met-isom} and \ref{lem:met-neg-isom} respectively. 

The composition of the flip map $\UM \oplus (-\UM) \rightarrow (-\UM) \oplus \UM$, $J^{-1} \oplus \id_M$ and $I$ is an isomorphism $F : \UM \oplus (-\UM) \rightarrow \UM \oplus \UH_{2k}$ given by $F(e_i)=-b_i$, $F(f_i)=f_i-a_i$, $F(\bar{e}_i)=e_i+b_i$ and $F(\bar{f}_i)=d_ie_i-f_i$. Therefore 
\begin{multline*}
F(L \oplus L) = F(\left< e_1, \ldots , e_k, \bar{e}_1, \ldots , \bar{e}_k \right>) = \\ 
= \left< -b_1, \ldots , -b_k, e_1+b_1, \ldots , e_k+b_k \right> = \left< b_1, \ldots , b_k, e_1, \ldots , e_k \right> = L \oplus (\{0\} \times \Z^k)
\end{multline*}
and
\begin{multline*}
F(\Delta_{\id}) = F(\left< e_1+\bar{e}_1, \ldots , e_k+\bar{e}_k, f_1+\bar{f}_1, \ldots , f_k+\bar{f}_k \right>) = \\ 
= \left< e_1, \ldots , e_k, d_1e_1-a_1, \ldots , d_ke_k-a_k \right> = \left< e_1, \ldots , e_k, a_1, \ldots , a_k \right> = L \oplus (\Z^k \times \{0\}) \text{\,.}
\end{multline*}
If $\Sigma : \Z^{2k} \rightarrow \Z^{2k}$ denotes the flip map (ie.\ $\Sigma(a,b)=(b,a)$ for $a,b \in \Z^k$), then $\id_M \oplus \Sigma \in \RU(\UM \oplus \UH_{2k}, L \oplus (\{0\} \times \Z^k))$. Since $(\id_M \oplus \Sigma)(L \oplus (\{0\} \times \Z^k)) = L \oplus (\Z^k \times \{0\})$, we have $\RU(\UM \oplus \UH_{2k}, L \oplus (\{0\} \times \Z^k)) = \RU(\UM \oplus \UH_{2k}, L \oplus (\Z^k \times \{0\}))$ by Lemma \ref{lem:ru-eq2}. Therefore 
\[
\begin{aligned}
\RU(\UM \oplus (-\UM), \Delta_{\id}) &= F^{-1} \circ \RU(\UM \oplus \UH_{2k}, L \oplus (\Z^k \times \{0\})) \circ F = \\
 &= F^{-1} \circ \RU(\UM \oplus \UH_{2k}, L \oplus (\{0\} \times \Z^k)) \circ F = \\
 &= \RU(\UM \oplus (-\UM), L \oplus L)
\end{aligned}
\] 
by Lemma \ref{lem:ru-eq1}.
\end{proof}

The following is a generalisation of Wall's \cite[Lemma 6.2]{wall-scm}.

\begin{thm} \label{thm:ru-wall}
Let $\UM$ be  a metabolic form, and $L$ a lagrangian in $\UM$. For any $\Phi \in \Aut(\UM)$ we have 
\[
\Phi \oplus \Phi^{-1} \in \RU_{\st}(\UM \oplus \UM, L \oplus L) \text{\,.}
\]
\end{thm}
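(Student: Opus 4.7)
The plan is to combine Lemma \ref{lem:ru-diag} with Proposition \ref{prop:ru-diag-eq} and then use a telescoping stabilization to convert the resulting element into $\Phi \oplus \Phi^{-1}$.

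First, fix an isomorphism $J : \UM \to -\UM$ with $J|_L = \id_L$ as in Lemma \ref{lem:met-neg-isom}, so that $J(L)=L$. Then $F := \id_M \oplus J : \UM \oplus \UM \to \UM \oplus (-\UM)$ is an isomorphism of extended quadratic forms sending $L \oplus L$ to $L \oplus L$. Applying Lemma \ref{lem:ru-diag}, Proposition \ref{prop:ru-diag-eq}, and Lemma \ref{lem:ru-eq1} in sequence yields
\[
\Phi \oplus \Psi \in \RU(\UM \oplus \UM,\,L \oplus L), \qquad \text{where } \Psi := J^{-1}\Phi J.
\]
The same argument applied to $\Phi^{-1}$ gives $\Phi^{-1} \oplus \Psi^{-1} \in \RU(\UM \oplus \UM, L \oplus L)$, using that $J^{-1}\Phi^{-1}J = \Psi^{-1}$. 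Since the flip $\sigma:(x,y)\mapsto(y,x)$ preserves $L \oplus L$ and so lies in $\RU$, conjugating by $\sigma$ shows that $\Psi^{-1} \oplus \Phi^{-1}$ also lies in $\RU(\UM \oplus \UM,L\oplus L)$.

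Next, stabilize by $(\UM, L)$ and work in $(\UM^{\oplus 3}, L^{\oplus 3})$. Both
\[
\Phi \oplus \Psi \oplus \id_M \quad\text{and}\quad \id_M \oplus \Psi^{-1} \oplus \Phi^{-1}
\]
lie in $\RU(\UM^{\oplus 3}, L^{\oplus 3})$, and their product telescopes in the middle slot (where $\Psi \Psi^{-1} = \id_M$) to
\[
\Phi \oplus \id_M \oplus \Phi^{-1} \in \RU(\UM^{\oplus 3}, L^{\oplus 3}).
\]
Conjugating by the swap of the last two summands, which preserves $L^{\oplus 3}$ and so lies in $\RU$, gives $\Phi \oplus \Phi^{-1} \oplus \id_M \in \RU(\UM^{\oplus 3}, L^{\oplus 3})$. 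By Definition \ref{def:ru-st} with $(\UN, K) = (\UM, L)$, this is exactly the statement $\Phi \oplus \Phi^{-1} \in \RU_{\st}(\UM \oplus \UM, L \oplus L)$.

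The main obstacle is that Lemma \ref{lem:ru-diag} naturally produces only $\Phi \oplus \Phi$, and transferring it across the identification $J$ introduces the auxiliary automorphism $\Psi = J^{-1}\Phi J$ rather than $\Phi^{-1}$. The telescoping cancellation $\Psi \cdot \Psi^{-1} = \id_M$ eliminates $\Psi$ at the cost of one additional stabilizing summand, which is exactly why the conclusion must be stated for $\RU_{\st}$ rather than $\RU$.
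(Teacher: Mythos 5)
Your proof is correct and takes essentially the same route as the paper: both arguments start from $\Phi \oplus \Phi \in \RU(\UM \oplus (-\UM), L \oplus L)$ (Lemma \ref{lem:ru-diag} combined with Proposition \ref{prop:ru-diag-eq}), then cancel inside a three-fold direct sum to produce $\Phi \oplus \Phi^{-1} \oplus \id$ and invoke Definition \ref{def:ru-st}. The only difference is cosmetic: the paper composes $\Phi \oplus \id_M \oplus \Phi$ with $(\id_M \oplus \Phi \oplus \Phi)^{-1}$ directly in $\RU(\UM \oplus \UM \oplus (-\UM), L \oplus L \oplus L)$, using $(-\UM, L)$ as the stabilising summand, whereas you first transport the diagonal element into $\UM \oplus \UM$ via $\id_M \oplus J$ (Lemmas \ref{lem:met-neg-isom} and \ref{lem:ru-eq1}), which introduces the auxiliary $\Psi = J^{-1}\Phi J$ that you then cancel in the middle slot.
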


\begin{proof}
By Lemma \ref{lem:ru-diag} and Proposition \ref{prop:ru-diag-eq} we have $\Phi \oplus \Phi \in \RU(\UM \oplus (-\UM), L \oplus L)$. This implies that 
\[
\id_M \oplus \Phi \oplus \Phi \in \RU(\UM \oplus \UM \oplus (-\UM), L \oplus L \oplus L)
\]
and that 
\[
\Phi \oplus \id_M \oplus \Phi \in \RU(\UM \oplus \UM \oplus (-\UM), L \oplus L \oplus L)
\]
By composing the latter with the inverse of the former we get that 
\[
\Phi \oplus \Phi^{-1} \oplus \id_M \in \RU(\UM \oplus \UM \oplus (-\UM), L \oplus L \oplus L)
\]
hence $\Phi \oplus \Phi^{-1} \in \RU_{\st}(\UM \oplus \UM, L \oplus L)$.
\end{proof}

\section{Extended $\ell$-monoids} \label{s:l-mon}

In this section we will define and study extended $\ell$-monoids. 

Recall that in the setting of Kreck \cite{kreck99} the monoid $l_{2q+1}(e)$ consists of equivalence classes of pairs $(\UH_{2k},V)$, where $V$ is a half-rank direct summand in $\UH_{2k}$. Crowley and Sixt \cite{crowley-sixt11} gave an equivalent definition in terms of quasi-formations: triples $(\UM, L, V)$ consisting of an (ordinary) quadratic form $\UM$, a lagrangian $L$ and a half-rank direct summand $V$. 

Building on these ideas, we define extended $\ell$-monoids. First we define quasi-formations (over an abelian group $Q$), using extended quadratic forms over $Q$ instead of ordinary quadratic forms. An extended $\ell$-monoid will then consist of equivalence classes of quasi-formations. As our goal is to define an obstruction in this $\ell$-monoid (see the definitions of $\theta_{W,F}$ and $[\theta_{W,F}]$ in Section \ref{s:obstr-def}), we define the equivalence relation in a way which will be compatible with the effect of surgery on $\theta_{W,F}$. Because of this, the relation used in \cite[Definition 3.8 (i)]{crowley-sixt11} will instead be a theorem in our setting (see Theorem \ref{thm:jacobi}). 

For most of this section we will work in a purely algebraic setting. First we define the $\ell$-monoid associated to a group $Q$ and a homomorphism $v : Q \rightarrow \Z_2$. We will consider two versions. In the first version elements are represented by quasi-formations such that the torsion subgroup of the abelian group underlying the extended quadratic form is isomorphic to a fixed torsion group $R$, we will denote this by $\ell^T_{2q+1}(Q,v,R)$. In the second version (which is a special case of the first), denoted by $\ell_{2q+1}(Q,v)$, the extended quadratic forms are free. The second version is more suitable for computations, as it has a natural monoid structure given by direct sum (see Proposition \ref{prop:lmonoid-zero}). However, the surgery obstruction that we will define in Section \ref{s:obstr-def} lives in an $\ell$-monoid of the first type. We will prove that the two versions are equivalent, ie.\ there is a natural bijection between them (see Proposition \ref{prop:ell-bij}), this will allow us to perform computations with the surgery obstruction. 

Next we prove the important identity of Theorem \ref{thm:jacobi}. Then we define the group $L_{2q+1}(Q,v)$, which is a generalisation of the classical L-group $L_{2q+1}$. It is the group of invertible elements of $\ell_{2q+1}(Q,v)$, and we show (using the fact that $L_{2q+1} \cong 0$) that if $Q$ is free, then $L_{2q+1}(Q,v) \cong 0$. We will end the section by defining the $\ell$-monoid associated to a space $B$ with a stable bundle $\xi$ over it.

\subsection{Definitions}

\begin{defin}
A \emph{quasi-formation} over an abelian group $Q$ is a triple
\[
(\UM; L, V)
\]
where
\begin{compactitem}
\item $\UM = (M, \lambda, \mu)$ is a metabolic extended quadratic form over $Q$
\item $L \leq M$ is a T-lagrangian
\item $V \leq M$ is a free half-rank direct summand
\end{compactitem}
\end{defin}

\begin{defin}
A quasi-formation $(\UM; L, V)$ is called \emph{elementary}, if $M = L \oplus V$ (as an internal direct sum).
\end{defin}

\begin{defin}
The \emph{standard rank-$2k$ elementary hyperbolic quasi-formation} is $\HH_{2k} = (\UH_{2k}; \{ 0 \} \times \Z^k, \Z^k \times \{ 0 \})$.
\end{defin}

\begin{defin}
Let $(\UM; L, V)$ and $(\UMp; L', V')$ be quasi-formations over an abelian group $Q$. They are \emph{isomorphic} if there is an isomorphism $h : \UM \rightarrow \UMp$ such that $L' = h(L)$ and $V' = h(V)$. 
\end{defin}

\begin{defin}
Let $(\UM; L, V)$ and $(\UMp; L', V')$ be quasi-formations over an abelian group $Q$. Their \emph{direct sum} is $(\UM; L, V) \oplus (\UMp; L', V') = (\UM \oplus \UMp; L \oplus L', V \oplus V')$. 
\end{defin}

\begin{defin}
Let $(\UM; L, V)$ and $(\UMp; L', V')$ be quasi-formations over an abelian group $Q$. They are \emph{stably isomorphic} if there are integers $k,l \geq 0$ such that  $(\UM; L, V) \oplus \HH_{2k} \cong (\UMp; L', V') \oplus \HH_{2l}$. Stable isomorphism will be denoted by ${} \si {}$. 
\end{defin}

\begin{defin} \label{def:gc}
For an abelian group $Q$ with a homomorphism $v : Q \rightarrow \Z_2$, a torsion abelian group $R$ and an even integer $q \geq 0$ let
\[
\gc^T_{2q+1}(Q,v,R)
\]
denote the class of quasi-formations $(\UM; L, V)$ over $Q$ such that $\UM = (M, \lambda, \mu)$ is geometric (with respect to $v$), full and $\Tor M \cong R$. 
\end{defin}

Note that the class $\gc^T_{2q+1}(Q,v,R)$ does not depend on the choice of $q$. 

\begin{defin}
For an abelian group $Q$ with a homomorphism $v : Q \rightarrow \Z_2$, a torsion abelian group $R$ and an even integer $q \geq 0$ let
\[
\gs^T_{2q+1}(Q,v,R) = \gc^T_{2q+1}(Q,v,R) / {\cong}
\]
denote the set of isomorphism classes of quasi-formations $(\UM; L, V)$ over $Q$ such that $\UM = (M, \lambda, \mu)$ is geometric (with respect to $v$), full and $\Tor M \cong R$. 
\end{defin}

\begin{defin}
For an abelian group $Q$ with a homomorphism $v : Q \rightarrow \Z_2$, a torsion abelian group $R$ and an even integer $q \geq 0$ let
\[
\s^T_{2q+1}(Q,v,R) = \gc^T_{2q+1}(Q,v,R) / {\si}
\] 
denote the set of stable isomorphism classes of quasi-formations $(\UM; L, V)$ over $Q$ such that $\UM = (M, \lambda, \mu)$ is geometric (with respect to $v$), full and $\Tor M \cong R$. 
\end{defin}

\begin{defin} \label{def:ell-t}
For an abelian group $Q$ with a homomorphism $v : Q \rightarrow \Z_2$, a torsion abelian group $R$ and an even integer $q \geq 0$ let
\[
\ell^T_{2q+1}(Q,v,R) = \gc^T_{2q+1}(Q,v,R) / {\sim}
\]
be the set of quasi-formations $(\UM; L, V)$ over $Q$ such that $\UM = (M, \lambda, \mu)$ is geometric (with respect to $v$), full and $\Tor M \cong R$ up to the equivalence relation $\sim$, which is generated by the following elementary equivalences: 
\begin{compactitem}
\item $(\UM; L, V) \sim (\UMp; L', V')$ if $(\UM; L, V) \si (\UMp; L', V')$
\item $(\UM \oplus \UH_2; L \oplus (\{ 0 \} \times \Z), V) \sim (\UM \oplus \UH_2; L \oplus (\Z \times \{ 0 \}), V)$, where $L$ is a T-lagrangian in $\UM$ and $V$ is a free half-rank direct summand in $\UM \oplus \UH_2$
\end{compactitem}

The equivalence class of $(\UM; L, V)$ will be denoted by $[\UM; L, V]$. 
\end{defin}

\begin{rem} \label{rem:eq-nat}
Notice that the equivalence relation $\sim$ depends ``naturally" on $(Q,v)$. To be able to state this precisely, we define the class $\gcb^T_{2q+1}(Q,v,R)$ of quasi-formations $(\UM; L, V)$ in the same way as $\gc^T_{2q+1}(Q,v,R)$, except that $\UM$ is not required to be full. Then $\sim$ can also be defined on $\gcb^T_{2q+1}(Q,v,R)$, and if $x,y \in \gc^T_{2q+1}(Q,v,R) \subseteq \gcb^T_{2q+1}(Q,v,R)$, then $x \sim y$ in $\gc^T_{2q+1}(Q,v,R)$ if and only if $x \sim y$ in $\gcb^T_{2q+1}(Q,v,R)$.

Pairs $(Q,v)$ are the objects in the slice category of abelian groups over $\Z_2$, where a morphism $(Q,v) \rightarrow (Q',v')$ is a homomorphism $f : Q \rightarrow Q'$ such that $v' \circ f = v$. Post-composition with such an $f$ turns an extended quadratic form over $Q$ into one over $Q'$, and determines a map $f_* : \gcb^T_{2q+1}(Q,v,R) \rightarrow \gcb^T_{2q+1}(Q',v',R)$. Then $\sim$ is natural in the sense that if $x \sim y \in \gcb^T_{2q+1}(Q,v,R)$, then $f_*(x) \sim f_*(y) \in \gcb^T_{2q+1}(Q',v',R)$.
\end{rem}

\begin{defin}
For an abelian group $Q$ with a homomorphism $v : Q \rightarrow \Z_2$ and an even integer $q \geq 0$ let
\[
\begin{aligned}
\gc_{2q+1}(Q,v) &= \gc^T_{2q+1}(Q,v,0) \text{\,,} \\
\gs_{2q+1}(Q,v) &= \gs^T_{2q+1}(Q,v,0) \text{\,,} \\
\s_{2q+1}(Q,v) &= \s^T_{2q+1}(Q,v,0) \text{\,,} \\
\ell_{2q+1}(Q,v) &= \ell^T_{2q+1}(Q,v,0) \text{\,.} 
\end{aligned}
\]
\end{defin}

To prove the equivalence of $\ell^T_{2q+1}(Q,v,R)$ and $\ell_{2q+1}(Q,v)$ we will need the following notation: 

\begin{defin} \label{def:quot}
Suppose that $\UN = (N, \lambda_N, \mu_N)$ is an extended quadratic form over an abelian group $Q$ such that $\mu_N \big| _{\Tor N} = 0$ (note that this condition automatically holds if $Q$ is free). Let $\bar{N}$ denote $N / \Tor N$ and $\pi_N : N \rightarrow N / \Tor N$ the quotient map. Let $\bar{\lambda}_N$ and $\bar{\mu}_N$ denote the induced bilinear function and homomorphism on $\bar{N}$, and let $\UbN = (\bar{N}, \bar{\lambda}_N, \bar{\mu}_N)$. If $X \leq N$ is a subgroup, then $\bar{X}$ will denote the subgroup $\pi_N(X) \leq N / \Tor N$.
\end{defin}

Note that $\pi_N : \UN \rightarrow \UbN$ is a morphism of extended quadratic forms. If $\UN$ is nonsingular, metabolic, full or geometric (with respect to some homomorphism), then the same property also holds for $\UbN$.

\begin{prop} \label{prop:ell-bij}
For any abelian group $Q$ with a homomorphism $v : Q \rightarrow \Z_2$, torsion abelian group $R$ and even integer $q \geq 0$, there is a canonical bijection between $\ell_{2q+1}(Q,v)$ and $\ell^T_{2q+1}(Q,v,R)$.
\end{prop}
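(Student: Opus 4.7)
The plan is to construct inverse maps $\pi^*: \ell^T_{2q+1}(Q,v,R) \rightarrow \ell_{2q+1}(Q,v)$ and $\iota: \ell_{2q+1}(Q,v) \rightarrow \ell^T_{2q+1}(Q,v,R)$. For $\pi^*$ I would send $[\UM; L, V]$ to $[\UbM; \bar{L}, \bar{V}]$ using the quotient-by-torsion construction of Definition \ref{def:quot}. This makes sense because the T-lagrangian condition forces $\Tor M \subseteq L$ and $\mu \big| _L = 0$, hence $\mu \big| _{\Tor M} = 0$, so $\UbM$ is defined. Moreover $\bar{L} = L/\Tor M$ is free of half-rank and forms a lagrangian in $\UbM$; and since $V$ is free, $V \cap \Tor M = 0$, so the image $\bar{V}$ is free of the same rank and a direct summand in $\bar{M}$. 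Well-definedness on equivalence classes is routine, since the quotient is compatible with direct sums and $\UH_2$ is already torsion-free so the hyperbolic-swap equivalence is preserved.

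For $\iota$ I would use the torsion-only quasi-formation $(\UM_R; R, 0)$, where $\UM_R = (R, 0, 0)$ has underlying group $R$ with trivial bilinear form and trivial $\mu$. Since $R$ is torsion, $\UM_R$ is nonsingular, metabolic (with T-lagrangian $R$ itself, as the rank is $0$), and geometric. Although $\UM_R$ is not full when $Q \neq 0$, the sum $\UN \oplus \UM_R$ remains full provided $\UN$ is, so the assignment $[\UN; L', V'] \mapsto [\UN \oplus \UM_R; L' \oplus R, V']$ gives a well-defined map into $\ell^T_{2q+1}(Q,v,R)$. The composition $\pi^* \circ \iota$ is then the identity since applying the quotient to $\UN \oplus \UM_R$ kills the torsion $R$-summand and recovers the original quasi-formation.

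The main substantive step is showing $\iota \circ \pi^* = \id$, which reduces to exhibiting, for each $(\UM; L, V) \in \gc^T_{2q+1}(Q,v,R)$, an isomorphism of quasi-formations
\[
(\UM; L, V) \cong (\UbM \oplus \UM_R; \bar{L} \oplus R, \bar{V}).
\]
Since $\bar{M}$ is free there exists a splitting $M = N \oplus \Tor M$, and the key technical point is choosing $N$ so that $V \subseteq N$. I would do this by writing $M = V \oplus W$ (valid since $V$ is a free direct summand), observing that $\Tor M \subseteq W$, decomposing $W = W_0 \oplus \Tor M$, and setting $N = V \oplus W_0$. Under this splitting $\lambda$ is block-diagonal (because $\lambda$ vanishes on $\Tor M \times M$, as $\Z$ is torsion-free) and $\mu$ is block-diagonal (because $\mu \big| _{\Tor M} = 0$), yielding the decomposition $\UM \cong \UbM \oplus \UM_R$ as extended quadratic forms. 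Since $L \supseteq \Tor M$ we have $L = (L \cap N) \oplus \Tor M$, which corresponds to $\bar{L} \oplus R$; and $V \subseteq N$ corresponds to $\bar{V} \oplus 0$. The hard part is precisely this arrangement of the splitting so that $V$ sits inside the complement of $\Tor M$; once this is set up, the remaining verifications are direct.
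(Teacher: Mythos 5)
Your proposal is correct and follows essentially the same route as the paper: the same two maps (quotient by torsion, and direct sum with the rank-zero torsion form $(R,0,0)$), with the only substantive step being a splitting $M \cong \bar{M} \oplus \Tor M$ chosen so that $V$ lies in the free complement, after which $L = (L \cap N) \oplus \Tor M$ follows from the T-lagrangian condition. The paper phrases this splitting as a right inverse of the quotient map whose image contains $V$, which is the same construction you give explicitly.
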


\begin{proof}
First we define a function $F : \ell_{2q+1}(Q,v) \rightarrow \ell^T_{2q+1}(Q,v,R)$. Let $\UR = (R, 0, 0)$, it is a metabolic extended quadratic form of rank $0$ over any group, geometric with respect to any homomorphism. If $(\UM; L, V) \in \gc_{2q+1}(Q,v)$, then $(\UM; L, V) \oplus (\UR; R, 0) \in \gc^T_{2q+1}(Q,v,R)$, let $F([\UM; L, V]) = [\UM \oplus \UR; L \oplus R, V \oplus 0]$.

Next we define a function $G : \ell^T_{2q+1}(Q,v,R) \rightarrow \ell_{2q+1}(Q,v)$. Suppose that $(\UN; K, W) \in \gc^T_{2q+1}(Q,v,R)$, where $\UN = (N, \lambda, \mu)$. Since $\mu \big| _{\Tor N} = 0$ (because $\UN$ has a T-lagrangian), we can define $\UbN$. Moreover, $\bar{K} = K / \Tor N$ is a lagrangian in $\UbN$, and $\bar{W} = (W \oplus \Tor N) / \Tor N$ is a half-rank direct summand, hence $(\UbN; \bar{K}, \bar{W}) \in \gc_{2q+1}(Q,v)$. Let $G([\UN; K, W])=[\UbN; \bar{K}, \bar{W}]$. 

It is immediate from the definitions that $G \circ F = \id_{\ell_{2q+1}(Q,v)}$. For the other composition, let $[\UN; K, W] \in \ell^T_{2q+1}(Q,v,R)$, then $F \circ G([\UN; K, W]) = [\UbN \oplus \UR; \bar{K} \oplus R, \bar{W} \oplus 0]$. Since $W$ is a free direct summand, $\pi_N$ has a right inverse $I_0 : \bar{N} \rightarrow N$ such that $W \leq \Image I_0$. By choosing an isomorphism $\Tor N \cong R$ we can extend $I_0$ to an isomorphism $I : \bar{N} \oplus R \rightarrow N$. This is also an isomorphism $\UbN \oplus \UR \cong \UN$ of extended quadratic forms, and by construction $I(\bar{W} \oplus 0) = I_0(\bar{W})=W$. Since $K$ is a T-lagrangian, we have $K = (K \cap \Image I_0) \oplus \Tor N$, so $I(\bar{K} \oplus R) = I_0(\bar{K}) \oplus \Tor N = (K \cap \Image I_0) \oplus \Tor N = K$. This shows that $[\UbN \oplus \UR; \bar{K} \oplus R, \bar{W} \oplus 0] = [\UN; K, W]$, hence $F \circ G = \id_{\ell^T_{2q+1}(Q,v,R)}$.
\end{proof}

\begin{rem}
The same proof shows that there are canonical bijections between $\gs_{2q+1}(Q,v)$ and $\gs^T_{2q+1}(Q,v,R)$ and between $\s_{2q+1}(Q,v)$ and $\s^T_{2q+1}(Q,v,R)$.
\end{rem}

Recall that a quasi-formation $(\UM; L, V)$ is called elementary if $M = L \oplus V$. This property is invariant under stable isomorphism (because $(\UM; L, V)$ is elementary if and only if $(\UM; L, V) \oplus \HH_{2k}$ is elementary, for any $k$). Therefore an element of $\gs^T_{2q+1}(Q,v,R)$ or $\s^T_{2q+1}(Q,v,R)$ has an elementary representative if and only if all of its representatives are elementary.

\begin{defin}
An element of $\gs^T_{2q+1}(Q,v,R)$ or $\s^T_{2q+1}(Q,v,R)$ is called \emph{elementary}, if its representatives are elementary.
\end{defin}

In contrast to the above, an elementary quasi-formation may be equivalent to a non-elementary one under the equivalence relation $\sim$. 

\begin{defin}
An element of $\ell^T_{2q+1}(Q,v,R)$ is called \emph{elementary}, if it has an elementary representative. 
\end{defin}

\begin{rem} \label{rem:elem-bij}
Under the bijection of Proposition \ref{prop:ell-bij} elementary elements of $\ell^T_{2q+1}(Q,v,R)$ correspond to elementary elements of $\ell_{2q+1}(Q,v) = \ell^T_{2q+1}(Q,v,0)$. 
\end{rem}

\subsection{The monoid $\ell_{2q+1}(Q,v)$}

From now on we will focus on the free version $\ell_{2q+1}(Q,v)$. We show that it has a natural monoid structure and, using the results on $\RU_{\st}(\UM,L)$ from Section \ref{s:ru}, we prove Theorem \ref{thm:jacobi}.

\begin{lem} \label{lem:ru-equiv}
Suppose that $(\UM; L, V) \in \gc_{2q+1}(Q,v)$. If $\Psi \in \RU(\UM,L)$, then we have $(\UM; L, V) \sim (\UM; L, \Psi(V))$.
\end{lem}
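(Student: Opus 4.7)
The plan is to reduce to the case where $\Psi$ is a single generator of $\RU(\UM,L)$. Since $\sim$ is an equivalence relation, if the conclusion $(\UM;L,V) \sim (\UM;L,\Psi V)$ holds for every $V$ whenever $\Psi \in \{ \Psi_1, \Psi_2 \}$, then substituting $V \mapsto \Psi_2 V$ into the case of $\Psi_1$ and chaining yields the conclusion for $\Psi_1 \Psi_2$. So by induction on word length it suffices to handle one generator (or its inverse) at a time. Type-1 generators of Definition \ref{def:ru} are closed under inversion, and type-2 generators are involutions because $\sigma^2 = \id$, so inverses need no separate treatment.

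For a type-1 generator $f \in \Aut(\UM)$ with $f(L)=L$, the map $f$ itself is an isomorphism of quasi-formations $(\UM;L,V) \to (\UM;L,f(V))$, giving $(\UM;L,V) \si (\UM;L,f(V))$ and hence $\sim$.

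For a type-2 generator $\Psi = I^{-1} \circ (\sigma \oplus \id_{M'}) \circ I$, I would first transport along $I$: it is an isomorphism of quasi-formations $(\UM;L,V) \cong (\UH_2 \oplus \UMp; (\{0\}\times\Z) \oplus L', V')$ where $V' = I(V)$, and likewise $(\UM;L,\Psi V)$ corresponds to $(\UH_2 \oplus \UMp; (\{0\}\times\Z) \oplus L', (\sigma \oplus \id)(V'))$. The automorphism $\sigma \oplus \id_{M'}$ of $\UH_2 \oplus \UMp$ then provides an isomorphism of quasi-formations
\[
(\UH_2 \oplus \UMp; (\{0\}\times\Z) \oplus L', V') \cong (\UH_2 \oplus \UMp; (\Z\times\{0\}) \oplus L', (\sigma \oplus \id)(V')),
\]
and after reordering summands by the flip $\UH_2 \oplus \UMp \cong \UMp \oplus \UH_2$ the second elementary equivalence of Definition \ref{def:ell-t}, applied to $\UMp$, $L'$ and the image of $(\sigma \oplus \id)(V')$, switches $(\Z\times\{0\})$ back to $(\{0\}\times\Z)$ in the T-lagrangian while leaving the free half-rank summand fixed. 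Flipping back closes the chain.

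The main obstacle is little more than bookkeeping: the second elementary equivalence of Definition \ref{def:ell-t} places $\UH_2$ on the right of the direct sum, while the definition of $\RU(\UM,L)$ places it on the left, so the flip isomorphism is needed to line them up; and one must check that $V'$ and $(\sigma \oplus \id)(V')$ remain free half-rank direct summands, which is automatic because $\sigma \oplus \id_{M'}$ is an automorphism of the ambient extended quadratic form.
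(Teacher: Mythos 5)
Your proposal is correct and follows essentially the same route as the paper: reduce to generators, handle $\Psi(L)=L$ by noting $\Psi$ is itself an isomorphism of quasi-formations, and handle the second type by conjugating with $I$ and invoking the second elementary equivalence of Definition \ref{def:ell-t}. The only cosmetic difference is that the paper uses the single isomorphism $I \circ \Psi$ where you split it into $I$ followed by $\sigma \oplus \id_{M'}$ (and you spell out the generator reduction and the summand-reordering bookkeeping, which the paper leaves implicit).
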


\begin{proof}
It is enough to prove the statement when $\Psi$ is a generator of $\RU(\UM,L)$. 

If $\Psi(L) = L$, then $\Psi$ is an isomorphism between $(\UM; L, V)$ and $(\UM; L, \Psi(V))$.

If there is an isomorphism $I : \UM \rightarrow \UH_2 \oplus \UMp$ such that $I(L) = ( \{0 \} \times \Z) \oplus L'$ for some lagrangian $L'$ in $\UMp$ and $\Psi = I^{-1} \circ (\sigma \oplus \id_{M'}) \circ I$, then $I \circ \Psi(L) = (\sigma \oplus \id_{M'}) \circ I(L) = (\sigma \oplus \id_{M'})(( \{0 \} \times \Z) \oplus L') = (\Z \times \{0 \} ) \oplus L'$. So 
\[
\begin{aligned}
(\UM; L, V) &\cong (\UH_2 \oplus \UMp; I \circ \Psi(L), I \circ \Psi(V)) = \\
 &= (\UH_2 \oplus \UMp; (\Z \times \{0 \} ) \oplus L', I \circ \Psi(V)) \sim \\
 &\sim (\UH_2 \oplus \UMp; ( \{0 \} \times \Z) \oplus L', I \circ \Psi(V)) = \\
 &= (\UH_2 \oplus \UMp; I(L), I \circ \Psi(V)) \cong \\
 &\cong (\UM; L, \Psi(V))
\end{aligned}
\]
where we used the isomorphisms $I \circ \Psi$ and $I$. Therefore the statement holds for every generator of $\RU(\UM,L)$.
\end{proof}

\begin{lem} \label{lem:hyp-zero}
If $(\UM; L, V) \in \gc_{2q+1}(Q,v)$ and $k \geq 0$, then we have $(\UM; L, V) \sim (\UM; L, V) \oplus (\UH_{2k}; \{ 0 \} \times \Z^k, \{ 0 \} \times \Z^k)$. 
\end{lem}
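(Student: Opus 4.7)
The plan is to first use the stable isomorphism generator of $\sim$ to absorb a copy of $\HH_{2k}$, and then use Lemma \ref{lem:ru-equiv} to modify the $V$-part of the resulting quasi-formation. By the first elementary equivalence in Definition \ref{def:ell-t},
\[
(\UM; L, V) \sim (\UM; L, V) \oplus \HH_{2k} = (\UM \oplus \UH_{2k}; L \oplus (\{0\}\times\Z^k), V \oplus (\Z^k\times\{0\}))\,.
\]
So it suffices to produce an equivalence
\[
(\UM \oplus \UH_{2k}; L \oplus (\{0\}\times\Z^k), V \oplus (\Z^k\times\{0\})) \sim (\UM \oplus \UH_{2k}; L \oplus (\{0\}\times\Z^k), V \oplus (\{0\}\times\Z^k))\,,
\]
i.e.\ to replace the $V$-half of the hyperbolic summand while fixing everything else.

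The candidate automorphism realising this change is $\Psi := \id_{\UM} \oplus \sigma_{2k}$, where $\sigma_{2k} : \Z^{2k} \to \Z^{2k}$ is the swap $(a,b) \mapsto (b,a)$ for $a,b \in \Z^k$. A direct computation shows $\sigma_{2k}$ preserves the form $\bigl[ \begin{smallmatrix} 0 & I_k \\ I_k & 0 \end{smallmatrix} \bigr]$ (and the trivial $\mu$), so $\Psi \in \Aut(\UM \oplus \UH_{2k})$, and clearly $\Psi(V \oplus (\Z^k \times \{0\})) = V \oplus (\{0\} \times \Z^k)$. By Lemma \ref{lem:ru-equiv} it is then enough to check that $\Psi$ lies in $\RU(\UM \oplus \UH_{2k}, L \oplus (\{0\}\times\Z^k))$.

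The main subtlety is that $\Psi$ does \emph{not} preserve $L \oplus (\{0\}\times\Z^k)$ (since $\sigma_{2k}$ interchanges $\{0\}\times\Z^k$ and $\Z^k\times\{0\}$), so $\Psi$ is not a generator of the first type in Definition \ref{def:ru}. Instead, one rearranges the basis to decompose $\UH_{2k} \cong \UH_2^{(1)} \oplus \cdots \oplus \UH_2^{(k)}$, under which $\{0\}\times\Z^k$ becomes $\bigoplus_{i=1}^k (\{0\}\times\Z)^{(i)}$ and $\sigma_{2k}$ becomes $\bigoplus_{i=1}^k \sigma$. For each $i$ the factor-rearranging isomorphism
\[
I_i : \UM \oplus \UH_2^{(1)} \oplus \cdots \oplus \UH_2^{(k)} \longrightarrow \UH_2^{(i)} \oplus \bigl(\UM \oplus \bigoplus_{j \neq i} \UH_2^{(j)}\bigr)
\]
sends $L \oplus \bigoplus_j (\{0\}\times\Z)^{(j)}$ to $(\{0\}\times\Z)^{(i)} \oplus \bigl(L \oplus \bigoplus_{j\neq i} (\{0\}\times\Z)^{(j)}\bigr)$, i.e.\ meets the hypothesis of the second generator class in Definition \ref{def:ru}. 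Hence $\Psi_i := I_i^{-1} \circ (\sigma \oplus \id) \circ I_i \in \RU(\UM \oplus \UH_{2k}, L \oplus (\{0\}\times\Z^k))$, and $\Psi_i$ acts as $\sigma$ on $\UH_2^{(i)}$ and as the identity on the other summands. The composition $\Psi_1 \circ \Psi_2 \circ \cdots \circ \Psi_k$ equals $\Psi$, establishing $\Psi \in \RU(\UM \oplus \UH_{2k}, L \oplus (\{0\}\times\Z^k))$ and completing the proof.

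The only nontrivial step is the recognition that $\Psi$ must be built from the second type of $\RU$-generators acting on the individual $\UH_2$-summands, rather than being a single first-type generator; everything else is straightforward bookkeeping with the definitions.
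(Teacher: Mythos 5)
Your proof is correct and essentially the same as the paper's: both first absorb $\HH_{2k}$ via stabilisation, then apply Lemma \ref{lem:ru-equiv} to the flip $\id_M \oplus \sigma_{2k}$, which is claimed to lie in $\RU(\UM \oplus \UH_{2k}, L \oplus (\{0\}\times\Z^k))$. The only difference is that the paper cites the assertion from Proposition \ref{prop:ru-diag-eq} without elaboration, whereas you spell out the verification by decomposing $\sigma_{2k}$ into $k$ single flips, each realised as a second-type generator of $\RU$ via a factor-rearranging isomorphism; this is exactly the detail the paper leaves implicit.
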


\begin{proof}
As in Proposition \ref{prop:ru-diag-eq}, if $\Sigma$ denotes the flip map, then $\id_M \oplus \Sigma \in \RU(\UM \oplus \UH_{2k}, L \oplus (\{0\} \times \Z^k))$ and $(\id_M \oplus \Sigma)(V \oplus (\Z^k \times \{0\})) = V \oplus (\{0\} \times \Z^k)$. Hence by Lemma \ref{lem:ru-equiv}, we have $(\UM; L, V) \sim (\UM; L, V) \oplus (\UH_{2k}; \{ 0 \} \times \Z^k, \Z^k \times \{ 0 \}) \sim (\UM; L, V) \oplus (\UH_{2k}; \{ 0 \} \times \Z^k, \{ 0 \} \times \Z^k)$. 
\end{proof}

\begin{prop} \label{prop:lmonoid-zero}
a) Direct sum of quasi-formations induces an operation $\oplus$ on $\ell_{2q+1}(Q,v)$, and $(\ell_{2q+1}(Q,v), \oplus)$ is a commutative monoid. 

b) If $(\UM; L, L) \in \gc_{2q+1}(Q,v)$, then $[\UM; L, L]$ is the zero element in $\ell_{2q+1}(Q,v)$.
\end{prop}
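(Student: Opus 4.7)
The plan is to prove parts (a) and (b) in turn.

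Part (a) is essentially formal. To show $\oplus$ descends to $\ell_{2q+1}(Q,v)$, it suffices to check that both kinds of generating elementary equivalences (stable isomorphism and the $\UH_2$-switch on a single summand) remain valid when direct-summed with a fixed quasi-formation; each check is immediate after an appropriate reordering of summands. Commutativity and associativity of $\oplus$ then follow from the corresponding isomorphisms of direct sums of quasi-formations. Existence of an identity element is supplied by part (b) below.

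For part (b), I would verify the identity axiom directly: $[\UM;L,L] \oplus x = x$ for every $x \in \ell_{2q+1}(Q,v)$. This simultaneously shows that $[\UM;L,L]$ is the zero of the monoid and that the class $[\UM;L,L]$ is independent of the representative $(\UM;L,L)$. As a first illustration and sanity check, in the case $x = [\UM;L,L]$ one gets idempotence $[\UM;L,L] \oplus [\UM;L,L] = [\UM;L,L]$ at once: Lemma \ref{lem:hyp-zero} applied with $V = L$ and $k = \rk L$ gives
\[
(\UM;L,L) \sim (\UM \oplus \UH_{2\rk L};\, L \oplus (\{0\}\times\Z^{\rk L}),\, L \oplus (\{0\}\times\Z^{\rk L})),
\]
and the isomorphism $I : \UM \oplus \UM \to \UM \oplus \UH_{2\rk L}$ from Lemma \ref{lem:met-isom}, which satisfies $I(L \oplus L) = L \oplus (\{0\}\times\Z^{\rk L})$, identifies the right-hand side with $(\UM;L,L) \oplus (\UM;L,L)$.

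For the general case $x = [\UMp;L',V']$, the goal is $(\UM \oplus \UMp;\, L \oplus L',\, L \oplus V') \sim (\UMp; L', V')$. The plan is first to apply stable isomorphism on both sides to bring them into a common ambient form $\UMp \oplus \UH_{2m}$; then to invoke Theorem \ref{thm:fund-met} a) on the two full geometric metabolic forms $\UM \oplus \UMp$ (with lagrangian $L \oplus L'$) and $\UMp$ (with lagrangian $L'$) to produce an isomorphism of ambient forms aligning their lagrangians; and finally to use Lemma \ref{lem:ru-equiv} together with the stable $\RU$-apparatus of Section \ref{s:ru}, in particular the elements $\Phi \oplus \Phi^{-1} \in \RU_{\st}$ provided by Theorem \ref{thm:ru-wall}, to transport the remaining two $V$-components into alignment.

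The main obstacle is this last alignment of $V$-components. After Theorem \ref{thm:fund-met} a) aligns the lagrangians in the ambient form $\UMp \oplus \UH_{2m}$, the two $V$-components are both half-rank direct summands, related through an isomorphism that need not respect them. The required $\RU$-automorphism bridging the two must be assembled from the two kinds of elementary generators of $\RU$ together with Theorem \ref{thm:ru-wall}, in a manner parallel to the proof of Theorem \ref{thm:fund-met} a) itself, but tracking the $V$-component alongside the lagrangian throughout.
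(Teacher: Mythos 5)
Your part (a) and the idempotent special case are fine, but the general absorption step --- the one you yourself flag as ``the main obstacle'' --- is exactly where the proof has to happen, and it is not carried out. Worse, the tool you propose for it is circular as stated: to move the $V$-component by an element of $\RU_{\st}$ inside the equivalence relation $\sim$ you need Lemma \ref{lem:ru-st-equiv} (not just Lemma \ref{lem:ru-equiv}, since Theorem \ref{thm:ru-wall} only gives membership in $\RU_{\st}$, i.e.\ membership in $\RU$ after adding a summand $(\UN;K,K)$ that then has to be discarded), and Lemma \ref{lem:ru-st-equiv} is proved via Lemma \ref{lem:nkk-zero}, which in turn rests on Proposition \ref{prop:lmonoid-zero} b) --- the statement you are trying to prove. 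Independently of the circularity, it is not explained how an $\RU$- or $\RU_{\st}$-automorphism carrying one half-rank summand to the other would be constructed: Theorem \ref{thm:fund-met} a) gives you control of the lagrangians only, and ``tracking the $V$-component alongside the lagrangian'' through its proof is precisely the nontrivial content that is missing. There is also a smaller omission: for part (a) you need the zero element to exist, i.e.\ that $\gc_{2q+1}(Q,v)$ actually contains a full, geometric quasi-formation of the form $(\UM;L,L)$; since $\HH_{2k}$ is not full when $Q\neq 0$, this requires an explicit construction, which your proposal does not supply.

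The paper sidesteps the $V$-alignment problem entirely by splitting the argument differently. It first constructs one full geometric $(\UM;L,L)$ by hand, then shows that any two quasi-formations of the form $(\UM;L,L)$, $(\UMp;L',L')$ are $\sim$-equivalent --- this uses only Theorem \ref{thm:fund-met} a) and Lemma \ref{lem:hyp-zero}, no $\RU$-theory, because here $V=L$ is carried along with the lagrangian for free. Having thus a single candidate class $\alpha$, it verifies absorption only in the special case $(\UM;L,V)\oplus(\UM;L,L)$ where the zero representative has the \emph{same} ambient form and lagrangian as the given element; there the explicit isomorphism $I$ of Lemma \ref{lem:met-isom} can be checked in coordinates to satisfy $I(V\oplus L)=V\oplus(\{0\}\times\Z^k)$ as well as $I(L\oplus L)=L\oplus(\{0\}\times\Z^k)$, so Lemma \ref{lem:hyp-zero} finishes the computation. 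If you want to rescue your route, the fix is to adopt this reduction: prove absorption only against the matched representative via Lemma \ref{lem:met-isom}, and let the uniqueness of the zero class (Step 2) do the rest, rather than attempting a direct comparison of $(\UM\oplus\UMp;L\oplus L',L\oplus V')$ with $(\UMp;L',V')$.
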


\begin{proof}
It is clear that there is a well-defined induced operation $\oplus$ on $\ell_{2q+1}(Q,v)$, which is associative and commutative (cf.\ \cite[Proposition 4.2.18]{csn-thesis}). We need to show that it has a zero element. (Note that $\UH_{2k}$ is not full if $Q \neq 0$, so $\HH_{2k} \not \in \gc_{2q+1}(Q,v)$.) We will do this in 3 steps. 

First, we show that there is a quasi-formation of the form $(\UM; L, L)$ in the class $\gc_{2q+1}(Q,v)$ (in particular $\gc_{2q+1}(Q,v)$ and $\ell_{2q+1}(Q,v)$ are non-empty). Let $\mu_0 : A \rightarrow Q$ be a surjective homomorphism from some free abelian group $A$ to $Q$. Let $k = \rk A$, and let $f_1, f_2, \ldots , f_k$ be a basis of $A$. Let $L$ be another rank-$k$ free abelian group with basis $e_1, e_2, \ldots , e_k$. Let $M = L \oplus A$. Let $\lambda : M \times M \rightarrow \Z$ be the bilinear function given by the block matrix
\[
\begin{bmatrix}
0 & I_k \\
I_k & D
\end{bmatrix}
\]
(in the basis $e_1, e_2, \ldots , e_k, f_1, f_2, \ldots , f_k$), where $D$ is a diagonal matrix such that the diagonal entry corresponding to $f_i$ is $0$ if $v \circ \mu_0(f_i)=0$ and $1$ if $v \circ \mu_0(f_i)=1$ (hence $\varrho_2 \circ \lambda(f_i,f_i) = v \circ \mu_0(f_i)$). Let $\mu : M \rightarrow Q$ be the homomorphism such that $\mu \big| _L = 0$ and $\mu \big| _A = \mu_0$. Then $\UM = (M, \lambda, \mu)$ is an extended quadratic form over $Q$, and it is free and full. It is also geometric, because for any $x = \sum_{i=1}^k r_ie_i + \sum_{i=1}^k s_if_i \in M$ we have 
\begin{multline*}
\varrho_2 \circ \lambda(x,x) = \varrho_2 \left( 2\sum_{i=1}^k r_is_i + \sum_{i=1}^k s_i^2 \lambda(f_i,f_i) \right) = \sum_{i=1}^k \varrho_2(s_i^2) \varrho_2 \circ \lambda(f_i,f_i) = \\ 
= \sum_{i=1}^k \varrho_2(s_i) \varrho_2 \circ \lambda(f_i,f_i) = \sum_{i=1}^k \varrho_2(s_i) v \circ \mu_0(f_i) = \sum_{i=1}^k v(s_i\mu_0(f_i)) = v \circ \mu(x)
\end{multline*}
Furthermore, $L$ is a lagrangian in $M$, therefore $(\UM; L, L) \in \gc_{2q+1}(Q,v)$. 

Second, we show that if $(\UM; L, L), (\UMp; L', L') \in \gc_{2q+1}(Q,v)$, then $(\UM; L, L) \sim (\UMp; L', L')$. By Theorem \ref{thm:fund-met} a) there is an isomorphism $I : \UM \oplus \UH_{2k} \rightarrow \UMp \oplus \UH_{2l}$ such that $I(L \oplus (\{ 0 \} \times \Z^k)) = L' \oplus (\{ 0 \} \times \Z^l)$ for some $k,l \geq 0$. So we have
\[
\begin{aligned}
(\UM; L, L) &\sim (\UM; L, L) \oplus (\UH_{2k}; \{ 0 \} \times \Z^k, \{ 0 \} \times \Z^k) = \\
 &= (\UM \oplus \UH_{2k}; L \oplus (\{ 0 \} \times \Z^k), L \oplus (\{ 0 \} \times \Z^k)) \cong \\
 &\cong (\UMp \oplus \UH_{2l}; L' \oplus (\{ 0 \} \times \Z^l), L' \oplus (\{ 0 \} \times \Z^l)) = \\
 &= (\UMp; L', L') \oplus (\UH_{2l}; \{ 0 \} \times \Z^l, \{ 0 \} \times \Z^l) \sim \\
 &\sim (\UMp; L', L')
\end{aligned}
\]
using Lemma \ref{lem:hyp-zero} and the isomorphism $I$. Therefore all quasi-formations of the form $(\UM; L, L)$ are in the same equivalence class, which we will denote by $\alpha \in \ell_{2q+1}(Q,v)$.

Third, we show that $\alpha$ is a zero element in $\ell_{2q+1}(Q,v)$. Suppose that $(\UM; L, V) \in \gc_{2q+1}(Q,v)$, then $(\UM; L, L) \in \gc_{2q+1}(Q,v)$ and $[\UM; L, L] = \alpha$. Let $I : \UM \oplus \UM \rightarrow \UM \oplus \UH_{2k}$ be the isomorphism constructed in the proof of Lemma \ref{lem:met-isom}, satisfying $I(L \oplus L) = L \oplus (\{ 0 \} \times \Z^k)$. We also define the basis elements $e_i,f_i,\bar{e}_i,\bar{f}_i,a_i,b_i$ and the integers $d_i$ as in Lemma \ref{lem:met-isom}. Let $v_1, v_2, \ldots , v_k$ be a basis of $V$, then $v_i = \sum_{j=1}^k (p_{ij}e_j + q_{ij}f_j)$ for some $p_{ij}, q_{ij} \in \Z$. We have $I(v_i) = \sum_{j=1}^k (p_{ij}I(e_j) + q_{ij}I(f_j)) = \sum_{j=1}^k (p_{ij}(e_j+b_j) + q_{ij}(f_j+d_jb_j)) = \sum_{j=1}^k (p_{ij}e_j + q_{ij}f_j) + \sum_{j=1}^k (p_{ij} + q_{ij}d_j)b_j = v_i + \sum_{j=1}^k (p_{ij} + q_{ij}d_j)b_j$ and $I(\bar{e}_i)=-b_i$. Thus 
\begin{multline*}
I(V \oplus L) = I(\left< v_1, \ldots , v_k, \bar{e}_1, \ldots , \bar{e}_k \right>) = \\
= \left< v_1 + \sum_{j=1}^k (p_{1j} + q_{1j}d_j)b_j, \ldots , v_k + \sum_{j=1}^k (p_{kj} + q_{kj}d_j)b_j, -b_1, \ldots , -b_k \right> = \\
= \left< v_1, \ldots , v_k, b_1, \ldots , b_k \right> = V \oplus (\{ 0 \} \times \Z^k) \text{\,.}
\end{multline*}
Using the isomorphism $I$ and Lemma \ref{lem:hyp-zero} we get the following: 
\[
\begin{aligned}
(\UM; L, V) \oplus (\UM; L, L) &= (\UM \oplus \UM; L \oplus L, V \oplus L) \cong \\
 &\cong (\UM \oplus \UH_{2k}; L \oplus (\{ 0 \} \times \Z^k), V \oplus (\{ 0 \} \times \Z^k)) = \\
 &= (\UM; L, V) \oplus (\UH_{2k}; \{ 0 \} \times \Z^k, \{ 0 \} \times \Z^k) \sim \\
 &\sim (\UM; L, V)
\end{aligned}
\]
This means that $[\UM; L, V] \oplus \alpha = [\UM; L, V]$. Since this is true for any $[\UM; L, V] \in \ell_{2q+1}(Q,v)$, $\alpha$ is a zero element (hence \textit{the} zero element) in $\ell_{2q+1}(Q,v)$. 
\end{proof}

\begin{rem}
The direct sum of elementary quasi-formations is obviously elementary, hence the elementary elements of the monoid $\ell_{2q+1}(Q,v)$ form a subsemigroup, which we denote by $\varepsilon\ell_{2q+1}(Q,v)$, following Crowley-Sixt \cite{crowley-sixt11}. In general $\varepsilon\ell_{2q+1}(Q,v)$ is not a submonoid of $\ell_{2q+1}(Q,v)$, because if $(\UM; L, V)$ is equivalent to an elementary quasi-formation, then $\mu(V) = Q$, therefore $[\UM; L, L] \not \in \varepsilon\ell_{2q+1}(Q,v)$ if $Q \neq 0$.  
\end{rem}

\begin{rem} \label{rem:ell-nat}
Using Remark \ref{rem:eq-nat}, we can see that $\ell_{2q+1}$ is a functor from the slice category of abelian groups over $\Z_2$ to commutative monoids. First, we define the monoid $\bar{\ell}_{2q+1}(Q,v) = \gcb_{2q+1}(Q,v) / \sim$, the analogue of $\ell_{2q+1}(Q,v)$ allowing not necessarily full forms, which has zero element $[\HH_{2k}]$. Then $\ell_{2q+1}(Q,v)$ is a subsemigroup of $\bar{\ell}_{2q+1}(Q,v)$. Its inclusion has a canonical left inverse $\bar{\ell}_{2q+1}(Q,v) \rightarrow \ell_{2q+1}(Q,v)$ given by $[\UN; K, V] \mapsto [(\UN; K, V) \oplus (\UM; L, L)]$ for any representative $(\UM; L, L)$ of the zero element of $\ell_{2q+1}(Q,v)$ (where $\UN \oplus \UM$ is full, because $\UM$ is full), which is a monoid morphism.

If $f : Q \rightarrow Q'$ is a morphism $(Q,v) \rightarrow (Q',v')$ in the slice category, then, by the naturality of $\sim$, the map $f_* : \gcb_{2q+1}(Q,v) \rightarrow \gcb_{2q+1}(Q',v')$ induces a map $\bar{\ell}_{2q+1}(Q,v) \rightarrow \bar{\ell}_{2q+1}(Q',v')$ between the quotients. By pre-composing this map with the inclusion $\ell_{2q+1}(Q,v) \rightarrow \bar{\ell}_{2q+1}(Q,v)$ and post-composing with $\bar{\ell}_{2q+1}(Q',v') \rightarrow \ell_{2q+1}(Q',v')$ we get the morphism $f_* : \ell_{2q+1}(Q,v) \rightarrow \ell_{2q+1}(Q',v')$ of $\ell$-monoids induced by $f$.
\end{rem}

The next lemma expands on Lemma \ref{lem:hyp-zero} and Proposition \ref{prop:lmonoid-zero} by showing that every quasi-formation of the form $(\UN; K, K)$ has a trivial effect on $\ell_{2q+1}(Q,v)$, even if $\UN$ is not full (so $(\UN; K, K)$ does not represent an element of $\ell_{2q+1}(Q,v)$).

\begin{lem} \label{lem:nkk-zero}
Suppose that $(\UM; L, V) \in \gc_{2q+1}(Q,v)$, $\UN$ is a free geometric (with respect to $v$) metabolic form over $Q$ and $K$ is a lagrangian in $\UN$ (so that $(\UM; L, V) \oplus (\UN; K, K) \in \gc_{2q+1}(Q,v)$). Then $(\UM; L, V) \sim (\UM; L, V) \oplus (\UN; K, K)$. 
\end{lem}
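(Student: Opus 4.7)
The plan is to exploit Proposition \ref{prop:lmonoid-zero}~b) twice: both $(\UM; L, L)$ and $(\UM \oplus \UN; L \oplus K, L \oplus K)$ lie in $\gc_{2q+1}(Q, v)$ and hence represent the zero element of $\ell_{2q+1}(Q, v)$, and then to exhibit an explicit permutation of summands that identifies the two sides of the desired equivalence. This avoids any need to construct a stable isomorphism $\UN \oplus \text{(hyperbolic)} \cong \text{(hyperbolic)}$ compatible with the data $(L,V)$.

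First, I would observe that $\UM \oplus \UN$ is full (because $\UM$ is), so together with the T-lagrangian $L \oplus K$ we have $(\UM \oplus \UN; L \oplus K, L \oplus K) \in \gc_{2q+1}(Q, v)$. Similarly $(\UM; L, L) \in \gc_{2q+1}(Q, v)$. By Proposition \ref{prop:lmonoid-zero}~b), both $[\UM; L, L]$ and $[\UM \oplus \UN; L \oplus K, L \oplus K]$ equal $0$ in $\ell_{2q+1}(Q, v)$. Using the zero element property in the monoid, I would then compute, on the one hand,
\[
[\UM; L, V] = [\UM; L, V] \oplus [\UM \oplus \UN; L \oplus K, L \oplus K] = [\UM \oplus \UM \oplus \UN;\, L \oplus L \oplus K,\, V \oplus L \oplus K]\text{,}
\]
and on the other hand,
\[
[\UM \oplus \UN; L \oplus K, V \oplus K] = [\UM \oplus \UN; L \oplus K, V \oplus K] \oplus [\UM; L, L] = [\UM \oplus \UN \oplus \UM;\, L \oplus K \oplus L,\, V \oplus K \oplus L]\text{.}
\]

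The two right-hand quasi-formations differ only by swapping the second and third direct summands, so the evident permutation of the underlying abelian group $M \oplus M \oplus N \to M \oplus N \oplus M$ given by $(x,y,z) \mapsto (x,z,y)$ is a quasi-formation isomorphism between them. Therefore the two left-hand classes coincide, giving $[\UM; L, V] = [\UM \oplus \UN; L \oplus K, V \oplus K]$, which is precisely the statement $(\UM; L, V) \sim (\UM; L, V) \oplus (\UN; K, K)$. The only point requiring any care will be the bookkeeping of summands; there is no substantive obstacle beyond unpacking the definitions.
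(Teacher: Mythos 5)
Your proposal is correct and is essentially the paper's own argument: both rely on Proposition \ref{prop:lmonoid-zero}~b) applied to $(\UM; L, L)$ and $(\UM \oplus \UN; L \oplus K, L \oplus K)$ (the latter being in $\gc_{2q+1}(Q,v)$ because $\UM$ is full), followed by a permutation of direct summands to identify the two stabilised quasi-formations. The paper merely packages the same three moves into a single chain of equivalences rather than two chains compared via the swap isomorphism.
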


\begin{proof}
We have $(\UM; L, L) \in \gc_{2q+1}(Q,v)$ and by Proposition \ref{prop:lmonoid-zero} b) it represents the zero element in $\ell_{2q+1}(Q,v)$. The same is true for $(\UM \oplus \UN; L \oplus K, L \oplus K) = (\UM; L, L) \oplus (\UN; K, K)$. Therefore we have $(\UM; L, V) \sim (\UM; L, V) \oplus ((\UM; L, L) \oplus (\UN; K, K)) \cong ((\UM; L, V) \oplus (\UN; K, K)) \oplus (\UM; L, L) \sim (\UM; L, V) \oplus (\UN; K, K)$. 
\end{proof}

We can now prove the stable version of Lemma \ref{lem:ru-equiv}: 

\begin{lem} \label{lem:ru-st-equiv}
Suppose that $(\UM; L, V) \in \gc_{2q+1}(Q,v)$. If $\Psi \in \RU_{\st}(\UM,L)$, then 
\[
(\UM; L, V) \sim (\UM; L, \Psi(V)) \text{\,.}
\] 
\end{lem}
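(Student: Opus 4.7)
The plan is to reduce to Lemma \ref{lem:ru-equiv} by passing through a stabilisation that absorbs the ``extra piece'' appearing in the definition of $\RU_{\st}(\UM,L)$.

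By Definition \ref{def:ru-st}, since $\Psi \in \RU_{\st}(\UM,L)$, there exist a metabolic form $\UN$ and a lagrangian $K \leq N$ (both automatically free and geometric by the standing assumption of Section \ref{s:ru}) such that $\Psi \oplus \id_N \in \RU(\UM \oplus \UN, L \oplus K)$. The quasi-formation $(\UM \oplus \UN; L \oplus K, V \oplus K)$ lies in $\gc_{2q+1}(Q,v)$: the underlying abelian group $M \oplus N$ is free, the form is geometric as a direct sum of geometric forms, and it is full because $\UM$ is.

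First I would apply Lemma \ref{lem:nkk-zero} to $(\UM; L, V)$ and the pair $(\UN, K)$ to obtain
\[
(\UM; L, V) \sim (\UM; L, V) \oplus (\UN; K, K) = (\UM \oplus \UN; L \oplus K, V \oplus K).
\]
Next, since $\Psi \oplus \id_N$ lies in $\RU(\UM \oplus \UN, L \oplus K)$, Lemma \ref{lem:ru-equiv} gives
\[
(\UM \oplus \UN; L \oplus K, V \oplus K) \sim (\UM \oplus \UN; L \oplus K, (\Psi \oplus \id_N)(V \oplus K)) = (\UM \oplus \UN; L \oplus K, \Psi(V) \oplus K).
\]
Finally, applying Lemma \ref{lem:nkk-zero} again (in the opposite direction) to $(\UM; L, \Psi(V))$ and $(\UN,K)$ yields
\[
(\UM \oplus \UN; L \oplus K, \Psi(V) \oplus K) = (\UM; L, \Psi(V)) \oplus (\UN; K, K) \sim (\UM; L, \Psi(V)),
\]
and concatenating the three equivalences completes the proof.

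There is no real obstacle here; the whole content of the lemma is the observation that the three ingredients (stabilisation by $(\UN; K, K)$ is trivial, $\RU$ acts trivially on equivalence classes, destabilisation is trivial) assemble correctly, and that the intermediate quasi-formations genuinely belong to $\gc_{2q+1}(Q,v)$. The only point worth checking carefully is this last membership condition, which I have addressed above by using that $\UM$ is full and that $\UN$ is free and geometric by the standing hypothesis of Section \ref{s:ru}.
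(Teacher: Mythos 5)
Your proof is correct and follows essentially the same route as the paper: invoke Definition \ref{def:ru-st} to get $(\UN,K)$ with $\Psi \oplus \id_N \in \RU(\UM \oplus \UN, L \oplus K)$, use Lemma \ref{lem:nkk-zero} to stabilise and destabilise by $(\UN;K,K)$, and apply Lemma \ref{lem:ru-equiv} in between. The membership checks you added are fine but routine; nothing further is needed.
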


\begin{proof}
By Definition \ref{def:ru-st} there is a metabolic form $\UN$ with a lagrangian $K$ such that $\Psi \oplus \id_N \in \RU(\UM \oplus \UN, L \oplus K)$. By Lemma \ref{lem:nkk-zero} it is enough to show that $(\UM \oplus \UN; L \oplus K, V \oplus K) \sim (\UM \oplus \UN; L \oplus K, \Psi(V) \oplus K)$, which holds by Lemma \ref{lem:ru-equiv}.
\end{proof}

\begin{thm} \label{thm:jacobi}
Suppose that $(\UM; L, V) \in \gc_{2q+1}(Q,v)$, and let $K$ be a lagrangian in $\UM$ (so $(\UM; K, L), (\UM; K, V) \in \gc_{2q+1}(Q,v)$). Then 
\[
[\UM; K, L] \oplus [\UM; L, V] = [\UM; K, V] \in \ell_{2q+1}(Q,v) \text{\,.}
\]
\end{thm}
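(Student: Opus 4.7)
The plan is to apply Theorem~\ref{thm:ru-wall} in two ways: first, to the metabolic form $\UM \oplus \UM$ with the flip automorphism, yielding a ``near-Jacobi'' identity with an extra copy of $[\UM;K,L]$ on each side; and second, to a suitable stabilisation of $\UM$, establishing that $[\UM;K,L]$ is invertible in $\ell_{2q+1}(Q,v)$ so that one copy may then be cancelled.

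First I will apply Theorem~\ref{thm:ru-wall} with $\UM' = \UM \oplus \UM$, lagrangian $K \oplus L$, and the flip $\sigma \in \Aut(\UM \oplus \UM)$ given by $\sigma(x,y) = (y,x)$. Since $\sigma$ is an involution, this yields $\sigma \oplus \sigma \in \RU_{\st}((\UM \oplus \UM) \oplus (\UM \oplus \UM),(K \oplus L) \oplus (K \oplus L))$. Applying Lemma~\ref{lem:ru-st-equiv} to the half-rank direct summand $(L \oplus V) \oplus (L \oplus L)$, and observing that $\sigma(L \oplus V) = V \oplus L$ while $\sigma(L \oplus L) = L \oplus L$, gives
\[
(\UM^{\oplus 4};(K \oplus L)^{\oplus 2},(L \oplus V) \oplus (L \oplus L)) \;\sim\; (\UM^{\oplus 4};(K \oplus L)^{\oplus 2},(V \oplus L) \oplus (L \oplus L)).
\]
Decomposing both sides as direct sums of four quasi-formations on $\UM$ and using Proposition~\ref{prop:lmonoid-zero}~b) to eliminate the $[\UM;L,L]$ terms, this becomes the identity $2\,[\UM;K,L] \oplus [\UM;L,V] = [\UM;K,L] \oplus [\UM;K,V]$ in $\ell_{2q+1}(Q,v)$.

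Next I will show that $[\UM;K,L]$ is invertible. By Theorem~\ref{thm:fund-met}~a) applied to $\UM$ with its two lagrangians $K$ and $L$ (whose direct complements in $M$ have equal rank, so that the two stabilisation exponents $k$ and $l$ in its proof coincide), there exist $k \geq 0$ and $\Phi \in \Aut(\UM \oplus \UH_{2k})$ with $\Phi(K_{\st}) = L_{\st}$, where $K_{\st} := K \oplus (\{0\} \times \Z^k)$ and $L_{\st} := L \oplus (\{0\} \times \Z^k)$. Theorem~\ref{thm:ru-wall} then yields $\Phi \oplus \Phi^{-1} \in \RU_{\st}((\UM \oplus \UH_{2k}) \oplus (\UM \oplus \UH_{2k}), K_{\st} \oplus K_{\st})$. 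Applying Lemma~\ref{lem:ru-st-equiv} with the half-rank summand $K_{\st} \oplus K_{\st}$ (which coincides with the lagrangian and so represents $0$ by Proposition~\ref{prop:lmonoid-zero}~b)), and identifying $[\UM \oplus \UH_{2k}; K_{\st}, L_{\st}]$ with $[\UM;K,L]$ via Lemma~\ref{lem:hyp-zero}, gives
\[
0 \;=\; [\UM;K,L] \oplus [\UM \oplus \UH_{2k}; K_{\st}, \Phi^{-1}(K_{\st})],
\]
so $[\UM;K,L]$ has an additive inverse in $\ell_{2q+1}(Q,v)$.

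Cancelling one copy of $[\UM;K,L]$ from the identity in the first step then produces $[\UM;K,L] \oplus [\UM;L,V] = [\UM;K,V]$, which is the theorem. The hardest part will be the invertibility argument: one must carefully verify the rank-matching condition in the proof of Theorem~\ref{thm:fund-met}~a) in order to obtain a genuine automorphism of $\UM \oplus \UH_{2k}$ (rather than an isomorphism between stabilisations of different ranks), and track the behaviour of $\Phi^{-1}$ on the various subgroups through the application of Lemma~\ref{lem:ru-st-equiv}.
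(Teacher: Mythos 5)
Your proof is correct, and it takes a genuinely different route from the paper's. The paper stabilises to $\UtM = \UM \oplus \UH_{2k}$ and, via Theorem~\ref{thm:fund-met}~a), fixes an automorphism $\Phi$ of $\UtM$ carrying $\tilde L$ to $\tilde K$; it then establishes the Jacobi relation in a single chain of equivalences, using $\Phi$ twice over in two different roles: once as an isomorphism of quasi-formations to replace $(\UtM;\tilde L,\tilde V)$ by $(\UtM;\tilde K,\Phi(\tilde V))$, and once via $\Phi\oplus\Phi^{-1}\in\RU_{\st}(\UtM\oplus\UtM,\tilde K\oplus\tilde K)$ to pass from $\tilde L\oplus\Phi(\tilde V)$ to $\tilde K\oplus\tilde V$, before stripping off $(\UtM;\tilde K,\tilde K)$ by Lemma~\ref{lem:nkk-zero}. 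Your argument instead factors into two independent pieces. The first — a ``near-Jacobi'' identity with an extra $[\UM;K,L]$ on each side — uses \emph{only} the flip $\sigma\in\Aut(\UM\oplus\UM)$ together with Theorem~\ref{thm:ru-wall} applied to the lagrangian $K\oplus L$, and does not invoke Theorem~\ref{thm:fund-met} at all; thus it holds for any two lagrangians with no stabilisation needed to set it up. The second piece — invertibility of $[\UM;K,L]$ — is where Theorem~\ref{thm:fund-met} actually enters, and once you have an inverse you cancel. This separation is conceptually clean in that it makes visible exactly where the stable uniqueness of lagrangians is used, at the cost of being a longer argument than the paper's one-chain proof. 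Both proofs ultimately rest on the same two inputs, Theorems~\ref{thm:fund-met} and~\ref{thm:ru-wall}.

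One small remark: the ``hardest part'' you flag — verifying that the two stabilisation exponents $k$ and $l$ in Theorem~\ref{thm:fund-met}~a) coincide when $\UM=\UMp$, so that the isomorphism is genuinely an automorphism — needs no argument from the internals of that proof. Any isomorphism of abelian groups preserves rank, so $\rk\UM + 2k = \rk\UMp + 2l$ forces $k=l$ automatically once $\UM = \UMp$. The paper relies on the same elementary observation without comment.
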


\begin{proof}
By Theorem \ref{thm:fund-met} a) there is an isomorphism $\Phi : \UM \oplus \UH_{2k} \rightarrow \UM \oplus \UH_{2k}$ such that $\Phi(L \oplus (\{ 0 \} \times \Z^k)) = K \oplus (\{ 0 \} \times \Z^k)$ for some $k \geq 0$. Let $\UtM = \UM \oplus \UH_{2k}$, $\tilde{L} = L \oplus (\{ 0 \} \times \Z^k)$, $\tilde{K} = K \oplus (\{ 0 \} \times \Z^k)$ and $\tilde{V} = V \oplus (\Z^k \times \{ 0 \})$, then $[\UM; L, V] = [\UtM; \tilde{L}, \tilde{V}]$, $[\UM; K, V] = [\UtM; \tilde{K}, \tilde{V}]$ and (by Lemma \ref{lem:hyp-zero}) $[\UM; K, L] = [\UtM; \tilde{K}, \tilde{L}]$.

By Theorem \ref{thm:ru-wall} $\Phi \oplus \Phi^{-1} \in \RU_{\st}(\UtM \oplus \UtM, \tilde{K} \oplus \tilde{K})$. Therefore we have 
\[
\begin{aligned}
(\UtM; \tilde{K}, \tilde{L}) \oplus (\UtM; \tilde{L}, \tilde{V}) &\cong (\UtM; \tilde{K}, \tilde{L}) \oplus (\UtM; \tilde{K}, \Phi(\tilde{V})) = \\
 &= (\UtM \oplus \UtM; \tilde{K} \oplus \tilde{K}, \tilde{L} \oplus \Phi(\tilde{V})) \sim \\
 &\sim (\UtM \oplus \UtM; \tilde{K} \oplus \tilde{K}, \tilde{K} \oplus \tilde{V}) = \\
 &= (\UtM; \tilde{K}, \tilde{K}) \oplus (\UtM; \tilde{K}, \tilde{V}) \sim \\ 
 &\sim (\UtM; \tilde{K}, \tilde{V}) 
\end{aligned}
\]
using the isomorphism $\Phi$ and Lemmas \ref{lem:ru-st-equiv} and \ref{lem:nkk-zero}. 
\end{proof}

These results generalise the relations that were used to define the classical L-groups and Kreck's monoid $l_{2q+1}(e)$. In \cite[Section 6]{kreck99} the equivalence relation on pairs $(\UH_{2k}, V)$ is given by the group $\RU^+(\Z)$, which is analogous to the relation in Lemma \ref{lem:ru-st-equiv} (see Proposition \ref{prop:L0} below for the correspondence between pairs and quasi-formations). The analogue of the identity of Theorem \ref{thm:jacobi} has been used to reformulate the definitions of the classical L-groups and the $l$-monoids in terms of (quasi-)formations (see eg.\ \cite{ranicki80}, \cite{crowley-sixt11}).

\subsection{The group $L_{2q+1}(Q,v)$}

In this section we consider the extended version of the L-groups.

\begin{defin}
For an abelian group $Q$ with a homomorphism $v : Q \rightarrow \Z_2$, a torsion abelian group $R$ and an even integer $q \geq 0$ let
\[
L^T_{2q+1}(Q,v,R) = \{ [\UM; L, K] \in \ell^T_{2q+1}(Q,v,R) \mid \text{$K$ is a free lagrangian in $\UM$} \}
\] 
and 
\[
L_{2q+1}(Q,v) = L^T_{2q+1}(Q,v,0)
\] 
\end{defin}

The first important application of Theorem \ref{thm:jacobi} is showing that the elements of $L_{2q+1}(Q,v)$ are invertible, hence it is a group:

\begin{prop} \label{prop:L-inverse}
$L_{2q+1}(Q,v)$ is the group of invertible elements of $\ell_{2q+1}(Q,v)$.
\end{prop}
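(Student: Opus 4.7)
The proof splits into two directions, and the key insight is that the condition ``the direct summand component is a free lagrangian'' is preserved by the generating moves of the equivalence relation $\sim$.

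For the inclusion $L_{2q+1}(Q,v) \subseteq \{\text{invertibles}\}$, I would use Theorem \ref{thm:jacobi} directly. Given $[\UM; L, K] \in L_{2q+1}(Q,v)$ with $K$ a lagrangian, both $(\UM; L, K)$ and $(\UM; K, L)$ lie in $\gc_{2q+1}(Q,v)$ (since a lagrangian is in particular a T-lagrangian and a free half-rank direct summand). Theorem \ref{thm:jacobi} with the triple of lagrangians $L, K, L$ gives
\[
[\UM; L, K] \oplus [\UM; K, L] = [\UM; L, L],
\]
which is the zero element by Proposition \ref{prop:lmonoid-zero} b). So $[\UM; K, L] \in L_{2q+1}(Q,v)$ is an inverse.

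For the reverse inclusion, I would argue that the property
\[
\mathcal{P}(\UM;L,V) \,:=\, \text{``$V$ is a free lagrangian in $\UM$''}
\]
is invariant under each of the two elementary moves generating $\sim$. Isomorphism preserves $\mathcal{P}$ by functoriality. Direct sum with the standard elementary hyperbolic $\HH_{2k} = (\UH_{2k}; \{0\} \times \Z^k, \Z^k \times \{0\})$ preserves $\mathcal{P}$ because $\Z^k \times \{0\}$ is a lagrangian in $\UH_{2k}$. The second elementary move only alters the T-lagrangian component, leaving $V$ and $\UM$ untouched, so preserves $\mathcal{P}$ trivially. Hence $\mathcal{P}$ descends to a well-defined property of classes in $\ell_{2q+1}(Q,v)$.

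Now suppose $[\UM; L, V]$ is invertible with inverse $[\UMp; L', V']$. Then $(\UM \oplus \UMp; L \oplus L', V \oplus V')$ is $\sim$-equivalent to a representative $(\UN_0; K_0, K_0)$ of the zero element, which visibly satisfies $\mathcal{P}$. By invariance, $V \oplus V'$ is a free lagrangian in $\UM \oplus \UMp$. Restricting, $\lambda_\UM$ and $\mu_\UM$ vanish on $V$, and $V$ is already a free half-rank direct summand in $\UM$ by hypothesis, so $V$ is a lagrangian in $\UM$ and $[\UM; L, V] \in L_{2q+1}(Q,v)$.

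No step looks like a serious obstacle here; the mild point requiring care is simply to check that each of the two elementary moves in Definition \ref{def:ell-t} preserves the property $\mathcal{P}$, especially the second one, where the move is purely a modification of the T-lagrangian and leaves $V$ unchanged.
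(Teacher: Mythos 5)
Your proof is correct. The first inclusion is exactly the paper's argument via Theorem \ref{thm:jacobi} and Proposition \ref{prop:lmonoid-zero} b). For the reverse inclusion you package the key observation --- that invertibility forces $\lambda\big|_{V\times V} = 0$ and $\mu\big|_V = 0$ --- as the $\sim$-invariance of a single predicate $\mathcal{P}$ (``$V$ is a free lagrangian''), together with the facts that a representative of the zero element satisfies $\mathcal{P}$ and that $\mathcal{P}$ passes to direct summands. The paper instead introduces two additive invariants $\alpha(\UM; L, V) = \mu(V) \leq Q$ and $\beta(\UM; L, V) = \Image(\lambda\big|_{V\times V}) \leq \Z$, checks each is a well-defined monoid homomorphism on $\ell_{2q+1}(Q,v)$ vanishing at the zero element, and observes that the target monoids (subgroups under sum) have no nontrivial invertibles. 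The underlying work is the same in both versions: verifying that each generating move of $\sim$ preserves the data of $(\lambda\big|_{V\times V}, \mu\big|_V)$, and exploiting how these behave under $\oplus$. Your predicate formulation and the paper's two-invariant formulation are essentially the same argument in slightly different clothing.
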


\begin{proof}
First we show that every element of $L_{2q+1}(Q,v)$ has an inverse. If $[\UM; L, K] \in L_{2q+1}(Q,v)$, then $K$ is a lagrangian in $\UM$, so $(\UM; K, L) \in \gc_{2q+1}(Q,v)$. By Theorem \ref{thm:jacobi} $[\UM; L, K] \oplus [\UM; K, L] = [\UM; L, L]$, which is the zero element, therefore $[\UM; K, L]$ is the inverse of $[\UM; L, K]$. 

Next we show that every invertible element is contained in $L_{2q+1}(Q,v)$. For an $(\UM; L, V) \in \gc_{2q+1}(Q,v)$ let $\alpha(\UM; L, V)$ denote the subgroup $\mu(V) \leq Q$ (where $\UM = (M, \lambda, \mu)$). Then $\alpha(\UM; L, V)$ is invariant under the elementary equivalences that define $\sim$, so it induces a well-defined function on $\ell_{2q+1}(Q,v)$. Moreover, we have $\alpha([\UM; L, V] \oplus [\UMp; L', V']) = \alpha([\UM; L, V]) + \alpha([\UMp; L', V'])$ and $\alpha([\UM; L, L]) = 0 \leq Q$. This implies that if $[\UM; L, V]$ is invertible, then $\alpha([\UM; L, V]) = 0$, that is, $\mu \big| _V = 0$. Similarly, let $\beta(\UM; L, V) \leq \Z$ denote the image of the homomorphism $V \otimes V \rightarrow \Z$ corresponding to the bilinear function $\lambda \big| _{V \times V} : V \times V \rightarrow \Z$. Then $\beta$ also induces a well-defined function on $\ell_{2q+1}(Q,v)$, we have $\beta([\UM; L, V] \oplus [\UMp; L', V']) = \beta([\UM; L, V]) + \beta([\UMp; L', V'])$ and $\beta([\UM; L, L]) = 0 \leq \Z$. So if $[\UM; L, V]$ is invertible, then $\beta([\UM; L, V]) = 0$, that is, $\lambda \big| _{V \times V} = 0$. This means that if $[\UM; L, V]$ is invertible, then $V$ is a lagrangian, therefore $[\UM; L, V] \in L_{2q+1}(Q,v)$. 
\end{proof}

Next we will prove that if $Q$ is free, then $L_{2q+1}(Q,v) \cong 0$ (Theorem \ref{thm:l-group-zero}). The first step towards that is the following:

\begin{prop} \label{prop:L0}
There is a canonical isomorphism of monoids $\ell_{2q+1}(0,0) \cong l_{2q+1}(e)$, where $l_{2q+1}(e)$ is Kreck's $l$-monoid associated to the trivial group. It restricts to an isomorphism $L_{2q+1}(0,0) \cong L_{2q+1}$, where $L_{2q+1}$ denotes the classical L-group used in simply-connected surgery.
\end{prop}

\begin{proof}
Kreck \cite[Section 6]{kreck99} defines $l_{2q+1}(e)$ as follows: an element of $l_{2q+1}(e)$ is represented by a pair $(\UH_{2k}, V)$ for some $k \geq 0$, where $V$ is a half-rank summand in $\UH_{2k}$. The equivalence relation on such pairs is generated by the elementary equivalences $(\UH_{2k}, V) \sim (\UH_{2k} \oplus \UH_2, V \oplus (\Z \times \{0 \} ))$ and $(\UH_{2k}, V) \sim (\UH_{2k}, \Phi(V))$ for $\Phi \in RU(\UH_{2k}, \{ 0 \} \times \Z^k)$. 

We define $F : l_{2q+1}(e) \rightarrow \ell_{2q+1}(0,0)$ by $F([\UH_{2k}, V]) = [\UH_{2k}; \{0 \} \times \Z^k, V]$, this map is well-defined by Lemma \ref{lem:ru-equiv}. 

To define $G : \ell_{2q+1}(0,0) \rightarrow l_{2q+1}(e)$, note that if $[\UH; L, V] \in \ell_{2q+1}(0,0)$, then $\UH$ is hyperbolic (see Lemma \ref{lem:hyp-equiv}), so by Corollary \ref{cor:fund-hyp} there is a $k \geq 0$ and an isomorphism $I : \UH \rightarrow \UH_{2k}$ such that $I(L) = \{0 \} \times \Z^k$. Let $G([\UH; L, V]) = [\UH_{2k}, I(V)]$, we will check that this is well-defined. If $I : \UH \rightarrow \UH_{2k}$ and $I' : \UH \rightarrow \UH_{2k}$ are two isomorphisms as above, then $I' \circ I^{-1}(\{0 \} \times \Z^k) = \{0 \} \times \Z^k$, so $I' \circ I^{-1} \in RU(\UH_{2k}, \{ 0 \} \times \Z^k)$ and $[\UH_{2k}, I(V)] = [\UH_{2k}, I'(V)]$. If $(\UH; L, V) \cong (\UHp; L', V')$, then $I' : \UHp \rightarrow \UH_{2k}$ can be chosen to be the isomorphism $\UHp \rightarrow \UH$ composed with $I : \UH \rightarrow \UH_{2k}$, so that $(\UH_{2k}, I(V)) = (\UH_{2k}, I'(V'))$. If $(\UHp; L', V') = (\UH; L, V) \oplus \HH_2$, then we can choose $I' = I \oplus \id_{\Z^2}$, so that $(\UH_{2k+2}, I'(V')) = (\UH_{2k} \oplus \UH_2, I(V) \oplus (\Z \times \{0 \} )) \sim (\UH_{2k}, I(V))$. Finally consider the equivalence $(\UHp \oplus \UH_2; L \oplus (\{ 0 \} \times \Z), V) \sim (\UHp \oplus \UH_2; L \oplus (\Z \times \{ 0 \}), V)$. If $I : \UHp \oplus \UH_2 \rightarrow \UH_{2k}$ is an isomorphism such that $I(L \oplus (\{ 0 \} \times \Z)) = \{0 \} \times \Z^k$, then $I \circ (\id_{H'} \oplus \sigma)(L \oplus (\Z \times \{ 0 \})) = \{0 \} \times \Z^k$ and $I \circ (\id_{H'} \oplus \sigma) \circ I^{-1} \in RU(\UH_{2k}, \{ 0 \} \times \Z^k)$, therefore $[\UH_{2k}, I(V)] = [\UH_{2k}, I \circ (\id_{H'} \oplus \sigma)(V)]$.

$F$ and $G$ are obviously inverses of each other, and they preserve direct sums, therefore they are isomorphisms. 

The pair $(\UH_{2k}, V)$ represents an element in $L_{2q+1}$ if and only if $V$ is a lagrangian in $\UH_{2k}$, which holds if and only if $[\UH_{2k}; \{0 \} \times \Z^k, V] \in L_{2q+1}(0,0)$, showing that $F$ restricts to an isomorphism $L_{2q+1} \rightarrow L_{2q+1}(0,0)$. (Alternatively, note that $L_{2q+1}(0,0)$ and $L_{2q+1}$ are both equal to the group of invertible elements in the corresponding monoid.)
\end{proof}

\begin{rem} \label{rem:L0}
The classical L-group $L_{2q+1}$ is known to vanish -- this follows from the results of Kervaire-Milnor \cite{kervaire-milnor63}, which show that the surgery process can always be completed for simply-connected odd-dimensional manifolds. In particular, this implies that $L_{2q+1}(0,0) \cong 0$.
\end{rem}

\begin{rem} 
Notice that Proposition \ref{prop:L-inverse} generalises Kreck's construction of the inverse of an element of $L_{2q+1} \subset l_{2q+1}(e)$ (see \cite[p.\ 733]{kreck99}). Such an element is represented by a pair $(\UH_{2k}, L)$, with $L$ a lagrangian in $\UH_{2k}$, so there is an $A \in \Aut(\UH_{2k})$ such that $L = A(\{0 \} \times \Z^k)$ (see Corollary \ref{cor:fund-hyp}). Then \cite[Lemma 6.2]{wall-scm} is used to show that $[\UH_{2k}, A^{-1}(\{0 \} \times \Z^k)]$ is the inverse of this element. Under the map $F : l_{2q+1}(e) \rightarrow \ell_{2q+1}(0,0)$ the element and its inverse correspond to $[\UH_{2k}; \{0 \} \times \Z^k, L]$ and $[\UH_{2k}; \{0 \} \times \Z^k, A^{-1}(\{0 \} \times \Z^k)]$ respectively. We have $(\UH_{2k}; \{0 \} \times \Z^k, A^{-1}(\{0 \} \times \Z^k)) \cong (\UH_{2k}; A(\{0 \} \times \Z^k), A \circ A^{-1}(\{0 \} \times \Z^k)) = (\UH_{2k}; L, \{0 \} \times \Z^k)$ (using the automorphism $A$), coinciding with the inverse defined in Proposition \ref{prop:L-inverse}.
\end{rem}

\begin{lem} \label{lem:hyp-cancel}
Suppose that $(\UM; L, V) \in \gc_{2q+1}(Q,v)$. If $\UH$ is a hyperbolic form and $K$ and $J$ are lagrangians in $\UH$, then $(\UM; L, V) \oplus (\UH; K, J) \sim (\UM; L, V)$. 
\end{lem}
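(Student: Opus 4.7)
The plan is to regard $(\UH; K, J)$ as a quasi-formation over the trivial abelian group $Q=0$, observe that it is equivalent there to $(\UH; K, K)$, transport this equivalence into $\ell_{2q+1}(Q,v)$ by direct summing each step with the fixed $(\UM; L, V)$, and then finish with Lemma~\ref{lem:nkk-zero}.

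First I would note that since $\UH$ is hyperbolic its third component vanishes, so $(\UH; K, J)$ and $(\UH; K, K)$ may be regarded as quasi-formations over $Q=0$ with respect to the trivial homomorphism $v=0$. Both lie in $\gc_{2q+1}(0,0)$: $\UH$ is free, metabolic and even (hence geometric with respect to $v=0$) and is trivially full over $0$, while $K$ and $J$ are free lagrangians. Since $K$ and $J$ are lagrangians, both classes actually lie in $L_{2q+1}(0,0)$, and by Proposition~\ref{prop:L0} this group is isomorphic to $L_{2q+1}\cong 0$. Hence $[\UH; K, J]=[\UH; K, K]$ in $\ell_{2q+1}(0,0)$, so there is a finite chain of elementary equivalences (of the two types from Definition~\ref{def:ell-t}) witnessing $(\UH; K, J)\sim(\UH; K, K)$ entirely inside $\gc_{2q+1}(0,0)$.

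Next I would direct sum each quasi-formation in this chain with the fixed $(\UM; L, V)$. Both elementary equivalences from Definition~\ref{def:ell-t} are manifestly closed under direct sum with a fixed quasi-formation: a stable isomorphism of a pair remains a stable isomorphism after direct summing, and the $\UH_2$-swap step remains a $\UH_2$-swap step with the enlarged T-lagrangian (obtained by combining $L$ with the T-lagrangian appearing in the intermediate quasi-formation). Each intermediate $(\UM; L, V)\oplus Q_i$ still belongs to $\gc_{2q+1}(Q,v)$: fullness is inherited from $\UM$; freeness from both factors being free; and geometricity with respect to $v$ follows from geometricity of $\UM$ with respect to $v$ combined with evenness of $Q_i$ (which is implied by geometricity of $Q_i$ with respect to the trivial $v$). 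This yields $(\UM; L, V)\oplus(\UH; K, J)\sim(\UM; L, V)\oplus(\UH; K, K)$ in $\ell_{2q+1}(Q,v)$.

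Finally, since $\UH$ is a free metabolic form that is even (and hence geometric with respect to any $v : Q \to \Z_2$), Lemma~\ref{lem:nkk-zero} applied with $\UN=\UH$ and lagrangian $K$ gives $(\UM; L, V)\oplus(\UH; K, K)\sim(\UM; L, V)$, completing the argument. The only real subtlety is the bookkeeping in the middle step: confirming that the elementary equivalences over $(0,0)$ really do lift to elementary equivalences over $(Q,v)$ after direct summing with $(\UM; L, V)$, and that $\gc_{2q+1}(Q,v)$-membership is preserved at every intermediate stage.
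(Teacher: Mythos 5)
Your proposal is correct and follows essentially the same route as the paper's proof: use $L_{2q+1}(0,0)\cong 0$ (Proposition \ref{prop:L0}) to get $(\UH;K,J)\sim(\UH;K,K)$ over $(0,0)$, transport this equivalence to $(Q,v)$ by direct summing with $(\UM;L,V)$, and conclude with Lemma \ref{lem:nkk-zero}. The only difference is that you spell out the bookkeeping that the paper compresses into ``by comparing the equivalence relations that define $\ell_{2q+1}(0,0)$ and $\ell_{2q+1}(Q,v)$''.
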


\begin{proof}
The quasi-formation $(\UH; K, J)$ represents an element of $L_{2q+1}(0,0)$. By Proposition \ref{prop:L0} and Remark \ref{rem:L0} $L_{2q+1}(0,0) \cong 0$, so $(\UH; K, J) \sim (\UH; K, K)$ in $\gc_{2q+1}(0,0)$. These elements are also equivalent in $\gcb_{2q+1}(0,0)$, and hence in $\gcb_{2q+1}(Q,v)$, by our observations in Remark \ref{rem:eq-nat} (in particular, the naturality of $\sim$ under the unique morphism $(0,0) \rightarrow (Q,v)$ in the slice category). Since $\sim$ respects direct sums, $(\UM; L, V) \oplus (\UH; K, J) \sim (\UM; L, V) \oplus (\UH; K, K)$ in $\gcb_{2q+1}(Q,v)$, and hence in $\gc_{2q+1}(Q,v)$. 
By applying Lemma \ref{lem:nkk-zero} we get that $(\UM; L, V) \oplus (\UH; K, J) \sim (\UM; L, V)$. 
\end{proof}

\begin{thm} \label{thm:l-group-zero}
If $Q$ is free, then $L_{2q+1}(Q,v) \cong 0$. 
\end{thm}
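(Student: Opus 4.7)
Plan. Let $(\UM; L, K) \in \gc_{2q+1}(Q, v)$ represent an element of $L_{2q+1}(Q, v)$, so both $L$ and $K$ are free lagrangians in the free, full, geometric, metabolic form $\UM$ over $Q$. I would aim to exhibit an equivalence $(\UM; L, K) \sim (\UM; L, L)$, which by Proposition \ref{prop:lmonoid-zero} b) would yield $[\UM; L, K] = 0$.

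The first step is to apply Theorem \ref{thm:fund-met} b), whose hypothesis that $Q$ is free is precisely the content of the assumption: this produces an honest automorphism $\Phi \in \Aut(\UM)$ with $\Phi(L) = K$ (rather than merely a stable iso $\UM \oplus \UH_{2k} \to \UM \oplus \UH_{2l}$, which is all that Theorem \ref{thm:fund-met} a) would give in the non-free case). This is the only place where the freeness of $Q$ is invoked.

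Next, I would bring in the machinery of Section \ref{s:ru} to convert $\Phi$ into an equivalence of quasi-formations. By Theorem \ref{thm:ru-wall}, $\Phi \oplus \Phi^{-1} \in \RU_{\st}(\UM \oplus \UM, L \oplus L)$. Applying Lemma \ref{lem:ru-st-equiv} to the quasi-formation $(\UM \oplus \UM; L \oplus L, K \oplus K)$ gives
\[
(\UM \oplus \UM; L \oplus L, K \oplus K) \sim (\UM \oplus \UM; L \oplus L, \Phi(K) \oplus L),
\]
and splitting the direct sums (using Lemma \ref{lem:nkk-zero} and the fact that $[\UM; L, L] = 0$) yields the identity $2[\UM; L, K] = [\UM; L, \Phi(K)]$ in $\ell_{2q+1}(Q, v)$.

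The main obstacle will be promoting this partial identity to $[\UM; L, K] = 0$. The cleanest way, which I would pursue, is to show that $\Phi$ itself lies in $\RU_{\st}(\UM, L)$: indeed, Lemma \ref{lem:ru-st-equiv} would then give $(\UM; L, L) \sim (\UM; L, \Phi(L)) = (\UM; L, K)$ directly. To achieve this, I would combine Theorem \ref{thm:ru-wall} with a decomposition argument using the $\UH_2$-flip generators of $\RU$ (Definition \ref{def:ru}) after stabilising by sufficiently many copies of $\UH_2$, and invoke Lemma \ref{lem:ru-eq2} to move between $\RU$-groups at different lagrangians. An alternative route is to reduce to $L_{2q+1}(0, 0) \cong L_{2q+1} = 0$ (Proposition \ref{prop:L0}) via Lemma \ref{lem:hyp-cancel}, transferring the triviality of the classical odd-dimensional L-group across. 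Either way, the technical heart is a combinatorial analysis of how the $\RU$-action moves lagrangians when $Q$ is free.
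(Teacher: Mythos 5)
Your opening moves are sound: since $M$ is free and $L,K$ are lagrangians, Theorem \ref{thm:fund-met} b) (using the freeness of $Q$) does produce $\Phi \in \Aut(\UM)$ with $\Phi(L)=K$, and the relation $2[\UM; L, K] = [\UM; L, \Phi(K)]$ you derive from Theorem \ref{thm:ru-wall}, Lemma \ref{lem:ru-st-equiv} and Lemma \ref{lem:nkk-zero} is correct (it also follows from Theorem \ref{thm:jacobi}, since $(\UM;L,K)\cong(\UM;K,\Phi(K))$ via $\Phi$). But the step you yourself flag as the main obstacle --- showing $\Phi \in \RU_{\st}(\UM,L)$, or more generally that some stable $\RU$-element carries $L$ to $K$ --- is the entire content of the theorem, and the proposal contains no argument for it: ``a decomposition argument using the $\UH_2$-flip generators after stabilising'' and ``a combinatorial analysis of how the $\RU$-action moves lagrangians'' restate the problem rather than solve it. This intermediate claim is at least as strong as the theorem (by Lemma \ref{lem:ru-st-equiv} it gives $[\UM;L,K]=0$ at once), and it cannot be reached by formal manipulation of your partial identity: iterating only yields $n[\UM;L,K]=[\UM;L,\Phi^{n}(L)]$, which does not force $[\UM;L,K]=0$.

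There is also a structural warning sign in your claim that freeness of $Q$ enters only through Theorem \ref{thm:fund-met} b). If the remaining step were a purely formal fact about $\RU_{\st}$, the same argument would run with the stabilised isomorphism supplied by Theorem \ref{thm:fund-met} a) (no freeness needed; rank considerations force the two stabilisations to have equal size when both forms are $\UM$), and together with Lemma \ref{lem:hyp-zero} one would conclude $L_{2q+1}(Q,v)\cong 0$ for \emph{arbitrary} $Q$ --- a statement the paper does not make and whose proof it does not possess. In the paper's argument freeness is used a second time, and essentially: one splits off $J=K\cap L$, obtaining $\UM=\UMp\oplus\UH$ with $J$ a lagrangian in $\UMp$ and $L'=L\cap X$, $K'=K\cap X$ lagrangians in the orthogonal complement $\UH$; because $\mu$ vanishes on $K'\oplus L'$ and the quotient of the underlying group of $\UH$ by $K'\oplus L'$ is torsion, freeness of $Q$ forces $\mu$ to vanish on all of $\UH$, so $\UH$ is hyperbolic and $(\UH;L',K')$ is cancelled by Lemma \ref{lem:hyp-cancel}, which is exactly where the classical $L_{2q+1}\cong 0$ (Proposition \ref{prop:L0}) is imported; the remaining piece $(\UMp;J,J)$ is zero by Proposition \ref{prop:lmonoid-zero} b). Your ``alternative route'' points at this reduction but supplies none of the decomposition. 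To repair the proposal you would need either to carry out that splitting, or to prove --- invoking the freeness of $Q$ again --- that after stabilisation some element of $\RU_{\st}(\UM\oplus\UH_{2k}, L\oplus(\{0\}\times\Z^k))$ carries the stabilised $L$ to the stabilised $K$.
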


\begin{proof}
Let $[\UM; L, K]$ be an arbitrary element in $L_{2q+1}(Q,v)$. 

Let $J = K \cap L$. Since $K$ and $L$ are direct summands in $M$, the same holds for $J$ (see Lemma \ref{lem:summand}). Let $X$ be a direct complement to $J$ in $M$, $K' = K \cap X$ and $L' = L \cap X$, then $K = J \oplus K'$ and $L = J \oplus L'$.

Let $M' = J \oplus X^{\perp}$. It follows from Lemma \ref{lem:perp-nonsing} that this is a subgroup of $M$ and $\lambda \big| _{M' \times M'}$ is nonsingular. Let $H = (M')^{\perp}$, by Lemma \ref{lem:nonsing-summand} $M = M' \oplus H$. Let $\UMp = (M', \lambda \big| _{M' \times M'}, \mu \big| _{M'})$ and $\UH = (H, \lambda \big| _{H \times H}, \mu \big| _H)$, then $\UM = \UMp \oplus \UH$, and since $\UM$ is nonsingular, $\UH$ is nonsingular too.

Next we show that $J$ is a lagrangian in $\UMp$. It is a direct summand in $M'$ by the latter's definition. We have $\lambda \big| _{J \times J} = 0$ and $\mu \big| _J = 0$, because $J \leq L$. Finally $\rk M' = \rk J + \rk X^{\perp} = \rk J + (\rk M - \rk X) = \rk J + \rk J = 2 \rk J$. So $J$ is a lagrangian in $\UMp$, and this means that $\UMp$ is metabolic. 

Next we show that $L'$ is a lagrangian in $\UH$. We have $L' = L \cap X = L^{\perp} \cap X \leq J^{\perp} \cap X = (J \oplus X^{\perp})^{\perp} = H$. By construction $L'$ is a direct summand in $L$, hence in $M$, hence in $H$. We have $\lambda \big| _{L' \times L'} = 0$ and $\mu \big| _{L'} = 0$, because $L' \leq L$. Finally $\rk L' = \rk L - \rk J = \frac{1}{2} \rk M - \frac{1}{2} \rk M' = \frac{1}{2}(\rk M - \rk M') = \frac{1}{2} \rk H$. Therefore $L'$ is a lagrangian in $\UH$, and since $\UH$ is nonsingular, it is metabolic. We can show similarly that $K'$ is also a lagrangian in $\UH$.

Consider the subgroup $K' \oplus L' \leq H$ (note that $K' \cap L' = \{ 0 \}$). Since $K'$ and $L'$ are lagrangians in $H$, we have $\rk (K' \oplus L') = \rk H$, therefore $H / (K' \oplus L')$ is a torsion group. The homomorphism $\mu \big| _H : H \rightarrow Q$ induces a homomorphism $H / (K' \oplus L') \rightarrow Q$, because $\mu \big| _{K' \oplus L'} = 0$. Since $Q$ is assumed to be free, this induced homomorphism is trivial, therefore $\mu \big| _H = 0$. This implies that $\UMp$ is full (because $\UM$ is full). Moreover, since $\UM$ is geometric (with respect to $v$), the same holds for $\UH$, so $\UH$ is even. By applying Lemma \ref{lem:hyp-equiv}, we get that $\UH$ is hyperbolic.

Then we have $(\UM; L, K) = (\UMp \oplus \UH; J \oplus L', J \oplus K') = (\UMp; J, J) \oplus (\UH; L', K') \sim (\UMp; J, J)$ by Lemma \ref{lem:hyp-cancel}. By Proposition \ref{prop:lmonoid-zero} b) $(\UMp; J, J)$ represents the zero element in $\ell_{2q+1}(Q,v)$, and hence in $L_{2q+1}(Q,v)$. Therefore $[\UM; L, K]$ is the zero element in $L_{2q+1}(Q,v)$, showing that $L_{2q+1}(Q,v) \cong 0$. 
\end{proof}

\subsection{The $\ell$-monoid associated to a space}

\begin{defin}
For a stable bundle $\xi$ over a space $B$ and an integer $q \geq 0$ let $\hat{v}_q(\xi) : H_q(B) \rightarrow \Z_2$ denote the homomorphism determined by the Wu class $v_q(\xi) \in H^q(B;\Z_2)$ of $\xi$.
\end{defin}

\begin{defin} \label{def:l-space}
For a simply-connected space $B$, a stable bundle $\xi$ over it and an even integer $q \geq 0$ let 
\[
\begin{aligned}
\gc_{2q+1}(B,\xi) &= \gc_{2q+1}(H_q(B),\hat{v}_q(\xi)) \text{\,,} \\
\gs_{2q+1}(B,\xi) &= \gs_{2q+1}(H_q(B),\hat{v}_q(\xi)) \text{\,,} \\
\s_{2q+1}(B,\xi) &= \s_{2q+1}(H_q(B),\hat{v}_q(\xi)) \text{\,,} \\
\ell_{2q+1}(B,\xi) &= \ell_{2q+1}(H_q(B),\hat{v}_q(\xi)) \text{\,,} \\
L_{2q+1}(B,\xi) &= L_{2q+1}(H_q(B),\hat{v}_q(\xi)) \text{\,,} 
\end{aligned}
\]
and 
\[
\begin{aligned}
\gc^T_{2q+1}(B,\xi) &= \gc^T_{2q+1}(H_q(B),\hat{v}_q(\xi),\Tor H_{q-1}(B)) \text{\,,} \\
\gs^T_{2q+1}(B,\xi) &= \gs^T_{2q+1}(H_q(B),\hat{v}_q(\xi),\Tor H_{q-1}(B)) \text{\,,} \\
\s_{2q+1}^T(B,\xi) &= \s^T_{2q+1}(H_q(B),\hat{v}_q(\xi),\Tor H_{q-1}(B)) \text{\,,} \\
\ell^T_{2q+1}(B,\xi) &= \ell^T_{2q+1}(H_q(B),\hat{v}_q(\xi),\Tor H_{q-1}(B)) \text{\,,} \\
L_{2q+1}^T(B,\xi) &= L^T_{2q+1}(H_q(B),\hat{v}_q(\xi),\Tor H_{q-1}(B)) \text{\,.} 
\end{aligned}
\]
\end{defin}

\section{Normal maps and Q-forms} \label{s:nm-qf}

\subsection{Normal maps} \label{ss:nm}

We recall the terminology related to normal maps and normal smoothings, see \cite{kreck99}. Suppose that $B$ is a space with a stable vector bundle $\xi$ over it (equivalently, with a map $B \rightarrow BO$). 

\begin{defin} \label{def:nm}
Let $M$ be a manifold. A \emph{normal map} over $(B,\xi)$ is a pair $(f, \bar{f})$, where $f : M \rightarrow B$ is a continuous map and $\bar{f} : \nu_M \rightarrow \xi$ is a bundle map from the stable normal bundle $\nu_M$ of $M$ to $\xi$ covering $f$. We will usually suppress $\bar{f}$ from the notation and say that $f$ is a normal map.
\end{defin}

\begin{defin} \label{def:nb}
Let $M_i$ be a closed manifold and $f_i : M_i \rightarrow B$ be a normal map over $(B,\xi)$ ($i=0,1$). A \emph{normal bordism} between $f_0$ and $f_1$ is a normal map $F : Y \rightarrow B$ such that $Y$ is a cobordism between $M_0$ and $M_1$, and $F \big| _{M_i} = f_i$ (and $\bar{F} \big| _{\nu_{M_i}} = \bar{f}_i$). 

If the manifolds $M_i$ have boundaries and there is an identification $\partial M_0 \approx \partial M_1$ such that the normal maps $f_i \big| _{\partial M_i}$ coincide, then \emph{normal bordism rel boundary} can be defined analogously, with the additional condition that $F$ restricts to the trivial normal bordism between the boundaries.
\end{defin}

\begin{defin}  \label{def:njs}
Let $j \geq 0$ be an integer. A normal map $f : M \rightarrow B$ over $(B,\xi)$ is called a \emph{normal $j$-smoothing}, if it is $(j+1)$-connected (ie.\ $\pi_i(f) : \pi_i(M) \rightarrow \pi_i(B)$ is an isomorphism for $i \leq j$ and $\pi_{j+1}(f)$ is surjective).
\end{defin}

\begin{defin} \label{def:j-type}
Let $j \geq 0$ be an integer. We say that $(B,\xi)$ is the \emph{normal $j$-type} of a manifold $M$, if $M$ admits a normal $j$-smoothing over $(B,\xi)$, and the classifying map $g : B \rightarrow BO$ of $\xi$ is $(j+1)$-co-connected (ie.\ $\pi_i(g) : \pi_i(B) \rightarrow \pi_i(BO)$ is an isomorphism for $i > j+1$ and $\pi_{j+1}(g)$ is injective). 
\end{defin}

Equivalently, $(B,\xi)$ is the normal $j$-type of $M$, if $B$ is the $(j+1)$-stage of the Moore-Postnikov tower of the classifying map $M \rightarrow BO$ of $\nu_M$, and $\xi$ is the pullback of the universal bundle to $B$. In particular, the normal $j$-type of $M$ always exists, and it is unique up to homotopy equivalence that commutes with the maps $M \rightarrow B$ and $B \rightarrow BO$ (see Baues \cite[Section (5.3)]{baues77}). Consequently, if the normal $j$-type $(B,\xi)$ is fixed, then any two normal $j$-smoothings $M \rightarrow B$ differ by an automorphism of $(B,\xi)$.

\subsection{The Q-form of a map} \label{ss:qf}

Next we define the Q-form of a map, and prove some of its basic properties. Fix a topological space $B$ and let $q \geq 2$ be an even integer. Let $M$ be a closed oriented $2q$-manifold with a map $f : M \rightarrow B$.

\begin{defin} \label{def:qf}
The \emph{Q-form} of $f$ is the extended quadratic form
\[
E_q(M,f) = (H_q(M), \lambda, \mu)
\] 
over $H_q(B)$, where $\lambda : H_q(M) \times H_q(M) \rightarrow \Z$ is the intersection form of $M$ and $\mu = H_q(f) : H_q(M) \rightarrow H_q(B)$. (The bilinear function $\lambda$ is symmetric, because $q$ is even.)
\end{defin}

It follows from Poincar\'e-duality that $E_q(M,f)$ is nonsingular. 

From the definitions we easily get the following lemmas:  

\begin{lem} \label{lem:qf-hyp}
If $t : S^q \times S^q \rightarrow B$ is nullhomotopic, then $E_q(S^q \times S^q,t) \cong \UH_2$.
\qed
\end{lem}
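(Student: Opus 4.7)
The plan is to unpack the definitions of both sides and verify they agree. First I would compute $H_q(S^q \times S^q)$ via the Künneth formula, obtaining the free abelian group of rank $2$ with basis $\alpha = [S^q \times \mathrm{pt}]$ and $\beta = [\mathrm{pt} \times S^q]$. Then I would identify the intersection form in this basis: the classes $\alpha$ and $\beta$ are represented by the two obvious embedded spheres, which are disjoint from their own generic self-translates and meet each other transversely in a single point, so $\lambda(\alpha,\alpha) = \lambda(\beta,\beta) = 0$ and $\lambda(\alpha,\beta) = \lambda(\beta,\alpha) = 1$ (here $q$ being even ensures the off-diagonal sign is $+1$). Thus $\lambda$ is represented by the matrix $\bigl[\begin{smallmatrix} 0 & 1 \\ 1 & 0 \end{smallmatrix}\bigr]$.

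Next I would observe that, since $t : S^q \times S^q \to B$ is nullhomotopic, the induced homomorphism $\mu = H_q(t) : H_q(S^q \times S^q) \to H_q(B)$ factors through the $q$-th homology of a point and is therefore zero. Hence
\[
E_q(S^q \times S^q, t) = \bigl( \Z^2,\, \bigl[\begin{smallmatrix} 0 & 1 \\ 1 & 0 \end{smallmatrix}\bigr],\, 0 \bigr)
\]
which is precisely $\UH_2$ by Definition of the standard rank-$2$ hyperbolic form. This gives the required isomorphism. There is no real obstacle here; the lemma is a direct unpacking of definitions, and the only point worth flagging is the use of the evenness of $q$ to ensure symmetry of the off-diagonal entries.
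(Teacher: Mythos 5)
Your proof is correct and is exactly the routine verification the paper has in mind: the paper states this lemma with no proof (as a direct consequence of the definitions), and your computation of the hyperbolic intersection form on $H_q(S^q \times S^q) \cong \Z^2$ together with the vanishing of $H_q(t)$ for a nullhomotopic $t$ is precisely that unpacking.
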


\begin{lem} \label{lem:qf-sum}
If $N$ is another closed oriented $2q$-manifold with a map $g : N \rightarrow B$, then $E_q(M \sqcup N,f \sqcup g) = E_q(M \# N,f \# g) = E_q(M,f) \oplus E_q(N,g)$. 
\qed
\end{lem}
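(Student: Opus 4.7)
The plan is to verify both equalities componentwise by identifying the underlying abelian group, the bilinear form, and the homomorphism. The key observation is that for $q \geq 2$ there is a natural chain of isomorphisms
\[
H_q(M) \oplus H_q(N) \;\cong\; H_q(M \sqcup N) \;\cong\; H_q(M \# N)
\]
compatible with the (homotopy classes of) maps $f \sqcup g$ and $f \# g$ to $B$.

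For the disjoint union, additivity of singular homology yields the left isomorphism immediately. Cycles representing classes in different components have disjoint supports, so the intersection form vanishes on mixed pairs and is block-diagonal, giving $\lambda_{M \sqcup N} = \lambda_M \oplus \lambda_N$. The induced map $H_q(f \sqcup g)$ is tautologically $H_q(f) \oplus H_q(g)$, and hence $E_q(M \sqcup N, f \sqcup g) = E_q(M,f) \oplus E_q(N,g)$.

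For the connected sum, I would decompose $M \# N = (M \setminus \mathring{D}_1) \cup_{S^{2q-1}} (N \setminus \mathring{D}_2)$ with small disks $D_i$, choosing $f \# g$ to restrict to $f$ on $M \setminus \mathring{D}_1$ and to $g$ on $N \setminus \mathring{D}_2$ (this is possible after a preliminary homotopy of $f$ and $g$ matching them on the connecting sphere; since $S^{2q-1}$ is $(2q{-}2)$-connected and we only care about $H_q$, any reasonable choice works). For $q \geq 2$, one has $H_q(S^{2q-1}) = H_{q-1}(S^{2q-1}) = 0$, and excision gives $H_q(M \setminus \mathring{D}_1) \cong H_q(M)$ and similarly for $N$. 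The Mayer-Vietoris sequence then produces $H_q(M \# N) \cong H_q(M) \oplus H_q(N)$, with each summand represented by cycles supported away from the gluing sphere. Two such cycles from different summands lie in disjoint subsets of $M \# N$, so their intersection number is zero, and $\lambda_{M\#N} = \lambda_M \oplus \lambda_N$. Under the same decomposition $H_q(f \# g)$ restricts to $H_q(f)$ and $H_q(g)$ on the respective summands, so it splits as $H_q(f) \oplus H_q(g)$.

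The only mild obstacle is that $f \# g$ is a priori defined only up to homotopy (and depends on choices of disks and of how to glue the maps on the connecting sphere), but since different choices give homotopic maps $M \# N \to B$, the extended quadratic form $E_q(M \# N, f \# g)$ is unambiguous. I do not expect this step to be more than a brief remark.
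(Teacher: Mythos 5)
Your argument is correct and is exactly the routine verification the paper has in mind: the paper states this lemma with no proof (it is among the facts said to follow easily from the definitions), and the intended justification is precisely the block-diagonal splitting for the disjoint union together with the standard Mayer--Vietoris/excision identification $H_q(M \# N) \cong H_q(M) \oplus H_q(N)$ for $q \geq 2$, under which intersection numbers and the induced maps to $H_q(B)$ split as you describe. The only points worth a passing word are the ones you already flag: the choices involved in forming $f \# g$ only change it by homotopy, so the Q-form is well defined.
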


\begin{lem} \label{lem:qf-neg}
If $-M$ denotes the same manifold $M$ with opposite orientation, then $E_q(-M,f) = -E_q(M,f)^*$.
\end{lem}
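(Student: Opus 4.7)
The plan is to unwind both sides in terms of the triple $(M,\lambda,\mu)$ defining the Q-form and check they agree component by component.

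First I would note that the underlying topological space of $-M$ is the same as that of $M$ (only the orientation is reversed), so $H_q(-M) = H_q(M)$ as abelian groups, and the continuous map $f$ is unchanged. Hence the third component of $E_q(-M,f)$ is $H_q(f) = \mu$, identical to that of $E_q(M,f)$. Meanwhile, applying Definitions of $\UMs$ and $-\UM$ in order, one computes
\[
-E_q(M,f)^* = -(H_q(M),\lambda,-\mu) = (H_q(M),-\lambda,\mu),
\]
so it suffices to verify that the intersection form of $-M$ is $-\lambda_M$.

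For this, I would use the description of the intersection form via cup product and the fundamental class: $\lambda_M(x,y) = \langle \mathrm{PD}^{-1}(x) \cup \mathrm{PD}^{-1}(y), [M] \rangle$, where $\mathrm{PD}$ is Poincar\'e duality (which itself depends on the orientation). Since reversing the orientation sends the fundamental class $[M]$ to $-[M]$, and Poincar\'e duality $\cap [M]$ is replaced by $\cap(-[M]) = -(\cap [M])$, one obtains $\lambda_{-M} = -\lambda_M$ directly. This gives $E_q(-M,f) = (H_q(M),-\lambda,\mu)$, matching $-E_q(M,f)^*$ identified above.

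There is essentially no obstacle here; the statement is a direct bookkeeping check, and the only substantive input is the sign change of the intersection form under orientation reversal, which is a standard fact. If a more self-contained argument is preferred, one can instead use the geometric description: for classes represented by transverse embedded oriented submanifolds $A,B$ of complementary dimension, $\lambda_M(A,B)$ counts intersection points with signs determined by comparing the orientation of $T_pA \oplus T_pB$ with that of $T_pM$; reversing the orientation of $M$ flips every sign, yielding $\lambda_{-M} = -\lambda_M$.
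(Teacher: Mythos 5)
Your proof is correct and follows essentially the same route as the paper, which simply notes that reversing the orientation changes the sign of the intersection form so that $E_q(-M,f) = (H_q(M), -\lambda, \mu) = -E_q(M,f)^*$; your extra detail on the sign change via the fundamental class is just a fuller justification of that one-line observation.
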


\begin{proof}
Changing the orientation of $M$ changes the sign of the intersection form $\lambda$, therefore $E_q(-M,f) = (H_q(M), -\lambda, \mu)= -E_q(M,f)^*$. 
\end{proof}

\begin{lem} \label{lem:qf-full}
If $f$ is $q$-connected, then $E_q(M,f)$ is full.
\qed
\end{lem}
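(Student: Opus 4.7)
The plan is to show directly that the homomorphism $\mu = H_q(f) : H_q(M) \to H_q(B)$ is surjective, which by definition means that the Q-form $E_q(M,f) = (H_q(M), \lambda, \mu)$ is full.

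First I would replace $f$ by a cofibration up to homotopy equivalence (for instance by passing to the mapping cylinder), so that without loss of generality $M \subseteq B$ and we have a well-defined pair $(B, M)$ whose relative homotopy groups are computed by the long exact sequence involving $\pi_i(f)$. By the convention recorded in Definition \ref{def:njs}, saying that $f$ is $q$-connected means that $\pi_i(f)$ is an isomorphism for $i \leq q-1$ and a surjection for $i = q$; the long exact sequence of homotopy groups then gives $\pi_i(B, M) = 0$ for all $i \leq q$.

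Since $q \geq 2$, the pair $(B, M)$ is in particular simply-connected, so the relative Hurewicz theorem applies without any $\pi_1$-subtlety and yields $H_i(B, M) = 0$ for $i \leq q$. Plugging this into the long exact sequence of the pair in homology,
\[
H_q(M) \xrightarrow{H_q(f)} H_q(B) \longrightarrow H_q(B, M) = 0 \text{\,,}
\]
we conclude that $H_q(f)$ is surjective. Hence $\mu$ is surjective and $E_q(M,f)$ is full.

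There is no real obstacle here; the argument is a standard application of relative Hurewicz. The only small point to watch is the indexing convention for ``$q$-connected,'' which the paper has fixed in Definition \ref{def:njs} in the form that makes the relative pair $q$-connected and thus triggers the vanishing of $H_q(B,M)$ exactly at the degree required.
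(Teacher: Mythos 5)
Your argument is correct and is essentially the justification the paper leaves implicit (the lemma is stated with \qed as an immediate consequence of the definitions): a $q$-connected map gives $\pi_i(B,M)=0$ for $i\leq q$ after replacing $f$ by an inclusion, relative Hurewicz gives $H_q(B,M)=0$, and the long exact sequence of the pair yields surjectivity of $\mu=H_q(f)$. One small caveat: saying ``the pair $(B,M)$ is simply-connected, so relative Hurewicz applies without $\pi_1$-subtlety'' conflates the vanishing of $\pi_1(B,M)$ with the relevant condition, which concerns the action of $\pi_1(M)$ on the relative homotopy groups; this is harmless here, since the general form of the relative Hurewicz theorem already gives $H_i(B,M)=0$ for $i\leq q$ whenever the pair is $q$-connected (the degree-$q$ group is a quotient of $\pi_q(B,M)=0$), so the conclusion stands even though $M$ and $B$ are not assumed simply-connected in this section.
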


Now let $\xi$ be a stable bundle over $B$, and recall that $\hat{v}_q(\xi) : H_q(B) \rightarrow \Z_2$ denotes the homomorphism determined by the Wu class $v_q(\xi) \in H^q(B;\Z_2)$ of $\xi$.

\begin{lem} \label{lem:qf-geom}
If $f^*(\xi) \cong \nu_M$, then $E_q(M,f)$ is geometric with respect to $\hat{v}_q(\xi)$.
\end{lem}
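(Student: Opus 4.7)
The plan is to reduce the claim to the Wu formula. Fix $x \in H_q(M)$ and let $x_2 \in H_q(M;\Z_2)$ denote its mod-$2$ reduction; let $x^* \in H^q(M;\Z_2)$ be the Poincar\'e dual of $x_2$. I want to identify both sides of the equation $\varrho_2 \circ \lambda(x,x) = \hat{v}_q(\xi) \circ H_q(f)(x)$ with the evaluation of a single cohomology class on $x$.

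First I would unwind the left-hand side. By the relation between the intersection form and cup product, $\varrho_2(\lambda(x,x))$ equals $\langle x^* \cup x^*, [M]_2 \rangle$ where $[M]_2$ is the mod-$2$ fundamental class. By the defining property of Steenrod squares in the top dimension, $x^* \cup x^* = Sq^q(x^*)$, so $\varrho_2(\lambda(x,x)) = \langle Sq^q(x^*), [M]_2 \rangle$. The Wu formula then rewrites this as $\langle v_q(M) \cup x^*, [M]_2 \rangle = \langle v_q(M), x_2 \rangle$, where $v_q(M) \in H^q(M;\Z_2)$ is the Wu class of $M$ and the last equality is Poincar\'e duality.

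Next I would use the hypothesis $f^*(\xi) \cong \nu_M$. By definition the Wu class of $M$ is $v_q(\nu_M)$, and Wu classes are natural under bundle maps, so $v_q(M) = v_q(\nu_M) = v_q(f^*\xi) = f^*(v_q(\xi))$. Naturality of the evaluation pairing between $H^q(-;\Z_2)$ and $H_q(-;\Z_2)$ then gives
\[
\langle v_q(M), x_2 \rangle = \langle f^*(v_q(\xi)), x_2 \rangle = \langle v_q(\xi), H_q(f; \Z_2)(x_2) \rangle \text{\,.}
\]
By definition the right-hand side is $\hat{v}_q(\xi)$ applied to the mod-$2$ reduction of $H_q(f)(x)$, which, as $\hat{v}_q(\xi)$ takes values in $\Z_2$, equals $\hat{v}_q(\xi) \circ H_q(f)(x)$. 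Combining the two chains of equalities yields the claim. There is no real obstacle here; the only care needed is to keep track of the mod-$2$ reductions and to invoke the Wu formula in the correct direction.
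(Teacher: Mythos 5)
Your argument is correct and is essentially the paper's own proof: both identify $\varrho_2\lambda(x,x)$ with $\langle Sq^q(\alpha_2),[M]\rangle$ via Poincar\'e duality and the cup-square identity, apply the Wu relation to replace $Sq^q$ by cupping with $v_q(M)$, and then use $v_q(M)=v_q(\nu_M)=f^*(v_q(\xi))$ together with naturality of the evaluation pairing. The only difference is cosmetic (you work with the mod-$2$ Poincar\'e dual throughout, while the paper reduces an integral dual), so there is nothing to add.
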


\begin{proof}
By the assumption $f^*(v_q(\xi)) = v_q(f^*(\xi)) = v_q(\nu_M) = v_q(M) \in H^q(M; \Z_2)$.

Let $x \in H_q(M)$ be an arbitrary element, let $\alpha \in H^q(M)$ be its Poincar\'e-dual, and let $\alpha_2 \in H^q(M; \Z_2)$ be the mod $2$ reduction of $\alpha$. Since $\lambda(x,x) = \left< [M], \alpha \cup \alpha \right> = \left< [M], \alpha^2 \right>$ and $\alpha_2^2 = Sq^q(\alpha_2) = \alpha_2 \cup v_q(M)$, we have
\begin{multline*}
\varrho_2 \circ \lambda(x,x) = \varrho_2(\left< [M], \alpha^2 \right>) = \left< [M], \alpha_2^2 \right> = \left< [M], \alpha_2 \cup v_q(M) \right> = \left< x, v_q(M) \right> = \\
= \left< f_*(x), v_q(\xi) \right> = \left< \mu(x), v_q(\xi) \right> = \hat{v}_q(\xi) \circ \mu(x) \text{\,.}
\end{multline*}
Therefore $E_q(M,f)$ is geometric with respect to $\hat{v}_q(\xi)$. 
\end{proof}

\begin{lem} \label{lem:boundary-lagr}
Suppose that $M$ is the boundary of some oriented manifold $W$ and there is a map $F : W \rightarrow B$ such that $F \big| _M = f$. If $X = \Image(H_{q+1}(W, M) \rightarrow H_q(M))$ denotes the image of the boundary homomorphism, then 

a) $\rk X = \frac{1}{2} \rk H_q(M)$.

b) $\lambda \big| _{X \times X} = 0$.

c) $\mu \big| _X = 0$.
\end{lem}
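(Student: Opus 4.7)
The plan is to use the long exact sequence of the pair $(W,M)$ to reinterpret $X$, and then prove the three statements in reverse order. By exactness, $X = \Image(\partial) = \Ker(j_* : H_q(M) \rightarrow H_q(W))$, where $j : M \hookrightarrow W$ is the inclusion.

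Part c) is immediate: since $f = F \circ j$, we have $\mu = H_q(f) = H_q(F) \circ j_*$, which vanishes on $X = \Ker j_*$.

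For part b), the strategy is to represent elements of $X$ as Poincar\'e duals of cohomology classes that extend over $W$, and then exploit the fact that $[M] = \partial[W,M]$ bounds in $W$. Poincar\'e--Lefschetz duality on $W$ combined with Poincar\'e duality on $M$ yields a commutative square whose horizontal maps are $j^* : H^q(W) \rightarrow H^q(M)$ and $\partial : H_{q+1}(W,M) \rightarrow H_q(M)$, and whose vertical duality isomorphisms identify, modulo torsion, the image of $j^*$ with the image of $\partial$. Working rationally, any $x \in X \otimes \Q$ thus has the form $j^*(\alpha) \cap [M]$ for some $\alpha \in H^q(W;\Q)$, and hence
\[
\lambda(x_1,x_2) = \left\langle [M], j^*(\alpha_1) \cup j^*(\alpha_2) \right\rangle = \left\langle j_*[M], \alpha_1 \cup \alpha_2 \right\rangle = 0,
\]
since $j_*[M] = 0 \in H_{2q}(W)$. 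As $\lambda$ is $\Z$-valued, vanishing over $\Q$ gives vanishing over $\Z$.

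For part a), I would combine b) with a ``half lives, half dies'' argument. By b) we have $X \subseteq X^\perp$, and since $\lambda$ is nonsingular modulo torsion $\rk X + \rk X^\perp = \rk H_q(M)$, giving $\rk X \leq \frac{1}{2} \rk H_q(M)$. For the reverse inequality, the duality square above shows (rationally) that $\rk X = \rk(j^* : H^q(W) \rightarrow H^q(M))$, which by the universal coefficient theorem equals $\rk(j_* : H_q(M) \rightarrow H_q(W))$. Since $X = \Ker j_*$, this gives $\rk X = \rk H_q(M) - \rk X$, hence $\rk X = \frac{1}{2} \rk H_q(M)$. The main subtlety throughout is that Lefschetz duality for the bordism $W$ is only an isomorphism modulo torsion; this is handled cleanly by passing to rational coefficients for the rank computation in a) and for exhibiting the representative $\alpha$ in b), which is harmless since $\lambda$ is integer-valued.
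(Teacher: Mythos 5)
Your proof is correct. For part c) it is essentially the paper's argument in different clothing: the paper regards $F$ as a map of pairs $(W,M)\rightarrow(B,B)$ and uses $H_{q+1}(B,B)\cong 0$ to conclude that the composition $H_{q+1}(W,M)\rightarrow H_q(M)\stackrel{\mu}{\rightarrow}H_q(B)$ vanishes, which is the same content as your observation that $X=\Ker j_*$ and $\mu = H_q(F)\circ j_*$. For parts a) and b) the paper gives no argument at all, citing them as well known (Hirzebruch, \S 8); what you supply is exactly the standard Poincar\'e--Lefschetz ``half lives, half dies'' proof that such a reference contains, so you gain self-containedness at the cost of a little length. Two small remarks: your cautionary note that Lefschetz duality for $W$ ``is only an isomorphism modulo torsion'' is not accurate --- for a compact oriented manifold $H^q(W)\cong H_{q+1}(W,\partial W)$ holds integrally --- but since you only use the rationalised square this does no harm; and passing to $\Q$ to prove b) is legitimate precisely because $\lambda$ is $\Z$-valued and automatically vanishes on torsion classes (if $kt=0$ then $k\lambda(t,y)=0$), so vanishing of the rationalised form on $X\otimes\Q$ does imply $\lambda\big|_{X\times X}=0$; it would be worth making that one-line reduction explicit.
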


\begin{proof}
a) and b) are well-known, see eg.\ \cite[\S8]{hirzebruch95}.

c) Regarding $F$ as a map of pairs $(W, M) \rightarrow (B, B)$, we get a morphism between the homological long exact sequences, and a commutative diagram: 
\[
\xymatrix{
H_{q+1}(W, M) \ar[r] \ar[d]_-{H_q(F)} & H_q(M) \ar[d]^{H_q(f)} \\
H_{q+1}(B, B) \ar[r] & H_q(B) 
}
\]
Since $H_{q+1}(B, B) \cong 0$, the composition $H_{q+1}(W, M) \rightarrow H_q(M) \stackrel{\mu}{\rightarrow} H_q(B)$ is trivial. 
\end{proof}

\begin{lem} \label{lem:bound-qf-met}
Suppose that $H_q(B)$ is free. If $M$ is the boundary of some oriented manifold $W$ and there is a map $F : W \rightarrow B$ such that $F \big| _M = f$, then $E_q(M,f)$ is metabolic. 
\end{lem}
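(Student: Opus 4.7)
The plan is to enhance the subgroup $X=\Image(H_{q+1}(W,M)\to H_q(M))$ from Lemma \ref{lem:boundary-lagr} into a full T-lagrangian by passing to its ``saturation'' inside $H_q(M)$. Write $M := H_q(M)$ for brevity. Since $E_q(M,f)$ is nonsingular (by Poincar\'e duality), it suffices to produce a T-lagrangian.

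First I would define $L := \{y \in M \mid \text{there exists } k \in \Z\setminus\{0\} \text{ with } ky \in X\}$. The quotient $M/L$ is, by construction, canonically isomorphic to $(M/X)/\Tor(M/X)$, hence free. By Lemma \ref{lem:summand}, this means $L$ is a direct summand of $M$ and $\Tor M \leq L$. Moreover, since $L/X$ is torsion, Lemma \ref{lem:boundary-lagr} a) gives $\rk L = \rk X = \tfrac{1}{2}\rk M$, so $L$ is half-rank.

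Next I would verify that $\lambda\big|_{L\times L} = 0$ and $\mu\big|_L = 0$. For the intersection form, given $y_1,y_2 \in L$ choose $k_1,k_2 \neq 0$ with $k_i y_i \in X$; then $k_1k_2\,\lambda(y_1,y_2) = \lambda(k_1y_1,k_2y_2) = 0$ by Lemma \ref{lem:boundary-lagr} b), and since $\lambda$ is $\Z$-valued this forces $\lambda(y_1,y_2) = 0$. The vanishing of $\mu$ on $L$ is the step where the hypothesis that $H_q(B)$ is free is essential: for $y\in L$ with $ky\in X$, Lemma \ref{lem:boundary-lagr} c) gives $k\mu(y) = 0$, so $\mu(y)$ is a torsion element of $H_q(B)$, and freeness of $H_q(B)$ forces $\mu(y) = 0$.

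The main obstacle is conceptual rather than computational: the subgroup $X$ itself is not in general a direct summand and need not contain $\Tor M$, so it is typically not a T-lagrangian on the nose; the point of the saturation $L$ is precisely to absorb the torsion of $M/X$ (and thereby also $\Tor M$) while preserving isotropy and the vanishing of $\mu$. Once $L$ is shown to be a T-lagrangian, metabolicity of $E_q(M,f)$ follows immediately from the definition.
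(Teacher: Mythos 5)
Your proof is correct and follows essentially the same route as the paper: both pass from $X = \Image(H_{q+1}(W,M)\to H_q(M))$ to its saturation, invoke Lemma \ref{lem:summand} to get a half-rank direct summand containing $\Tor H_q(M)$, and use freeness of $H_q(B)$ to kill $\mu$ on the torsion quotient. The only cosmetic difference is that you verify the direct-summand condition via criterion (2) of Lemma \ref{lem:summand} (freeness of the quotient) where the paper uses criterion (3) directly, and you argue $\mu|_L=0$ element-by-element rather than by factoring through $L/X$; these are the same argument.
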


\begin{proof}
Let $X = \Image(H_{q+1}(W, M) \rightarrow H_q(M))$, then $\rk X = \frac{1}{2} \rk H_q(M)$, $\lambda \big| _{X \times X} = 0$ and $\mu \big| _X = 0$ by Lemma \ref{lem:boundary-lagr}. 

Let $Y = \{ y \in H_q(M) \mid ky \in X \text{ for some $k \in \Z \setminus \{ 0 \}$} \}$. Then $Y$ is a direct summand in $H_q(M)$ and $\Tor H_q(M) \leq Y$ by Lemma \ref{lem:summand}. Since $X \leq Y$ and $Y/X$ is a torsion group, $\rk Y = \rk X$. We have $\lambda \big| _{Y \times Y} = 0$, because for any $y_1, y_2 \in Y$ there is $k_1, k_2 \in \Z \setminus \{ 0 \}$ such that $k_1y_1, k_2y_2\in X$, so $\lambda(k_1y_1, k_2y_2) = 0$. 

The homomorphism $\mu \big| _Y : Y \rightarrow H_q(B)$ induces a homomorphism $Y/X \rightarrow H_q(B)$, because $\mu \big| _X = 0$. Since $Y/X$ is a torsion group and $H_q(B)$ is free, this induced homomorphism is trivial, therefore $\mu \big| _Y = 0$. 

Therefore $Y$ is a T-lagrangian in $E_q(M,f)$. Since $E_q(M,f)$ is nonsingular, this means that it is metabolic. 
\end{proof}

\section{The extended surgery obstruction $\theta_{W,F}$} \label{s:obstr-def}

The following setting will be used throughout this section: Let $B$ be a simply-connected space, $\xi$ a stable vector bundle over $B$ and $q \geq 2$ an even integer. Let $M_0$ and $M_1$ be closed oriented $2q$-manifolds, and let $W$ be an oriented cobordism between $M_0$ and $M_1$, ie.\ $\partial W = M_0 \sqcup (-M_1)$. Let $f_0 : M_0 \rightarrow B$ and $f_1 : M_1 \rightarrow B$ be normal $(q{-}1)$-smoothings over $(B,\xi)$ (note that we suppress the bundle maps from the notation, see Section \ref{ss:nm}). Let $F : W \rightarrow B$ be a normal bordism between $f_0$ and $f_1$, and assume that $F$ is also a normal $(q{-}1)$-smoothing (ie.\ it is $q$-connected). Assume also that $\chi(M_0)=\chi(M_1)$. 

We will define an invariant $\theta_{W,F} \in \s^T_{2q+1}(B,\xi)$. This invariant, and its equivalence class $[\theta_{W,F}] \in \ell^T_{2q+1}(B,\xi)$ is the \emph{extended surgery obstruction} associated to the normal bordism $F : W \rightarrow B$.

To define $\theta_{W,F}$ we need to make two choices (and then we will show that $\theta_{W,F}$ is independent of these choices). First, we choose a CW-decomposition $c$ of $W$ (ie.\ a homeomorphism between a CW-complex and $W$). Let $\sk_q W$ denote the $q$-skeleton of $W$ with respect to the CW-decomposition $c$. The inclusion $\sk_q W \rightarrow W$ is isotopic to some embedding $\sk_q W \rightarrow \interior W$ into the interior of $W$. Our second choice is a closed regular neighbourhood $U \subset \interior W$ of such an embedding.

After making these two choices, we will define a quasi-formation $\theta_{U,F} \in \gc^T_{2q+1}(B,\xi)$. Then we define $\theta_{c,F} \in \gs^T_{2q+1}(B,\xi)$ to be the isomorphism class of $\theta_{U,F}$ and we show that it only depends on the choice of $c$, but not on the choice of $U$. Next we define $\theta_{W,F}$ to be the stable isomorphism class represented by $\theta_{c,F}$  and show that it does not depend on the choice of $c$. Finally we consider $[\theta_{W,F}]$ and show that it depends only on the normal bordism class (rel boundary) of $F : W \rightarrow B$. 

To sum up, we will consider the following invariants: 
\[
\begin{aligned}
\theta_{U,F} &\in \gc^T_{2q+1}(B,\xi) \\
\theta_{c,F} &\in \gs^T_{2q+1}(B,\xi) \\
\theta_{W,F} &\in \s_{2q+1}^T(B,\xi) \\
[\theta_{W,F}] &\in \ell^T_{2q+1}(B,\xi)
\end{aligned}
\]

We will prove that $W$ is an h-cobordism if and only if $\theta_{W,F}$ is elementary, and $F$ is normally bordant (rel boundary) to an $F' : W' \rightarrow B$ with $W'$ an h-cobordism if and only if $[\theta_{W,F}]$ is elementary, justifying the name ``obstruction" for the invariants $\theta_{W,F}$ and $[\theta_{W,F}]$.

\vspace{\baselineskip}

Our construction of the extended surgery obstruction is based on Kreck's modified surgery obstruction \cite[Section 6]{kreck99}, with changes that give it nicer algebraic properties. As a result, under some assumptions, we can completely determine the obstruction in terms of the Q-forms of $f_0$ and $f_1$ (see Theorem \ref{thm:theta-calc} and Remark \ref{rem:theta-calc}). This comes at the cost of added technical difficulties in proving the well-definedness and completeness of the obstruction (see Sections \ref{ss:def:tw} and \ref{ss:real-surg}). We summarise the two constructions and the main similarities and differences below. 

The definition of the modified surgery obstruction $\theta(W,\bar{\nu}) \in l_{2q+1}(e)$ starts with the choice of an embedding $\bigsqcup^k S^q \times D^{q+1} \rightarrow W$ representing a generating set of $\Ker(\pi_q(F) : \pi_q(W) \rightarrow \pi_q(B))$, and $U \subset W$ is taken to be its image. The embedding identifies the intersection form of $\partial U$ with that of $\bigsqcup^k S^q \times S^q$ (the standard hyperbolic form) such that $\Image(H_{q+1}(U, \partial U) \rightarrow H_q(\partial U))$ corresponds to one of the standard lagrangians, and the obstruction is given by the half-rank direct summand $V \leq \Z^{2k}$ corresponding to $\Image(H_{q+1}(W \setminus \interior U, \partial U \sqcup M_0) \rightarrow H_q(\partial U))$. 

We define $\theta_{U,F} \in \gc^T_{2q+1}(B,\xi)$ as a quasi-formation on $E_q(\partial U, F \big| _{\partial U})$ for $U$ a neighbourhood of a $q$-skeleton of $W$ (see Definitions \ref{def:thetaU-M} and \ref{def:thetaU}). While the submanifold $U$ is different, the lagrangian $L$ and half-rank summand $V$ are defined using the same formulas (see Definitions \ref{def:thetaU-L} and \ref{def:thetaU-V}). Because of the choice of a more complicated $U$, the form $E_q(\partial U, F \big| _{\partial U})$ is only metabolic, and the obstruction $[\theta_{W,F}]$ lives in the extended $\ell$-monoid $\ell^T_{2q+1}(B,\xi)$, rather than $l_{2q+1}(e)$. 

After representatives are defined, the obstructions are shown to be well-defined for a given $W$. In the case of $\theta(W,\bar{\nu})$ this is done by examining the steps needed to transform one generating set of $\Ker(\pi_q(F))$ to another. As that method relies on the special form of $U \approx \bigsqcup^k S^q \times D^{q+1}$, a new approach is needed in the extended setting (see Theorem \ref{thm:tw-welldef}). We will prove that the $q$-skeleton of $W$ is unique up to homotopy equivalence and stabilisation (see Lemma \ref{lem:cwstab-he}), and check that the stable isomorphism class of $\theta_{U,F}$ is preserved, to get an invariant $\theta_{W,F} \in \s_{2q+1}^T(B,\xi)$. A similar refinement of $\theta(W,\bar{\nu})$ that depends on $W$ (rather than its normal bordism class) has not been defined in \cite{kreck99}, but it would live in the set of equivalence classes of pairs $(\UH_{2k},V)$ taken up to stabilisation and automorphisms of $\UH_{2k}$ preserving the standard lagrangian.

The next step is showing that the obstructions are invariant under normal bordism (equivalently, surgeries) of $W$. In the case of $\theta(W,\bar{\nu})$ we can assume that the surgery is done along one of the chosen embeddings $S^q \times D^{q+1} \rightarrow W$, and then its effect on the obstruction is given by a transposition, a generator of $\RU^+(\Z)$. For $[\theta_{W,F}]$ we construct compatible CW-decompositions on $W$ and on the result of the surgery such that the surgery is performed along a cell $S^q \subset \sk_q W$, and show that the obstruction changes by an elementary equivalence of $\sim$ (see Theorem \ref{thm:theta-bordism-inv}). 

Finally, the obstructions are complete. For $[\theta_{W,F}]$ we divide the proof into two parts. First, we prove that $W$ is an h-cobordism if and only if $\theta_{W,F}$ is elementary (see Proposition \ref{prop:hcob-elem}) using the exact sequence of the triple $(W, W \setminus \interior U, M_0)$, similarly to the analogous part of the argument for $\theta(W,\bar{\nu})$. The main difference is in the second part, showing that if $\theta_{W,F}$ is equivalent to an elementary quasi-formation (ie.\ $[\theta_{W,F}]$ is elementary), then $F : W \rightarrow B$ is normally bordant to an h-cobordism. This requires realising an equivalence $\theta_{W,F} \sim x$ of quasi-formations geometrically, that is, finding an $F' : W' \rightarrow B$ with $\theta_{W',F'} \si x$ (see Theorem \ref{thm:theta-equiv-real}). In the case of $\theta(W,\bar{\nu})$, the transposition in $\RU^+(\Z)$ can be readily realised by a surgery along one of the chosen embeddings $S^q \times D^{q+1} \rightarrow W$. For $[\theta_{W,F}]$, to realise an elementary equivalence of $\sim$, we first need to find an embedding $S^q \times D^{q+1} \rightarrow W$ to do surgery, and then the larger part of the proof is concerned with checking that this has the expected effect on the obstruction (see Lemma \ref{lem:theta-surg-real}).

\subsection{The definition of $\theta_{U,F}$}

Let $c$ be a CW-decomposition of $W$ and $U \subset \interior W$ be a neighbourhood of an embedding $\sk_q W \rightarrow \interior W$ isotopic to the inclusion. We will define a quasi-formation $\theta_{U,F} \in \gc^T_{2q+1}(B,\xi)$. We start with some basic observations.

\begin{prop} \label{prop:fund-hom-groups}
a) The spaces $W$, $M_0$, $M_1$, $\sk_q W$, $U$, $\partial U$ and $W \setminus \interior U$ are all simply-connected. 

b) If $i \leq q-1$, then $H_i(B) \cong H_i(W) \cong H_i(M_0) \cong H_i(M_1) \cong H_i(\sk_q W) \cong H_i(U) \cong H_i(\partial U) \cong H_i(W \setminus \interior U)$. The isomorphisms are induced by the natural inclusions between these spaces and (the restrictions of) $F$. 

c) If $i \leq q$, then $H_i(W, U) \cong H_i(W \setminus \interior U, \partial U) \cong 0$. 

d) If $i \leq q$, then $H_i(W, W \setminus \interior U) \cong H_i(U, \partial U) \cong 0$. 

e) If $i \leq q-1$, then $H_i(W, M_0) \cong H_i(W, M_1) \cong H_i(W \setminus \interior U, M_0) \cong H_i(W \setminus \interior U, M_1) \cong 0$. 
\end{prop}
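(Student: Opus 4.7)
The plan is to deduce all five parts from two structural facts. First, the maps $F$, $f_0$ and $f_1$ are $q$-connected (they are normal $(q{-}1)$-smoothings). Second, $U$ is a regular neighborhood of the $q$-dimensional complex $\sk_q W$ inside the $(2q{+}1)$-manifold $W$, so $U$ deformation retracts onto $\sk_q W$ and (after smoothing the CW-structure) admits a handle decomposition with handles of index $\leq q$; dually, $U$ is built from a collar of $\partial U$ by attaching handles of index $\geq q{+}1$, which makes the inclusion $\partial U \hookrightarrow U$ a $q$-connected map. In particular, the relative CW pair $(W, \sk_q W)$ (equivalently $(W, U)$) has cells only of dimension $\geq q{+}1$. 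Together with basic general position arguments, these facts control everything.

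For part (a), simple connectedness of $B$, $W$, $M_0$, $M_1$ is immediate from the $q$-connectedness of $F, f_0, f_1$ and $q \geq 2$; that of $\sk_q W$ follows from cellular approximation; that of $U$ from the deformation retraction; and that of $\partial U$ from the $q$-connectedness of $\partial U \hookrightarrow U$. For $W \setminus \interior U$, observe that it deformation retracts onto $W \setminus \sk_q W$, and removing a subcomplex of codimension $\geq q{+}1 \geq 3$ from a manifold does not change $\pi_1$. Part (b) is similar in spirit: $H_i(B) \cong H_i(W) \cong H_i(M_j)$ for $i \leq q-1$ come from $q$-connectedness; $H_i(\sk_q W) \cong H_i(W)$ is cellular approximation; $H_i(U) \cong H_i(\sk_q W)$ is the deformation retraction; $H_i(\partial U) \cong H_i(U)$ for $i \leq q-1$ is the $q$-connectedness of $\partial U \hookrightarrow U$; and $H_i(W \setminus \interior U) \cong H_i(W)$ for $i \leq q-1$ will follow from part (d) via the long exact sequence of $(W, W \setminus \interior U)$, since both $H_i$ and $H_{i+1}$ of that pair vanish in this range.

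Parts (c) and (d) are the computational heart. For (d), excision gives $H_i(W, W \setminus \interior U) \cong H_i(U, \partial U)$, and Lefschetz duality for the compact oriented $(2q{+}1)$-manifold-with-boundary $U$, together with $U \simeq \sk_q W$, gives $H_i(U, \partial U) \cong H^{2q+1-i}(\sk_q W)$, which vanishes for $i \leq q$. For (c), excising $\interior U$ yields $H_i(W, U) \cong H_i(W \setminus \interior U, \partial U)$, and the relative cellular chain complex of $(W, \sk_q W) \simeq (W, U)$, having no cells in dimensions $\leq q$, gives $H_i(W, U) = 0$ for $i \leq q$. For part (e), I would first use the long exact sequence of the pair $(W, M_j)$ together with the isomorphisms $H_i(M_j) \cong H_i(W)$ just established, to conclude $H_i(W, M_j) = 0$ for $i \leq q-1$; then the long exact sequence of the triple $(W, W \setminus \interior U, M_j)$, combined with $H_i(W, M_j) = 0$ and $H_{i+1}(W, W \setminus \interior U) = 0$ (from (d)) in the range $i \leq q-1$, yields $H_i(W \setminus \interior U, M_j) = 0$.

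The step I expect to require the most care is establishing the dual handle decomposition of $(U, \partial U)$, hence the $q$-connectedness of $\partial U \hookrightarrow U$ and the applicability of Lefschetz duality to $U$: although the CW-structure $c$ on $W$ need not be smooth, one smooths it (or passes to a compatible triangulation) before taking the regular neighborhood, so that $U$ is genuinely a smooth compact $(2q{+}1)$-manifold-with-boundary. Once this is in place, all remaining pieces reduce to standard applications of excision and the long exact sequences of pairs and triples.
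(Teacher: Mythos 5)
Your proposal is correct, and all five parts do follow from the facts you assemble; the difference from the paper lies in how you obtain the connectivity of $\partial U \hookrightarrow U$ and of the complement. The paper's proof runs everything through one general-position principle: since $\sk_q W$ is a codimension-$(q{+}1)$ subcomplex of $U$ and of $W$, both inclusions $\partial U \simeq U \setminus \sk_q W \to U$ and $W \setminus \interior U \simeq W \setminus \sk_q W \to W$ are $q$-connected, and then (a)--(e) are read off from the $q$-connectedness of $F$, $f_0$, $f_1$, $\sk_q W \simeq U \to W$ and these two inclusions. You instead get the $q$-connectedness of $\partial U \to U$ from the dual handle decomposition of the regular neighbourhood (handles of index $\leq q$, hence dually $\geq q{+}1$), prove (d) by excision plus Lefschetz duality $H_i(U,\partial U) \cong H^{2q+1-i}(U) \cong H^{2q+1-i}(\sk_q W)$, prove (c) from the relative cell structure of $(W,\sk_q W)$, and for the complement you only invoke general position for $\pi_1$ (codimension $\geq 3$), recovering its homology in degrees $\leq q-1$ by bootstrapping through the long exact sequence of $(W, W \setminus \interior U)$ and (d); part (e) then follows from the exact sequences of the pair $(W,M_j)$ and the triple $(W, W\setminus \interior U, M_j)$ exactly as you say. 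This duality-based route is sound and arguably needs less transversality input (it never establishes, nor needs, the full $q$-connectedness of $W \setminus \interior U \to W$, only what the statement asserts), at the cost of invoking Lefschetz duality and the handle picture; the paper's route is more uniform and yields homotopy-level connectivity of the complement as a by-product. Two minor points: the deformation retraction goes the other way ($W \setminus \sk_q W$ retracts onto $W \setminus \interior U$), which is harmless since only the homotopy equivalence is used; and the smoothing/triangulation issue you flag for taking the regular neighbourhood is indeed the technical point both arguments share -- the paper also relies on $U$ being an honest compact oriented manifold with boundary (it uses Poincar\'e--Lefschetz duality for $U$ in later lemmas) and defers such details to the thesis and to Wall's embedding theorem.
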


\begin{proof}
Use the fact that the maps $F$, $f_0$ and $f_1$, as well as the inclusion $\sk_q W \simeq U \rightarrow W$, are $q$-connected, $B$ is simply-connected, and $q \geq 2$. Moreover, since $\sk_q W$ is a codimension-$(q+1)$ subcomplex in $U$ and $W$, the inclusions $\partial U \simeq U \setminus \sk_q W \rightarrow U$ and $W \setminus \interior U \simeq W \setminus \sk_q W \rightarrow W$ are $q$-connected too. (For more details see \cite[Proposition 4.4.1]{csn-thesis}.)
\end{proof}

Since $W$ is oriented, the codimension-$0$ submanifold $U$ and its boundary $\partial U$ are oriented too. 

\begin{defin} \label{def:thetaU-M}
Let $\UM = E_q(\partial U, F \big| _{\partial U})$ (see Definition \ref{def:qf}), that is
\[
\UM = (H_q(\partial U), \lambda, \mu)
\]
where $\lambda : H_q(\partial U) \times H_q(\partial U) \rightarrow \Z$ is the intersection form of $\partial U$ and $\mu = H_q(F \big| _{\partial U}) : H_q(\partial U) \rightarrow H_q(B)$. This is an extended quadratic form over $H_q(B)$.
\end{defin}

In the following we will define subgroups $L$ and $V$ in $H_q(\partial U)$, and show that $L$ is a T-lagrangian and $V$ is a free half-rank direct summand, so that $(\UM; L, V)$ is a quasi-formation over $H_q(B)$. This will be $\theta_{U,F}$. We will also verify that $\theta_{U,F}$ is in the class $\gc^T_{2q+1}(B,\xi) = \gc^T_{2q+1}(H_q(B),\hat{v}_q(\xi),\Tor H_{q-1}(B))$, ie.\ $\UM$ is geometric (with respect to $\hat{v}_q(\xi)$), full and $\Tor H_q(\partial U) \cong \Tor H_{q-1}(B)$ (see Definitions \ref{def:gc} and \ref{def:l-space}).

\begin{lem} \label{lem:dU-tor}
$\Tor H_q(\partial U) \cong \Tor H_{q-1}(B)$.
\end{lem}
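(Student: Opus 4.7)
The plan is to compute $H_q(\partial U)$ (up to a free summand) via Poincar\'e–Lefschetz duality on the manifold with boundary $U$, and then to read off its torsion subgroup from the universal coefficient theorem applied to $U$.

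First I would exploit the fact that $U$ deformation retracts onto $\sk_q W$. This immediately gives $H_i(U)=0$ for $i>q$ and, since $H_q$ of a $q$-dimensional CW complex is the kernel of the cellular boundary into a free abelian group, that $H_q(U)$ is free. Combined with Proposition \ref{prop:fund-hom-groups}, I also have $H_{q-1}(U)\cong H_{q-1}(B)$.

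Next, Poincar\'e–Lefschetz duality on the oriented $(2q{+}1)$-manifold $U$ gives $H_i(U,\partial U)\cong H^{2q+1-i}(U)$. Since $H^j(U)=0$ for $j>q$, this vanishes for $i\le q$ (recovering part of Proposition \ref{prop:fund-hom-groups}), while $H_{q+1}(U,\partial U)\cong H^q(U)$. The portion
\[
H_{q+1}(U)\to H_{q+1}(U,\partial U)\to H_q(\partial U)\to H_q(U)\to H_q(U,\partial U)
\]
of the long exact sequence of the pair therefore reduces to a short exact sequence
\[
0\to H^q(U)\to H_q(\partial U)\to H_q(U)\to 0\text{.}
\]
Because $H_q(U)$ is free, this sequence splits, so $\Tor H_q(\partial U)\cong \Tor H^q(U)$.

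Finally, the universal coefficient theorem gives
\[
H^q(U)\cong \Hom(H_q(U),\Z)\oplus \Ext(H_{q-1}(U),\Z)\text{,}
\]
whose first summand is free (again because $H_q(U)$ is free), and whose second summand is isomorphic to $\Tor H_{q-1}(U)\cong \Tor H_{q-1}(B)$ by Proposition \ref{prop:fund-hom-groups}. Hence $\Tor H_q(\partial U)\cong \Tor H_{q-1}(B)$, as required. There is no real obstacle here; the only care needed is to verify that the $H_q(U)$-summand is genuinely free (for which the description of $U$ as a regular neighbourhood of a $q$-complex is essential), so that the splitting of the short exact sequence is justified.
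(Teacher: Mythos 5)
Your proof is correct and follows essentially the same route as the paper: the long exact sequence of the pair $(U,\partial U)$ combined with the freeness of $H_q(U) \cong H_q(\sk_q W)$, Poincar\'e--Lefschetz duality, and the universal coefficient theorem. The only difference is the order of operations — the paper first extracts $\Tor H_q(\partial U)\cong\Tor H_{q+1}(U,\partial U)$ from the exact sequence and then applies universal coefficients and duality to the pair $(U,\partial U)$ to reach $H_{q-1}(U)\cong H_{q-1}(B)$, while you dualise $H_{q+1}(U,\partial U)\cong H^q(U)$ first and apply universal coefficients to $U$ — and this reordering is immaterial.
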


\begin{proof}
Since $U \simeq \sk_q W$ and $\sk_q W$ is $q$-dimensional, $H_q(U)$ is free and $H_{q+1}(U) \cong 0$. Using the exact sequence of the pair $(U, \partial U)$, we get $\Tor H_q(\partial U) \cong \Tor H_{q+1}(U, \partial U)$. By the universal coefficient formula we have $\Tor H_{q+1}(U, \partial U) \cong \Tor H^{q+2}(U, \partial U)$. By Poincar\'e-duality and Proposition \ref{prop:fund-hom-groups} b) $H^{q+2}(U, \partial U) \cong H_{q-1}(U) \cong H_{q-1}(B)$. 
\end{proof}

\begin{defin} \label{def:thetaU-L}
We define the subgroup $L \leq H_q(\partial U)$ to be the image of the boundary homomorphism $H_{q+1}(U, \partial U) \rightarrow H_q(\partial U)$.
\end{defin}

\begin{prop} \label{prop:l-lagr}
$L$ is a T-lagrangian in $\UM$. 
\end{prop}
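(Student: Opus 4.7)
The plan is to verify the four conditions in the definition of a T-lagrangian directly, by combining Lemma \ref{lem:boundary-lagr} with a short exact sequence argument.

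First, observe that $U$ is a compact oriented $(2q{+}1)$-manifold whose boundary is $\partial U$, and the restriction $F|_U : U \to B$ has $(F|_U)|_{\partial U} = F|_{\partial U}$. Therefore Lemma \ref{lem:boundary-lagr}, applied to $U$ with its boundary $\partial U$ and the map $F|_U$, immediately yields three of the required properties for $L = \Image(H_{q+1}(U, \partial U) \to H_q(\partial U))$: namely $\rk L = \tfrac{1}{2}\rk H_q(\partial U)$, $\lambda|_{L \times L} = 0$, and $\mu|_L = 0$.

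It remains to show that $L$ is a direct summand of $H_q(\partial U)$ and that $\Tor H_q(\partial U) \leq L$. For this I would examine the long exact sequence of the pair $(U, \partial U)$:
\[
H_{q+1}(U) \longrightarrow H_{q+1}(U, \partial U) \xrightarrow{\partial} H_q(\partial U) \longrightarrow H_q(U) \longrightarrow H_q(U, \partial U).
\]
Since $U \simeq \sk_q W$ is homotopy equivalent to a $q$-dimensional CW-complex, we have $H_{q+1}(U) = 0$, and Proposition \ref{prop:fund-hom-groups} d) gives $H_q(U, \partial U) = 0$. So the sequence collapses to a short exact sequence
\[
0 \longrightarrow H_{q+1}(U, \partial U) \xrightarrow{\partial} H_q(\partial U) \longrightarrow H_q(U) \longrightarrow 0,
\]
identifying $L$ with the image of $\partial$ (equivalently the kernel of $H_q(\partial U) \to H_q(U)$) and giving an isomorphism $H_q(\partial U)/L \cong H_q(U)$.

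The final ingredient is that $H_q(U)$ is free, which again follows from $U \simeq \sk_q W$ being a $q$-dimensional complex. Thus the quotient $H_q(\partial U)/L$ is free, and by Lemma \ref{lem:summand} we conclude both that $L$ is a direct summand in $H_q(\partial U)$ and that $\Tor H_q(\partial U) \leq L$. Combined with the half-rank, vanishing-$\lambda$ and vanishing-$\mu$ properties noted above, this shows $L$ is a T-lagrangian in $\UM$. I do not anticipate any real obstacle here: the argument is an assembly of standard facts, the only point requiring care being that the quotient is free, which is exactly what allows Lemma \ref{lem:summand} to upgrade ``image of a boundary map with correct rank'' to ``T-lagrangian''.
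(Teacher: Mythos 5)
Your proof is correct and follows essentially the same route as the paper's: Lemma \ref{lem:boundary-lagr} applied to $(U,\partial U, F|_U)$ gives the half-rank and the vanishing of $\lambda$ and $\mu$ on $L$, and freeness of $H_q(\partial U)/L$ together with Lemma \ref{lem:summand} gives the direct-summand and torsion conditions. The only cosmetic difference is that you invoke $H_q(U,\partial U)\cong 0$ (and $H_{q+1}(U)\cong 0$) to identify $H_q(\partial U)/L$ with $H_q(U)$, whereas the paper merely notes that this quotient is the image of $H_q(\partial U)\to H_q(U)$, hence a subgroup of the free group $H_q(U)$ and therefore free.
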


\begin{proof}
Again, we have that $H_q(U)$ is free, so $\Coker(H_{q+1}(U, \partial U) \rightarrow H_q(\partial U)) \cong \Image(H_q(\partial U) \rightarrow H_q(U)) \leq H_q(U)$ is free too. By Lemma \ref{lem:summand} $L = \Image(H_{q+1}(U, \partial U) \rightarrow H_q(\partial U))$ is a direct summand in $H_q(\partial U)$, and it contains $\Tor H_q(\partial U)$. By applying Lemma \ref{lem:boundary-lagr} we also get $\rk L = \frac{1}{2} \rk H_q(\partial U)$, $\lambda \big| _{L \times L} = 0$ and $\mu \big| _L = 0$. 
\end{proof}

\begin{lem} \label{lem:skel-hom}
$H_q(\sk_q W) \cong H_q(\partial U) / L$, and the quotient homomorphism $H_q(\partial U) \rightarrow H_q(\partial U) / L \cong H_q(\sk_q W)$ is induced by the inclusion $\partial U \rightarrow U \simeq \sk_q W$. 
\end{lem}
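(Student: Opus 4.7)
The plan is to read the isomorphism off the long exact sequence of the pair $(U, \partial U)$, together with the fact that $U$ deformation retracts onto $\sk_q W$.

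The relevant portion of the long exact sequence is
\[
H_{q+1}(U, \partial U) \xrightarrow{\partial} H_q(\partial U) \xrightarrow{i_*} H_q(U) \longrightarrow H_q(U, \partial U),
\]
where $i : \partial U \hookrightarrow U$ is the inclusion. By Proposition \ref{prop:fund-hom-groups} d) we have $H_q(U, \partial U) \cong 0$, so $i_*$ is surjective. By Definition \ref{def:thetaU-L}, $L = \Image \partial = \Ker i_*$, so $i_*$ induces an isomorphism
\[
H_q(\partial U) / L \xrightarrow{\cong} H_q(U).
\]

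Next, since $U$ is a closed regular neighbourhood of an embedded copy of $\sk_q W$, the deformation retraction of $U$ onto this copy of $\sk_q W$ gives a homotopy equivalence $U \simeq \sk_q W$ which, composed with the inclusion $\sk_q W \hookrightarrow U$, is homotopic to the identity. In particular, the induced isomorphism $H_q(U) \cong H_q(\sk_q W)$ identifies $i_*$ with the map induced by the composition $\partial U \hookrightarrow U \simeq \sk_q W$. Combining this with the previous isomorphism, we get $H_q(\sk_q W) \cong H_q(\partial U) / L$, as required, and the quotient map is induced by the inclusion $\partial U \rightarrow U \simeq \sk_q W$.

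There is no substantive obstacle here; the only ingredient beyond the long exact sequence is the vanishing $H_q(U, \partial U) \cong 0$, already recorded in Proposition \ref{prop:fund-hom-groups} d).
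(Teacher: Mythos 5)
Your proof is correct and follows essentially the same route as the paper: surjectivity of $H_q(\partial U) \rightarrow H_q(U)$ from the vanishing $H_q(U, \partial U) \cong 0$ of Proposition \ref{prop:fund-hom-groups} d), exactness identifying $L = \Image \partial = \Ker i_*$, and the homotopy equivalence $U \simeq \sk_q W$. Nothing further is needed.
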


\begin{proof}
The homomorphism $H_q(\partial U) \rightarrow H_q(U)$ is surjective, because $H_q(U, \partial U) \cong 0$ (see Proposition \ref{prop:fund-hom-groups} d)). Hence $H_q(\sk_q W) \cong H_q(U) \cong \Image(H_q(\partial U) \rightarrow H_q(U)) \cong H_q(\partial U) / \Ker(H_q(\partial U) \rightarrow H_q(U)) \cong H_q(\partial U) / \Image(H_{q+1}(U, \partial U) \rightarrow H_q(\partial U))$. 
\end{proof}

Consider the long exact sequence of the triple $(W \setminus \interior U, \partial U \sqcup M_0, M_0)$: 
\begin{multline*}
\xymatrix{
H_{q+1}(W \setminus \interior U, M_0) \ar[r] & H_{q+1}(W \setminus \interior U, \partial U \sqcup M_0) \ar[r] & 
}
\\
\xymatrix{
 \ar[r] & H_q(\partial U \sqcup M_0, M_0) \ar[r] & H_q(W \setminus \interior U, M_0) \ar[r] & H_q(W \setminus \interior U, \partial U \sqcup M_0)
}
\end{multline*}

\begin{defin} \label{def:thetaU-V}
We define the subgroup $V \leq H_q(\partial U)$ to be the image of the boundary homomorphism $H_{q+1}(W \setminus \interior U, \partial U \sqcup M_0) \rightarrow H_q(\partial U \sqcup M_0, M_0) \cong H_q(\partial U)$. 
\end{defin}

Next we will show that the relevant part of the above sequence is in fact a short exact sequence.

\begin{lem} \label{lem:hom-triv1}
The homomorphism $H_q(W \setminus \interior U, M_0) \rightarrow H_q(W \setminus \interior U, \partial U \sqcup M_0)$ is trivial. 
\end{lem}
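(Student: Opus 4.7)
The plan is to factor the map $H_q(W \setminus \interior U, M_0) \to H_q(W \setminus \interior U, \partial U \sqcup M_0)$ through $H_q(W, M_0) \to H_q(W, U \cup M_0)$ using excision, and then show that the latter is zero using the long exact sequence of the triple $(W, U \cup M_0, M_0)$.

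First I will consider the commutative square of pair inclusions
\[
\xymatrix{
(W \setminus \interior U, M_0) \ar[r] \ar[d] & (W \setminus \interior U, \partial U \sqcup M_0) \ar[d] \\
(W, M_0) \ar[r] & (W, U \cup M_0)
}
\]
By excising $\interior U$, the right vertical inclusion induces an isomorphism $H_q(W \setminus \interior U, \partial U \sqcup M_0) \cong H_q(W, U \cup M_0)$. Hence the map in question is equal to the composition
\[
H_q(W \setminus \interior U, M_0) \to H_q(W, M_0) \to H_q(W, U \cup M_0) ,
\]
so it suffices to show that the second map here is trivial.

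To prove this, I will apply the long exact sequence of the triple $(W, U \cup M_0, M_0)$:
\[
H_q(U \cup M_0, M_0) \to H_q(W, M_0) \to H_q(W, U \cup M_0) .
\]
Since $U$ and $M_0$ are disjoint, $H_q(U \cup M_0, M_0) \cong H_q(U)$. Thus I need to argue that $H_q(U) \to H_q(W, M_0)$ is surjective. This map factors as $H_q(U) \to H_q(W) \to H_q(W, M_0)$. The first map is the isomorphism from Proposition \ref{prop:fund-hom-groups} b). For the second, I will use the long exact sequence of the pair $(W, M_0)$, in which the map $H_q(W, M_0) \to H_{q-1}(M_0)$ lands in the kernel of the isomorphism $H_{q-1}(M_0) \to H_{q-1}(W)$ (again Proposition \ref{prop:fund-hom-groups} b)); hence $H_q(W) \to H_q(W, M_0)$ is surjective.

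Combining these, $H_q(U) \to H_q(W, M_0)$ is surjective, and so by the triple sequence $H_q(W, M_0) \to H_q(W, U \cup M_0)$ is zero, which yields the lemma. No step appears delicate: the argument is a diagram chase powered entirely by the $q$-connectedness statements already packaged in Proposition \ref{prop:fund-hom-groups}, so the only point worth being careful about is confirming the commutativity of the square and the excision isomorphism on its right edge.
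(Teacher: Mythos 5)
Your argument is correct in substance and is essentially the paper's proof rearranged: both use the commutative square of pair inclusions, excision on its right-hand edge, and then reduce to the two connectivity facts packaged in Proposition \ref{prop:fund-hom-groups}. The paper identifies $H_q(W, U \sqcup M_0)$ with $H_{q-1}(M_0)$ via the exact sequence of the triple $(W, U \sqcup M_0, U)$ (using part c)) and then quotes the vanishing of the boundary map $H_q(W,M_0) \rightarrow H_{q-1}(M_0)$ (which follows from part b)); you instead kill the map $H_q(W,M_0) \rightarrow H_q(W, U \sqcup M_0)$ directly via the triple $(W, U \sqcup M_0, M_0)$, using that same vanishing together with surjectivity of $H_q(U) \rightarrow H_q(W,M_0)$. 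Same ingredients, equivalent diagram chase.

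One justification is misstated, however. You cite Proposition \ref{prop:fund-hom-groups} b) for the claim that $H_q(U) \rightarrow H_q(W)$ is an isomorphism, but part b) only covers degrees $i \leq q-1$, and in degree $q$ this map is in general \emph{not} injective: $U \simeq \sk_q W$, and the $q$-skeleton carries all $q$-cycles, whereas classes that bound $(q{+}1)$-chains die in $H_q(W)$, so the kernel is typically nontrivial (compare Lemma \ref{lem:skel-hom}). Fortunately your argument only needs surjectivity, which does hold: by Proposition \ref{prop:fund-hom-groups} c) we have $H_q(W,U) \cong 0$, so the exact sequence of the pair $(W,U)$ shows that $H_q(U) \rightarrow H_q(W)$ is onto (equivalently, the inclusion $U \simeq \sk_q W \hookrightarrow W$ is $q$-connected). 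With that correction the rest of your chase — surjectivity of $H_q(W) \rightarrow H_q(W,M_0)$ from the vanishing boundary map, hence surjectivity of $H_q(U) \rightarrow H_q(W,M_0)$, hence triviality of $H_q(W,M_0) \rightarrow H_q(W,U \sqcup M_0)$ — goes through as written.
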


\begin{proof}
It follows from Proposition \ref{prop:fund-hom-groups} b) that the boundary map $H_q(W, M_0) \rightarrow H_{q-1}(M_0)$ is trivial.

By Proposition \ref{prop:fund-hom-groups} c) $H_i(W, U) \cong 0$ for $i \leq q$, so the boundary homomorphism $H_q(W, U \sqcup M_0) \rightarrow H_{q-1}(U \sqcup M_0, U)$ in the exact sequence of the triple $(W, U \sqcup M_0, U)$ is an isomorphism. By excision we get the isomorphism in the commutative diagram 
\[
\xymatrix{
H_q(W \setminus \interior U, M_0) \ar[r] \ar[d] & H_q(W \setminus \interior U, \partial U \sqcup M_0) \ar[d]^-{\cong} \\ 
H_q(W, M_0) \ar[r]^-{0} & H_{q-1}(M_0)
}
\]
showing that the map $H_q(W \setminus \interior U, M_0) \rightarrow H_q(W \setminus \interior U, \partial U \sqcup M_0)$ is trivial. 
\end{proof}

\begin{lem} \label{lem:hom-triv2}
The homomorphism $H_{q+1}(W \setminus \interior U, M_0) \rightarrow H_{q+1}(W \setminus \interior U, \partial U \sqcup M_0)$ is trivial. 
\end{lem}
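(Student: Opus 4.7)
Following the template of Lemma \ref{lem:hom-triv1}, my plan is to first set up the commutative square
\[
\xymatrix{
H_{q+1}(W \setminus \interior U, M_0) \ar[r] \ar[d] & H_{q+1}(W \setminus \interior U, \partial U \sqcup M_0) \ar[d]^-{\cong} \\
H_{q+1}(W, M_0) \ar[r] & H_{q+1}(W, U \sqcup M_0)
}
\]
in which the right vertical is the excision isomorphism. From the exact sequence of the triple $(W, U \sqcup M_0, M_0)$, combined with $H_{q+1}(U \sqcup M_0, M_0) \cong H_{q+1}(U) \cong H_{q+1}(\sk_q W) = 0$ (since $\sk_q W$ is $q$-dimensional), the bottom map is injective. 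Consequently, triviality of the top map is equivalent to the inclusion-induced map $H_{q+1}(W \setminus \interior U, M_0) \to H_{q+1}(W, M_0)$ being zero.

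To analyse this inclusion-induced map, I would next show that $H_{q+1}(M_1) \to H_{q+1}(W \setminus \interior U, M_0)$ (induced by $M_1 \hookrightarrow W \setminus \interior U$) is surjective. This follows from the exact sequence of the triple $(W \setminus \interior U, M_0 \sqcup M_1, M_0)$, together with the vanishing $H_{q+1}(W \setminus \interior U, M_0 \sqcup M_1) = 0$. The latter comes from Poincar\'e--Lefschetz duality on $W \setminus \interior U$ (whose boundary is $M_0 \sqcup M_1 \sqcup \partial U$), giving $H_{q+1}(W \setminus \interior U, M_0 \sqcup M_1) \cong H^q(W \setminus \interior U, \partial U)$; by excision this equals $H^q(W, U)$, which is zero since $H_q(W, U) = H_{q-1}(W, U) = 0$ by Proposition \ref{prop:fund-hom-groups} c) and the universal coefficient theorem. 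Hence it remains to prove that the direct map $H_{q+1}(M_1) \to H_{q+1}(W, M_0)$ is zero.

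The main step is this last triviality, which I would establish via Poincar\'e--Lefschetz duality: the isomorphisms $H_{q+1}(M_1) \cong H^{q-1}(M_1)$ and $H_{q+1}(W, M_0) \cong H^q(W, M_1)$ identify the map in question with the coboundary $\delta \colon H^{q-1}(M_1) \to H^q(W, M_1)$ of the pair $(W, M_1)$, by naturality of the cap product with the fundamental class. So it suffices to show $\delta = 0$, equivalently that $H^{q-1}(W) \to H^{q-1}(M_1)$ is surjective. Since the pair $(W, M_1)$ is $(q{-}1)$-connected (from the $q$-connectedness of $f_1$ and $F$), the universal coefficient theorem yields $H^{q-1}(W, M_1) = 0$, so this restriction is already injective, with cokernel a subgroup of $H^q(W, M_1) \cong \Hom(H_q(W, M_1), \Z)$, which is torsion-free. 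Proposition \ref{prop:fund-hom-groups} b) gives $\rk H^{q-1}(W) = \rk H_{q-1}(B) = \rk H^{q-1}(M_1)$, so this cokernel has rank zero and, being simultaneously torsion-free, must vanish. The rank-matching via Proposition \ref{prop:fund-hom-groups} b) together with the cokernel-in-free-group observation is the key technical ingredient closing the argument.
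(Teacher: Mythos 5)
Your proof is correct, and it takes a genuinely different route from the paper's. The paper applies Lefschetz duality on $W \setminus \interior U$ at the outset to convert the whole statement into one about a map $H^q(W \setminus \interior U, \partial U \sqcup M_1) \to H^q(W \setminus \interior U, M_1)$, which it then kills via a commutative square comparing the coboundary $H^{q-1}(M_1) \to H^q(W, M_1)$ (zero) with the coboundary $H^{q-1}(U \sqcup M_1, U) \to H^q(W, U \sqcup M_1)$ (an isomorphism, since $H^{q-1}(W,U) = H^q(W,U) = 0$). You stay in homology: you excise to reduce to the inclusion-induced map $H_{q+1}(W \setminus \interior U, M_0) \to H_{q+1}(W, M_0)$, then add the extra step of showing $H_{q+1}(M_1) \to H_{q+1}(W \setminus \interior U, M_0)$ is surjective (via Lefschetz duality on $W \setminus \interior U$ and $H^q(W,U)=0$), and finally identify the residual map $H_{q+1}(M_1) \to H_{q+1}(W,M_0)$ with the dual of the same coboundary $\delta: H^{q-1}(M_1) \to H^q(W, M_1)$. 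Both arguments thus rest on the same two facts — vanishing of $H^i(W,U)$ for $i \leq q$ and vanishing of $\delta$ — but assemble them differently; your surjectivity step is an additional ingredient with no direct analogue in the paper, and your use of the compatibility of Lefschetz duality with the long exact sequence of $(W,M_1)$ (which you invoke somewhat tersely as ``naturality of cap product'') is precisely the dualization the paper performs once, up front. One stylistic remark: your closing rank-plus-torsion-free argument for surjectivity of $H^{q-1}(W) \to H^{q-1}(M_1)$ is valid but is slightly more elaborate than needed — since the inclusion $M_1 \hookrightarrow W$ induces isomorphisms on $H_{q-1}$ and $H_{q-2}$ (Proposition \ref{prop:fund-hom-groups} b)), naturality of the universal coefficient sequence and the five lemma already make $H^{q-1}(W) \to H^{q-1}(M_1)$ an isomorphism outright, which is the observation the paper uses.
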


\begin{proof}
By Poincar\'e-duality it is enough to prove that the homomorphism $H^q(W \setminus \interior U, \partial U \sqcup M_1) \rightarrow H^q(W \setminus \interior U, M_1)$ is trivial. 

Since $F$ and $f_1$ are $q$-connected, the inclusion $M_1 \rightarrow W$ induces an isomorphism on $H^i$ for $i \leq q-1$, so the boundary map $H^{q-1}(M_1) \rightarrow H^q(W, M_1)$ is trivial.

Since $U \simeq \sk_q W$, we have $H^i(W, U) \cong 0$ for $i \leq q$, therefore the boundary homomorphism $H^{q-1}(U \sqcup M_1, U) \rightarrow H^q(W, U \sqcup M_1)$ in the exact sequence of the triple $(W, U \sqcup M_1, U)$ is an isomorphism. By excision we get the isomorphism in the commutative diagram 
\[
\xymatrix{
H^{q-1}(M_1) \ar[r]^-{0} \ar[d]_-{\cong} & H^q(W, M_1) \ar[d] \\ 
H^q(W \setminus \interior U, \partial U \sqcup M_1) \ar[r] & H^q(W \setminus \interior U, M_1)
}
\]
showing that the map $H^q(W \setminus \interior U, \partial U \sqcup M_1) \rightarrow H^q(W \setminus \interior U, M_1)$ is trivial. 
\end{proof}

By combining Lemmas \ref{lem:hom-triv1} and \ref{lem:hom-triv2} we get: 

\begin{thm} \label{thm:V-ses}
There is a short exact sequence
\[
\xymatrix{
0 \ar[r] & H_{q+1}(W \setminus \interior U, \partial U \sqcup M_0) \ar[r] & H_q(\partial U) \ar[r] & H_q(W \setminus \interior U, M_0) \ar[r] & 0
} 
\qed
\]
\end{thm}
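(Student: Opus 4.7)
The strategy is essentially immediate from the two preceding lemmas together with the long exact sequence of the triple $(W \setminus \interior U, \partial U \sqcup M_0, M_0)$ that was displayed just before Definition \ref{def:thetaU-V}. So my proposal is really just an organizational one: extract the relevant five-term piece of this long exact sequence and kill the outer terms using Lemmas \ref{lem:hom-triv1} and \ref{lem:hom-triv2}.

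The plan is as follows. First, I would recall the excision isomorphism $H_q(\partial U \sqcup M_0, M_0) \cong H_q(\partial U)$, obtained by excising the open collar of $M_0$ (or simply by the disjoint-union decomposition of the pair, since $M_0$ is a component of the subspace). This identifies the middle term of the five-term sequence
\[
H_{q+1}(W\!\setminus\!\interior U, M_0) \to H_{q+1}(W\!\setminus\!\interior U, \partial U\sqcup M_0) \to H_q(\partial U\sqcup M_0, M_0) \to H_q(W\!\setminus\!\interior U, M_0) \to H_q(W\!\setminus\!\interior U, \partial U\sqcup M_0)
\]
with $H_q(\partial U)$, so that the map to the fourth term becomes the very homomorphism whose image defines $V$ in Definition \ref{def:thetaU-V}.

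Second, Lemma \ref{lem:hom-triv2} says the leftmost arrow is zero, which by exactness promotes the next arrow $H_{q+1}(W \setminus \interior U, \partial U \sqcup M_0) \to H_q(\partial U)$ to an injection. Third, Lemma \ref{lem:hom-triv1} says the rightmost arrow is zero, which by exactness promotes the arrow $H_q(\partial U) \to H_q(W \setminus \interior U, M_0)$ to a surjection. Exactness at the middle term $H_q(\partial U)$ then yields precisely the short exact sequence
\[
0 \to H_{q+1}(W \setminus \interior U, \partial U \sqcup M_0) \to H_q(\partial U) \to H_q(W \setminus \interior U, M_0) \to 0
\]
as claimed.

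There is no real obstacle here; all the work has already been done in Lemmas \ref{lem:hom-triv1} and \ref{lem:hom-triv2}, whose proofs relied on the $q$-connectivity of $f_0$, $f_1$, $F$ and the inclusion $U \simeq \sk_q W \hookrightarrow W$, combined with Poincar\'e--Lefschetz duality in the second case. The only care needed in writing this up is to make the excision identification of $H_q(\partial U \sqcup M_0, M_0)$ with $H_q(\partial U)$ explicit, so that the map $H_q(\partial U) \to H_q(W \setminus \interior U, M_0)$ in the short exact sequence is unambiguously identified with the composition of the connecting-sequence map with the excision isomorphism.
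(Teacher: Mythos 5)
Your proposal is correct and is exactly the paper's argument: the theorem is stated with a \qed precisely because it follows by combining Lemmas \ref{lem:hom-triv1} and \ref{lem:hom-triv2} with the long exact sequence of the triple $(W \setminus \interior U, \partial U \sqcup M_0, M_0)$ and the identification $H_q(\partial U \sqcup M_0, M_0) \cong H_q(\partial U)$. Nothing further is needed.
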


The following two lemmas will imply that $V$ is a free half-rank direct summand.

\begin{lem} \label{lem:V-free}
The restriction of the homomorphism $H_q(\partial U) \rightarrow H_q(W \setminus \interior U, M_0)$ is an isomorphism $\Tor H_q(\partial U) \cong \Tor H_q(W \setminus \interior U, M_0)$. 
\end{lem}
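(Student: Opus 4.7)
The plan is to exploit the short exact sequence from Theorem \ref{thm:V-ses},
\[
0 \rightarrow V \rightarrow H_q(\partial U) \rightarrow H_q(W \setminus \interior U, M_0) \rightarrow 0 \text{,}
\]
and to show the lemma in two steps: first, that $V$ is torsion-free (equivalently, free), which immediately gives that the induced map $\Tor H_q(\partial U) \rightarrow \Tor H_q(W \setminus \interior U, M_0)$ is injective; second, that both of these torsion subgroups are abstractly isomorphic to $\Tor H_{q-1}(B)$, and hence have the same (finite) order, so that the injection is forced to be an isomorphism.

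For the first step, $V \cong H_{q+1}(W \setminus \interior U, \partial U \sqcup M_0)$ by exactness, and $W \setminus \interior U$ is an oriented $(2q{+}1)$-manifold whose boundary decomposes as $(\partial U \sqcup M_0) \sqcup M_1$. Lefschetz duality therefore yields $H_{q+1}(W \setminus \interior U, \partial U \sqcup M_0) \cong H^q(W \setminus \interior U, M_1)$, and the universal coefficient theorem identifies this with $\Hom(H_q(W \setminus \interior U, M_1), \Z) \oplus \Ext(H_{q-1}(W \setminus \interior U, M_1), \Z)$. By Proposition \ref{prop:fund-hom-groups} e), $H_{q-1}(W \setminus \interior U, M_1) = 0$, so only the $\Hom$-summand survives and $V$ is free.

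For the second step, Lemma \ref{lem:dU-tor} already gives $\Tor H_q(\partial U) \cong \Tor H_{q-1}(B)$. For the other side, Lefschetz duality gives $H_q(W \setminus \interior U, M_0) \cong H^{q+1}(W \setminus \interior U, M_1 \sqcup \partial U)$, and the universal coefficient theorem (combined with excision) yields
\[
\Tor H_q(W \setminus \interior U, M_0) \cong \Tor H_q(W \setminus \interior U, M_1 \sqcup \partial U) \cong \Tor H_q(W, U \sqcup M_1) \text{.}
\]
The last group is then computed from the long exact sequence of the pair $(W, U \sqcup M_1)$: since $H_q(U) \rightarrow H_q(W)$ is surjective (by Proposition \ref{prop:fund-hom-groups} c)), the map $H_q(W) \rightarrow H_q(W, U \sqcup M_1)$ vanishes, so $H_q(W, U \sqcup M_1)$ injects into $H_{q-1}(U) \oplus H_{q-1}(M_1)$ with image equal to the kernel of the map to $H_{q-1}(W)$. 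Both inclusions induce isomorphisms with $H_{q-1}(B)$ by Proposition \ref{prop:fund-hom-groups} b), so this kernel is the anti-diagonal, isomorphic to $H_{q-1}(B)$; hence $H_q(W, U \sqcup M_1) \cong H_{q-1}(B)$, giving $\Tor H_q(W \setminus \interior U, M_0) \cong \Tor H_{q-1}(B)$ as required.

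The main technical care goes into step two, where one needs to chain together Lefschetz duality, the universal coefficient theorem, excision, and the long exact sequence of a pair, and correctly identify the kernel of $H_{q-1}(U \sqcup M_1) \rightarrow H_{q-1}(W)$ using the fact that both pieces induce the \emph{same} identification with $H_{q-1}(B)$; step one and the final counting argument are essentially formal once this is in place.
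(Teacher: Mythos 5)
Your argument is correct, but it closes the proof differently from the paper. The paper works with the map itself: using the natural isomorphism $\Ext(H^{i+1}(X),\Z)\cong\Tor H_i(X)$ it reduces the lemma to showing that the restriction $H^{q+1}(W\setminus\interior U,M_0)\rightarrow H^{q+1}(\partial U)$ is an isomorphism, and proves that by a commutative diagram combining Poincar\'e--Lefschetz duality and excision, which reduces the claim to the boundary map $H_q(W,U\sqcup M_1)\rightarrow H_{q-1}(U)$ of the triple $(W,U\sqcup M_1,M_1)$ being an isomorphism. You instead never verify surjectivity on torsion directly: you prove injectivity by showing the kernel $V\cong H_{q+1}(W\setminus\interior U,\partial U\sqcup M_0)\cong H^q(W\setminus\interior U,M_1)$ is free (duality, universal coefficients, and Proposition \ref{prop:fund-hom-groups} e)), then compute both torsion subgroups abstractly -- $\Tor H_q(\partial U)\cong\Tor H_{q-1}(B)$ by Lemma \ref{lem:dU-tor}, and $\Tor H_q(W\setminus\interior U,M_0)\cong\Tor H_q(W,U\sqcup M_1)\cong\Tor H_{q-1}(B)$ by duality, universal coefficients, excision and the long exact sequence of the pair $(W,U\sqcup M_1)$ -- and conclude by counting, since an injection between finite groups of equal order is bijective. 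Both routes use the same ingredients, but yours trades the paper's naturality bookkeeping for a cardinality argument, which is legitimate here because all groups in play are finitely generated (a blanket assumption of the paper), so the torsion subgroups are finite; the paper's version is slightly more robust in that it identifies the map itself without any counting, and your identification of $H_q(W,U\sqcup M_1)$ as the anti-diagonal kernel could be shortened to the paper's observation that the triple $(W,U\sqcup M_1,U)$ gives $H_q(W,U\sqcup M_1)\cong H_{q-1}(M_1)$ directly. As a small bonus, your step one independently establishes the freeness of $V$, which the paper instead extracts afterwards from Lemma \ref{lem:V-free} when verifying that $V$ is a free half-rank direct summand.
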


\begin{proof}
The universal coefficient formula, applied dually to compute the integral homology of a space $X$ (with finitely generated cohomology groups) from its cohomology, gives a natural isomorphism $\Ext(H^{i+1}(X),\Z) \stackrel{\cong}{\rightarrow} \Tor H_i(X)$ for every $i$. Therefore it is enough to prove that the homomorphism $H^{q+1}(W \setminus \interior U, M_0) \rightarrow H^{q+1}(\partial U)$ is an isomorphism. 

Consider the diagram 
\[
\xymatrix{
H^{q+1}(W \setminus \interior U, M_0) \ar[r] \ar[d] & H_q(W \setminus \interior U, \partial U \sqcup M_1) \ar[r] \ar[d] & H_q(W, U \sqcup M_1) \ar[d] \\
H^{q+1}(\partial U) \ar[r] & H_{q-1}(\partial U) \ar[r] & H_{q-1}(U)
}
\]
By Poincar\'e-duality the first square commutates and the horizontal arrows in it are isomorphisms. The second square also commutates, because it is induced by a map $(W \setminus \interior U, \partial U \sqcup M_1, M_1) \rightarrow (W, U \sqcup M_1, M_1)$ of triples, and its horizontal arrows are isomorphisms by excision and Proposition \ref{prop:fund-hom-groups} b). 

It is now enough to prove that the map $H_q(W, U \sqcup M_1) \rightarrow H_{q-1}(U)$ is an isomorphism. We use the exact sequence of the triple $(W, U \sqcup M_1, M_1)$. We have $H_{q-1}(W, M_1) \cong 0$ by Proposition \ref{prop:fund-hom-groups} e). The map $H_q(W, M_1) \rightarrow H_q(W, U \sqcup M_1)$ is trivial, because $H_q(W, U \sqcup M_1) \cong H_{q-1}(M_1)$ (similarly to the isomorphism $H_q(W, U \sqcup M_0) \cong H_{q-1}(M_0)$ in the proof of Lemma \ref{lem:hom-triv1}) and the composition $H_q(W, M_1) \rightarrow H_{q-1}(M_1)$ vanishes by Proposition \ref{prop:fund-hom-groups} b).

It is now enough to prove that the map $H_q(W, U \sqcup M_1) \rightarrow H_{q-1}(U)$ is an isomorphism. This holds, because in the exact sequence of the triple $(W, U \sqcup M_1, M_1)$ we have $H_{q-1}(W, M_1) \cong 0$ and the map $H_q(W, M_1) \rightarrow H_q(W, U \sqcup M_1) \cong H_{q-1}(M_1)$ is trivial by Proposition \ref{prop:fund-hom-groups} e) and b) respectively (where we used the isomorphism from the proof of Lemma \ref{lem:hom-triv1}). 
\end{proof}

\begin{lem} \label{lem:V-rank}
We have $\rk H_{q+1}(W \setminus \interior U, \partial U \sqcup M_0) = \rk H_q(W \setminus \interior U, M_0)$.
\end{lem}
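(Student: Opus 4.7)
The plan is to show that both $\rk H_{q+1}(W \setminus \interior U, \partial U \sqcup M_0)$ and $\rk H_q(W \setminus \interior U, M_0)$ equal $\tfrac{1}{2} \rk H_q(\partial U)$. The short exact sequence in Theorem \ref{thm:V-ses} already gives that their sum equals $\rk H_q(\partial U)$, so it suffices to establish the single inequality $\rk H_q(W \setminus \interior U, M_0) \leq \tfrac{1}{2} \rk H_q(\partial U)$ together with its symmetric analogue for $M_1$. Indeed, Poincar\'e--Lefschetz duality on the $(2q{+}1)$-manifold $W \setminus \interior U$ (with boundary $\partial U \sqcup M_0 \sqcup M_1$), combined with the universal coefficient theorem at the level of ranks, identifies $\rk H_q(W \setminus \interior U, M_0) = \rk H_{q+1}(W \setminus \interior U, \partial U \sqcup M_1)$, so these two inequalities, together with the two versions of Theorem \ref{thm:V-ses} (the second obtained by swapping the roles of $M_0$ and $M_1$ in Lemmas \ref{lem:hom-triv1} and \ref{lem:hom-triv2}), will pin all four ranks to $\tfrac{1}{2}\rk H_q(\partial U)$.

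The main obstacle will be proving the inequality $\rk H_q(W \setminus \interior U, M_0) \leq \tfrac{1}{2} \rk H_q(\partial U)$, since this is where the assumption $\chi(M_0) = \chi(M_1)$ must enter (Theorem \ref{thm:V-ses} alone only constrains the sum of ranks). The key identity is $\chi(W, M_0) = \chi(W) - \chi(M_0) = \tfrac{1}{2}(\chi(M_0) + \chi(M_1)) - \chi(M_0) = 0$, which, together with the vanishing $H_i(W, M_0) = 0$ for $i \leq q{-}1$ (Proposition \ref{prop:fund-hom-groups} e)) and for $i \geq q{+}2$ (via Poincar\'e--Lefschetz duality on $W$ and universal coefficients, using Proposition \ref{prop:fund-hom-groups} e) for $M_1$), yields $\rk H_q(W, M_0) = \rk H_{q+1}(W, M_0)$.

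Feeding this into the long exact sequence of the triple $(W, W \setminus \interior U, M_0)$, and using the computation $H_i(W, W \setminus \interior U) \cong H_i(U, \partial U) \cong H^{2q+1-i}(U)$ (by excision and Poincar\'e--Lefschetz duality on $U$, which satisfies $H^j(U) = 0$ for $j > q$ as $U \simeq \sk_q W$), I would extract the 6-term exact sequence
\begin{multline*}
0 \to H_{q+2}(W, W \setminus \interior U) \to H_{q+1}(W \setminus \interior U, M_0) \to H_{q+1}(W, M_0) \to \\
\to H_{q+1}(W, W \setminus \interior U) \to H_q(W \setminus \interior U, M_0) \to H_q(W, M_0) \to 0 \text{\,.}
\end{multline*}
Using Lemma \ref{lem:skel-hom} and Proposition \ref{prop:l-lagr} to identify $\rk H_{q+1}(W, W \setminus \interior U) = \rk H_q(\sk_q W) = \tfrac{1}{2}\rk H_q(\partial U)$, and Proposition \ref{prop:fund-hom-groups} b) to get $\rk H_{q+2}(W, W \setminus \interior U) = \rk H_{q-1}(U) = \rk H_{q-1}(B)$, the alternating sum of ranks in this sequence produces
\[
\rk H_q(W \setminus \interior U, M_0) - \rk H_{q+1}(W \setminus \interior U, M_0) = \tfrac{1}{2}\rk H_q(\partial U) - \rk H_{q-1}(B) \text{\,.}
\]

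To complete the inequality I would bound $\rk H_{q+1}(W \setminus \interior U, M_0)$ from above by $\rk H_{q-1}(B)$. This follows by applying Lemma \ref{lem:hom-triv2} in the long exact sequence of the triple $(W \setminus \interior U, \partial U \sqcup M_0, M_0)$ (with $H_i(\partial U \sqcup M_0, M_0) \cong H_i(\partial U)$ by excision) to show that $H_{q+1}(\partial U) \to H_{q+1}(W \setminus \interior U, M_0)$ is surjective, then using Poincar\'e duality on the closed $2q$-manifold $\partial U$ together with Proposition \ref{prop:fund-hom-groups} b) to rewrite $\rk H_{q+1}(\partial U) = \rk H_{q-1}(\partial U) = \rk H_{q-1}(B)$. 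Combined with the previous equation this gives $\rk H_q(W \setminus \interior U, M_0) \leq \tfrac{1}{2}\rk H_q(\partial U)$, and then the symmetric argument for $M_1$ together with the duality identifications of the first paragraph forces equality.
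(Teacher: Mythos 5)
Your proposal is correct, but it takes a genuinely different route from the paper. The paper's proof stays inside $X = W \setminus \interior U$: it uses Lefschetz duality once to rewrite $\rk H_{q+1}(X, \partial U \sqcup M_0) = \rk H_q(X, M_1)$, and then shows $\rk H_q(X, M_0) = \rk H_q(X, M_1)$ by comparing $\chi(X, M_0) = \chi(X, M_1)$ (immediate from $\chi(M_0)=\chi(M_1)$ and additivity) and checking term by term that all other relative ranks agree (they vanish for $i \leq q-1$ by Proposition \ref{prop:fund-hom-groups} e), and both equal $\rk H_{2q-i}(B)$ for $i \geq q+1$ via duality and the identification from the proof of Lemma \ref{lem:hom-triv1}). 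You instead pin both ranks to the common value $\tfrac{1}{2}\rk H_q(\partial U)$: you pass through the triple $(W, W\setminus\interior U, M_0)$, use $\chi(W,M_0)=0$ (via $\chi(W)=\tfrac12\chi(\partial W)$ for the odd-dimensional $W$, together with $\chi(M_0)=\chi(M_1)$), identify $\rk H_{q+1}(W, W\setminus\interior U) = \rk L = \tfrac12\rk H_q(\partial U)$, and extract the upper bound $\rk H_q(X,M_0) \leq \tfrac12\rk H_q(\partial U)$ from the surjection $H_{q+1}(\partial U) \twoheadrightarrow H_{q+1}(X, M_0)$ supplied by Lemma \ref{lem:hom-triv2}; symmetrising and feeding in the duality and the short exact sequence(s) of Theorem \ref{thm:V-ses} then forces equality. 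All the steps check out (in particular the six-term sequence is exact at both ends because $H_{q+2}(W,M_0)\cong 0$ and $H_q(W,W\setminus\interior U)\cong 0$, and the $M_1$-analogues of Lemmas \ref{lem:hom-triv1} and \ref{lem:hom-triv2} do hold by the same proofs, as you note). Your argument is longer, and it relies on symmetric analogues that the paper never states explicitly, but it yields a small bonus the paper's proof does not: it computes the common rank as $\tfrac12\rk H_q(\partial U) = \rk L$. A minor simplification of your last step: using the duality in the form $\rk H_{q+1}(X,\partial U \sqcup M_0) = \rk H_q(X, M_1)$ (as the paper does), your two inequalities plus the single original sequence of Theorem \ref{thm:V-ses} already force all ranks to $\tfrac12\rk H_q(\partial U)$, so the $M_1$-version of that theorem is not actually needed.
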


\begin{proof}
It follows from Poincar\'e-duality and the universal coefficient formula that for every $i$ we have $\rk H_i(W \setminus \interior U, \partial U \sqcup M_0) = \rk H^{2q+1-i}(W \setminus \interior U, M_1) = \rk H_{2q+1-i}(W \setminus \interior U, M_1)$, in particular $\rk H_{q+1}(W \setminus \interior U, \partial U \sqcup M_0) = \rk H_q(W \setminus \interior U, M_1)$. 

Since $\chi(M_0) = \chi(M_1)$, we have $\chi(W \setminus \interior U, M_0) = \chi(W \setminus \interior U, M_1)$, that is, 
\[
\sum_{i=0}^{2q+1} (-1)^i \rk H_i(W \setminus \interior U, M_0) = \sum_{i=0}^{2q+1} (-1)^i \rk H_i(W \setminus \interior U, M_1)
\] 
Hence
\begin{multline*}
\rk H_q(W \setminus \interior U, M_0) - \rk H_q(W \setminus \interior U, M_1) = \\
= \sum_{i \neq q} (-1)^{i+q+1} (\rk H_i(W \setminus \interior U, M_0) - \rk H_i(W \setminus \interior U, M_1))
\end{multline*}

If $i \leq q-1$, then $H_i(W \setminus \interior U, M_0) \cong H_i(W \setminus \interior U, M_1) \cong 0$ by Proposition \ref{prop:fund-hom-groups} e). 

If $i \geq q+1$, then $\rk H_i(W \setminus \interior U, M_1) = \rk H_{2q+1-i}(W \setminus \interior U, \partial U \sqcup M_0) = \rk H_{2q-i}(M_0) = \rk H_{2q-i}(B)$ (the second equality follows from the proof of Lemma \ref{lem:hom-triv1} and the third from Proposition \ref{prop:fund-hom-groups} b)). Similarly $\rk H_i(W \setminus \interior U, M_0) = \rk H_{2q-i}(B)$, therefore $\rk H_i(W \setminus \interior U, M_0) - \rk H_i(W \setminus \interior U, M_1) = 0$. 

It follows that $\rk H_q(W \setminus \interior U, M_0) - \rk H_q(W \setminus \interior U, M_1) = 0$.
\end{proof}

\begin{defin} \label{def:thetaU}
Let $\theta_{U,F}$ denote the triple $(\UM; L, V)$ (see Definitions \ref{def:thetaU-M}, \ref{def:thetaU-L} and \ref{def:thetaU-V}).
\end{defin}

\begin{prop}
$\theta_{U,F}$ is a quasi-formation over $H_q(B)$ and $\theta_{U,F} \in \gc^T_{2q+1}(B,\xi)$. 
\end{prop}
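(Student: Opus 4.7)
The plan is to verify the three conditions for $(\UM;L,V)$ to be a quasi-formation (namely $\UM$ metabolic, $L$ a T-lagrangian, $V$ a free half-rank direct summand), together with the three extra conditions for membership in $\gc^T_{2q+1}(B,\xi)$: fullness of $\UM$, geometricity with respect to $\hat{v}_q(\xi)$, and the torsion identification $\Tor H_q(\partial U) \cong \Tor H_{q-1}(B)$. Almost every piece has been packaged by the preceding lemmas; the only step requiring genuine bookkeeping is the direct-summand property of $V$.

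First I would dispatch the easy pieces. The form $\UM = E_q(\partial U, F \big|_{\partial U})$ is nonsingular by Poincar\'e duality on the closed oriented manifold $\partial U$, and Proposition \ref{prop:l-lagr} shows $L$ is a T-lagrangian, so $\UM$ is metabolic by definition. The torsion identification is exactly Lemma \ref{lem:dU-tor}. For geometricity, note that $\partial U$ is an orientable hypersurface in $W$, so its normal bundle in $W$ is trivial; hence stably $\nu_{\partial U} \cong \nu_W \big|_{\partial U} \cong (F\big|_{\partial U})^*\xi$, so $F\big|_{\partial U}$ is a normal map and Lemma \ref{lem:qf-geom} applies. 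For fullness, factor $\mu = H_q(F\big|_{\partial U})$ through $H_q(\partial U) \to H_q(U) \to H_q(W) \to H_q(B)$: the first arrow is surjective by Proposition \ref{prop:fund-hom-groups} d), the second by Proposition \ref{prop:fund-hom-groups} c) and exactness, and the third by $q$-connectedness of $F$.

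The substantive step is showing that $V$ is a free half-rank direct summand of $H_q(\partial U)$. Theorem \ref{thm:V-ses} identifies $V = \Ker(H_q(\partial U) \to H_q(W \setminus \interior U, M_0))$, and Lemma \ref{lem:V-free} implies $V \cap \Tor H_q(\partial U) = 0$, so $V$ is torsion-free and therefore free. Half-rank follows by additivity of rank in the short exact sequence of Theorem \ref{thm:V-ses} together with Lemma \ref{lem:V-rank}: $\rk H_q(\partial U) = \rk V + \rk H_q(W \setminus \interior U, M_0) = 2\rk V$. For the direct-summand property I would apply Lemma \ref{lem:summand} to the enlarged subgroup $V \oplus \Tor H_q(\partial U) \leq H_q(\partial U)$: by Lemma \ref{lem:V-free}, the image of $\Tor H_q(\partial U)$ in the quotient $H_q(\partial U)/V \cong H_q(W \setminus \interior U, M_0)$ is precisely $\Tor H_q(W \setminus \interior U, M_0)$, so $H_q(\partial U)/(V \oplus \Tor H_q(\partial U)) \cong H_q(W \setminus \interior U, M_0)/\Tor H_q(W \setminus \interior U, M_0)$ is free. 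By Lemma \ref{lem:summand}, $V \oplus \Tor H_q(\partial U)$ is a direct summand, and consequently $V$ itself is a direct summand. The principal care required lies in this last deduction; everything else is an immediate appeal to the lemmas already established in the section.
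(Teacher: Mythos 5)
Your proof is correct and follows essentially the same route as the paper: metabolicity via Proposition \ref{prop:l-lagr}, geometricity via Lemma \ref{lem:qf-geom} using $(F\big|_{\partial U})^*(\xi) \cong \nu_{\partial U}$, fullness via the composition of $q$-connected maps, torsion via Lemma \ref{lem:dU-tor}, and the properties of $V$ from Theorem \ref{thm:V-ses} and Lemmas \ref{lem:V-free} and \ref{lem:V-rank}. Your explicit deduction that $V$ is a direct summand (enlarging to $V \oplus \Tor H_q(\partial U)$ and applying Lemma \ref{lem:summand}) correctly fills in a detail the paper leaves implicit.
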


\begin{proof}
$\UM = E_q(\partial U, F \big| _{\partial U})$ is an extended quadratic form over $H_q(B)$. It is metabolic, because the intersection form $\lambda$ is nonsingular, and $L$ is a T-lagrangian (Proposition \ref{prop:l-lagr}). It follows from Theorem \ref{thm:V-ses} and Lemmas \ref{lem:V-free} and \ref{lem:V-rank} that $V$ is a free half-rank direct summand. Therefore $\theta_{U,F}$ is a quasi-formation over $H_q(B)$. 

Since $F$ is a normal $(q{-}1)$-smoothing, $F^*(\xi) \cong \nu_W$ and hence $(F \big| _{\partial U})^*(\xi) \cong \nu_{\partial U}$. By Lemma \ref{lem:qf-geom} this implies that $\UM$ is geometric with respect to $\hat{v}_q(\xi)$. It is also full, because $F \big| _{\partial U}$ is the composition $\partial U \rightarrow U \rightarrow W \stackrel{F}{\rightarrow} B$ of three $q$-connected maps (using Proposition \ref{prop:fund-hom-groups} d), the homotopy equivalence $U \simeq \sk_q W$ and the assumption on $F$). By Lemma \ref{lem:dU-tor} $\Tor H_q(\partial U) \cong \Tor H_{q-1}(B)$. Therefore $\theta_{U,F}$ is in the class $\gc^T_{2q+1}(B,\xi) = \gc^T_{2q+1}(H_q(B),\hat{v}_q(\xi),\Tor H_{q-1}(B))$. 
\end{proof}

\subsection{The definition of $\theta_{c,F}$}

Now suppose that only a CW-decomposition $c$ of $W$ is chosen, without a fixed regular neighbourhood $U$ of $\sk_q W$.

\begin{defin}
Let $\theta_{c,F} \in \gs^T_{2q+1}(B,\xi)$ denote the isomorphism class of $\theta_{U,F}$ for any regular neighbourhood $U$ of any embedding $\sk_q W \rightarrow \interior W$ isotopic to the inclusion $\sk_q W \rightarrow W$.
\end{defin}

The $c_1=c_2=c$ special case of the following lemma shows that $\theta_{c,F}$ is well-defined.

\begin{lem} \label{lem:cwhe-isom}
Let $c_1$ and $c_2$ be two CW-decompositions of $W$, and let $K_i \subset W$ denote the $q$-skeleton of $W$ with respect to $c_i$, for $i=1,2$. Let $U_i$ be a regular neighbourhood of an embedding $K_i \rightarrow \interior W$ isotopic to the inclusion $K_i \rightarrow W$. Suppose that there is a homotopy equivalence $j : K_1 \rightarrow K_2$ such that the diagram 
\[
\xymatrix{
K_1 \ar[rr]^-{j} \ar[rd] & & K_2 \ar[ld] \\
 & W & 
}
\]
commutes up to homotopy (where the maps $K_i \rightarrow W$ are the inclusions). 

Then $\theta_{U_1,F} \cong \theta_{U_2,F}$, via an isomorphism $I : H_q(\partial U_1) \rightarrow H_q(\partial U_2)$ that fits into the commutative diagram
\[
\xymatrix{
H_q(K_1) \ar[r]^-{\cong} \ar[d]_-{H_q(j)} & H_q(U_1) \ar[d] & H_q(\partial U_1) \ar[l] \ar[d]^-{I} \\
H_q(K_2) \ar[r]^-{\cong} & H_q(U_2) & H_q(\partial U_2) \ar[l]
}
\]
where the isomorphisms $H_q(K_i) \rightarrow H_q(U_i)$ are induced by the embeddings of $K_i$, the isomorphism $H_q(U_1) \rightarrow H_q(U_2)$ is the unique map that makes the left square commute, and the homomorphisms $H_q(\partial U_i) \rightarrow H_q(U_i)$ are induced by the inclusions. 
\end{lem}

\begin{proof}
By Theorem \ref{thm:wall-emb-uniq} a) (applied to the composition $K_1 \stackrel{j}{\rightarrow} K_2 \rightarrow U_2$), there is a compact codimension-$0$ submanifold $U_2' \subset U_2$ with simply-connected boundary and a homotopy equivalence $K_1 \rightarrow U_2'$ such that the diagram 
\[
\xymatrix{
K_1 \ar[r] \ar[d]_-{j} & U_2' \ar[d]  \\
K_2 \ar[r] & U_2 
}
\]
commutes up to homotopy. Since $j$ is a homotopy equivalence, the inclusion $U_2' \rightarrow U_2$ is a homotopy equivalence too. The compositions $K_1 \rightarrow U_1 \rightarrow W$ and $K_1 \rightarrow U_2' \rightarrow U_2 \rightarrow W$ are homotopic, so Theorem \ref{thm:wall-emb-uniq} b) can be applied to the homotopy equivalences $K_1 \rightarrow U_1$ and $K_1 \rightarrow U_2'$. We get that there is a diffeomorphism $g : U_1 \rightarrow U_2'$ such that the inclusion $U_1 \rightarrow W$ and the composition $U_1 \stackrel{g}{\rightarrow} U_2' \rightarrow W$ are isotopic, and the diagram 
\[
\xymatrix{
K_1 \ar[r] \ar@{=}[d] & U_1 \ar[d]^-{g}  \\
K_1 \ar[r] & U_2' 
}
\]
commutes up to homotopy. In particular, $U_2'$ is a regular neighbourhood of an embedding $K_1 \rightarrow \interior W$ isotopic to the inclusion $K_1 \rightarrow W$, so $\theta_{U_2',F}$ is defined.

We will show that $\theta_{U_1,F} \cong \theta_{U_2',F}$ and $\theta_{U_2',F} \cong \theta_{U_2,F}$. We introduce the notation $(\UM; L, V) = \theta_{U_1,F}$, $(\UMp; L', V') = \theta_{U_2',F}$ and $(\UMpp; L'', V'') = \theta_{U_2,F}$. 

First we consider $\theta_{U_1,F}$ and $\theta_{U_2',F}$. It follows from the isotopy extension theorem that there is a diffeomorphism $G : W \rightarrow W$ (isotopic to the identity) such that $G \big| _{U_1} = g$ (hence $G(U_1)=U_2'$). Then $H_q(G \big| _{\partial U_1}) = H_q(g \big| _{\partial U_1}) : H_q(\partial U_1) \rightarrow H_q(\partial U_2')$ is an isomorphism between $\UM$ and $\UMp$. Moreover, $G$ induces commutative diagrams
\[
\xymatrix{
H_{q+1}(U_1, \partial U_1) \ar[r] \ar[d]_{\cong} & H_q(\partial U_1) \ar[d]^{\cong} \\
H_{q+1}(U_2', \partial U_2') \ar[r] & H_q(\partial U_2') 
}
\quad \quad
\xymatrix{
H_{q+1}(W \setminus \interior U_1, \partial U_1 \sqcup M_0) \ar[r] \ar[d]_{\cong} & H_q(\partial U_1) \ar[d]^{\cong} \\
H_{q+1}(W \setminus \interior U_2', \partial U_2' \sqcup M_0) \ar[r] & H_q(\partial U_2') 
}
\]
Hence $H_q(G \big| _{\partial U_1})$ sends $L$ to $L'$ and $V$ to $V'$. Therefore $(\UM; L, V) \cong (\UMp; L', V')$, via the isomorphism $H_q(g \big| _{\partial U_1})$. 

Next we consider $\theta_{U_2',F}$ and $\theta_{U_2,F}$. Let $C = U_2 \setminus \interior U_2'$. It follows from the assumptions and the Van Kampen theorem that $\partial U_2'$, $\partial U_2$ and $C$ are all simply-connected. Since the inclusion $U_2' \rightarrow U_2$ is a homotopy equivalence, $H_*(C, \partial U_2') \cong H_*(U_2, U_2') \cong 0$, so the inclusion $\partial U_2' \rightarrow C$ is also a homotopy equivalence. It follows from Poincar\'e-duality that the same is true for the inclusion $\partial U_2 \rightarrow C$, so $C$ is an h-cobordism. This (and excision) implies that the vertical arrows are isomorphisms in the following diagrams, induced by the inclusions:
\[
\xymatrix{
H_{q+1}(U_2', \partial U_2') \ar[r] \ar[d]_{\cong} & H_q(\partial U_2') \ar[d]^{\cong} \\
H_{q+1}(U_2, C) \ar[r] & H_q(C) \\
H_{q+1}(U_2, \partial U_2) \ar[r] \ar[u]^{\cong} & H_q(\partial U_2) \ar[u]_{\cong} 
}
\quad \quad
\xymatrix{
H_{q+1}(W \setminus \interior U_2', \partial U_2' \sqcup M_0) \ar[r] \ar[d]_{\cong} & H_q(\partial U_2') \ar[d]^{\cong} \\
H_{q+1}(W \setminus \interior U_2', C \sqcup M_0) \ar[r] & H_q(C) \\
H_{q+1}(W \setminus \interior U_2, \partial U_2 \sqcup M_0) \ar[r] \ar[u]^{\cong} & H_q(\partial U_2) \ar[u]_{\cong} 
}
\]
Thus the composition of the isomorphism $H_q(\partial U_2') \rightarrow H_q(C)$ and the inverse of $H_q(\partial U_2) \rightarrow H_q(C)$ is an isomorphism $H_q(\partial U_2') \rightarrow H_q(\partial U_2)$ that sends $L'$ to $L''$ and $V'$ to $V''$. Moreover, it is an isomorphism between the extended quadratic forms $\UMp$ and $\UMpp$, because the inclusions induce isomorphisms of cohomology rings $H^*(\partial U_2') \cong H^*(C) \cong H^*(\partial U_2)$, and they commute with the homomorphisms $H_q(F \big| _{\partial U_2'})$, $H_q(F \big| _C)$ and $H_q(F \big| _{\partial U_2})$. Therefore $(\UMp; L', V') \cong (\UMpp; L'', V'')$.

We take $I$ to be the composition of the above isomorphisms $H_q(\partial U_1) \rightarrow H_q(\partial U_2') \rightarrow H_q(\partial U_2)$. To complete the proof, we verify the commutativity of the following diagram:
\[
\xymatrix{
H_q(K_1) \ar[r]^-{\cong} \ar@{=}[d] & H_q(U_1) \ar[d]^-{H_q(g)} & H_q(\partial U_1) \ar[l] \ar[d]^-{H_q(g | _{\partial U_1})} \\
H_q(K_1) \ar[r]^-{\cong} \ar[d]_-{H_q(j)} & H_q(U_2') \ar[d] & H_q(\partial U_2') \ar[l] \ar[d] \\
H_q(K_2) \ar[r]^-{\cong} & H_q(U_2) & H_q(\partial U_2) \ar[l]
}
\]
The squares on the left are obtained by applying $H_q$ to the earlier homotopy commutative diagrams. The top right square is induced by $g$ and the inclusions. Finally, the bottom right square commutes, because both compositions are equal to the composition $H_q(\partial U_2') \rightarrow H_q(C) \rightarrow H_q(U_2)$ induced by the inclusions.
\end{proof}

\begin{rem}
The proof can be simplified by taking $U_2' = U_2$ if either $q>2$ or $c_1=c_2$. In the former case $C \approx \partial U_2' \times I$ by the h-cobordism theorem, while in the latter case Theorem \ref{thm:wall-emb-uniq} b) can be applied to the homotopy equivalences $K_1 \rightarrow U_1$ and $K_1 = K_2 \rightarrow U_2$.
\end{rem}

We used the following: 

\begin{thm}[Wall \cite{wall66-iv} and Hudson \cite{hudson70}] \label{thm:wall-emb-uniq}
a) Suppose that $K$ is a simply-connected finite CW-complex of dimension $k$, and $M$ is a simply-connected $d$-manifold for some $d \geq \max(2k+1,k+3)$. Let $f : K \rightarrow M$ be a continuous map. Then there is a compact codimension-$0$ submanifold $N \subset M$ with simply-connected boundary and a homotopy equivalence $h : K \rightarrow N$ such that the composition $K \stackrel{h}{\rightarrow} N \rightarrow M$ is homotopic to $f$. Moreover, $N$ has a handlebody decomposition that corresponds to the cell decomposition of $K$ in the sense of \cite{wall66-iv} (in particular, every handle has index at most $k$).

b) If $h : K \rightarrow N$ and $h' : K \rightarrow N'$ are two homotopy equivalences as above, then there is a diffeomorphism $g : N \rightarrow N'$ such that $g \circ h \simeq h' : K \rightarrow N'$ and $N \stackrel{g}{\rightarrow} N' \rightarrow M$ is isotopic to the embedding $N \rightarrow M$.
\end{thm}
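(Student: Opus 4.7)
The statement packages together two classical results from high-codimension embedding theory. Part (a) produces $N$ by approximating $f$ by a smooth embedding and taking a smooth regular neighbourhood; the simple connectivity of $\partial N$ uses the codimension-$\geq 3$ hypothesis $d \geq k+3$. Part (b) follows from the concordance-implies-isotopy theorem for smooth embeddings in the range $d \geq 2k+1$, together with the uniqueness up to ambient isotopy of smooth regular neighbourhoods of a fixed finite subcomplex.

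\textbf{Part (a).} Triangulate $K$ and smoothly approximate $f$. Since $d \geq 2k+1$, a generic perturbation is free of double points (transverse intersection of two $k$-simplices would have dimension $2k - d < 0$), so $f$ may be replaced by a smooth embedding $f' : K \hookrightarrow M$ homotopic to $f$. Take $N$ to be a closed smooth regular neighbourhood of $f'(K)$, obtained for instance by thickening $f'(K)$ inside a smooth triangulation of $M$. Then $N$ is a compact codimension-$0$ submanifold, and $h := f' : K \to N$ is a homotopy equivalence whose composition with $N \hookrightarrow M$ is homotopic to $f$. For the simple connectivity of $\partial N$: a loop $\gamma : S^1 \to \partial N$ bounds a singular disk $D : D^2 \to N$ because $\pi_1(N) = \pi_1(K) = 1$. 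By general position in $N$, using $\dim D + \dim f'(K) = 2 + k \leq d - 1 < \dim N$ (the hypothesis $d \geq k+3$), we may perturb $D$ rel $\partial D^2$ so that its image misses $f'(K)$. The perturbed disk then lies in $N \setminus f'(K)$, which deformation retracts onto $\partial N$, so $\gamma$ is nullhomotopic in $\partial N$.

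\textbf{Part (b).} Write $\iota : N \hookrightarrow M$ and $\iota' : N' \hookrightarrow M$ for the inclusions. Approximating the homotopy equivalences $h, h'$ by smooth embeddings into $N, N'$ (possible because $N$, $N'$ are $d$-manifolds with $d \geq 2k+1$), the compositions $\iota \circ h$ and $\iota' \circ h'$ are smooth embeddings $K \hookrightarrow M$, homotopic since both are homotopic to $f$. Wall's embedding theorem for $k$-complexes in simply-connected $d$-manifolds with $d \geq 2k+1$ upgrades this homotopy to a concordance via the Whitney trick applied to self-intersections of a generic homotopy, and Hudson's concordance-implies-isotopy theorem (valid in the same range) then produces an ambient isotopy $\Phi_t : M \to M$ with $\Phi_0 = \id_M$ and $\Phi_1 \circ \iota \circ h \simeq \iota' \circ h'$. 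By the isotopy extension theorem $N'' := \Phi_1(N)$ is a compact codimension-$0$ submanifold that is a smooth regular neighbourhood of the subcomplex $\iota' \circ h'(K) \subset M$. Since $N'$ is another such regular neighbourhood, uniqueness of smooth regular neighbourhoods of a fixed compact subpolyhedron provides a further ambient isotopy rel $\iota' \circ h'(K)$ taking $N''$ to $N'$. Composing these ambient isotopies restricts to a diffeomorphism $g : N \to N'$ satisfying $g \circ h \simeq h'$ and with $\iota' \circ g$ isotopic to $\iota$.

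\textbf{Main obstacle.} The technical crux is producing the ambient isotopy in part (b) under the optimal hypothesis $d \geq 2k+1$, which combines two non-trivial inputs: Wall's upgrade from homotopy to concordance for embeddings of finite $k$-complexes (using simple connectivity of both $K$ and $M$ to run the Whitney trick on self-intersections of a generic homotopy) and Hudson's theorem that concordant smooth embeddings are ambient isotopic in this range. The weaker condition $d \geq k+3$ in part (a) is by contrast only needed to guarantee simple connectivity of $\partial N$ via a general-position argument on a $2$-disk.
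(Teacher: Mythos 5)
Your overall architecture is the same as the paper's, which proves nothing from scratch here: it quotes Wall's embedding theorem for part a) and, for part b), the uniqueness part of that theorem together with Hudson's concordance-implies-isotopy results (Theorem 2.1 and its Addendum). Your sketch of part a) (general-position embedding of the $k$-complex for $d \geq 2k+1$, a regular neighbourhood, and pushing a $2$-disc off the $k$-dimensional spine to see that $\pi_1(\partial N)=1$, which is where $d \geq k+3$ enters) is in order, and your use of homotopy-implies-concordance plus Hudson in part b) matches the ingredients the paper cites.

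There is, however, a genuine gap in part b), at the sentence asserting that $N'' := \Phi_1(N)$ is, ``by the isotopy extension theorem'', a smooth regular neighbourhood of $\iota' \circ h'(K)$, and implicitly that $N'$ is one as well. In part b) the hypotheses on $N$ and $N'$ are only those of part a): they are compact codimension-$0$ submanifolds with simply-connected boundary admitting homotopy equivalences from $K$ compatible with $f$ up to homotopy; they are \emph{not} given as regular neighbourhoods of embedded copies of $K$ (and in the paper's applications, e.g.\ Lemma \ref{lem:cwhe-isom} and Lemma \ref{lem:theta-surg-real}, $h$ genuinely is not an inclusion of a spine). After you approximate $h'$ by an embedding $K \hookrightarrow \interior N'$, all you know is that $N'$ contains an embedded spine to which it is homotopy equivalent; upgrading this to ``$N'$ is a regular neighbourhood of that spine'' is precisely the nontrivial uniqueness-of-thickenings statement. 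The standard argument takes an actual regular neighbourhood $N_0' \subset \interior N'$ of the spine and shows that $N' \setminus \interior N_0'$ is an h-cobordism between simply-connected boundaries, then applies the h-cobordism theorem; this needs $d-1 \geq 5$ and is not available smoothly in the borderline case $d=5$, $k=2$ allowed by the hypotheses (the case relevant to $q=2$ in the paper). So this step is not a formality delivered by isotopy extension plus uniqueness of regular neighbourhoods of a fixed subpolyhedron; it is the content that the citation to Wall \cite{wall66-iv} is carrying, and as written your argument does not establish it. (A small additional slip: you want the ambient isotopy to achieve $\Phi_1 \circ \iota \circ h = \iota' \circ h'$ as embeddings, not merely up to homotopy, for the final regular-neighbourhood comparison to make sense.)
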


\begin{proof}
Part a) is a special case of Wall's embedding theorem \cite{wall66-iv}. Part b) follows from the uniqueness part of the embedding theorem combined with results of Hudson \cite[Theorem 2.1 and Addendum 2.1.2]{hudson70}.
\end{proof}

\subsection{The definition of $\theta_{W,F}$} \label{ss:def:tw}

Now we will consider $W$ without a fixed CW-decomposition $c$.

\begin{defin}
Let $\theta_{W,F} \in \s^T_{2q+1}(B,\xi)$ be the stable isomorphism class of $\theta_{c,F}$ for any CW-decomposition $c$ of $W$. 
\end{defin}

\begin{thm} \label{thm:tw-welldef}
$\theta_{W,F}$ is well-defined. 
\end{thm}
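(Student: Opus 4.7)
Plan: The aim is to show that any two CW-decompositions $c,c'$ of $W$ yield stably isomorphic quasi-formations $\theta_{c,F},\theta_{c',F} \in \gs^T_{2q+1}(B,\xi)$. My strategy would be to translate CW-decompositions into smooth handle decompositions of $W$ (compatible with the identification $\partial W = M_0 \sqcup (-M_1)$), so that the regular neighborhood $U$ of $\sk_q W$ corresponds, up to ambient isotopy in $\interior W$, to the sub-handlebody consisting of handles of index $\leq q$. By the argument of Proposition \ref{prop:tcf-welldef}, the isomorphism class $\theta_{c,F}$ depends only on this ambient-isotopy class.

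Next I would invoke the classical result that any two smooth handle decompositions of $W$ rel boundary are connected by a finite sequence of moves: isotopies, handle slides, and creations or cancellations of complementary handle pairs. Isotopies and handle slides leave the sub-handlebody of index $\leq q$ invariant up to ambient isotopy, so they leave $\theta_{c,F}$ unchanged. Creation of a canceling $(i,i+1)$-pair with $i \leq q-1$ attaches a contractible subcomplex to $\sk_q W$, whose regular neighborhood is identified by Theorem \ref{thm:wall-emb-uniq} with the old $U$; a canceling $(i,i+1)$-pair with $i \geq q+1$ does not touch $\sk_q W$ at all. So the only move that can alter $\theta_{U,F}$ is the creation of a canceling $(q,q+1)$-pair, and the problem reduces to computing its effect. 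In that case $U$ is replaced by $U' = U \natural (S^q \times D^{q+1})$, so $\partial U' = \partial U \# (S^q \times S^q)$. Since the added $q$-handle is cancelable, its attaching sphere is null-homotopic in $W$, and hence the restriction of $F$ to the added $S^q \times S^q$ summand is null-homotopic. Lemmas \ref{lem:qf-hyp} and \ref{lem:qf-sum} then give $\UM' \cong \UM \oplus \UH_2$. Standard diagram chases in the long exact sequences of $(U',\partial U')$ and of $(W \setminus \interior U', \partial U' \sqcup M_0)$, together with Lemma \ref{lem:boundary-lagr} applied to the new handle, show that the new T-lagrangian and the new free half-rank direct summand take the form $L' = L \oplus (\{0\} \times \Z)$ and $V' = V \oplus (\Z \times \{0\})$, where the two $\Z$-factors correspond respectively to the belt sphere of the new $q$-handle (which bounds in $U'$) and to a dual sphere bounding in the complement $W \setminus \interior U'$. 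Therefore $\theta_{U',F} \cong \theta_{U,F} \oplus \HH_2$, which represents the same stable isomorphism class. Combined with the invariance under the other moves, this proves $\theta_{c,F} \si \theta_{c',F}$.

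The main obstacle is the last bookkeeping step: identifying $L'$ and $V'$ explicitly inside $H_q(\partial U') = H_q(\partial U) \oplus \Z^2$ and matching them against the standard lagrangian and half-rank direct summand of $\HH_2$. This requires carefully tracking the boundary homomorphisms in both long exact sequences and using that the belt sphere and the dual sphere form a symplectic basis of the new $H_q(S^q \times S^q)$ summand. A secondary technical point is the translation between CW-decompositions and smooth handle decompositions rel boundary, but since $W$ is smooth and CW-structures on a smooth manifold can be smoothed compatibly with the boundary decomposition, this presents no serious difficulty.
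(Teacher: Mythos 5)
Your computation of the effect of a cancelling $(q,q+1)$-pair is essentially the paper's Lemma \ref{lem:cwstab-isom}, and your use of the Wall--Hudson uniqueness of thickenings (Theorem \ref{thm:wall-emb-uniq}) to absorb the lower-index moves matches Lemma \ref{lem:cwhe-isom}. But the overall reduction has a genuine gap at the point you dismiss as a "secondary technical point": the theorem is about \emph{arbitrary} CW-decompositions of $W$, and there is no theorem to the effect that a CW-structure on a smooth manifold "can be smoothed" into a handle decomposition whose index-$\leq q$ sub-handlebody is (ambient isotopic to) a regular neighbourhood of the given $q$-skeleton. Cells of an arbitrary CW-decomposition are attached by merely continuous maps, and passing from such a structure to a handle decomposition while controlling the $q$-skeleton over $W$ is precisely the comparison problem you are trying to solve; Cerf theory then only compares two handle decompositions with each other. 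There is also a setup problem: if, as your phrase "rel boundary" suggests, the handle decomposition is built on $M_0 \times I$, then the sub-handlebody of index $\leq q$ contains a collar of $M_0$ and is not a regular neighbourhood of a $q$-complex in $\interior W$, so it cannot be identified with the $U$ used to define $\theta_{U,F}$; one would need decompositions built from the empty set with free boundary, together with the corresponding (less standard) Cerf statement.

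The paper avoids both issues by never leaving the homotopy category when comparing skeletons: given two CW-decompositions, one first stabilises each (Definition \ref{def:cw-stab}), which changes $\theta_{c,F}$ only by $\HH_{2k}$ (Lemma \ref{lem:cwstab-isom}), and then shows that after suitable stabilisations the two $q$-skeletons admit a homotopy equivalence commuting with the maps to $W$ (Lemma \ref{lem:cwstab-he}). This last step is the real content and is what your proposal is missing: it uses the algebraic fact that two surjections from free abelian groups onto $H_q(W)$ become isomorphic after adding free summands (Lemma \ref{lem:hom-stab-isom} a)) and a realization lemma producing an actual map of skeletons over $W$ inducing a prescribed isomorphism on $H_q$ (Lemma \ref{lem:hom-real-map}); then Lemma \ref{lem:cwhe-isom} (via Theorem \ref{thm:wall-emb-uniq}) converts the homotopy equivalence into an isomorphism of quasi-formations. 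To repair your argument you would either have to supply the CW-to-handle translation with control on the skeleton over $W$ — which I do not see how to do for wild CW-structures — or replace it by exactly this kind of homotopy-theoretic stabilisation argument, at which point Cerf theory is no longer needed.
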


\begin{proof}
Let $c_1$ and $c_2$ be arbitrary CW-decompositions of $W$. Then they have stabilisations (see Definition \ref{def:cw-stab}) $c_1'$ and $c_2'$ that satisfy the conditions of Lemma \ref{lem:cwstab-he}. By Lemmas \ref{lem:cwstab-isom} and \ref{lem:cwhe-isom} we have $\theta_{c_1,F} \si \theta_{c_1',F} \cong \theta_{c_2',F} \si \theta_{c_2,F}$. Therefore the stable isomorphism class of $\theta_{c,F}$ is independent of the choice of $c$. 
\end{proof}

\begin{defin} \label{def:cw-stab}
We define \emph{stabilisation}, an operation on CW-decompositions of $W$, as follows: Let $c$ be a CW-decomposition of $W$. Let $x$ be a $0$-cell, since $W$ is a $(2q+1)$-manifold, $x$ is in the boundary of some $(2q+1)$-cell $e^{2q+1}$. Consider $D^{q+1}$ with its decomposition into $3$ cells (of dimensions $0$, $q$ and $q+1$), and an embedding of $D^{q+1}$ into $x \cup \interior e^{2q+1}$ such that the $0$-cell goes to $x$. The new CW-decomposition $c'$ is constructed from $c$ by adding the $q$-cell and $(q+1)$-cell of the embedded $D^{q+1}$ and modifying the gluing map of the $(2q+1)$-cell $e^{2q+1}$.

Note that the stabilisation operation is not unique. 

We say that $c'$ is a ($k$-fold) stabilisation of $c$, if it can be obtained from $c$ by a sequence of ($k$) stabilisation steps.
\end{defin}

\begin{figure}[H]
\centering
\includegraphics[scale=0.5]{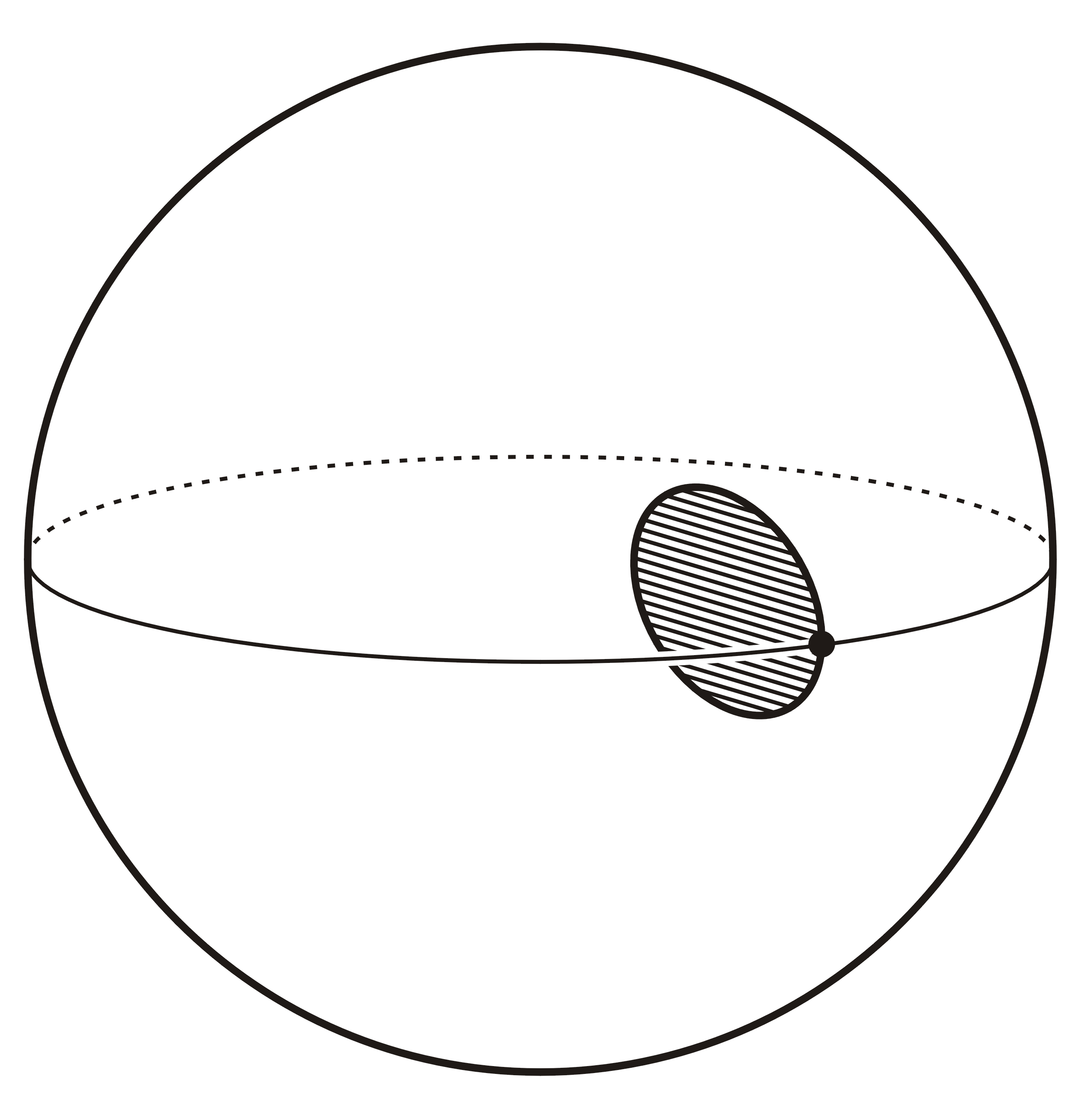}
\caption{A stabilisation of the standard CW-decomposition of $D^3$ (consisting of one $0$-cell, one $2$-cell and one $3$-cell) adds a $1$-cell and a $2$-cell, and modifies the $3$-cell.}
\end{figure}

The following three lemmas are needed for the proof of Theorem \ref{thm:tw-welldef}.

\begin{lem} \label{lem:cwstab-isom}
Let $c$ and $c'$ be CW-decompositions of $W$. If $c'$ is a $k$-fold stabilisation of $c$, then $\theta_{c',F} \cong \theta_{c,F} \oplus \HH_{2k}$. In particular, $\theta_{c',F} \si \theta_{c,F}$ for any stabilisation $c'$ of $c$.
\end{lem}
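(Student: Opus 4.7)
The plan is to induct on $k$, reducing the statement to the case of a single stabilisation $c \mapsto c'$. A single stabilisation adds a $q$-cell and a $(q+1)$-cell inside the $(2q+1)$-cell $e^{2q+1}$, both coming from an embedded $D^{q+1}$ at the $0$-cell $x$. The new $q$-cell, together with $x$, forms a sphere $S^q$ wedged onto $\sk_q W$ at $x$, so $\sk_q W'$ is homotopy equivalent to $\sk_q W \vee S^q$. After isotoping this new $S^q$ slightly off $\sk_q W$ (except along a short connecting arc), I would choose a regular neighbourhood of $\sk_q W'$ of the form $U' \approx U \natural N'$, where $U$ is a regular neighbourhood of $\sk_q W$ and $N' \approx S^q \times D^{q+1}$ is a tubular neighbourhood of the new $S^q$ inside a small ball $D^{2q+1} \subset \interior e^{2q+1}$. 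Then $\partial U' \approx \partial U \# (S^q \times S^q)$ and $\theta_{U', F}$ represents $\theta_{c', F}$.

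Next I would compute the three ingredients of $\theta_{U', F} = (\UMp; L', V')$. Since the $S^q \times S^q$ summand lies inside a ball in $W$, the restriction of $F$ to it is nullhomotopic, so Lemmas \ref{lem:qf-sum} and \ref{lem:qf-hyp} give $\UMp \cong \UM \oplus \UH_2$, with the $\UH_2$ factor carrying the natural basis $\alpha = [S^q \times \mathrm{pt}]$, $\beta = [\mathrm{pt} \times S^q]$ of $H_q(S^q \times S^q)$. For $L'$: naturality of the boundary homomorphism applied to the decomposition $U' \approx U \natural N'$ (or equivalently Poincar\'e-Lefschetz duality) gives $H_{q+1}(U', \partial U') \cong H_{q+1}(U, \partial U) \oplus H_{q+1}(N', \partial N')$, and the extra generator (a fibre disk of $N'$) has boundary $\beta$. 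Hence $L' = L \oplus (\{0\} \times \Z) \subset \UM \oplus \UH_2$. For $V'$: the original $(q+1)$-cell of the stabilisation (a disk $D^{q+1}$ bounding the new $S^q$) meets $W \setminus \interior U'$ in a slightly smaller disk whose boundary sphere, in the tubular coordinates of $N'$, is a core-parallel sphere representing $\alpha$. This gives $V' = V \oplus (\Z \times \{0\}) \subset \UM \oplus \UH_2$.

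Assembling these identifications yields $\theta_{U', F} \cong \theta_{U, F} \oplus \HH_2$ and hence $\theta_{c', F} \cong \theta_{c, F} \oplus \HH_2$; iterating $k$ times proves the general statement and, in particular, gives $\theta_{c', F} \si \theta_{c, F}$. The main technical point is keeping track of which $\Z$ summand of $H_q(S^q \times S^q)$ ends up in $L'$ versus $V'$: one has to verify that the fibre class $\beta$ bounds in $U'$ (via a fibre disk in $N'$) while the core-parallel class $\alpha$ bounds in $W \setminus \interior U'$ (via the $(q+1)$-cell of the stabilisation pushed off $N'$), so that the assignment $L' \leftrightarrow \{0\} \times \Z$, $V' \leftrightarrow \Z \times \{0\}$ genuinely matches the standard elementary hyperbolic quasi-formation $\HH_2$ rather than its flip.
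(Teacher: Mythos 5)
Your proposal is correct and follows essentially the same route as the paper: reduce to a single stabilisation, take $U' \approx U \natural (S^q \times D^{q+1})$ so that $\partial U' \approx \partial U \# (S^q\times S^q)$ and $\UMp \cong \UM \oplus \UH_2$, then observe that the fibre disk $\{x\}\times D^{q+1}$ puts the fibre class into $L'$ while the stabilising $(q{+}1)$-cell, pushed into $W \setminus \interior U'$, puts the core-parallel class into $V'$. The only cosmetic difference is that the paper establishes the inclusions $L\oplus(\{0\}\times\Z)\leq L'$ and $V\oplus(\Z\times\{0\})\leq V'$ and concludes equality because both sides are T-lagrangians, respectively free half-rank direct summands, rather than computing $L'$ and $V'$ outright from a direct-sum splitting of the relative homology.
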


\begin{proof}
It is enough to prove this when $c'$ is obtained from $c$ by a single stabilisation step. 

Let $K$ and $K'$ denote the $q$-skeletons of $W$ with respect to $c$ and $c'$, then $K' = K \vee S^q$. Let $U$ and $U'$ be regular neighbourhoods of $K$ and $K'$, we can choose these such that $U' \approx U \natural (S^q \times D^{q+1})$. Let $(\UM; L, V) = \theta_{U,F}$ and $(\UMp; L', V') = \theta_{U',F}$. 

Since $\partial U' \approx U \# (S^q \times S^q)$, we have $H_q(\partial U') \cong H_q(\partial U) \oplus \Z^2$ and $\UMp \cong \UM \oplus \UH_2$, we will use these canonical isomorphisms to identify these groups and extended quadratic forms. The inclusion $H_q(\partial U) \rightarrow H_q(\partial U')$ under this identification is in fact the composition $H_q(\partial U) \cong H_q(\partial U \setminus \interior D^{2q}) \rightarrow H_q(\partial U')$, where the homomorphism $H_q(\partial U \setminus \interior D^{2q}) \rightarrow H_q(\partial U')$ is induced by the inclusion of spaces. Thus the inclusion $U \rightarrow U'$ induces commutative diagrams
\[
\xymatrix{
H_{q+1}(U, \partial U) \ar[r] \ar[d] & H_q(\partial U) \ar[d] \\
H_{q+1}(U', \partial U') \ar[r] & H_q(\partial U') 
}
\]
and
\[
\xymatrix{
H_{q+1}(W, U \sqcup M_0) \cong H_{q+1}(W \setminus \interior U, \partial U \sqcup M_0) \ar[r] \ar[d] & H_q(\partial U) \ar[d] \\
H_{q+1}(W, U' \sqcup M_0) \cong H_{q+1}(W \setminus \interior U', \partial U' \sqcup M_0) \ar[r] & H_q(\partial U') 
}
\]
hence $L \leq L'$ and $V \leq V'$. 

If we fix some basepoint $x \in S^q$, the submanifold $\{ x \} \times D^{q+1} \subset S^q \times D^{q+1} \subseteq U'$ represents an element of $H_{q+1}(U', \partial U')$, and its boundary is represented by the submanifold $\{ x \} \times S^q \subset S^q \times S^q \subseteq \partial U'$. This shows that $(0,(0,1)) \in L' \leq H_q(\partial U') \cong H_q(\partial U) \oplus \Z^2$, therefore $L \oplus (\{ 0 \} \times \Z) \leq L'$. Since $L'$ and $L \oplus (\{ 0 \} \times \Z)$ are both T-lagrangians in $\UMp$, we get that $L \oplus (\{ 0 \} \times \Z)$ is a direct summand in $L'$, and they have the same torsion subgroup and the same rank, and this implies that $L' = L \oplus (\{ 0 \} \times \Z)$. 

Similarly, the embedded $D^{q+1}$ that we used to construct the stabilisation $c'$ from $c$ represents an element of $H_{q+1}(W, U' \sqcup M_0) \cong H_{q+1}(W \setminus \interior U', \partial U' \sqcup M_0)$, and its boundary is represented by $S^q \times \{ x \} \subset S^q \times S^q \subseteq \partial U'$, therefore $V \oplus (\Z \times \{ 0 \}) \leq V'$.  Since $V'$ and $V \oplus (\Z \times \{ 0 \})$ are both free half-rank direct summands in $\UMp$, this implies that $V' = V \oplus (\Z \times \{ 0 \})$. 

Therefore $(\UMp; L', V') \cong (\UM \oplus \UH_2; L \oplus (\{ 0 \} \times \Z), V \oplus (\Z \times \{ 0 \})) = (\UM; L, V) \oplus \HH_2$. Since $\theta_{c,F}$ and $\theta_{c',F}$ are the isomorphism classes of $\theta_{U,F}$ and $\theta_{U',F}$ respectively, this means that $\theta_{c',F} \cong \theta_{c,F} \oplus \HH_2$.
\end{proof}

\begin{lem} \label{lem:cwstab-he}
Let $c_1$ and $c_2$ be any CW-decompositions of $W$. Then there is a stabilisation $c_i'$ of $c_i$ ($i=1,2$) such that if $K_i' \subset W$ denotes the $q$-skeleton of $W$ with respect to $c_i'$, then there is a homotopy equivalence $j : K_1' \rightarrow K_2'$ such that the diagram 
\[
\xymatrix{
K_1' \ar[rr]^-{j} \ar[rd] & & K_2' \ar[ld] \\
 & W & 
}
\]
commutes up to homotopy.
\end{lem}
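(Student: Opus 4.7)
The plan is to use cellular approximation followed by a careful stabilisation argument. Since $S_1$ has dimension at most $q$, cellular approximation of the inclusion $i_1 : S_1 \hookrightarrow W$ with respect to the CW-structure $c_2$ yields a cellular map $\phi : S_1 \to \sk_q(W, c_2) = S_2$ with $i_2 \circ \phi \simeq i_1$, and symmetrically $\psi : S_2 \to S_1$ with $i_1 \circ \psi \simeq i_2$. Applying the $5$-lemma to the long exact sequences of $(W, S_i)$ in degrees $\leq q-1$ (using that $i_1, i_2$ are $q$-connected) shows that $\phi$ and $\psi$ induce isomorphisms on $\pi_j$ for $j \leq q-1$; in degree $q$ their behaviour on homotopy is controlled only up to the kernels of $\pi_q(S_i) \to \pi_q(W)$, but the homological maps $H_q(\phi), H_q(\psi)$ are well-defined and have finitely generated free abelian domains and codomains since $\dim S_i \leq q$.

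The stabilisation operation of Definition \ref{def:cw-stab} has a clean geometric description: it wedges a $q$-sphere onto the current $q$-skeleton at a $0$-cell, and the newly added $(q{+}1)$-cell of $W$ provides a nullhomotopy of this sphere in $W$. Hence if we stabilise $c_1$ by $k_1$ steps and $c_2$ by $k_2$ steps we obtain stabilised $q$-skeleta $S_i' = S_i \vee \bigvee^{k_i} S^q$ whose new wedge summands all bound in $W$. We extend $\phi$ to $\phi' : S_1' \to S_2'$ in two parts: first, we modify $\phi$ on $S_1$ by adding a correction map landing in the new wedge summand $\bigvee^{k_2} S^q \subset S_2'$, which is nullhomotopic in $W$ and so preserves $i_2' \circ \phi' \simeq i_1'$; second, we send each new wedge summand of $S_1'$ to a chosen class in $S_2'$ (also nullhomotopic in $W$). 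Choosing $k_1, k_2$ large enough and these classes appropriately, using that $\Ker H_q(\phi)$ and $\Coker H_q(\phi)$ are finitely generated free abelian and that both consist of classes in $\Ker(H_q(S_i) \to H_q(W))$, we arrange that $H_q(\phi')$ is an isomorphism.

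The main obstacle is organising this bookkeeping at the level of cellular chains while keeping everything compatible with the inclusions into $W$; the key technical input is that stabilisation produces exactly the nullhomotopic-in-$W$ classes needed to absorb the discrepancies between $H_q(S_1)$ and $H_q(S_2)$. Once $H_q(\phi')$ is an isomorphism, the mapping cone $C_{\phi'}$ is simply-connected of dimension at most $q{+}1$ with $H_j(C_{\phi'}) = 0$ for $j \leq q-1$ (inherited from the $\pi_{\leq q-1}$-isomorphism), $H_q(C_{\phi'}) = \Coker H_q(\phi') = 0$, and $H_{q+1}(C_{\phi'}) = \Ker H_q(\phi') = 0$. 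Hence $C_{\phi'}$ is weakly contractible, so $\phi'$ is a weak equivalence and therefore a homotopy equivalence of CW-complexes by Whitehead's theorem; commutativity of the diagram with the inclusions into $W$ holds by construction, giving the desired $h : S_1' \to S_2'$.
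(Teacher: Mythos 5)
Your overall route is the same as the paper's: stabilise both skeleta, produce an isomorphism on $H_q$ compatible with the maps to $H_q(W)$, realise it by a map over $W$ using that the added spheres (and more generally classes killed in $W$) can absorb corrections without disturbing homotopy-commutativity, and finish with Whitehead. The final step (mapping cone / homology Whitehead) and the identification of the stabilised skeleta with $S_i \vee \bigvee^{k_i} S^q$ are fine, and the map $\psi$ you construct is never used. The problem is that the central step is exactly the part you dismiss as ``bookkeeping'', and the one concrete fact you offer in its support is wrong: $\Coker H_q(\phi)$ need \emph{not} be free. It is the cokernel of a homomorphism of finitely generated free abelian groups, and nothing in the setup prevents torsion (e.g.\ nothing rules out $\phi_*$ acting as multiplication by $2$ on a $\Z$-summand that dies in $H_q(W)$). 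The surjectivity of $H_q(g_i):H_q(S_i)\to H_q(W)$ only gives that every cokernel class has a representative in $\Ker H_q(g_2)$; it does not make the image of $H_q(\phi)$ a direct summand. Consequently the implicit ``kill the cokernel by new source spheres, kill the kernel by corrections into new target spheres'' scheme does not obviously terminate in an isomorphism, and making it work is precisely the content of Lemma \ref{lem:hom-stab-isom}~a): given two surjections of free abelian groups onto $H_q(W)$, after adding free summands mapping to $0$ there is an isomorphism commuting with the maps to $H_q(W)$. The paper's proof of that lemma stabilises the source by a free group surjecting onto $\Ker H_q(g_2)$ \emph{and} by a copy of $H_q(S_2)$, and builds the isomorphism $(x,y)\mapsto(\bar f_1(x),x+c(y))$ directly, rather than repairing kernel and cokernel separately; some such argument is needed and is missing from your sketch.

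A second, smaller gap: to keep $i_2'\circ\phi'\simeq i_1'$ you only allow modifications by classes that are nullhomotopic in $W$, but the homology classes you need to hit include prescribed elements of $\Ker(H_q(S_2)\to H_q(W))$ in the \emph{old} part of $S_2'$, not just the new wedge summands. That every such homology class is the Hurewicz image of an element of $\Ker(\pi_q(S_2')\to\pi_q(W))$ is not automatic; it requires the relative Hurewicz isomorphism $\pi_{q+1}(W,S_2')\cong H_{q+1}(W,S_2')$ for the $q$-connected, simply-connected pair, which is the argument carried out in Lemma \ref{lem:hom-real-map} (where the correction is also implemented cell-by-cell on $q$-cells, the honest version of ``adding'' a map on a non-suspension complex). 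With Lemma \ref{lem:hom-stab-isom}~a) supplying the homological isomorphism and Lemma \ref{lem:hom-real-map} supplying its realisation over $W$, your argument becomes the paper's proof; without them, the step ``we arrange that $H_q(\phi')$ is an isomorphism'' is unsubstantiated.
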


\begin{proof}
Let $K_i$ denote the $q$-skeleton of $W$ with respect to $c_i$ and let $g_i : K_i \rightarrow W$ denote the inclusion. Then the homomorphisms $H_q(g_i) : H_q(K_i) \rightarrow H_q(W)$ are surjective, so by Lemma \ref{lem:hom-stab-isom} a) there are free abelian groups $A_i$ and an isomorphism $J : H_q(K_1) \oplus A_1 \rightarrow H_q(K_2) \oplus A_2$ such that $(H_q(g_1)+0) = (H_q(g_2)+0) \circ J : H_q(K_1) \oplus A_1 \rightarrow H_q(W)$. 

Let $k_i = \rk A_i$ and let $c_i'$ be a $k_i$-fold stabilisation of $c_i$. Then $K_i' \approx K_i \vee \bigvee^{k_i} S^q$, so $H_q(K_i') \cong H_q(K_i) \oplus A_i$, and $J$ can be regarded as an isomorphism $J : H_q(K_1') \rightarrow H_q(K_2')$ such that $H_q(g_1') = H_q(g_2') \circ J : H_q(K_1') \rightarrow H_q(W)$, where $g_i' : K_i' \rightarrow W$ is the inclusion. 

By Lemma \ref{lem:hom-real-map} there is a map $j : K_1' \rightarrow K_2'$ such that $g_1' \simeq g_2' \circ j : K_1' \rightarrow W$ and $H_q(j) = J$. Since $H_h(g_1') = H_h(g_2') \circ H_h(j)$ and $H_h(g_1')$ and $H_h(g_2')$ are isomorphisms if $h \leq q-1$, the same is true for $H_h(j)$. As $H_q(j) = J$ is also an isomorphism, and $K_1'$ and $K_2'$ are simply-connected and $q$-dimensional, $j$ is a homotopy equivalence. 
\end{proof}

\begin{lem} \label{lem:hom-real-map}
Let $X$ be a simply-connected topological space and $R$ and $S$ be CW-complexes with maps $f : R \rightarrow X$ and $g : S \rightarrow X$. Assume that $R$ is $q$-dimensional, $g$ is $q$-connected and $J : H_q(R) \rightarrow H_q(S)$ is a homomorphism such that $H_q(g) \circ J = H_q(f)$. Then there is a map $j : R \rightarrow S$ such that $g \circ j \simeq f$ and $H_q(j) = J$.
\end{lem}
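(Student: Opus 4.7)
The plan is to argue by obstruction theory, first producing some lift $j_0$ of $f$ and then modifying it cell by cell to realise $J$ on $H_q$. Replace $g$ by a Serre fibration via the mapping path space; since $g$ is $q$-connected, its homotopy fibre $F$ is $(q{-}1)$-connected. The obstruction to extending a partial lift of $f$ from $R^{(i-1)}$ to $R^{(i)}$ lives in $H^i(R; \pi_{i-1}(F))$, which vanishes for $i \leq q$ by the connectivity of $F$; since $R$ has no cells above dimension $q$, all obstructions vanish and we obtain $j_0 : R \to S$ with $g \circ j_0 \simeq f$.

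To correct the induced map on $H_q$, set $\Delta = J - H_q(j_0) : H_q(R) \to H_q(S)$; by hypothesis $H_q(g) \circ \Delta = 0$. The low-dimensional part of the homology long exact sequence of the fibration $F \to S \to X$ (valid because $X$ is simply-connected and $F$ is $(q{-}1)$-connected) identifies $\Ker H_q(g) = \Image(i_* : H_q(F) \to H_q(S))$, where $i : F \hookrightarrow S$ is the fibre inclusion. Since $R$ is $q$-dimensional, $H_q(R)$ sits as a subgroup of the free abelian group $C_q(R)$ of cellular $q$-chains, hence is itself free, so $\Delta$ lifts through $i_*$ to some $\Delta_0 : H_q(R) \to H_q(F)$. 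Under the Hurewicz isomorphism $H_q(F) \cong \pi_q(F)$, we extend $\Delta_0$ to a homomorphism $\widetilde{\Delta} : C_q(R) \to \pi_q(F)$; this is possible because the quotient $C_q(R)/H_q(R)$ embeds into the free group $C_{q-1}(R)$, so $H_q(R)$ is a direct summand of $C_q(R)$.

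For each $q$-cell $e$ of $R$ with characteristic map $\chi_e$, I would alter $j_0 \circ \chi_e : D^q \to S$ via the pinch map $D^q \to D^q \vee S^q$, adding the spherical class $i_* \widetilde{\Delta}(e) \in \pi_q(S)$ on the wedge summand. Because this class factors through $\pi_q(F) \to \pi_q(S)$, its composition with $g$ is nullhomotopic rel boundary, so patching together the modified cells yields a map $j : R \to S$ with $g \circ j \simeq g \circ j_0 \simeq f$. A direct cellular-chain computation then shows $H_q(j) - H_q(j_0) = i_* \circ \Delta_0 = \Delta$, so $H_q(j) = J$, as required.

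The subtlest point, and the one I expect to take the most care, is this last step: verifying that the cell-wise pinch-and-wedge modification changes $H_q(j_0)$ by exactly $i_* \widetilde{\Delta}$ on all of $H_q(R)$, not just on the free generators of $C_q(R)$. This reduces to tracking how the Hurewicz map on $\pi_q(S)$ interacts with the cellular boundary in $R$, and is a routine but slightly delicate calculation.
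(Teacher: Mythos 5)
Your proposal is correct and follows essentially the same route as the paper's proof: first lift $f$ using the connectivity of $g$ and $\dim R = q$, then correct the discrepancy $J - H_q(j_0)$ by extending it (using that $H_q(R)=\Ker\partial_q$ is a direct summand of $C_q(R)$) to a homomorphism into $\Ker(\pi_q(g))$ and adding spheres cell by cell via the pinch map. The only difference is cosmetic: you package the key identification through the homotopy fibre $F$ and the Serre/Hurewicz sequence, where the paper uses the relative groups $\pi_{q+1}(X,S)\cong H_{q+1}(X,S)$, and the final cellular-chain verification you defer is carried out in the paper exactly as you sketch it, via $H_q(R,\sk_{q-1}R)\cong C_q(R)$.
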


\begin{proof}
The map $f : R \rightarrow X$ has a lift $\tilde{f} : R \rightarrow S$ such that $g \circ \tilde{f} \simeq f$, because the obstructions to the existence of such a lift are in the groups $H^i(R;\pi_{i-1}(\hofib(g)))$, which are trivial, because $g$ is $q$-connected and $R$ is $q$-dimensional. Let $J_0 = J - H_q(\tilde{f}) : H_q(R) \rightarrow H_q(S)$. 

The pair $(X, S)$ is $q$-connected (where $S$ is regarded as a subspace of $X$ via $g$), so $\pi_{q+1}(X, S) \cong H_{q+1}(X, S)$. The Hurewicz-homomorphisms induce the following commutative diagram: 
\[
\xymatrix{
 & H_q(R) \ar[d]^-{J_0} \ar@{-->}[dl] & \\
H_{q+1}(X, S) \ar[r] & H_q(S) \ar[r]^-{H_q(g)} & H_q(X) \\
\pi_{q+1}(X, S) \ar[u]^-{\cong} \ar[r] & \pi_q(S) \ar[u]_-{h} \ar[r]^-{\pi_q(g)} & \pi_q(X) \ar[u]
}
\]
We have $H_q(g) \circ J_0 = H_q(g) \circ (J - H_q(\tilde{f})) = H_q(g) \circ J - H_q(g) \circ H_q(\tilde{f}) = H_q(f) - H_q(f) = 0$ and $H_q(R)$ is free, so $J_0$ can be lifted to a homomorphism $H_q(R) \rightarrow H_{q+1}(X, S)$. By composing this homomorphism with the inverse of the isomorphism $\pi_{q+1}(X, S) \rightarrow H_{q+1}(X, S)$ and the boundary map, we get a homomorphism $\hat{J}_0 : H_q(R) \rightarrow \pi_q(S)$ such that $h \circ \hat{J}_0 = J_0$. Moreover, it follows from the construction of $\hat{J}_0$ that $\pi_q(g) \circ \hat{J}_0 = 0$, that is $\Image(\hat{J}_0) \leq \Ker(\pi_q(g))$.

Let $(C_*(R), \partial_*)$ be the cellular chain complex of $R$. Since $\Image(\partial_q) \cong C_q(R) / \Ker(\partial_q)$ is a subgroup of $C_{q-1}(R)$, it is free, hence $H_q(R) = \Ker(\partial_q)$ is a direct summand in $C_q(R)$. Therefore $\hat{J}_0 : H_q(R) \rightarrow \Ker(\pi_q(g))$ can be extended (arbitrarily) to a homomorphism $\bar{J}_0 : C_q(R) \rightarrow \Ker(\pi_q(g)) \leq \pi_q(S)$. 

Given a $q$-cell $e^q$ of $R$ and an element $[a] \in \pi_q(S)$, we can modify the map $\tilde{f}$ by adding $[a]$ to $\tilde{f} \big| _{e^q}$, ie.\ by replacing $\tilde{f} \big| _{e^q}$ with the composition $e^q \rightarrow e^q \vee S^q \stackrel{\tilde{f} \vee a}{\longrightarrow} S$. This operation changes the homomorphism $H_q(\tilde{f}) : H_q(R, \sk_{q-1} R) \rightarrow H_q(S, \sk_{q-1} S)$ by adding the image of $h([a]) \in H_q(S)$ in $H_q(S, \sk_{q-1} S)$ to the image of the generator $[e^q] \in H_q(R, \sk_{q-1} R) \cong C_q(R)$. Moreover, if $[a] \in \Ker(\pi_q(g))$, then the homotopy class of $g \circ \tilde{f}$ does not change, because $g \circ a$ is nullhomotopic, so the composition $e^q \rightarrow e^q \vee S^q \stackrel{\tilde{f} \vee a}{\longrightarrow} S \stackrel{g}{\rightarrow} X$ is homotopic to $g \circ \tilde{f} \big| _{e^q}$ (rel boundary). 

Let $j : R \rightarrow S$ be the map obtained from $\tilde{f}$ by adding $\bar{J}_0([e^q_i]) \in \pi_q(S)$ to $\tilde{f} \big| _{e^q_i}$ for every $q$-cell $e^q_i$ of $R$. This changes $H_q(\tilde{f}) : H_q(R, \sk_{q-1} R) \cong C_q(R) \rightarrow H_q(S, \sk_{q-1} S)$ by adding the composition $C_q(R) \stackrel{\bar{J}_0}{\rightarrow} \pi_q(S) \stackrel{h}{\rightarrow} H_q(S) \rightarrow H_q(S, \sk_{q-1} S)$. Therefore $H_q(j) = H_q(\tilde{f}) + h \circ \bar{J}_0 \big| _{H_q(R)} : H_q(R) \rightarrow H_q(S)$. Hence 
\[
H_q(j) = H_q(\tilde{f}) + h \circ \bar{J}_0 \big| _{H_q(R)} = H_q(\tilde{f}) + h \circ \hat{J}_0 = H_q(\tilde{f}) + J_0 = H_q(\tilde{f}) + J - H_q(\tilde{f}) = J \text{\,.}
\]
Moreover, since $\Image(\bar{J}_0) \leq \Ker(\pi_q(g))$, we have $g \circ j \simeq g \circ \tilde{f} \simeq f$, therefore $j$ satisfies both conditions. 
\end{proof}

We end this section with a consequence of the above results, which we will use later. Namely, any sufficiently stabilised isomorphism class in the stable isomorphism class $\theta_{W,F}$ can be realised as $\theta_{c,F}$ for some CW-decomposition $c$ of $W$:

\begin{prop} \label{prop:theta-cw-real}
If $\theta_{W,F} \si x$ for some $x \in \gs^T_{2q+1}(B,\xi)$, then there is a CW-decomposition $c$ of $W$ and an integer $k \geq 0$ such that $\theta_{c,F} \cong x \oplus \HH_{2k}$.
\end{prop}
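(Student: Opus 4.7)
The plan is to leverage the two pieces of machinery already in place: the definition of stable isomorphism, which permits adding $\HH$-summands on both sides, and Lemma \ref{lem:cwstab-isom}, which describes the effect on $\theta_{c,F}$ of stabilising the CW-decomposition $c$.

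First I would pick an arbitrary CW-decomposition $c_0$ of $W$. By Theorem \ref{thm:tw-welldef} the isomorphism class $\theta_{c_0,F}$ is a representative of the stable isomorphism class $\theta_{W,F}$. The hypothesis $\theta_{W,F} \si (\UM; L, V)$ then unpacks, via the definition of $\si$, to an isomorphism of quasi-formations $(\UM; L, V) \oplus \HH_{2a} \cong \theta_{c_0,F} \oplus \HH_{2b}$ for some integers $a, b \geq 0$.

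Next I would take a $b$-fold stabilisation $c$ of $c_0$ (in the sense of Definition \ref{def:cw-stab}). By Lemma \ref{lem:cwstab-isom} this yields $\theta_{c,F} \cong \theta_{c_0,F} \oplus \HH_{2b}$. Chaining the two isomorphisms gives $\theta_{c,F} \cong (\UM; L, V) \oplus \HH_{2a}$, so setting $k = a$ completes the proof.

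There is no genuine obstacle here: the content of the proposition is essentially that the formal padding allowed by stable isomorphism on the $\theta_{c_0,F}$-side can always be realised geometrically by repeatedly attaching a $(q, q{+}1)$-cell pair, while whatever padding is needed on the $(\UM; L, V)$-side simply becomes the $\HH_{2k}$ in the conclusion. All the real work has already been carried out in the proof of Lemma \ref{lem:cwstab-isom}.
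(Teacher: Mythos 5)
Your proposal is correct and is essentially identical to the paper's proof: both pick an arbitrary CW-decomposition $c_0$, unpack the stable isomorphism $(\UM; L, V) \oplus \HH_{2a} \cong \theta_{c_0,F} \oplus \HH_{2b}$, and then take a $b$-fold stabilisation of $c_0$ so that Lemma \ref{lem:cwstab-isom} supplies the missing $\HH_{2b}$ summand geometrically. No further comment is needed.
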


\begin{proof}
Let $c_0$ be an arbitrary CW-decomposition of $W$. Then $\theta_{W,F}$ is the stable isomorphism class of $\theta_{c_0,F}$, so $\theta_{W,F} \si x$ means that there are integers $k,l \geq 0$ such that $x \oplus \HH_{2k} \cong \theta_{c_0,F} \oplus \HH_{2l}$. Let $c$ be an $l$-fold stabilisation of $c_0$. Then by Lemma \ref{lem:cwstab-isom} $\theta_{c,F} \cong \theta_{c_0,F} \oplus \HH_{2l} \cong x \oplus \HH_{2k}$. 
\end{proof}

\subsection{The invariance of $[\theta_{W,F}]$}

Now we consider the equivalence class $[\theta_{W,F}] \in \ell^T_{2q+1}(B,\xi)$ of $\theta_{W,F}$ and show that it is a normal bordism invariant.

\begin{thm} \label{thm:theta-bordism-inv}
Suppose that $F' : W' \rightarrow B$ is another $q$-connected normal bordism between $f_0$ and $f_1$ over $(B,\xi)$. If $F'$ is normally bordant to $F$ (rel boundary), then $[\theta_{W,F}] = [\theta_{W',F'}] \in \ell^T_{2q+1}(B,\xi)$. 
\end{thm}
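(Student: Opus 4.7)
The plan is the standard Kreck-style strategy: pass to a highly connected normal bordism between $F$ and $F'$, then show that $[\theta_{W,F}]$ is invariant under the resulting elementary interior surgeries.

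First I would take a normal bordism rel boundary $G : Z \rightarrow B$ between $F$ and $F'$, so $Z$ is a $(2q{+}2)$-manifold with $\partial Z = W \cup (-W')$ glued along $M_0 \sqcup (-M_1)$, and $G$ restricts to $F, F'$ on the two halves. I would perform surgery on $G$ in $\interior(Z)$ in dimensions $\leq q$, i.e.\ below the middle dimension $q{+}1$ of $Z$, to arrange that $G$ is $(q{+}1)$-connected; this does not touch $\partial Z$, so $F$ and $F'$ are unchanged. With this connectivity, Wall-style handle trading (cf.\ \cite{wall-scm}) presents $Z$ as $W \times [0,1]$ with handles only of index $q{+}1$ and $q{+}2$ attached from $W \times \{1\}$; equivalently, $W'$ is obtained from $W$ by a sequence of interior $q$- and $(q{+}1)$-surgeries. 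It therefore suffices to check the theorem when $W'$ differs from $W$ by a single such surgery.

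The remaining step is to compute the effect on $\theta$ of a single interior $q$-surgery on $W$ (the $(q{+}1)$-surgery case follows dually, by viewing it as a $q$-surgery on $W'$). By general position the attaching $S^q$ can be made disjoint from $\sk_q W$, and its belt sphere contributes a new $q$-cell to a natural CW structure $c'$ on $W'$. The regular neighbourhood $U' \subset W'$ of the new $q$-skeleton is then diffeomorphic to $U \,\natural\, (S^q \times D^{q+1})$, and a computation modelled on Lemma \ref{lem:cwstab-isom} identifies $\theta_{U',F'} \cong \theta_{U,F} \oplus (\UH_2; K_0, V_0)$, where $K_0$ and $V_0$ are half-rank summands of $\UH_2$ depending on the framing of the surgery. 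Up to isomorphism the pair $(K_0, V_0)$ is either that of $\HH_2$, giving stable isomorphism, or $(\{0\} \times \Z, \{0\} \times \Z)$ modulo its swap $(\Z \times \{0\}, \{0\} \times \Z)$, which is exactly the second elementary equivalence in Definition \ref{def:ell-t}. In every case $[\theta_{W',F'}] = [\theta_{W,F}]$ in $\ell^T_{2q+1}(B,\xi)$.

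The main obstacle is the precise homological bookkeeping in this final step: one must verify that after the interior surgery the new T-lagrangian $L'$ and free half-rank summand $V'$ really split off the expected $\UH_2$-factor, and that the ``extra'' pair $(K_0, V_0)$ is one of the two configurations described. This requires careful tracking of the long exact sequences that define $L$ and $V$ in the presence of a handle attached in $\interior(W)$ near (but transverse to) $U$, and is where realisation-type arguments analogous to those developed later in Section \ref{s:obstr-def} (cf.\ Lemma \ref{lem:theta-surg-real}) come into play.
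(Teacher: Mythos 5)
Your first step (a rel-boundary normal bordism $G : Z \to B$, surgery below the middle dimension to make $G$ $(q{+}1)$-connected, then a handle/Morse argument reducing to a single interior surgery) is essentially the paper's reduction; note only that the paper arranges critical points of index $q{+}1$ exclusively, so that $W'$ is obtained from $W$ by $q$-surgeries alone, whereas your parenthetical dualisation is off by one: an index-$(q{+}2)$ handle of the $(2q{+}2)$-dimensional $Z$, read from the $W'$ side, has index $q$ and corresponds to a $(q{-}1)$-surgery on $W'$, not a $q$-surgery.

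The genuine gap is in the final step. After pushing the attaching sphere off $\sk_q W$ by general position, you claim $\theta_{U',F'} \cong \theta_{U,F} \oplus (\UH_2; K_0, V_0)$. This splitting is unjustified and in general false for the summand $V'$: with the surgery region $S^q \times D^{q+1}$ lying in $W \setminus \interior U$, the complement changes, $W' \setminus \interior U' \neq W \setminus \interior U$, and the relative $(q{+}1)$-cycles in $(W \setminus \interior U, \partial U \sqcup M_0)$ that define $V$ meet the attaching sphere in isolated points (dimension count $(q{+}1)+q = 2q{+}1$), so they need not survive the surgery; hence there is no reason for $V'$ to contain $V$, let alone equal $V \oplus V_0$ for a rank-one $V_0$ in the new $\UH_2$-factor. (Note also that the second elementary move in Definition \ref{def:ell-t} swaps the lagrangian summand $\{0\} \times \Z \leftrightarrow \Z \times \{0\}$ while keeping $V$ fixed as an arbitrary half-rank summand of $\UM \oplus \UH_2$ --- it does not require, and the paper never claims, that $V$ splits.) The paper sidesteps exactly this difficulty by making the opposite choice: it builds compatible CW-structures on $W$ and $W'$ from one on $W_0 = W \setminus \interior(S^q \times D^{q+1})$ so that the surgery solid torus is \emph{inside} $U$ (namely $U = U_0 \natural (S^q \times D^{q+1})$) and the co-core solid torus is inside $U'$ (namely $U' = U_0 \natural (D^{q+1} \times S^q)$). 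Then $\partial U = \partial U'$ and $W \setminus \interior U = W_0 \setminus \interior U_0 = W' \setminus \interior U'$, so $V' = V$ on the nose, $\UMp = \UM \cong \UM_0 \oplus \UH_2$, and only the T-lagrangian changes, from $L_0 \oplus (\{0\} \times \Z)$ to $L_0 \oplus (\Z \times \{0\})$ --- which is precisely the elementary equivalence the monoid was designed to absorb. If you want to keep your ``sphere disjoint from the skeleton'' set-up, you would have to carry out bookkeeping of the kind done in Lemma \ref{lem:theta-surg-real}, which is substantially harder than the direct-sum claim you make; as written, that claim is the missing (and not repairable as stated) step.
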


\begin{proof}
Let $G : X \rightarrow B$ be a normal bordism between $F$ and $F'$. By applying surgery below the middle dimension (see eg.\ \cite[Proposition 4]{kreck99}), we can assume that $G$ is $(q+1)$-connected. Since the compositions $W \rightarrow X \rightarrow B$ and $W' \rightarrow X \rightarrow B$ are $q$-connected, this implies that the inclusions $W \rightarrow X$ and $W' \rightarrow X$ are $q$-connected too. Then there is a Morse-function on the triad $(X, W, W')$ having only critical points of index $q+1$ (see \cite[Theorem 8.1 and the proof of Theorem 7.8]{milnor65}). This means that $W'$ can be obtained from $W$ by a sequence of $q$-surgeries over $(B,\xi)$ (with trace $X$), so it is enough to prove the statement when $W'$ is obtained by a single $q$-surgery.

Fix an embedding $S^q \times D^{q+1} \rightarrow W$ where the surgery is done, and let $W_0 = W \setminus \interior(S^q \times D^{q+1})$. Then $W' = W_0 \cup_{S^q \times S^q} D^{q+1} \times S^q$. Take the standard CW-decomposition of $S^q \times S^q$ into $4$ cells (coming from the product structure) and extend it to a CW-decomposition $c_0$ of $W_0$. Define the equivalence relation $\underset{W}{\sim}$ on $S^q \times S^q$ by $(x,y_1) \underset{W}{\sim} (x,y_2)$ for any $x,y_1,y_2 \in S^q$, then ${W_0} / {\underset{W}{\sim}}$ is diffeomorphic to $W$ (because a neighbourhood of $S^q \times D^{q+1}$ in $W$ can be identified with $S^q \times \R^{q+1}$, and $(\R^{q+1} \setminus \interior D^{q+1}) / S^q \approx \R^{q+1}$). Moreover, $c_0$ induces a CW-decomposition $c$ on ${W_0} / {\underset{W}{\sim}} \approx W$, namely we have the standard decomposition of ${S^q \times S^q} / {\underset{W}{\sim}} \approx S^q$ into $2$ cells, and all other cells remain the same, with adjusted gluing maps. If we fix these CW-decompositions, then $\sk_q W$ is of the form $K_0 \vee S^q$, where $K_0$ is the $q$-skeleton of $W_0$ without the two $q$-cells of $S^q \times S^q$. Then a regular neighbourhood $U$ of $\sk_q W$ is of the form $U_0 \natural (S^q \times D^{q+1})$, where $U_0$ is a regular neighbourhood of $K_0$ and we can assume that the embedding $S^q \times D^{q+1} \rightarrow U \rightarrow W$ coincides with the embedding previously fixed for surgery (and hence $F \big| _{S^q \times D^{q+1}}$ is nullhomotopic). This means that $\partial U \approx \partial U_0 \# (S^q \times S^q)$ and $F \big| _{S^q \times S^q}$ is nullhomotopic. Let $(\UM; L, V) = \theta_{U,F}$. By Lemmas \ref{lem:qf-hyp} and \ref{lem:qf-sum} we have $\UM \cong \UM_0 \oplus \UH_2$, where $\UM_0 = (H_q(\partial U_0), \lambda_0, \mu_0)$, $\lambda_0$ is the intersection form of $\partial U_0$ and $\mu_0 = H_q(F \big| _{\partial U_0})$. Moreover $L \cong L_0 \oplus (\{ 0 \} \times \Z)$, where $L_0 = \Image(H_{q+1}(U_0, \partial U_0) \rightarrow H_q(\partial U_0))$. 

Similarly we define the equivalence relation $\underset{W'}{\sim}$ on $S^q \times S^q$ by $(x_1,y) \underset{W'}{\sim} (x_2,y)$ for any $x_1,x_2,y \in S^q$, then ${W_0} / {\underset{W'}{\sim}} \approx W'$. We get a CW-decomposition $c'$ of $W'$ such that $\sk_q W' \approx K_0 \vee S^q$, and $\sk_q W'$ has a regular neighbourhood $U' \approx U_0 \natural (D^{q+1} \times S^q)$, where $U_0$ is the same regular neighbourhood of $K_0$ that we used earlier. We can assume that the image of the embedding $D^{q+1} \times S^q \rightarrow U' \rightarrow W'$ is $\overline{W' \setminus W_0}$, and the restriction of this embedding to $S^q \times S^q$ coincides with the previously fixed embedding $S^q \times S^q \rightarrow W_0$. Let $(\UMp; L', V') = \theta_{U',F'}$. We have $\partial U' = \partial U \approx \partial U_0 \# (S^q \times S^q)$, therefore $\UMp = \UM \cong \UM_0 \oplus \UH_2$ and $L' \cong L_0 \oplus (\Z \times \{ 0 \})$. Moreover, $W \setminus \interior U = W_0 \setminus \interior U_0 = W' \setminus \interior U'$, so $V = \Image(H_{q+1}(W \setminus \interior U, \partial U \sqcup M_0) \rightarrow H_q(\partial U)) = \Image(H_{q+1}(W' \setminus \interior U', \partial U' \sqcup M_0) \rightarrow H_q(\partial U')) = V'$. By the definition of the equivalence relation $\sim$ (see Definition \ref{def:ell-t}), we have $\theta_{U,F} \cong (\UM_0 \oplus \UH_2; L_0 \oplus (\{ 0 \} \times \Z), V) \sim (\UM_0 \oplus \UH_2; L_0 \oplus (\Z \times \{ 0 \}), V) \cong \theta_{U',F'}$. Therefore $\theta_{W,F} \sim \theta_{W',F'}$.
\end{proof}

\subsection{Realising stable isomorphism classes} \label{ss:real-surg}

Next we show that every stable isomorphism class in the equivalence class $[\theta_{W,F}]$ can be realised by a normal bordism. This will be the key input to showing that $[\theta_{W,F}]$ is a complete obstruction, see Proposition \ref{prop:bord-hcob-elem}.

\begin{thm} \label{thm:theta-equiv-real}
If $\theta_{W,F} \sim x$ for some $x \in \s^T_{2q+1}(B,\xi)$, then there is a $q$-connected normal bordism $F' : W' \rightarrow B$ that is normally bordant to $F$ (rel boundary) such that $\theta_{W',F'} \si x$.
\end{thm}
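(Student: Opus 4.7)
The equivalence relation $\sim$ on $\gc^T_{2q+1}(B,\xi)$ is generated by two kinds of elementary equivalences: stable isomorphism $\si$, which is the identity on the set $\s^T_{2q+1}(B,\xi)$ of stable isomorphism classes, together with the lagrangian swap $(\UM \oplus \UH_2; L \oplus (\{0\} \times \Z), V) \sim (\UM \oplus \UH_2; L \oplus (\Z \times \{0\}), V)$. By induction on the number of swaps in a chain of elementary equivalences connecting $\theta_{W,F}$ to $x$, the problem reduces to the case where the chain consists of a single swap; the stable-isomorphism steps require no modification of $W$ or $F$.

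Suppose then that $\theta_{W,F} \si (\UM \oplus \UH_2; L \oplus (\{0\} \times \Z), V)$ and $x \si (\UM \oplus \UH_2; L \oplus (\Z \times \{0\}), V)$. By Proposition \ref{prop:theta-cw-real}, I choose a CW-decomposition $c$ of $W$ and an integer $k \geq 0$ with $\theta_{c,F} \cong (\UM \oplus \UH_2; L \oplus (\{0\} \times \Z), V) \oplus \HH_{2k}$. After further stabilisation of $c$ if necessary, I would arrange that the distinguished $\UH_2$ factor, the one whose lagrangian is to be swapped, is realised geometrically: the regular neighbourhood $U$ of $\sk_q W$ takes the form $U_0 \natural (S^q \times D^{q+1})$, and the chosen $\UH_2$ appears as the connected summand $S^q \times S^q \subseteq \partial U$ coming from the attached handle.

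Next, I perform a $q$-surgery on $W$ along the embedded $S^q \times D^{q+1} \subseteq U$. Because the handle arises from a stabilisation, $F|_{S^q \times D^{q+1}}$ is nullhomotopic, so the surgery extends to a normal surgery over $(B,\xi)$, and its trace is a normal bordism rel boundary from $F$ to some $F' : W' \rightarrow B$. By precisely the computation in the proof of Theorem \ref{thm:theta-bordism-inv} (read in the forward direction), the effect on the quasi-formation is the lagrangian swap in the chosen $\UH_2$ factor, together with the passage from $U$ to the analogously defined $U' \approx U_0 \natural (D^{q+1} \times S^q)$. Hence $\theta_{W',F'} \si x$, as required.

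The main obstacle is the step of identifying an arbitrary algebraic $\UH_2$ factor in $\theta_{c,F}$ with a geometric, stabilisation-induced $\UH_2$ factor in a way that preserves both the T-lagrangian $L$ and the half-rank direct summand $V$. I expect this to combine Proposition \ref{prop:theta-cw-real} with an algebraic realignment using Theorem \ref{thm:fund-met} (which provides standard forms for free full geometric metabolic forms up to stable isomorphism), possibly iterated with additional stabilisations so that $V$ is brought into a position compatible with the chosen decomposition of the underlying extended quadratic form. Once this realignment is achieved, the geometric surgery described above realises the algebraic swap exactly.
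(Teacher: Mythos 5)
Your overall strategy---reduce to a single lagrangian swap, realise the quasi-formation via Proposition \ref{prop:theta-cw-real}, do surgery along a sphere associated to the hyperbolic generator, and read off the effect as in the proof of Theorem \ref{thm:theta-bordism-inv}---is the paper's strategy, and the reduction to a single swap (with the flip automorphism handling the reverse direction) is fine. But the step you defer as ``the main obstacle,'' namely identifying the distinguished algebraic $\UH_2$ factor with a stabilisation-induced geometric handle summand \emph{in a way that preserves both $L$ and $V$}, is not a technical detail to be patched later: it is essentially the whole content of the paper's Lemma \ref{lem:theta-surg-real}, and the patch you propose does not work. A stabilisation-induced $\UH_2$ summand is constrained relative to $V$: by Lemma \ref{lem:cwstab-isom} the new summand satisfies $L' = L \oplus (\{0\} \times \Z)$ \emph{and} $V' = V \oplus (\Z \times \{0\})$, so $V$ meets it in its first factor. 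The distinguished $\UH_2$ in the hypothesis is only assumed compatible with $L$; the half-rank summand $V$ can sit arbitrarily relative to it (for instance, meet it trivially). Hence in general there is no isomorphism of quasi-formations (i.e.\ one preserving $V$ as well as $L$) carrying your algebraic summand onto a handle summand, and Theorem \ref{thm:fund-met} cannot supply one, since it concerns only metabolic forms together with a lagrangian and gives no control over $V$.

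The paper's actual route avoids geometrising the chosen $\UH_2$ altogether. It takes $\alpha \in H_q(W)$ to be the image of the hyperbolic generator $(0,(1,0)) \in H_q(\partial U)$, lifts it via a Hurewicz argument to $\hat{\alpha} \in \pi_q(W)$ with $\pi_q(F)(\hat{\alpha}) = 0$, and performs surgery on an embedded sphere representing $\hat{\alpha}$, which need not be related to the handles of $U$ at all. The surgery-adapted CW-structures $c'$, $c''$ then exhibit the swap, but in a decomposition $\UMp_0 \oplus \UH_2$ a priori unrelated to the given one $(\UM_0 \oplus \UH_2; L_0 \oplus (\{0\}\times\Z), V)$. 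The heart of the proof is the comparison of the two: one re-runs the well-definedness argument (Theorem \ref{thm:tw-welldef} via Lemmas \ref{lem:cwstab-he} and \ref{lem:hom-real-map}) with a homomorphism $J$ chosen to respect the class of $(0,(1,0))$ modulo $\langle \alpha \rangle$, and then verifies with the intersection form that the induced isomorphism $I$ of quasi-formations satisfies $I(0,(1,0)) = (a,(1,0))$ with $a \in \tilde{L}_0$, hence $I(\tilde{L}'_0) = \tilde{L}_0$, so that $I$ still matches the lagrangians \emph{after} the swap while carrying $\tilde{V}'$ to $\tilde{V}$. Without this comparison (or a genuine substitute for it), your argument only shows that $\theta_{W'',F''}$ differs by a swap from $\theta_{W,F}$ in some surgery-adapted decomposition, not that $\theta_{W'',F''} \si x$ for the given $x$; so as it stands the proposal has a real gap at its central step.
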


\begin{proof}
If $\theta_{W,F} \si (\UM \oplus \UH_2; L \oplus ( \{0 \} \times \Z), V)$ and $x \si (\UM \oplus \UH_2; L \oplus (\Z \times \{ 0 \}), V)$ for some $\UM$, $L$ and $V$, then we can apply Lemma \ref{lem:theta-surg-real} below to get a suitable $F'$.

The case $\theta_{W,F} \si (\UM \oplus \UH_2; L \oplus (\Z \times \{ 0 \}), V)$, $x \si (\UM \oplus \UH_2; L \oplus ( \{0 \} \times \Z), V)$ can be reduced to the previous one by applying the automorphism $\id_M \oplus \sigma$ of $\UM \oplus \UH_2$ (where $\sigma : \Z^2 \rightarrow \Z^2$ is the flip map).

In general, $\theta_{W,F}$ is related to $x$ by a sequence of the above elementary equivalences, so we can get an $F' : W' \rightarrow B$ by a repeated application of Lemma \ref{lem:theta-surg-real}.
\end{proof}

\begin{lem} \label{lem:theta-surg-real}
Suppose that $\theta_{W,F} \si (\UM_0 \oplus \UH_2; L_0 \oplus ( \{0 \} \times \Z), V)$ for some $\UM_0$, $L_0$ and $V$. Then there is a $q$-connected normal bordism $F'' : W'' \rightarrow B$ which is normally bordant to $F$ such that $\theta_{W'',F''} \si (\UM_0 \oplus \UH_2; L_0 \oplus (\Z \times \{0 \} ), V)$.
\end{lem}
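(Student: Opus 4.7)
The plan is to realise the elementary equivalence
\[
(\UM \oplus \UH_2; L \oplus (\{0\} \times \Z), V) \sim (\UM \oplus \UH_2; L \oplus (\Z \times \{0\}), V)
\]
geometrically by a single framed $q$-surgery on $W$, extending the construction used in the proof of Theorem \ref{thm:theta-bordism-inv}. By Proposition \ref{prop:theta-cw-real} I would first fix a CW-decomposition $c$ of $W$ and a regular neighbourhood $U$ of $\sk_q W$ with
\[
\theta_{U,F} \cong (\UM_0 \oplus \UH_2; L_0 \oplus (\{0\} \times \Z), V) \oplus \HH_{2k}
\]
for some $k \geq 0$; by Lemma \ref{lem:cwstab-isom} the $k$ summands of $\HH_{2k}$ are realised by $k$ stabilisation wedge summands $\Sigma_i \cong S^q$ of $\sk_q W$, each embedded with trivial normal bundle and with $F|_{\Sigma_i}$ nullhomotopic (their bounding $(q+1)$-cells lie in contractible top cells, inherited from the stabilisation procedure).

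The crux is to produce a framed embedding $S^q \times D^{q+1} \hookrightarrow \interior W$ realising the algebraic $\UH_2$ factor of $\UM_0 \oplus \UH_2$. The generator $\beta$ of $(\{0\} \times \Z) \subset \UH_2$ lies in $\Image(H_{q+1}(U,\partial U) \to H_q(\partial U)) = L$ and hence is represented (by Hurewicz, together with general-position and embedding arguments in the spirit of Theorem \ref{thm:wall-emb-uniq}) by an embedded $S^q \subset \partial U$ bounding an embedded $D^{q+1} \subset U$; pushing the disk into $\interior U$ yields a framed embedding $S^q \times D^{q+1} \hookrightarrow \interior W$ on which $F$ is nullhomotopic, because $\beta$ bounds in $U \subset W$. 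The main technical obstacle is then to arrange, after possibly further stabilisation of $c$ and a reorganisation of the above isomorphism, that this embedded framed sphere coincides (up to ambient isotopy in $W$) with one of the geometric $\Sigma_i$'s, without disturbing the remaining summands of $\theta_{U,F}$. This alignment step---matching an arbitrary algebraic $\UH_2$ with a geometric wedge summand of $\sk_q W$, while preserving the $V$-component---is precisely what the author flags as the main difficulty of working with the extended surgery obstruction, in contrast to Theorem \ref{thm:theta-bordism-inv} where the CW-structure was tailored to a prescribed surgery.

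Once the alignment is achieved, the $q$-surgery produces $W'' = (W \setminus \interior(S^q \times D^{q+1})) \cup_{S^q \times S^q} (D^{q+1} \times S^q)$ together with a $q$-connected normal map $F'' : W'' \to B$, with the surgery trace providing a normal bordism rel boundary from $F$ to $F''$. Following the calculation in the proof of Theorem \ref{thm:theta-bordism-inv} with a CW-decomposition of $W''$ adapted to the surgery, I would verify that $\theta_{U'',F''}$ differs from $\theta_{U,F}$ only by the flip of the T-lagrangian summand $(\{0\} \times \Z) \leftrightarrow (\Z \times \{0\})$ in the designated $\UH_2$, while $V$ is preserved since $W \setminus \interior U = W'' \setminus \interior U''$. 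Stably this gives $\theta_{W'',F''} \si (\UM_0 \oplus \UH_2; L_0 \oplus (\Z \times \{0\}), V)$, as required.
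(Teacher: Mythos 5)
Your construction surgers the wrong class, and this is fatal. You take the generator $\beta$ of the old lagrangian direction $(\{0\}\times\Z)$, which lies in $L=\Image(H_{q+1}(U,\partial U)\to H_q(\partial U))$ and bounds in $U$. Geometrically this is the belt-sphere class: a representative bounds an embedded disc in $\interior U$, so surgery on it (with the bounding framing) replaces $W$ by $W\#(S^{q+1}\times S^q)$ (or a twisted analogue). That is a trivial stabilisation, which by the analysis behind Lemma \ref{lem:cwstab-isom} leaves $\theta$ unchanged up to stable isomorphism; it does not flip the T-lagrangian of the designated $\UH_2$ summand. The class that must be killed is the image $\alpha\in H_q(W)$ of the \emph{dual} generator $(0,(1,0))$ of $\Z\times\{0\}$: in the model computation of Theorem \ref{thm:theta-bordism-inv} the surgered core sphere represents $(1,0)$, and it is precisely after killing it that the new T-lagrangian contains $\Z\times\{0\}$ instead of $\{0\}\times\Z$ while $V$ is unchanged. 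This $\alpha$ is in general essential in $W$ and does not bound in $U$, so your ``it bounds in $U\subset W$'' shortcut for producing the framed embedded sphere is unavailable for the relevant class; what is actually used is that $\mu(0,(1,0))=0$ gives $H_q(F)(\alpha)=0$, one lifts $\alpha$ to $\hat\alpha\in\pi_q(W)$ with $\pi_q(F)(\hat\alpha)=0$ via the Hurewicz ladder of the $q$-connected pair $(B,W)$, and then \cite[Lemma 2]{kreck99} (using $q<\tfrac12\dim W$) yields an embedded framed sphere along which surgery over $(B,\xi)$ can be performed.

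Your ``alignment'' step is also not the right mechanism and is unsubstantiated. The stabilisation spheres $\Sigma_i$ realise the $\HH_{2k}$ summands (whose $V$-component is $\Z^k\times\{0\}$), not the designated $\UH_2$ inside $\UM_0\oplus\UH_2$, so matching your sphere with a $\Sigma_i$ would identify the wrong summand; and no ambient-isotopy statement of this kind is available or needed. The genuine difficulty, which your sketch defers entirely to this step, is the comparison of the obstruction computed from a CW-structure adapted to the surgery with the originally fixed identification of $\theta_{U,F}$. In the paper this is done after the surgery: one stabilises further, builds (via Lemma \ref{lem:hom-stab-isom}, corrected by a homomorphism factoring through $\left<\alpha\right>$) an isomorphism $J$ between the $q$-th homology of the stabilised skeleta that commutes with the maps to $H_q(W)$ and fixes the distinguished class, realises $J$ by a homotopy equivalence (Lemma \ref{lem:hom-real-map}) and then by a diffeomorphism of regular neighbourhoods (Theorem \ref{thm:wall-emb-uniq}), and finally verifies algebraically that the induced isomorphism of quasi-formations sends $(0,(1,0))$ to $(a,(1,0))$ with $a$ in the lagrangian and carries $\tilde L'_0$ to $\tilde L_0$, hence respects the flipped lagrangian and the $V$-summand. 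None of this is supplied by your proposal, so even after correcting the surgery class the argument has a substantial gap.
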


\begin{proof}
First we construct a normal bordism $F''$ by applying surgery to $F$. 

By Proposition \ref{prop:theta-cw-real} there is a CW-decomposition $c$ of $W$ and an integer $k \geq 0$ such that $\theta_{c,F} \cong \HH_{2k} \oplus (\UM_0 \oplus \UH_2; L_0 \oplus ( \{0 \} \times \Z), V)$. To simplify notation let $\UbM_0 = \UH_{2k} \oplus \UM_0$, $\bar{L}_0 = ( \{0 \} \times \Z^k) \oplus L_0$ and $\bar{V} = (\Z^k \times \{0 \} ) \oplus V$. Let $\bar{M}_0$ denote the underlying group of $\UbM_0$. Let $K$ denote the $q$-skeleton of $W$ with respect to $c$ and let $U$ be a regular neighbourhood of $K$, then by definition $\theta_{c,F}$ is the isomorphism class of $\theta_{U,F}$. We fix an isomorphism between $\theta_{U,F}$ and $(\UbM_0 \oplus \UH_2; \bar{L}_0 \oplus ( \{0 \} \times \Z), \bar{V})$. 

Let $\alpha \in H_q(W)$ be the image of $(0,(1,0)) \in \bar{M}_0 \oplus \Z^2$ under the homomorphism $H_q(\partial U) \rightarrow H_q(W)$ induced by the inclusion. Since $(0,(1,0))$ is in the hyperbolic component of $H_q(\partial U)$, we have $H_q(F \big| _{\partial U})(0,(1,0)) = 0$ and hence $H_q(F)(\alpha) = 0$.

Consider the commutative diagram
\[
\xymatrix{
H_{q+1}(B, W) \ar[r] & H_q(W) \ar[r]^-{H_q(F)} & H_q(B) \\
\pi_{q+1}(B, W) \ar[u]^-{\cong} \ar[r] & \pi_q(W) \ar[u] \ar[r]^-{\pi_q(F)} & \pi_q(B) \ar[u]
}
\]
induced by the Hurewicz-homomorphisms (where we regard $W$ as a subspace of $B$ via $F$). The pair $(B,W)$ is $q$-connected, hence $\pi_{q+1}(B, W) \cong H_{q+1}(B, W)$. 

Since $H_q(F)(\alpha) = 0$, $\alpha$ is the image of some element in $H_{q+1}(B, W)$. The image of the corresponding relative homotopy class in $\pi_{q+1}(B, W)$ under the boundary homomorphism is an element $\hat{\alpha} \in \pi_q(W)$, which is mapped to $\alpha$ by the Hurewicz-homomorphism, and it follows from the exactness of the sequence that $\pi_q(F)(\hat{\alpha}) = 0$. 

Since $q < \frac{1}{2} \dim W$, $\hat{\alpha}$ can be represented by an embedding $S^q \times D^{q+1} \rightarrow W$ along which we can perform a surgery over $(B, \xi)$ (see \cite[Lemma 2]{kreck99}). Let $F'' : W'' \rightarrow B$ be the result of this surgery, it is a normal bordism between $f_0$ and $f_1$ which is normally bordant to $F$ by construction. Since $F$ is $q$-connected and we did surgery along a $q$-sphere, $F''$ is $q$-connected too (\cite[Lemma 1]{kreck99}). 

In the rest of the proof we will show that the obstruction $\theta_{W'',F''}$ is stably isomorphic to $(\UbM_0 \oplus \UH_2; \bar{L}_0 \oplus (\Z \times \{0 \} ), \bar{V})$, and hence to $(\UM_0 \oplus \UH_2; L_0 \oplus (\Z \times \{0 \} ), V)$. We start by applying Theorem \ref{thm:theta-bordism-inv}, which describes the relationship between $\theta_{W'',F''}$ and $\theta_{W,F}$ (more precisely, between certain representatives thereof, which are obtained using specific CW-decompositions). 

We construct CW-decompositions $c'$ and $c''$ of $W$ and $W''$ respectively from a CW-decomposition of $W \setminus \interior(S^q \times D^{q+1})$ as in the proof of Theorem \ref{thm:theta-bordism-inv}. Let $K'$ denote the $q$-skeleton of $W$ with respect to $c'$, then $K' \approx K'_0 \vee S^q$ for some $K'_0$, so $H_q(K') \cong H_q(K'_0) \oplus \Z$. Since $S^q \subseteq K'$ represents the sphere along which we do surgery, the inclusion $K' \rightarrow W$ induces a homomorphism that sends $(0,1) \in H_q(K') \cong H_q(K'_0) \oplus \Z$ to $\alpha \in H_q(W)$. Let $U'$ be a regular neighbourhood of $K'$, then $U'$ is of the form $U'_0 \natural (S^q \times D^{q+1})$, and we can assume that the composition $S^q \times D^{q+1} \rightarrow U' \rightarrow W$ coincides with the embedding used for the surgery. Therefore $\theta_{U',F} \cong (\UMp_0 \oplus \UH_2; L'_0 \oplus (\{ 0 \} \times \Z), V')$, where $\UMp_0 = (H_q(\partial U'_0), \lambda'_0, \mu'_0)$, $\lambda'_0$ is the intersection form of $\partial U'_0$, $\mu'_0 = H_q(F \big| _{\partial U'_0})$ and $L'_0 = \Image(H_{q+1}(U'_0, \partial U'_0) \rightarrow H_q(\partial U'_0))$. We also have $\theta_{c'',F''} \cong (\UMp_0 \oplus \UH_2; L'_0 \oplus ( \Z \times \{0 \}), V')$, as we saw in the proof of Theorem \ref{thm:theta-bordism-inv}.

Now consider the following diagram: 
\[
\xymatrix{
\theta_{W,F} \si (\UMp_0 \oplus \UH_2; L'_0 \oplus (\{ 0 \} \times \Z), V') \ar@{<->}[r]^-{\sim} \ar@{<->}[d]_-{\si \;} & (\UMp_0 \oplus \UH_2; L'_0 \oplus ( \Z \times \{0 \}), V') \si \theta_{W'',F''} \\
\theta_{W,F} \si (\UbM_0 \oplus \UH_2; \bar{L}_0 \oplus ( \{0 \} \times \Z), \bar{V}) \ar@{<->}[r]^-{\sim} & (\UbM_0 \oplus \UH_2; \bar{L}_0 \oplus (\Z \times \{0 \} ), \bar{V})
}
\]
The top row summarises the information we just got from Theorem \ref{thm:theta-bordism-inv}. The bottom row contains the data appearing in the statement of the lemma (stabilised by adding $\HH_{2k}$). The stable isomorphism in the left column follows from the well-definedness of $\theta_{W,F}$ (Theorem \ref{thm:tw-welldef}). Our goal is to prove that $(\UMp_0 \oplus \UH_2; L'_0 \oplus ( \Z \times \{0 \}), V') \si (\UbM_0 \oplus \UH_2; \bar{L}_0 \oplus (\Z \times \{0 \} ), \bar{V})$. 

The stable isomorphism between $(\UMp_0 \oplus \UH_2; L'_0 \oplus (\{ 0 \} \times \Z), V')$ and $(\UbM_0 \oplus \UH_2; \bar{L}_0 \oplus ( \{0 \} \times \Z), \bar{V})$ is given by an isomorphism $I : \UH_{2l} \oplus \UMp_0 \oplus \UH_2 \rightarrow \UH_{2m} \oplus \UbM_0 \oplus \UH_2$ for some $l,m$ that is compatible with the (stabilised) quasi-formations. We will prove that there exists an $I$ that also determines a stable isomorphism between $(\UMp_0 \oplus \UH_2; L'_0 \oplus ( \Z \times \{0 \}), V')$ and $(\UbM_0 \oplus \UH_2; \bar{L}_0 \oplus (\Z \times \{0 \} ), \bar{V})$. The key to this is the observation that we have some control over the stable isomorphism produced by Theorem \ref{thm:tw-welldef}. This is due to the freedom in the choice of the isomorphism $J$ in Lemma \ref{lem:cwstab-he}, which is used in Lemma \ref{lem:hom-real-map} to construct a $j$ with $H_q(j)=J$, and related to $I$ via the diagram in Lemma \ref{lem:cwhe-isom}. So we will construct a suitable $I$ by going through the proof of Theorem \ref{thm:tw-welldef} again, this time choosing the isomorphism $J$ more carefully.

The homomorphism $H_q(K) \cong (\bar{M}_0 \oplus \Z^2) / (\bar{L}_0 \oplus ( \{0 \} \times \Z)) \rightarrow H_q(W)$ is surjective, because $K$ is a $q$-skeleton of $W$ (and we used Lemma \ref{lem:skel-hom} for the isomorphism), and the image of (the coset of) $(0,(1,0))$ is by definition $\alpha$, so there is an induced homomorphism $H_q(K) / \left< [0,(1,0)] \right> \cong \bar{M}_0/\bar{L}_0 \rightarrow H_q(W) / \left< \alpha \right>$, which is also surjective. 

The homomorphism $H_q(K') \cong H_q(K'_0) \oplus \Z \rightarrow H_q(W)$ is surjective too, and the image of $(0,1)$ is $\alpha$, so there is an induced homomorphism $H_q(K'_0) \rightarrow H_q(W) / \left< \alpha \right>$, which is also surjective.

By Lemma \ref{lem:hom-stab-isom} a) there are integers $l, m \geq 0$ and an isomorphism $\Z^l \oplus H_q(K'_0) \rightarrow \Z^m \oplus \bar{M}_0/\bar{L}_0$ which commutes with the maps to $H_q(W) / \left< \alpha \right>$ (where the $\Z^l$ and $\Z^m$ components are mapped to $0$). We fix such an isomorphism.

Consider the homomorphisms $\Z^l \oplus H_q(K'_0) \rightarrow H_q(W)$ and $\Z^l \oplus H_q(K'_0) \cong \Z^m \oplus \bar{M}_0/\bar{L}_0 \rightarrow H_q(W)$ (where the $\Z^l$ and $\Z^m$ components are again mapped to $0$). After composing with the projection $H_q(W) \rightarrow H_q(W) / \left< \alpha \right>$, these two homomorphisms become equal, therefore their difference is a homomorphism $\Z^l \oplus H_q(K'_0) \rightarrow \left< \alpha \right>$. Since $\left< \alpha \right>$ is cyclic and $\Z^l \oplus H_q(K'_0)$ is free, this homomorphism  factors as a composition $\Z^l \oplus H_q(K'_0) \rightarrow \Z \rightarrow \left< \alpha \right>$, where the second homomorphism maps $1$ to $\alpha$. By adding the first homomorphism of this composition to the previously fixed isomorphism, we get a homomorphism $\Z^l \oplus H_q(K'_0) \rightarrow \Z^m \oplus \bar{M}_0/\bar{L}_0 \oplus \Z \cong \Z^m \oplus H_q(K)$ which commutes with the maps to $H_q(W)$. We can extend it to an isomorphism $J : \Z^l \oplus H_q(K') \cong \Z^l \oplus H_q(K'_0) \oplus \Z \rightarrow \Z^m \oplus H_q(K) \cong \Z^m \oplus \bar{M}_0/\bar{L}_0 \oplus \Z$ by sending $(0,0,1)$ to $(0,0,1)$. This isomorphism $J$ again commutes with the maps to $H_q(W)$.

Let $\tilde{c}$ be an $m$-fold stabilisation of $c$ and $\tilde{c}'$ be an $l$-fold stabilisation of $c'$. Let $\tilde{K}$ and $\tilde{K}'$ denote the $q$-skeletons of $W$ with respect to $\tilde{c}$ and $\tilde{c}'$, then $H_q(\tilde{K}) \cong \Z^m \oplus H_q(K)$ and $H_q(\tilde{K}') \cong \Z^l \oplus H_q(K')$. Moreover, $J : H_q(\tilde{K}') \cong \Z^l \oplus H_q(K') \rightarrow H_q(\tilde{K}) \cong \Z^m \oplus H_q(K)$ is an isomorphism which commutes with the homomorphisms to $H_q(W)$. By applying Lemma \ref{lem:hom-real-map} as in the proof of Lemma \ref{lem:cwstab-he}, we get a homotopy equivalence $j : \tilde{K}' \rightarrow \tilde{K}$ that commutes with the inclusions in $W$ (up to homotopy) such that $H_q(j)=J$. 

Let $\tilde{U}$ and $\tilde{U}'$ be regular neighbourhoods of $\tilde{K}$ and $\tilde{K}'$ respectively. Since $\tilde{c}$ and $\tilde{c}'$ are stabilisations of $c$ and $c'$, we have $\theta_{\tilde{U},F} \cong \HH_{2m} \oplus \theta_{U,F}$ and $\theta_{\tilde{U}',F} \cong \HH_{2l} \oplus \theta_{U',F}$ by Lemma \ref{lem:cwstab-isom}. Let $\UtM_0 = \UH_{2m} \oplus \UbM_0$, $\tilde{L}_0 = ( \{0 \} \times \Z^m) \oplus \bar{L}_0$ and $\tilde{V} = (\Z^m \times \{0 \} ) \oplus \bar{V}$, then $\theta_{\tilde{U},F} \cong (\UtM_0 \oplus \UH_2; \tilde{L}_0 \oplus ( \{0 \} \times \Z), \tilde{V})$. Similarly, let $\UtMp_0 = \UH_{2l} \oplus \UMp_0$, $\tilde{L}'_0 = ( \{0 \} \times \Z^l) \oplus L'_0$ and $\tilde{V}' = (\Z^l \times \{0 \} ) \oplus V'$, then $\theta_{\tilde{U}',F} \cong (\UtMp_0 \oplus \UH_2; \tilde{L}'_0 \oplus ( \{0 \} \times \Z), \tilde{V}')$. 

We introduce the notation $(\tilde{M}_0, \tilde{\lambda}_0, \tilde{\mu}_0) = \UtM_0$ and $(\tilde{M}'_0, \tilde{\lambda}'_0, \tilde{\mu}'_0) = \UtMp_0$, and let $\tilde{\lambda} = \tilde{\lambda}_0 \oplus \bigl[ \begin{smallmatrix} 0 & 1 \\ 1 & 0 \end{smallmatrix} \bigr]$ and $\tilde{\lambda}' = \tilde{\lambda}'_0 \oplus \bigl[ \begin{smallmatrix} 0 & 1 \\ 1 & 0 \end{smallmatrix} \bigr]$ be the bilinear functions in $\UtM_0 \oplus \UH_2$ and $\UtMp_0 \oplus \UH_2$ respectively. By Lemma \ref{lem:skel-hom} we have $H_q(\tilde{U}) \cong (\tilde{M}_0 \oplus \Z^2) / (\tilde{L}_0 \oplus ( \{0 \} \times \Z))$ and $H_q(\tilde{U}') \cong (\tilde{M}'_0 \oplus \Z^2) / (\tilde{L}'_0 \oplus ( \{0 \} \times \Z))$, and the quotient homomorphisms $Q : \tilde{M}_0 \oplus \Z^2 \rightarrow (\tilde{M}_0 \oplus \Z^2) / (\tilde{L}_0 \oplus ( \{0 \} \times \Z))$ and $Q' : \tilde{M}'_0 \oplus \Z^2 \rightarrow (\tilde{M}'_0 \oplus \Z^2) / (\tilde{L}'_0 \oplus ( \{0 \} \times \Z))$ are induced by the inclusions $\partial \tilde{U} \rightarrow \tilde{U}$ and $\partial \tilde{U}' \rightarrow \tilde{U}'$. 

By applying Lemma \ref{lem:cwhe-isom} to the homotopy equivalence $j : \tilde{K}' \rightarrow \tilde{K}$, we get that  $\theta_{\tilde{U}',F} \cong \theta_{\tilde{U},F}$, via an isomorphism $I$ that fits into the commutative diagram
\[
\!\!\!\!
\xymatrix{
H_q(\tilde{K}') \cong \Z^l \oplus H_q(K'_0) \oplus \Z \ar[r]^-{\cong} \ar[d]_-{H_q(j) = J} & H_q(\tilde{U}') \cong (\tilde{M}'_0 \oplus \Z^2) / (\tilde{L}'_0 \oplus ( \{0 \} \times \Z)) \ar[d] & H_q(\partial \tilde{U}') \cong \tilde{M}'_0 \oplus \Z^2 \ar[d]^-{I} \ar[l]_-{Q'} \\
H_q(\tilde{K}) \cong \Z^m \oplus \bar{M}_0/\bar{L}_0 \oplus \Z \ar[r]^-{\cong} & H_q(\tilde{U}) \cong (\tilde{M}_0 \oplus \Z^2) / (\tilde{L}_0 \oplus ( \{0 \} \times \Z)) & H_q(\partial \tilde{U}) \cong \tilde{M}_0 \oplus \Z^2 \ar[l]_-{Q}
}
\]
In particular, $I$ is an isomorphism $I : \UtMp_0 \oplus \UH_2 \rightarrow \UtM_0 \oplus \UH_2$ such that $I(\tilde{L}'_0 \oplus ( \{0 \} \times \Z)) = \tilde{L}_0 \oplus ( \{0 \} \times \Z)$ and $I(\tilde{V}') = I(\tilde{V})$. 

It remains to prove that $I(\tilde{L}'_0 \oplus ( \Z \times \{0 \})) = \tilde{L}_0 \oplus (\Z \times \{0 \} )$, implying that $I$ determines an isomorphism between $(\UtMp_0 \oplus \UH_2; \tilde{L}'_0 \oplus ( \Z \times \{0 \}), \tilde{V}')$ and $(\UtM_0 \oplus \UH_2; \tilde{L}_0 \oplus (\Z \times \{0 \} ), \tilde{V})$, and hence a stable isomorphism between $(\UMp_0 \oplus \UH_2; L'_0 \oplus ( \Z \times \{0 \}), V')$ and $(\UbM_0 \oplus \UH_2; \bar{L}_0 \oplus (\Z \times \{0 \} ), \bar{V})$.

Under the identifications $H_q(\tilde{U}') \cong H_q(\tilde{K}')$ and $H_q(\tilde{U}) \cong H_q(\tilde{K})$, the isomorphism $J$ maps $[0,(1,0)]$, the coset of $(0,(1,0)) \in \tilde{M}'_0 \oplus \Z^2$ (which corresponds to $(0,0,1) \in \Z^l \oplus H_q(K'_0) \oplus \Z$) to $[0,(1,0)]$, the coset of $(0,(1,0)) \in \tilde{M}_0 \oplus \Z^2$ (that is $(0,0,1) \in \Z^m \oplus \bar{M}_0/\bar{L}_0 \oplus \Z$) by construction. Hence $Q \circ I(0,(1,0)) = J \circ Q'(0,(1,0)) = J([0,(1,0)]) = [0,(1,0)]$, so $I(0,(1,0)) \in Q^{-1}([0,(1,0)])$, therefore $I(0,(1,0)) = (a,(1,b))$ for some $a \in \tilde{L}_0$ and $b \in \Z$. 

Since $I$ is an isomorphism of extended quadratic forms, we have $0 = \tilde{\lambda}'((0,(1,0)),(0,(1,0))) = \tilde{\lambda}((a,(1,b))(a,(1,b))) = \tilde{\lambda}_0(a,a)+2b = 2b$ (because $a \in \tilde{L}_0$ and $\tilde{L}_0$ is a T-lagrangian in $\UtM_0$), hence $b=0$. Therefore $I(0,(1,0)) = (a,(1,0))$ for some $a \in \tilde{L}_0$. 

In $\UtMp_0 \oplus \UH_2$ we have $(\Z \times \{0 \})^{\perp} = \tilde{M}'_0 \oplus (\Z \times \{0 \})$, so $\tilde{L}'_0 = (\tilde{L}'_0 \oplus ( \{0 \} \times \Z)) \cap (\Z \times \{0 \})^{\perp}$. Hence its image under the isomorphism $I$ is $I(\tilde{L}'_0) = I(\tilde{L}'_0 \oplus ( \{0 \} \times \Z)) \cap I((\Z \times \{0 \})^{\perp}) = (\tilde{L}_0 \oplus ( \{0 \} \times \Z)) \cap (I(\Z \times \{0 \}))^{\perp} = (\tilde{L}_0 \oplus ( \{0 \} \times \Z)) \cap \left< (a,(1,0)) \right> ^{\perp}$. For $(x,(0,y)) \in \tilde{L}_0 \oplus ( \{0 \} \times \Z)$ we have $\tilde{\lambda}((a,(1,0))(x,(0,y))) = \tilde{\lambda}_0(a,x)+y = y$, so $(\tilde{L}_0 \oplus ( \{0 \} \times \Z)) \cap \left< (a,(1,0)) \right> ^{\perp} = \tilde{L}_0$. Therefore $I(\tilde{L}'_0) = \tilde{L}_0$.

This implies that $I(\tilde{L}'_0 \oplus ( \Z \times \{0 \})) = \tilde{L}_0 \oplus (\Z \times \{0 \} )$, as required. 
\end{proof}

\subsection{Elementary obstructions and h-cobordisms}

\begin{prop} \label{prop:hcob-elem}
$W$ is an h-cobordism if and only if $\theta_{W,F} \in \s^T_{2q+1}(B,\xi)$ is elementary. 
\end{prop}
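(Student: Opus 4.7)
The plan is to use the Mayer--Vietoris sequence for the triad $(W; U, W')$, with $W' = W \setminus \interior U$, relative to $M_0 \subseteq W'$. This gives the exact sequence
\[
H_{q+1}(W, M_0) \stackrel{\delta}{\to} H_q(\partial U) \stackrel{\alpha}{\to} H_q(U) \oplus H_q(W', M_0) \stackrel{\beta}{\to} H_q(W, M_0) \to 0,
\]
where $\alpha(x) = (i_*(x), j_*(x))$ for the inclusions $i \colon \partial U \to U$ and $j \colon \partial U \to W'$; surjectivity of $\beta$ follows because the next term maps into $H_{q-1}(\partial U) \to H_{q-1}(U)$, which is an isomorphism by Proposition \ref{prop:fund-hom-groups}. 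By definition $L = \Ker(i_*)$, and by Theorem \ref{thm:V-ses} we have $V = \Ker(j_*)$. I also note that ``$\theta_{W,F}$ elementary'' is equivalent to a representative $\theta_{U,F}$ being elementary, since the condition $M = L \oplus V$ is preserved and reflected under adding $\HH_{2k}$-summands.

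The first step is to translate the elementary condition on $\theta_{U,F}$ into a condition on $\alpha$. The condition $L \cap V = 0$ is exactly $\Ker \alpha = 0$. Since $i_*$ is surjective (by Proposition \ref{prop:fund-hom-groups}), $L + V = H_q(\partial U)$ is equivalent to $i_*(V) = H_q(U)$; together with the symmetric statement $j_*(L) = H_q(W', M_0)$, this makes $\alpha$ surjective, and conversely $\alpha$ surjective forces both of these by projection. Hence $\theta_{U,F}$ is elementary if and only if $\alpha$ is an isomorphism, equivalently (by exactness) $\delta = 0$ and $H_q(W, M_0) = 0$. The forward direction then follows immediately: if $W$ is an h-cobordism then $H_*(W, M_0) = 0$, so $\alpha$ is an isomorphism.

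For the reverse direction, I assume $\theta_{W,F}$ is elementary, so $H_q(W, M_0) = 0$, and I need to deduce that $H_i(W, M_0) = 0$ in all other degrees. For $i \leq q - 1$ this is Proposition \ref{prop:fund-hom-groups}; for $i \geq q + 2$, Poincar\'e--Lefschetz duality gives $H_i(W, M_0) \cong H^{2q+1-i}(W, M_1)$, which vanishes by the universal coefficient theorem. The main obstacle is $i = q + 1$, and this is where the standing hypothesis $\chi(M_0) = \chi(M_1)$ becomes essential. My plan is to use duality together with UCT, under the vanishing already established, to show $H_i(W, M_1) = 0$ for $i \neq q$ and $H_q(W, M_1) \cong \Hom(H_{q+1}(W, M_0), \Z)$. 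Then for even $q$ the identity $\chi(W, M_0) = \chi(W) - \chi(M_0) = \chi(W) - \chi(M_1) = \chi(W, M_1)$ becomes $-\rk H_{q+1}(W, M_0) = \rk H_{q+1}(W, M_0)$, forcing the rank to be zero; combined with the freeness of $H_{q+1}(W, M_0)$ coming from duality, this gives $H_{q+1}(W, M_0) = 0$. Simple-connectedness together with $H_*(W, M_0) = 0$ then yields, by Whitehead's theorem, that $M_0 \hookrightarrow W$ is a homotopy equivalence; by symmetry so is $M_1 \hookrightarrow W$, and hence $W$ is an h-cobordism.
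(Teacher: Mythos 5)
Your proof is correct, but it takes a genuinely different route from the paper's, so a comparison is worthwhile. The paper runs the long exact sequence of the triple $(W, W\setminus\interior U, M_0)$, shows by a cohomological duality argument that $H_{q+1}(W\setminus\interior U, M_0)\to H_{q+1}(W,M_0)$ is trivial, and thereby obtains a four-term exact sequence in which the map $H_{q+1}(W,W\setminus\interior U)\cong H_{q+1}(U,\partial U)\to H_q(W\setminus\interior U,M_0)$ is an isomorphism exactly when $H_q(\partial U)=L\oplus V$; elementarity is thus equivalent, in one stroke, to the simultaneous vanishing of $H_q(W,M_0)$ and $H_{q+1}(W,M_0)$, and the hypothesis $\chi(M_0)=\chi(M_1)$ is not touched in that proof (it enters only earlier, in Lemma \ref{lem:V-rank}, to make $V$ half-rank). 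You instead use the relative Mayer--Vietoris sequence of the triad, identify $L=\Ker i_*$ and $V=\Ker j_*$ (the latter via Theorem \ref{thm:V-ses}), and translate elementarity into $\alpha$ being an isomorphism; your forward direction then agrees in substance with the paper. In the converse you use only the surjectivity half of elementarity, namely $H_q(W,M_0)=0$, and recover $H_{q+1}(W,M_0)=0$ from Lefschetz duality and universal coefficients (which also supply the needed freeness, since $H_{q-1}(W,M_1)=0$ kills the Ext term) together with the Euler-characteristic count; this is precisely where $\chi(M_0)=\chi(M_1)$ and the evenness of $q$ enter. Your argument therefore proves slightly more --- in this geometric situation $L+V=H_q(\partial U)$ alone already forces $L\cap V=0$ --- at the price of importing the $\chi$ hypothesis and duality into this proposition, whereas the paper's version keeps the proof purely exact-sequence-theoretic. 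Both arguments are complete.
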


\begin{proof}
Choose an arbitrary $c$ and $U$, we will prove that $W$ is an h-cobordism if and only if $\theta_{U,F}$ is elementary. 

By Proposition \ref{prop:fund-hom-groups} e) we have $H_i(W, M_0) \cong 0$ for $i \leq q-1$, and it also follows that if $i \geq q+2$, then $H_i(W, M_0) \cong H^{2q+1-i}(W, M_1) \cong 0$ (by Poincar\'e-duality and the universal coefficient formula). Since $W$, $M_0$ and $M_1$ are all simply-connected, this means that $W$ is an h-cobordism if and only if $H_q(W, M_0) \cong H_{q+1}(W, M_0) \cong 0$.

Consider the homological long exact sequence of the triple $(W, W \setminus \interior U, M_0)$. By Proposition \ref{prop:fund-hom-groups} d) we have $H_q(W, W \setminus \interior U) \cong 0$. From the proof of Lemma \ref{lem:hom-triv2} we get a commutative diagram
\[
\xymatrix{
H^{q-1}(M_1) \ar[rr]^-{0} \ar[d]_-{\cong} \ar[dr]^-{\cong} & & H^q(W, M_1) \\ 
H^q(W \setminus \interior U, \partial U \sqcup M_1) & H^q(W, U \sqcup M_1) \ar[l]_-{\cong} \ar[ru] 
}
\] 
so the homomorphism $H^q(W \setminus \interior U, \partial U \sqcup M_1) \cong H^q(W, U \sqcup M_1) \rightarrow H^q(W, M_1)$ is trivial. By Poincar\'e-duality, the homomorphism $H_{q+1}(W \setminus \interior U, M_0) \rightarrow H_{q+1}(W, M_0)$ is trivial too. So we have the following exact sequence (note that this sequence is a copy of the horizontal exact sequence in \cite[Figure 1]{kreck99}, but $U$ is defined differently):
\[
0 \rightarrow H_{q+1}(W, M_0) \rightarrow H_{q+1}(W, W \setminus \interior U) \rightarrow H_q(W \setminus \interior U, M_0) \rightarrow H_q(W, M_0) \rightarrow 0 
\]
It follows that $W$ is an h-cobordism if and only if the map $H_{q+1}(W, W \setminus \interior U) \rightarrow H_q(W \setminus \interior U, M_0)$ is an isomorphism. 

This map can be written as a composition $H_{q+1}(W, W \setminus \interior U) \cong H_{q+1}(U, \partial U) \rightarrow H_q(\partial U) \rightarrow H_q(W \setminus \interior U, M_0)$. Since $H_{q+1}(U) \cong H_{q+1}(\sk_q W) \cong 0$, the map $H_{q+1}(U, \partial U) \rightarrow H_q(\partial U)$ is injective, and its image is by definition $L$. By the exact sequence 
\[
\xymatrix{
0 \ar[r] & H_{q+1}(W \setminus \interior U, \partial U \sqcup M_0) \ar[r] & H_q(\partial U) \ar[r] & H_q(W \setminus \interior U, M_0) \ar[r] & 0
} 
\]
of Theorem \ref{thm:V-ses} and the definition of $V$, the restriction of the map $H_q(\partial U) \rightarrow H_q(W \setminus \interior U, M_0)$ to $L$ is an isomorphism if and only if $H_q(\partial U) \cong L \oplus V$. 

Therefore $W$ is an h-cobordism if and only if $\theta_{U,F}$ is elementary. 
\end{proof}

\begin{prop} \label{prop:bord-hcob-elem}
$F$ is normally bordant (rel boundary) to some normal bordism $F' : W' \rightarrow B$ such that $W'$ is an h-cobordism if and only if $[\theta_{W,F}] \in \ell^T_{2q+1}(B,\xi)$ is elementary.
\end{prop}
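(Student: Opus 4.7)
The plan is to deduce this proposition essentially formally from Proposition \ref{prop:hcob-elem}, Theorem \ref{thm:theta-bordism-inv}, and Theorem \ref{thm:theta-equiv-real}. One small preliminary point to verify is that elementariness of a quasi-formation is invariant under stable isomorphism: stabilising $(\UM; L, V)$ by $\HH_{2k}$ replaces $(L, V)$ by $(L \oplus (\{0\} \times \Z^k),\, V \oplus (\Z^k \times \{0\}))$, and the condition $M = L \oplus V$ holds if and only if $M \oplus \Z^{2k} = (L \oplus (\{0\} \times \Z^k)) \oplus (V \oplus (\Z^k \times \{0\}))$. Hence one may unambiguously speak of an element of $\s^T_{2q+1}(B,\xi)$ being elementary, and the two notions of elementariness in $\s^T_{2q+1}(B,\xi)$ and in $\ell^T_{2q+1}(B,\xi)$ are compatible.

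For the ``only if'' direction, suppose $F' : W' \rightarrow B$ is a normal bordism between $f_0$ and $f_1$ that is normally bordant to $F$ rel boundary, and such that $W'$ is an h-cobordism. The first step is to observe that $F'$ is automatically $q$-connected: the inclusions $M_i \hookrightarrow W'$ are homotopy equivalences, $F' \big| _{M_i} = f_i$, and each $f_i$ is a normal $(q{-}1)$-smoothing (hence $q$-connected). So the extended surgery obstruction $[\theta_{W',F'}]$ is defined, and by Proposition \ref{prop:hcob-elem} the stable isomorphism class $\theta_{W',F'}$ is elementary. By Theorem \ref{thm:theta-bordism-inv} we have $[\theta_{W,F}] = [\theta_{W',F'}]$, so $[\theta_{W,F}]$ has an elementary representative.

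For the ``if'' direction, assume $[\theta_{W,F}]$ is elementary and pick an elementary representative $(\UM; L, V)$. Let $x \in \s^T_{2q+1}(B,\xi)$ be its stable isomorphism class, so that $x \sim \theta_{W,F}$ in $\ell^T_{2q+1}(B,\xi)$. Applying Theorem \ref{thm:theta-equiv-real} produces a $q$-connected normal bordism $F' : W' \rightarrow B$, normally bordant to $F$ rel boundary, with $\theta_{W',F'} \si x$. Since $x$ is elementary, the preliminary observation above forces $\theta_{W',F'}$ to be elementary as well, and Proposition \ref{prop:hcob-elem} concludes that $W'$ is an h-cobordism. The main geometric content is already located in Lemma \ref{lem:theta-surg-real} (underlying Theorem \ref{thm:theta-equiv-real}) and in Proposition \ref{prop:hcob-elem}, so once stable invariance of elementariness is in hand, no further difficulty is expected in the argument itself.
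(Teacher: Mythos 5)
Your proposal is correct and follows essentially the same route as the paper: the "only if" direction via the automatic $q$-connectedness of $F'$, Proposition \ref{prop:hcob-elem} and Theorem \ref{thm:theta-bordism-inv}, and the "if" direction via Theorem \ref{thm:theta-equiv-real} and Proposition \ref{prop:hcob-elem}. Your preliminary observation that elementariness is invariant under stabilisation by $\HH_{2k}$ is already noted in the paper just before the definitions of elementary elements, so nothing further is needed.
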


\begin{proof}
First suppose that $F$ is normally bordant to an $F' : W' \rightarrow B$ such that $W'$ is an h-cobordism. In this case $F'$ is $q$-connected (because the inclusion $M_0 \rightarrow W'$ is a homotopy equivalence, and the composition $M_0 \rightarrow W' \stackrel{F'}{\rightarrow} B$ is equal to $f_0$, which is $q$-connected), so $\theta_{W',F'}$ is defined. It follows from Proposition \ref{prop:hcob-elem} that $\theta_{W',F'}$ is elementary, hence $[\theta_{W',F'}]$ is elementary, and by Theorem \ref{thm:theta-bordism-inv} $[\theta_{W,F}] = [\theta_{W',F'}]$. 

Now suppose that $[\theta_{W,F}]$ is elementary. This means that there is an elementary quasi-formation $x \in \gc^T_{2q+1}(B,\xi)$ such that $\theta_{W,F} \sim x$. By Theorem \ref{thm:theta-equiv-real} there is a $q$-connected normal bordism $F' : W' \rightarrow B$ which is normally bordant to $F$ such that $\theta_{W',F'} \si x$. Then $\theta_{W',F'}$ is elementary, so $W'$ is an h-cobordism by Proposition \ref{prop:hcob-elem}.
\end{proof}

\section{The computation of $[\theta_{W,F}]$} \label{s:obstr-comp}

We use the same setting as in Section \ref{s:obstr-def}. The goal of this section is to compute the obstruction $[\theta_{W,F}]$ in terms of the Q-forms of $f_0$ and $f_1$.

\begin{defin}
For $i=0,1$ let $\UE_i = (H_q(M_i), \lambda_i, \mu_i) = E_q(M_i, f_i)$, this is an extended quadratic form over $H_q(B)$. 

Note that $\UE_i$ is geometric (with respect to $\hat{v}_q(\xi)$) and full, because $f_i$ is a normal $(q{-}1)$-smoothing (see Lemmas \ref{lem:qf-geom} and \ref{lem:qf-full}). 
\end{defin}

We choose a CW-decomposition $c$ of $W$ and a regular neighbourhood $U$ of $\sk_q W$ as before. These will be fixed for the rest of this section. 

\begin{defin}
Let $(\UM; L, V) = \theta_{U,F}$, where $\UM = (H_q(\partial U), \lambda, \mu)$ and let $\UV = (V, \lambda \big| _{V \times V}, \mu \big| _V)$, this is an extended quadratic form over $H_q(B)$, geometric (with respect to $\hat{v}_q(\xi)$) and free.
\end{defin}

Recall that we made the following orientation conventions. $W$, $M_0$ and $M_1$ are oriented manifolds such that $W$ is an oriented cobordism between $M_0$ and $M_1$, that is, $W$ induces the positive orientation on $M_0$ and the negative orientation on $M_1$. The regular neighbourhood $U$ is a codimension-$0$ submanifold in $W$, and its orientation is the restriction of that of $W$. We orient $\partial U$ as the boundary of $U$. Thus $W \setminus \interior U$ induces the negative orientation on $\partial U$. 

We will need the following lemma, which is an analogue of \cite[Proposition 8. i)]{kreck99}.

\begin{lem} \label{lem:proj-qf}
There is a morphism of extended quadratic forms $p : \UV \rightarrow \UE_0$ such that the underlying homomorphism $p : V \rightarrow H_q(M_0)$ is surjective. 
\end{lem}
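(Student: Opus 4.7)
The plan is to build $p$ from the long exact sequences relating $\partial U$, $M_0$, and $W\setminus\interior U$, and then to verify the three required properties (surjectivity, $\mu$-preservation, $\lambda$-preservation) using naturality and the classical cobordism-intersection identity.

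\emph{Construction of $p$.} The pair boundary $\delta\colon H_{q+1}(W\setminus\interior U,\partial U\sqcup M_0)\to H_q(\partial U\sqcup M_0)=H_q(\partial U)\oplus H_q(M_0)$ splits as $\delta=(\delta_1,\delta_2)$. Its first component $\delta_1$ coincides (after the excision identification $H_q(\partial U\sqcup M_0,M_0)\cong H_q(\partial U)$) with the triple boundary of $(W\setminus\interior U,\partial U\sqcup M_0,M_0)$, which by Theorem~\ref{thm:V-ses} is injective with image exactly $V$. So each $v\in V$ has a unique preimage $\tilde v\in H_{q+1}(W\setminus\interior U,\partial U\sqcup M_0)$, and I define $p(v)=-\delta_2(\tilde v)$. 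The minus sign is dictated by the fact that $\partial U$ inherits opposite orientations as boundary of $U$ versus boundary of $W\setminus\interior U$; without it the verifications of $\mu$ and $\lambda$ below would acquire a sign.

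\emph{Surjectivity and the morphism properties.} Surjectivity follows by looking at the long exact sequence of the triple $(W\setminus\interior U,\partial U\sqcup M_0,\partial U)$, whose triple boundary equals $\delta_2$ (again after an excision identification). The cokernel of $\delta_2$ injects into $H_q(W\setminus\interior U,\partial U)\cong H_q(W,U)$, which vanishes by Proposition~\ref{prop:fund-hom-groups}(c); hence $\delta_2$, and therefore $p$, is surjective. Preservation of $\mu$ follows from naturality of $\delta$ applied to the pair map $F\colon(W\setminus\interior U,\partial U\sqcup M_0)\to(B,B)$, combined with $H_{q+1}(B,B)=0$: naturality gives $\mu\circ\delta_1+\mu_0\circ\delta_2=0$, which, after substituting $v=\delta_1(\tilde v)$ and $p(v)=-\delta_2(\tilde v)$, becomes $\mu_0\circ p=\mu_V$.

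\emph{Preservation of $\lambda$.} This is an instance of the classical principle that the intersection form on the boundary of an oriented $(2q{+}1)$-cobordism vanishes on pairs of classes that jointly bound. Applied to $W\setminus\interior U$, whose boundary is $M_0\sqcup(-M_1)\sqcup(-\partial U)$, and to lifts $\tilde v_1,\tilde v_2\in H_{q+1}(W\setminus\interior U,\partial U\sqcup M_0)$ with no $M_1$-component, it yields $-\lambda(v_1,v_2)+\lambda_0(\delta_2(\tilde v_1),\delta_2(\tilde v_2))=0$. Bilinearity of $\lambda_0$ then gives $\lambda(v_1,v_2)=\lambda_0(p(v_1),p(v_2))$.

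The one mildly delicate point is getting the sign in the definition of $p$ consistent with both the $\mu$- and $\lambda$-identities at once; once the sign is fixed as above, everything else reduces to naturality plus the cobordism-boundary fact, both of which are standard. I do not expect any serious obstacle beyond this bookkeeping.
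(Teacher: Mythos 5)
Your proposal is correct and follows essentially the same route as the paper: you define $p=-d_2\circ d_1^{-1}$ from the two components of the boundary map of $(W\setminus\interior U,\partial U\sqcup M_0)$, get injectivity of $d_1$ with image $V$ from Theorem \ref{thm:V-ses}, deduce surjectivity of $d_2$ from a triple exact sequence together with the vanishing $H_q(W\setminus\interior U,\partial U)\cong H_q(W,U)\cong 0$ of Proposition \ref{prop:fund-hom-groups} c), and verify the morphism identities exactly as in Lemma \ref{lem:boundary-lagr} (the map to $(B,B)$ for $\mu$, and isotropy of jointly bounding classes in $\partial(W\setminus\interior U)$ for $\lambda$), with the same sign conventions. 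The only cosmetic difference is your choice of the triple $(W\setminus\interior U,\partial U\sqcup M_0,\partial U)$ in place of the paper's excision-equivalent $(W,U\sqcup M_0,U)$.
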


\begin{proof}
Let $d_1 : H_{q+1}(W \setminus \interior U, \partial U \sqcup M_0) \rightarrow H_q(\partial U)$ and $d_2 : H_{q+1}(W \setminus \interior U, \partial U \sqcup M_0) \rightarrow H_q(M_0)$ denote the components of the boundary homomorphism $d : H_{q+1}(W \setminus \interior U, \partial U \sqcup M_0) \rightarrow H_q(\partial U \sqcup M_0) \cong H_q(\partial U) \oplus H_q(M_0)$.

By Theorem \ref{thm:V-ses} $d_1$ is injective and its image is $V$. In the homological long exact sequence of the triple $(W, U \sqcup M_0, U)$ we have $H_q(W, U) \cong 0$ (see Proposition \ref{prop:fund-hom-groups} c)), therefore $d_2$ is surjective. Let $p = - d_2 \circ d_1^{-1} : V \rightarrow H_q(M_0)$, this is a surjective homomorphism. 

Let $X$ denote the image of $\partial : H_{q+1}(W \setminus \interior U, \partial U \sqcup M_0 \sqcup M_1) \rightarrow H_q(\partial U \sqcup M_0 \sqcup M_1)$. Since $\Image d \leq X$, we have $((-\lambda) \oplus \lambda_0) \big| _{\Image d \times \Image d} = 0$ and $(\mu \oplus \mu_0) \big| _{\Image d} = 0$ by Lemma \ref{lem:boundary-lagr}, and this implies that $p$ is a morphism of extended quadratic forms. (For more details see \cite[Lemma 4.5.3]{csn-thesis}.)
\end{proof}

To be able to make computations, we will pass from the set $\ell^T_{2q+1}(B,\xi)$ to the monoid $\ell_{2q+1}(B,\xi)$ using the bijection of Proposition \ref{prop:ell-bij}. Recall the notation from Definition \ref{def:quot}.

\begin{defin}
Let $\bar{M} = H_q(\partial U) / \Tor H_q(\partial U)$. Since $L$ is a T-lagrangian in $\UM$, we always have $\mu \big| _{\Tor H_q(\partial U)} = 0$, hence we can define $\UbM = (\bar{M}, \bar{\lambda}, \bar{\mu})$, let $\pi : H_q(\partial U) \rightarrow \bar{M}$ denote the quotient homomorphism. 

Let $\bar{\theta}_{U,F} = (\UbM; \bar{L}, \bar{V}) \in \gc_{2q+1}(B,\xi)$. Let $\bar{\theta}_{c,F} \in \gs_{2q+1}(B,\xi)$ and $\bar{\theta}_{W,F} \in \s_{2q+1}(B,\xi)$ be the elements represented by $\bar{\theta}_{U,F}$. 
\end{defin}

Then $[\bar{\theta}_{W,F}] \in \ell_{2q+1}(B,\xi)$ is the element corresponding to $[\theta_{W,F}] \in \ell^T_{2q+1}(B,\xi)$ under the bijection of Proposition \ref{prop:ell-bij}.

\begin{defin}
Let $\bar{E}_i = H_q(M_i) / \Tor H_q(M_i)$. If $\mu_i(\Tor H_q(M_i))=0$, then we can also define $\UbE_i = (\bar{E}_i, \bar{\lambda}_i, \bar{\mu}_i)$.
\end{defin}

\begin{thm} \label{thm:theta-calc}
Assume that $\mu_i(\Tor H_q(M_i))=0$ for $i=0,1$ and that $\UbE_0 \oplus (-\UbEs_1)$ is metabolic. Then for every lagrangian $\bar{K}$ in $\UbE_0 \oplus (-\UbEs_1)$ there is an $x \in L_{2q+1}(B,\xi)$ such that 
\[
[\bar{\theta}_{W,F}] = [\UbE_0 \oplus (-\UbEs_1); \bar{K}, \bar{E}_0] \oplus x \in \ell_{2q+1}(B,\xi) \text{\,.}
\]
\end{thm}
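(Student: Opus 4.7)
The plan is to exhibit, after stabilisation with hyperbolic summands, a direct sum decomposition
\[
[\UbM; \bar{L}, \bar{V}] = [\UbE_0 \oplus (-\UbEs_1); \bar{L}_0, \bar{E}_0] \oplus [\UbN; \bar{L}', \bar{K}']
\]
in $\ell_{2q+1}(B,\xi)$, where $\bar{L}_0$ is some lagrangian in $\UbE_0 \oplus (-\UbEs_1)$, $\UbN$ is a free metabolic form, and $\bar{K}'$ is a free lagrangian in $\UbN$. The second summand then represents an element of $L_{2q+1}(B,\xi)$, and Theorem \ref{thm:jacobi} converts $\bar{L}_0$ into the prescribed $\bar{K}$ at the cost of another element of $L_{2q+1}(B,\xi)$, yielding the desired $x$.

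The geometric input comes from the cobordism $Z = W \setminus \interior U$. Its oriented boundary is $M_0 \sqcup (-M_1) \sqcup (-\partial U)$, so Lemma \ref{lem:qf-neg} identifies the Q-form on $H_q(\partial Z)$ with $\UE_0 \oplus (-\UEs_1) \oplus (-\UMs)$, and via Lemma \ref{lem:met-star-isom} with $\UE_0 \oplus (-\UEs_1) \oplus (-\UM)$. As in the proof of Lemma \ref{lem:bound-qf-met}, saturating the image of $\partial : H_{q+1}(Z, \partial Z) \to H_q(\partial Z)$ produces (after modding out torsion) a lagrangian $\bar{Y}$ in $\UbE_0 \oplus (-\UbEs_1) \oplus (-\UbM)$. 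Diagram chases with the long exact sequences of the triples $(Z, \partial Z, \partial U \sqcup M_i)$ show that the projection of $\bar{Y}$ to $-\UbM$ recovers $\bar{V}$, and that the pairing $\bar{Y}$ sets up between the three summands implements the surjective morphism $\bar{p}: \bar{V} \to \bar{E}_0$ of Lemma \ref{lem:proj-qf}.

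To extract the decomposition, I would apply Theorem \ref{thm:fund-met}(a) to the metabolic form $\UbE_0 \oplus (-\UbEs_1) \oplus (-\UbM)$: after stabilising both sides with hyperbolic summands, there is an automorphism matching (a stabilisation of) $\bar{Y}$ to the product lagrangian $\bar{K} \oplus \bar{L}$. Tracking the image of the half-rank summand $\bar{V}$ under this automorphism, together with the identification $\UbV = \bar{p}^*(\UbE_0)$, produces the required direct sum decomposition of $(\UbM; \bar{L}, \bar{V})$, and passing to $\ell_{2q+1}(B,\xi)$ through the bijection of Proposition \ref{prop:ell-bij} completes the computation.

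The main obstacle will be the simultaneous bookkeeping of $\bar{L}$ and $\bar{V}$ under this isomorphism: $\bar{V}$ is not isotropic in $\UbM$, but the identification $\UbV = \bar{p}^*(\UbE_0)$ shows that its ``non-isotropic part'' is exactly captured by $\bar{p}$ and that $\Ker \bar{p} \leq \bar{V}$ is isotropic with trivial $\bar{\mu}$. Separating these two parts cleanly, and in a way compatible with a decomposition $\bar{L} = \bar{L}_0 \oplus \bar{L}'$ of the lagrangian, is the technical heart of the argument and requires a careful combination of the constructive proof of Theorem \ref{thm:fund-met}(a) with stabilisation arguments in the spirit of Lemma \ref{lem:hom-stab-isom}(a).
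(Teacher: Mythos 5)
Your overall plan---expressing $[\UbM;\bar{L},\bar{V}]$ as the target element plus an element of $L_{2q+1}(B,\xi)$, then invoking Theorem \ref{thm:jacobi} to adjust lagrangians---matches the paper's strategy, and you correctly identify the roles of the surjection $\bar{p}$ from Lemma \ref{lem:proj-qf} and of the isotropic subgroup $\Ker\bar{p}\leq\bar{V}$. But the mechanism you propose for extracting the decomposition has a genuine gap, and a key geometric construction is missing. The paper produces the decomposition by constructing a \emph{second} half-rank summand $\bar{V}'$ inside $\bar{M}$: reversing the orientation of $W$ (so that $M_1$ replaces $M_0$) reruns the machinery of Lemma \ref{lem:proj-qf} to give $V'$ with a surjection $\bar{p}':\UbVp\to-\UbEs_1$, one shows $\bar{V}'=\bar{V}^\perp$, that the radicals $\bar{Z}=\bar{V}\cap\bar{V}^\perp$ and $\bar{Z}'=\bar{V}'\cap(\bar{V}')^\perp$ coincide, and then $\UbM$ splits orthogonally as $\UbN\oplus\UbNp$ with $\UbN\cong\UbE_0\oplus(-\UbEs_1)$ via $\bar{i}\oplus\bar{i}'$ and $\bar{Z}$ a lagrangian in $\UbNp$. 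Only then is Theorem \ref{thm:jacobi} applied: the lagrangian $(\bar{i}\oplus\bar{i}')(\bar{K})\oplus\bar{Z}$ is inserted between $\bar{L}$ and $\bar{V}$, and the second factor splits along $\UbM=\UbN\oplus\UbNp$ (dropping $(\UbNp;\bar{Z},\bar{Z})$ by Lemma \ref{lem:nkk-zero}).

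Your substitute---passing to the boundary lagrangian $\bar{Y}$ of $\partial Z$ inside $\UbE_0\oplus(-\UbEs_1)\oplus(-\UbM)$, applying Theorem \ref{thm:fund-met}(a) to match $\bar{Y}$ stably to $\bar{K}\oplus\bar{L}$, and then ``tracking $\bar{V}$''---does not deliver the internal decomposition of $\UbM$ you need. First, the projection of $\bar{Y}$ to the $-\UbM$ factor is not obviously $\bar{V}$: since $\rk\bar{Y}=\tfrac12(\rk\UbE_0+\rk\UbE_1+\rk\UbM)$, that projection has rank at least $\tfrac12\rk\UbM=\rk\bar{V}$, with equality only if $\bar{Y}\cap(\bar{E}_0\oplus\bar{E}_1)$ is itself a lagrangian of $\UbE_0\oplus(-\UbEs_1)$, which you do not establish---and even granting equality of ranks, identifying the image with $\bar{V}$ (which is defined via the triple $(Z,\partial U\sqcup M_0,M_0)$, not $(Z,\partial Z)$) requires a nontrivial argument. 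Second, and more seriously, the stable isomorphism coming from Theorem \ref{thm:fund-met}(a) is an \emph{abstract} automorphism of the stabilised big form with no reason to respect the direct-sum decomposition, so the image of $\bar{V}\subset\bar{M}$ under it is not a subgroup of $\bar{M}$ (stabilised), and hence ``tracking $\bar{V}$'' cannot yield an internal splitting of the quasi-formation $(\UbM;\bar{L},\bar{V})$, which is what $\ell_{2q+1}(B,\xi)$ requires. The paper sidesteps both pitfalls by carrying out the whole argument inside $\UbM$ via the orthogonal complement $\bar{V}'$; that orientation-reversal construction is the ingredient your sketch is missing and cannot be replaced by an application of Theorem \ref{thm:fund-met}(a).
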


\begin{rem} \label{rem:theta-calc}
The boundary of $W$ is $M_0 \sqcup (-M_1)$, and by Lemmas \ref{lem:qf-sum} and \ref{lem:qf-neg} we have $E_q(M_0 \sqcup (-M_1), f_0 \sqcup f_1) = \UE_0 \oplus (-\UEs_1)$. Thus by Lemma \ref{lem:bound-qf-met} if $H_q(B)$ is free, then $\UE_0 \oplus (-\UEs_1)$ is metabolic. The freeness of $H_q(B)$ also implies that $\mu_i(\Tor H_q(M_i))=0$, so $\UbE_0 \oplus (-\UbEs_1)$ is defined and it is also metabolic, hence both assumptions hold. Moreover, $L_{2q+1}(B,\xi) \cong 0$ by Theorem \ref{thm:l-group-zero}, so in this case Theorem \ref{thm:theta-calc} completely determines $[\bar{\theta}_{W,F}]$.
\end{rem}

\begin{proof}
We have $[\bar{\theta}_{W,F}] = [\UbM; \bar{L}, \bar{V}]$ by definition. To compute it, first we will define a subgroup $V' \leq M$ such that in $\UbM$ we have $\bar{V}' = \bar{V}^{\perp}$. Then we will use the morphism $\bar{p} : \UbV \rightarrow \UbE_0$ induced by the morphism $p : \UV \rightarrow \UE_0$ from Lemma \ref{lem:proj-qf} and an analogous morphism $\bar{p}' : \UbVp \rightarrow -\UbEs_1$ to relate $\UbM$ to $\UbE_0 \oplus (-\UbEs_1)$.

First we consider $-W$, ie.\ $W$ with the opposite orientation, this is an oriented cobordism between $M_1$ and $M_0$, ie.\ $\partial(-W) = M_1 \sqcup (-M_0)$. After changing the orientations $F$, $f_0$ and $f_1$ are still normal $(q{-}1)$-smoothings over $(B,\xi)$, and the obstruction $\theta_{-W,F}$ can be defined as before. The CW-decomposition $c$ of $W$ that we fixed earlier is also a CW-decomposition of $-W$, and we can choose the same regular neighbourhood $U$ of the $q$-skeleton, but the orientation of $-W$ restricts to the negative orientation of $U$. We will denote by $\theta_{-U,F}$ the quasi-formation determined by $-U$ ($U$ with the opposite orientation) that represents $\theta_{-W,F}$.

Let $(\UMp; L', V') = \theta_{-U,F}$. Then $\UMp = E_q(-\partial U, F \big| _{\partial U}) = -\UMs = (H_q(\partial U), -\lambda, \mu)$ by Lemma \ref{lem:qf-neg}. By definition we have $L' = \Image(H_{q+1}(-U, -\partial U) \rightarrow H_q(-\partial U)) = \Image(H_{q+1}(U, \partial U) \rightarrow H_q(\partial U)) = L$ and $V' = \Image(H_{q+1}(-W \setminus \interior(-U), -\partial U \sqcup M_1) \rightarrow H_q(-\partial U)) = \Image(H_{q+1}(W \setminus \interior U, \partial U \sqcup M_1) \rightarrow H_q(\partial U))$. Note that $M_1$ takes the place of $M_0$, because $V$ was defined using the positive boundary component of $W$ (ie.\ $M_0$) and the positive boundary component of $-W$ is $M_1$.

Let $\UtVp = (V', -\lambda \big| _{V' \times V'}, \mu \big| _{V'})$. By Lemma \ref{lem:proj-qf} there is a morphism of extended quadratic forms $p' : \UtVp \rightarrow \UE_1$ such that the underlying homomorphism $p' : V' \rightarrow H_q(M_1)$ is surjective. Let $\UVp = (V', \lambda \big| _{V' \times V'}, \mu \big| _V') = -(\UtVp)^*$, then $p'$ is also a morphism $p' : \UVp = -(\UtVp)^* \rightarrow -\UEs_1$. 

We have $p' = - d'_2 \circ (d'_1)^{-1}$, where $d'_1$ and $d'_2$ denote the components of the boundary homomorphism $d' : H_{q+1}(W \setminus \interior U, \partial U \sqcup M_1) \rightarrow H_q(\partial U \sqcup M_1)$ as in the proof of Lemma \ref{lem:proj-qf}. We also get that $\Image d' \leq X$, hence $((-\lambda) \oplus \lambda_0 \oplus \lambda_1) \big| _{\Image d \times \Image d'} = 0$. This implies that $\lambda(x,y) = 0$ for every $x \in V$, $y \in V'$.

Now we consider $\UbM$. Since $V$ is free, we have $\UbV = \UV$ by definition, and $\bar{V} = \pi(V) \cong V$, therefore $\UbV \cong (\bar{V}, \bar{\lambda} \big| _{\bar{V} \times \bar{V}}, \bar{\mu} \big| _{\bar{V}})$. We will in fact identify these two extended quadratic forms, ie.\ we will regard $\bar{V} \leq \bar{M}$ as the underlying group of $\UbV$ (instead of $V$). Similarly we have $\UbVp \cong (\bar{V}', \bar{\lambda} \big| _{\bar{V}' \times \bar{V}'}, \bar{\mu} \big| _{\bar{V}'})$.

The surjective morphism $p : \UV \rightarrow \UE_0$ induces a surjective morphism $\bar{p} : \UbV \rightarrow \UbE_0$. Since $\bar{E}_0$ is free, there is a homomorphism $\bar{i} : \bar{E}_0 \rightarrow \bar{V}$ such that $\bar{p} \circ \bar{i} = \id_{\bar{E}_0}$. This is a morphism of extended quadratic forms $\bar{i} : \UbE_0 \rightarrow \UbV$, because $\bar{p}$ is a morphism of extended quadratic forms. Similarly, $p' : \UVp \rightarrow -\UEs_1$ induces a surjective morphism $\bar{p}' : \UbVp \rightarrow -\UbEs_1$, which has a right inverse $\bar{i}' : -\UbEs_1 \rightarrow \UbVp$. 

Let $Z = p^{-1}(\Tor H_q(M_0)) \leq V$, then $\bar{Z} = \Ker \bar{p} \leq \bar{V}$. Let $\bar{Y} = \Image \bar{i}$, then $\bar{V} = \bar{Y} \oplus \bar{Z}$. We have $\bar{\mu}(x) = \bar{\mu}_0(\bar{p}(x)) = \bar{\mu}_0(0) = 0$ and $\bar{\lambda}(x,y) = \bar{\lambda}_0(\bar{p}(x), \bar{p}(y)) = 0$ for every $x \in \bar{Z}$, $y \in \bar{V}$. Therefore $\bar{\lambda} \big| _{\bar{Z} \times \bar{Z}} = 0$ and $\bar{\mu} \big| _{\bar{Z}} = 0$, and $\UbV = \UbY \oplus \UbZ$, where $\UbY = (\bar{Y}, \bar{\lambda} \big| _{\bar{Y} \times \bar{Y}}, \bar{\mu} \big| _{\bar{Y}})$ and $\UbZ = (\bar{Z},0,0)$. Moreover, $\bar{i} : \UbE_0 \rightarrow \UbY$ is an isomorphism of extended quadratic forms. 

Similarly, let $Z' = (p')^{-1}(\Tor H_q(M_1)) \leq V'$ so that $\bar{Z}' = \Ker \bar{p}'$, and let $\bar{Y}' = \Image \bar{i}'$. If $\UbYp = (\bar{Y}', \bar{\lambda} \big| _{\bar{Y}' \times \bar{Y}'}, \bar{\mu} \big| _{\bar{Y}'})$ and $\UbZp = (\bar{Z}',0,0)$, then $\UbVp = \UbYp \oplus \UbZp$ and $\bar{i}' : -\UbEs_1 \rightarrow \UbYp$ is an isomorphism of extended quadratic forms. 

Since $\lambda(x,y) = 0$ for every $x \in V$, $y \in V'$, we get that $\bar{\lambda}(x,y) = 0$ for every $x \in \bar{V}$, $y \in \bar{V}'$, in particular $\bar{V}' \leq \bar{V}^{\perp}$. Since $\bar{V}'$ is a direct summand in $\bar{M}$ and $\rk \bar{V}' = \frac{1}{2} \rk \UM = \rk \bar{V}^{\perp}$, this implies that $\bar{V}' = \bar{V}^{\perp}$. Similarly $\bar{V} = (\bar{V}')^{\perp}$.

Since $\UbV = \UbY \oplus \UbZ$ and $\bar{\lambda}(x,y) = 0$ for every $x \in \bar{Z}$, $y \in \bar{V}$, we have $\bar{Z} \leq \bar{V} \cap \bar{V}^{\perp}$. On the other hand, $\UbY \cong \UbE_0$ is nonsingular, so $\bar{Y} \cap \bar{Y}^{\perp} = \{ 0 \}$, hence $\bar{Y} \cap \bar{V}^{\perp} = \{ 0 \}$. Therefore $\bar{V} \cap \bar{V}^{\perp} = \bar{Z}$. Similarly, $\bar{V}' \cap (\bar{V}')^{\perp} = \bar{Z}'$. Together with our earlier observations this means that $\bar{Z} = \bar{V} \cap \bar{V}' = \bar{Z}'$. So we have $\bar{V} = \bar{Y} \oplus \bar{Z}$, $\bar{V}' = \bar{Y}' \oplus \bar{Z}$ and $\bar{V} \cap \bar{V}' = \bar{Z}$, which implies that $\bar{Y} \cap \bar{Y}' = \{ 0 \}$. 

Let $\bar{N} = \bar{Y} \oplus \bar{Y}'$ and $\UbN = (\bar{N}, \bar{\lambda} \big| _{\bar{N} \times \bar{N}}, \bar{\mu} \big| _{\bar{N}}) = \UbY \oplus \UbYp$. Then $\bar{i} \oplus \bar{i}' : \UbE_0 \oplus (-\UbEs_1) \rightarrow \UbN$ is an isomorphism, so $\UbN$ is nonsingular, hence by Lemma \ref{lem:nonsing-summand} we have $\bar{M} = \bar{N} \oplus \bar{N}^{\perp}$. Let $\bar{N}' = \bar{N}^{\perp}$ and $\UbNp = (\bar{N}', \bar{\lambda} \big| _{\bar{N}' \times \bar{N}'}, \bar{\mu} \big| _{\bar{N}'})$, then $\UbM = \UbN \oplus \UbNp$. 

We already saw that $\bar{\lambda} \big| _{\bar{Z} \times \bar{Z}} = 0$ and $\bar{\mu} \big| _{\bar{Z}} = 0$. Also, $\bar{Z}$ is a direct summand in $\bar{V}$, hence in $\bar{M}$, hence in $\bar{N}'$. Finally, $\rk \bar{Z} = \rk \bar{V} - \rk \bar{Y} = \frac{1}{2}(\rk \UbM - \rk \UbN) = \frac{1}{2} \rk \UbNp$. Therefore $\bar{Z}$ is a lagrangian in $\UbNp$. 

By assumption $\bar{K}$ is a lagrangian in $\UbE_0 \oplus (-\UbEs_1)$, so $(\bar{i} \oplus \bar{i}')(\bar{K})$ is a lagrangian in $\UbN$. Therefore $(\bar{i} \oplus \bar{i}')(\bar{K}) \oplus \bar{Z}$ is a lagrangian in $\UbN \oplus \UbNp = \UbM$. 

Now we can compute $[\bar{\theta}_{W,F}]$. 
\[
\begin{aligned}
\bar{\theta}_{W,F} &\si (\UbM; \bar{L}, \bar{V}) \sim \\
 &\sim (\UbM; \bar{L}, (\bar{i} \oplus \bar{i}')(\bar{K}) \oplus \bar{Z}) \oplus (\UbM; (\bar{i} \oplus \bar{i}')(\bar{K}) \oplus \bar{Z}, \bar{V}) = \\
 &= (\UbM; \bar{L}, (\bar{i} \oplus \bar{i}')(\bar{K}) \oplus \bar{Z}) \oplus (\UbN \oplus \UbNp; (\bar{i} \oplus \bar{i}')(\bar{K}) \oplus \bar{Z}, \bar{Y} \oplus \bar{Z}) = \\
 &= (\UbM; \bar{L}, (\bar{i} \oplus \bar{i}')(\bar{K}) \oplus \bar{Z}) \oplus (\UbN; (\bar{i} \oplus \bar{i}')(\bar{K}), \bar{Y}) \oplus (\UbNp; \bar{Z}, \bar{Z}) \sim \\
 &\sim (\UbM; \bar{L}, (\bar{i} \oplus \bar{i}')(\bar{K}) \oplus \bar{Z}) \oplus (\UbN; (\bar{i} \oplus \bar{i}')(\bar{K}), \bar{Y}) \cong \\
 &\cong (\UbM; \bar{L}, (\bar{i} \oplus \bar{i}')(\bar{K}) \oplus \bar{Z}) \oplus (\UbE_0 \oplus (-\UbEs_1); \bar{K}, \bar{E}_0)
\end{aligned}
\]
where we used Theorem \ref{thm:jacobi}, Lemma \ref{lem:nkk-zero} and the isomorphism $\bar{i} \oplus \bar{i}'$. Let $x = [\UbM; \bar{L}, (\bar{i} \oplus \bar{i}')(\bar{K}) \oplus \bar{Z}] \in L_{2q+1}(B,\xi)$, then $[\bar{\theta}_{W,F}] = x \oplus [\UbE_0 \oplus (-\UbEs_1); \bar{K}, \bar{E}_0]$.
\end{proof}

\section{Proof of the Q-form conjecture when $H_q(B)$ is free} \label{s:qfc}

\begin{proof}[Proof of Theorem \ref{thm:qfc}]
First we apply surgery below the middle dimension to get a $q$-connected normal bordism $F : W \rightarrow B$ which is normally bordant to $F_0$ (see \cite[Proposition 4]{kreck99}). We have $H_i(M_0) \cong H_i(M_1)$ for every $i$ (by the assumptions that $f_0$ and $f_1$ are $q$-connected and $E_q(M_0,f_0) \cong E_q(M_1,f_1)$, and Poincar\'e-duality), hence $\chi(M_0)=\chi(M_1)$. In this setting the surgery obstruction $\theta_{W,F} \in \s^T_{2q+1}(B,\xi)$ is defined. Let $[\bar{\theta}_{W,F}] \in \ell_{2q+1}(B,\xi)$ denote the element corresponding to $[\theta_{W,F}] \in \ell^T_{2q+1}(B,\xi)$ under the bijection of Proposition \ref{prop:ell-bij}.

We define $\UE_i$, $\bar{E}_i$ and $\UbE_i$ as before (using that $H_q(B)$ is free). By assumption there is an isomorphism $I : \UE_0 \rightarrow \UE_1$, it induces an isomorphism $\bar{I} : \UbE_0 \rightarrow \UbE_1$. By Lemma \ref{lem:diag-lagr} b) $\Delta^*_{\bar{I}} \leq \bar{E}_0 \oplus \bar{E}_1$ is a lagrangian in $\UbE_0 \oplus (-\UbEs_1)$. By Theorem \ref{thm:theta-calc} and Remark \ref{rem:theta-calc} we have $[\bar{\theta}_{W,F}] = [\UbE_0 \oplus (-\UbEs_1); \Delta^*_{\bar{I}}, \bar{E}_0]$. 

We have $\Delta^*_{\bar{I}} \oplus \bar{E}_0 = \bar{E}_0 \oplus \bar{E}_1$, because $\Delta^*_{\bar{I}}$ is the anti-diagonal in $\bar{E}_0 \oplus \bar{E}_1$ (with respect to the isomorphism $\bar{I}$). This means that $[\UbE_0 \oplus (-\UbEs_1); \Delta^*_{\bar{I}}, \bar{E}_0]$, and hence $[\bar{\theta}_{W,F}]$ is elementary. Therefore $[\theta_{W,F}]$, the corresponding element of $\ell^T_{2q+1}(B,\xi)$, is elementary too (see Remark \ref{rem:elem-bij}). Then by Proposition \ref{prop:bord-hcob-elem} $F$ is normally bordant to a normal bordism $F' : W' \rightarrow B$ such that $W'$ is an h-cobordism. 
\end{proof}

\section{Inertia groups} \label{s:inertia}

In this section we prove Theorem \ref{thm:inert-ker}. The proof is a simplified version of the argument given in \cite[Section 5.2.3]{csn-thesis}, which identifies the extended inertia group of a polarised $8$-dimensional E-manifold with the kernel of a bordism map (noting that in the case of a $3$-connected $8$-manifold the polarisation is trivial). As explained in Section \ref{ss:ig}, the $I(M) \leq \Ker(\eta_{2q}^A)$ direction of Theorem \ref{thm:inert-ker} follows from a classical argument, and the $I(M) \geq \Ker(\eta_{2q}^A)$ direction uses the special case of the Q-form conjecture.

Recall that $\nu_M$ denotes the stable normal bundle of a manifold $M$, as well as its classifying map $\nu_M : M \rightarrow BSO$.

\begin{defin} \label{def:BSO-qa}
For a positive integer $q$ and a subgroup $A \leq \pi_q(BSO)$ let $g : BSO\left< q, A\right> \rightarrow BSO$ be a fibration such that $\pi_i(BSO\left< q, A\right>) \cong 0$ for $i < q$, $\pi_q(g) : \pi_q(BSO\left< q, A\right>) \rightarrow \pi_q(BSO)$ is an injective map with image $A$ and $\pi_i(g)$ is an isomorphism for $i > q$. Let $\xi_{q,A} = g^*(\gamma) \rightarrow BSO\left< q, A\right>$ be the pullback of the universal oriented vector bundle $\gamma \rightarrow BSO$.
\end{defin}

The conditions of Definition \ref{def:BSO-qa} determine $BSO\left< q, A\right>$ up to homotopy equivalence (compatible with $g$). To see that a fibration satisfying the conditions exists, we can take $BSO\left< q, A\right>$ to be eg.\  the homotopy fibre of the map $BSO\left< q \right> \rightarrow K(\pi_q(BSO) / A, q)$ inducing the quotient homomorphism $\pi_q(BSO) \rightarrow \pi_q(BSO) / A$ on $\pi_q$ (where $BSO\left< q \right>$ is the $(q{-}1)$-connected cover of $BSO$), with $g$ the composition $BSO\left< q, A\right> \rightarrow BSO\left< q \right> \rightarrow BSO$.

\begin{prop} \label{prop:nm-unique}
Suppose that $M$ is a $(q{-}1)$-connected manifold (of any dimension, possibly with boundary), and let $A \leq \pi_q(BSO)$ be a subgroup such that $\Image(\pi_q(\nu_M)) \leq A$. Then up to homotopy there is a unique normal map 
\[
\xymatrix{
\nu_M \ar[d] \ar[r] & \xi_{q,A} \ar[d] \\
M \ar[r] & BSO\left< q, A\right>
}
\]
Moreover, if $A = \Image(\pi_q(\nu_M))$, then this normal map is a normal $(q{-}1)$-smoothing.
\end{prop}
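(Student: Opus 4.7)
My plan is to convert the proposition into a standard obstruction-theoretic lifting problem. A normal map from $M$ to $(BSO\left<q,A\right>, \xi_{q,A})$, by the definition $\xi_{q,A} = g^*(\gamma)$, is the same datum as a lift of the Gauss map $\nu_M : M \to BSO$ through $g$; two normal maps are homotopic precisely when the underlying lifts are homotopic through lifts. So the task reduces to showing that the space of such lifts is non-empty and connected.

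To analyse this space I would factor $g$ through the $(q{-}1)$-connected cover $BSO\left<q\right> \to BSO$. Since $M$ is $(q{-}1)$-connected, $\nu_M$ lifts uniquely (up to homotopy) to $M \to BSO\left<q\right>$: both the obstructions $H^{i+1}(M; \pi_{i+1}(BSO))$ and ambiguities $H^i(M; \pi_{i+1}(BSO))$ for $i \leq q{-}2$ vanish by $(q{-}1)$-connectedness of $M$. The remaining fibration $BSO\left<q, A\right> \to BSO\left<q\right>$ has homotopy fibre $K(\pi_q(BSO)/A, q{-}1)$ and is classified by a map $BSO\left<q\right> \to K(\pi_q(BSO)/A, q)$ inducing the quotient $\pi_q(BSO) \to \pi_q(BSO)/A$ on $\pi_q$. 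The primary (and only) existence obstruction sits in $H^q(M; \pi_q(BSO)/A)$ and is represented by the composition $M \to BSO\left<q\right> \to K(\pi_q(BSO)/A, q)$; by Hurewicz this class corresponds to the composition $\pi_q(M) \to \pi_q(BSO) \to \pi_q(BSO)/A$, which vanishes by the hypothesis $\Image(\pi_q(\nu_M)) \leq A$. The ambiguity between two lifts then lies in $H^{q{-}1}(M; \pi_q(BSO)/A)$, which is zero again by $(q{-}1)$-connectedness.

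Finally, when $A = \Image(\pi_q(\nu_M))$, both $M$ and $BSO\left<q, A\right>$ are $(q{-}1)$-connected, so a lift $f : M \to BSO\left<q, A\right>$ is automatically an isomorphism on $\pi_i$ for $i \leq q{-}1$. On $\pi_q$, the injectivity of $\pi_q(g)$ (with image $A$), combined with $\pi_q(g) \circ \pi_q(f) = \pi_q(\nu_M)$ having image $A$, forces $\pi_q(f)$ to be surjective; this is precisely the remaining condition for $f$ to be a normal $(q{-}1)$-smoothing. I do not anticipate any real obstacle here: everything is routine obstruction-theoretic bookkeeping once the homotopy fibre of $g$ is identified. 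The one minor point requiring attention is the case $\partial M \neq \emptyset$, but since only the vanishing of $H^i(M; \cdot)$ in degrees $< q$ enters, and this holds for any $(q{-}1)$-connected space regardless of boundary, the argument goes through verbatim.
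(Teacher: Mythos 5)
Your argument is correct, but it takes a genuinely different route from the paper, so let me compare. You first assert that normal maps up to homotopy are the same as lifts of $\nu_M : M \rightarrow BSO$ through $g$ up to vertical homotopy, and then run a two-stage Moore--Postnikov argument: lift through $BSO\left< q \right> \rightarrow BSO$ (obstructions and ambiguities lie in degrees $\leq q-1$ and die by the $(q{-}1)$-connectivity of $M$), then through the principal fibration $BSO\left< q, A\right> \rightarrow BSO\left< q \right>$ with fibre $K(\pi_q(BSO)/A,\,q{-}1)$, where the unique obstruction in $H^q(M;\pi_q(BSO)/A) \cong \Hom(\pi_q(M),\pi_q(BSO)/A)$ is the composite $\pi_q(M) \rightarrow \pi_q(BSO) \rightarrow \pi_q(BSO)/A$ and vanishes by hypothesis, while the ambiguity group $H^{q-1}(M;\pi_q(BSO)/A)$ vanishes. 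The paper distributes the work differently: for existence it lifts $\nu_M$ directly over a CW structure whose $q$-skeleton is a wedge of $q$-spheres (the $\pi_q$ hypothesis handles the $q$-skeleton, and all higher obstructions vanish because the fibre $F$ of $g$ has $\pi_i(F) \cong 0$ for $i \geq q$); for uniqueness it does \emph{not} pass to lifts at all, but shows the space of bundle maps $\nu_M \rightarrow \xi_{q,A}$ is path-connected by identifying it with the space of sections of $F(\nu_M) \times_{SO} F(\xi_{q,A}) \rightarrow M$ and computing $\pi_i(F(\xi_{q,A})) \cong 0$ for $i \geq q$. The one step you state without justification is precisely the bridge between the two formulations: a normal map carries a bundle map, not just a map $f$ with $g \circ f \simeq \nu_M$, so the bijection between homotopy classes of normal maps and vertical homotopy classes of lifts needs an argument (it follows from the pullback description $\xi_{q,A} = g^*\gamma$ together with the contractibility of the space of bundle maps $\nu_M \rightarrow \gamma$ into the universal bundle, which is proved by exactly the kind of frame-bundle/section argument the paper applies directly to $\xi_{q,A}$). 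This is standard and your route is sound, but that equivalence is where the paper's uniqueness proof does its real work, so it should be stated and proved rather than folded into the definition. Your treatment of the ``moreover'' clause and of the nonempty-boundary case agrees with the paper's.
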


In particular, if $A = \Image(\pi_q(\nu_M))$, then $(BSO\left< q, A\right>,\xi_{q,A})$ is the normal $(q{-}1)$-type of $M$.

\begin{proof}
For the existence of such a normal map we need to show that $\nu_M : M \rightarrow BSO$ can be lifted to a map $f : M \rightarrow BSO\left< q, A\right>$. Since $M$ is $(q{-}1)$-connected, it is homotopy equivalent to a CW complex whose $q$-skeleton is a wedge of $q$-spheres. So the restriction of $\nu_M$ to the $q$-skeleton amounts to a choice of some elements in $\pi_q(BSO)$, which are in $\Image(\pi_q(\nu_M))$. Since $\Image(\pi_q(\nu_M))$ is contained in $A = \Image(\pi_q(g) : \pi_q(BSO\left< q, A\right>) \rightarrow \pi_q(BSO))$, this restriction can be lifted to $BSO\left< q, A\right>$. The obstructions to lifting the map on the higher-dimensional cells are in the groups $H^i(M;\pi_{i-1}(F))$ for $i > q$, where $F$ denotes the fibre of $g : BSO\left< q, A\right> \rightarrow BSO$. By the definition of $BSO\left< q, A\right>$ we have $\pi_i(F) \cong 0$ for $i \geq q$, so all of the obstructions vanish. Therefore there is a map $f : M \rightarrow BSO\left< q, A\right>$ which lifts $\nu_M$ (and hence it is covered by a bundle map). Moreover, $\pi_i(M) \cong \pi_i(BSO\left< q, A\right>) \cong 0$ for $i < q$, and if $A = \Image(\pi_q(\nu_M))$, then $\pi_q(f)$ is surjective (because $\Image(\pi_q(g) \circ \pi_q(f)) = \Image(\pi_q(\nu_M)) = A$ and $\pi_q(g) : \pi_q(BSO\left< q, A\right>) \rightarrow A$ is an isomorphism), so $f$ is $q$-connected.

For the uniqueness we need to show that the space of normal maps $\nu_M \rightarrow \xi_{q,A}$ is path-connected. This space is homeomorphic to the space of bundle maps $F(\nu_M) \rightarrow F(\xi_{q,A})$, where $F(\nu_M)$ and $F(\xi_{q,A})$ denote the oriented frame bundles of $\nu_M$ and $\xi_{q,A}$ respectively. Since $F(\nu_M)$ and $F(\xi_{q,A})$ are principal $SO$-bundles, a continuous map $F(\nu_M) \rightarrow F(\xi_{q,A})$ is a bundle map if and only if it is $SO$-equivariant. 

By \cite[Theorem 4 (8.1)]{husemoller94} the space of $SO$-equivariant maps has the following equivalent description. There is a well-defined map $F(\nu_M) \times_{SO} F(\xi_{q,A}) \rightarrow F(\nu_M) / SO$ given by $[x,y] \mapsto [x]$, and it is the projection of a locally trivial bundle over $F(\nu_M) / SO = M$ with fibre $F(\xi_{q,A})$ (which is associated to the principal $SO$-bundle $F(\nu_M)$). A section $s : M \rightarrow F(\nu_M) \times_{SO} F(\xi_{q,A})$ determines a well-defined $SO$-equivariant map $\bar{f}_s : F(\nu_M) \rightarrow F(\xi_{q,A})$ given by $\bar{f}_s(x)=y$ whenever $[x,y] \in \Image(s)$. This correspondence is a homeomorphism between the space of sections of $F(\nu_M) \times_{SO} F(\xi_{q,A})$ and the space of $SO$-equivariant maps $F(\nu_M) \rightarrow F(\xi_{q,A})$.

So it is enough to show that the space of sections of $F(\nu_M) \times_{SO} F(\xi_{q,A})$ is path-connected. Given any two sections, the obstructions to the existence of a homotopy between them are in the groups $H^i(\Sigma M;\pi_{i-1}(F(\xi_{q,A}))) \cong H^{i-1}(M;\pi_{i-1}(F(\xi_{q,A})))$. Since $M$ is $(q{-}1)$-connected, these groups vanish if $i \leq q$. Hence it is enough to show that $\pi_i(F(\xi_{q,A})) \cong 0$ for $i \geq q$.

Since $\xi_{q,A}$ is the pullback of the universal oriented vector bundle $\gamma \rightarrow BSO$, there is a commutative diagram between the homotopical long exact sequences of the associated principal $SO$-bundles:  
\[
\xymatrix{
\pi_{i+1}(BSO\left< q, A\right>) \ar[r] \ar[d] & \pi_i(SO) \ar[r] \ar[d]_-{\cong} & \pi_i(F(\xi_{q,A})) \ar[r] \ar[d] & \pi_i(BSO\left< q, A\right>) \ar[r] \ar[d] & \pi_{i-1}(SO) \ar[d]_-{\cong} \\
\pi_{i+1}(BSO) \ar[r]^-{\cong} & \pi_i(SO) \ar[r] & \pi_i(F(\gamma)) \ar[r] & \pi_i(BSO) \ar[r]^-{\cong} & \pi_{i-1}(SO) 
}
\]
Here $F(\gamma)=ESO$ is contractible, and by the definition of $BSO\left< q, A\right>$ the map $\pi_i(BSO\left< q, A\right>) \rightarrow \pi_i(BSO)$ is an isomorphism for $i > q$ and injective for $i=q$. Therefore by the $4$-lemma we have $\pi_i(F(\xi_{q,A})) \cong 0$ for $i \geq q$.
\end{proof}

\begin{defin}
Suppose that $V$ is a $(q{-}1)$-connected $2q$-manifold with $\partial V \approx S^{2q-1}$. Let $C^{2q}_V$ be the set of diffeomorphism classes of $2q$-manifolds $M$ such that $M \setminus \interior D^{2q} \approx V$. 
\end{defin}

The group $\Theta_{2q}$ acts transitively on $C^{2q}_V$ via connected sum. If $M \in C^{2q}_V$, then $M$ is $(q{-}1)$-connected, $\pi_q(M) \cong \pi_q(V)$ and $\Image(\pi_q(\nu_M)) = \Image(\pi_q(\nu_V)) \leq \pi_q(BSO)$.

\begin{defin}
Suppose that $V$ is a $(q{-}1)$-connected $2q$-manifold with $\partial V \approx S^{2q-1}$ and $A = \Image(\pi_q(\nu_V))$. Let $\eta_V : C^{2q}_V \rightarrow \Omega_{2q}(BSO\left< q, A\right>; \xi_{q,A})$ denote the map which sends a manifold $M$ to the bordism class of the normal map $M \rightarrow BSO\left< q, A\right>$. 
\end{defin}

By Proposition \ref{prop:nm-unique} $\eta_V$ is well-defined. In the special case $V = D^{2q}$ we have $C^{2q}_{D^{2q}} = \Theta_{2q}$, so we get a map $\eta_{D^{2q}} : \Theta_{2q} \rightarrow \Omega_{2q}(BSO\left< q, 0\right>; \xi_{q,0})$. This map is equal to $\eta_{2q}^0$ from Definition \ref{def:eta-A}, and in general $\eta_{2q}^A$ can be equivalently defined as the composition of $\eta_{D^{2q}}$ and the homomorphism $\Omega_{2q}(BSO\left< q, 0\right>; \xi_{q,0}) \rightarrow \Omega_{2q}(BSO\left< q, A\right>; \xi_{q,A})$ induced by the fibration $BSO\left< q, 0\right> \rightarrow BSO\left< q, A\right>$.

\begin{prop} \label{prop:eta1}
Suppose that $V$ is a $(q{-}1)$-connected $2q$-manifold with $\partial V \approx S^{2q-1}$ and $A = \Image(\pi_q(\nu_V))$. If $M \in C^{2q}_V$ and $\Sigma \in \Theta_{2q}$, then $\eta_V(M \# \Sigma) = \eta_V(M) + \eta_{2q}^A(\Sigma) \in \Omega_{2q}(BSO\left< q, A\right>; \xi_{q,A})$.
\end{prop}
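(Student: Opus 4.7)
The plan is to build an explicit normal bordism $W$ between the canonical normal maps on $M \sqcup \Sigma$ and on $M \# \Sigma$ to $(BSO\left<q,A\right>, \xi_{q,A})$, and then use Proposition \ref{prop:nm-unique} to identify the resulting bordism classes with $\eta_V(M) + \eta_{2q}^A(\Sigma)$ and $\eta_V(M \# \Sigma)$ respectively. First I would check that $M \# \Sigma$ again lies in $C^{2q}_V$, so that $\eta_V(M \# \Sigma)$ is defined, and that $\Image(\pi_q(\nu_{M \# \Sigma})) = A$, using that $\Sigma$ is $(2q{-}1)$-connected and hence $\pi_q(M \# \Sigma) \cong \pi_q(M)$. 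By Proposition \ref{prop:nm-unique} there are then unique-up-to-homotopy normal maps $f : M \to BSO\left<q,A\right>$, $g : \Sigma \to BSO\left<q,A\right>$ and $h : M \# \Sigma \to BSO\left<q,A\right>$ representing $\eta_V(M)$, $\eta_{2q}^A(\Sigma)$ and $\eta_V(M \# \Sigma)$, and disjoint union gives $[f \sqcup g] = [f] + [g]$ in $\Omega_{2q}(BSO\left<q,A\right>; \xi_{q,A})$.

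For $W$ I would take the trace of a single $1$-handle attached to $(M \sqcup \Sigma) \times I$ at the top, with one foot placed inside the disc of $M$ and the other inside the disc of $\Sigma$ used to form the connected sum. This produces a $(q{-}1)$-connected cobordism with $\partial W = (M \sqcup \Sigma) \sqcup {-}(M \# \Sigma)$ that deformation retracts onto the wedge $M \vee \Sigma$.

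Next I would extend the normal map $f \sqcup g$ over $W$ by the same obstruction-theoretic argument as in Proposition \ref{prop:nm-unique}: the image of $\pi_q(\nu_W)$ is generated by the images of $\pi_q(\nu_M)$ and $\pi_q(\nu_\Sigma)$, both of which lie in $A$, and the remaining relative obstructions to lifting $\nu_W$ through $BSO\left<q,A\right> \to BSO$ sit in groups $H^i(W, M \sqcup \Sigma; \pi_{i-1}(F))$ for $i > q$ (where $F$ is the fibre of $g$), all of which vanish since $\pi_j(F) \cong 0$ for $j \geq q$. The restriction of the resulting normal map $F : W \to BSO\left<q,A\right>$ to $M \# \Sigma$ is then a normal map $M \# \Sigma \to BSO\left<q,A\right>$; by the uniqueness part of Proposition \ref{prop:nm-unique} it is homotopic to $h$, so $F$ yields a normal bordism between $f \sqcup g$ and $h$. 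This gives $[h] = [f] + [g]$ in $\Omega_{2q}(BSO\left<q,A\right>; \xi_{q,A})$, which is precisely the claimed identity.

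The main obstacle will be the obstruction-theoretic extension of the normal map across the $1$-handle, i.e.\ verifying the vanishing of the relative obstructions; however this is a direct adaptation of the existence argument already carried out in the proof of Proposition \ref{prop:nm-unique}.
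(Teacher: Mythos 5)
Your proposal is essentially the paper's proof: the paper uses exactly the same bordism $W = ((M \sqcup \Sigma) \times I) \cup (D^{2q} \times I)$ (the trace of your $1$-handle, glued along the discs used to form the connected sum), and the same appeal to the uniqueness part of Proposition \ref{prop:nm-unique} to identify the restriction to $M \# \Sigma$ with the representative of $\eta_V(M \# \Sigma)$. The only difference is how the normal map is extended over $W$: the paper does it explicitly, observing that $f$ and $f_0$ restrict to homotopic normal maps on the two embedded discs and inserting that homotopy on $D^{2q} \times I$, whereas you extend $f \sqcup g$ by relative obstruction theory. Your version works, but the bookkeeping as stated is off: for the \emph{relative} lifting/extension problem the obstructions lie in $H^i(W, M \sqcup \Sigma; \pi_{i-1}(F))$ for \emph{all} $i$, not just $i > q$, and in degrees $i \leq q$ the coefficients $\pi_{i-1}(F)$ are generally nonzero, so the vanishing of $\pi_j(F)$ for $j \geq q$ is not the relevant point (nor is the claim about $\Image(\pi_q(\nu_W))$, which belongs to the absolute construction of a lift from scratch). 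What saves the argument is that the pair $(W, M \sqcup \Sigma)$ has, up to homotopy, a single relative $1$-cell (the core of the handle), so the only possible obstruction is the degree-$1$ one, valued in $\pi_0$ of the fibre; working with the frame-bundle formulation from the proof of Proposition \ref{prop:nm-unique}, the fibre $F(\xi_{q,A})$ is connected, so this obstruction vanishes and the extension (as a normal map, including the bundle data) exists. With that correction your argument is complete and coincides in substance with the paper's.
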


\begin{proof}
The bordism class $\eta_V(M)$ is represented by a normal map $f : M \rightarrow BSO\left< q, A\right>$ and $\eta_{2q}^A(\Sigma)$ is represented by a normal map $f_0 : \Sigma \rightarrow BSO\left< q, A\right>$. The connected sum $M' := M \# \Sigma$ is formed using embeddings $D^{2q} \rightarrow M$ and $D^{2q} \rightarrow \Sigma$. When composed with $f$ and $f_0$ respectively, these embeddings give homotopic maps $D^{2q} \rightarrow BSO\left< q, A\right>$ (covered by bundle maps). The homotopy $D^{2q} \times I \rightarrow BSO\left< q, A\right>$, together with the maps $f \circ \pi_1 : M \times I \rightarrow BSO\left< q, A\right>$ and $f_0 \circ \pi_1 : \Sigma \times I \rightarrow BSO\left< q, A\right>$ (where $\pi_1$ denotes the projection to the first component), defines a normal bordism $W := ((M \sqcup \Sigma) \times I) \cup (D^{2q} \times I) \rightarrow BSO\left< q, A\right>$ between $f \sqcup f_0 : M \sqcup \Sigma \rightarrow BSO\left< q, A\right>$ and $f' := f \# f_0 : M' \rightarrow BSO\left< q, A\right>$. This shows that $f'$ represents $\eta_V(M) + \eta_{2q}^A(\Sigma)$. By Proposition \ref{prop:nm-unique} $f'$ also represents $\eta_V(M')$, therefore $\eta_V(M \# \Sigma) = \eta_V(M) + \eta_{2q}^A(\Sigma)$.
\end{proof}

In the special case $V = D^{2q}$ we get that $\eta_{D^{2q}}$ is a homomorphism, and hence the same is true for $\eta_{2q}^A$. So we can use it to define an action of $\Theta_{2q}$ on $\Omega_{2q}(BSO\left< q, A\right>; \xi_{q,A})$ by $x^{\Sigma} = x + \eta_{2q}^A(\Sigma)$, where $x \in \Omega_{2q}(BSO\left< q, A\right>; \xi_{q,A})$ and $\Sigma \in \Theta_{2q}$. Then Proposition \ref{prop:eta1} means that $\eta_V : C^{2q}_V \rightarrow \Omega_{2q}(BSO\left< q, A\right>; \xi_{q,A})$ is $\Theta_{2q}$-equivariant.

\begin{prop} \label{prop:inert-upper}
Suppose that $q \geq 3$, $M$ is a $(q{-}1)$-connected $2q$-manifold and $A = \Image(\pi_q(\nu_M))$. Then $I(M) \leq \Ker(\eta_{2q}^A : \Theta_{2q} \rightarrow \Omega_{2q}(BSO\left< q, A\right>; \xi_{q,A}))$.
\end{prop}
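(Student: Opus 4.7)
The plan is to deduce this directly from the already-established $\Theta_{2q}$-equivariance of $\eta_V$ (Proposition \ref{prop:eta1}). First, I would set $V = M \setminus \interior D^{2q}$, which is a $(q{-}1)$-connected $2q$-manifold with boundary $S^{2q-1}$, and observe that by construction $M \in C^{2q}_V$ and $A = \Image(\pi_q(\nu_M)) = \Image(\pi_q(\nu_V))$. In particular, $\eta_V : C^{2q}_V \rightarrow \Omega_{2q}(BSO\left< q, A\right>; \xi_{q,A})$ is defined and depends only on the diffeomorphism class of its input.

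Next, take an arbitrary $\Sigma \in I(M)$. By definition of the inertia group, there is a diffeomorphism $M \# \Sigma \approx M$; in particular, $M \# \Sigma$ and $M$ represent the same element of $C^{2q}_V$, so $\eta_V(M \# \Sigma) = \eta_V(M)$. On the other hand, Proposition \ref{prop:eta1} gives
\[
\eta_V(M \# \Sigma) = \eta_V(M) + \eta_{2q}^A(\Sigma) \in \Omega_{2q}(BSO\left< q, A\right>; \xi_{q,A}) \text{\,.}
\]
Subtracting $\eta_V(M)$ from both sides yields $\eta_{2q}^A(\Sigma) = 0$, i.e.\ $\Sigma \in \Ker(\eta_{2q}^A)$, as required.

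There is essentially no obstacle here: all the real content has been absorbed into the setup, namely the well-definedness of $\eta_V$ (which requires Proposition \ref{prop:nm-unique}, using $q \geq 3$ to ensure that $V$ and $M$ are $(q{-}1)$-connected of dimension $2q \geq q+3$ so that normal smoothings are unique) and the additivity formula of Proposition \ref{prop:eta1}. Once those are in hand, the inclusion $I(M) \leq \Ker(\eta_{2q}^A)$ is just the observation that a diffeomorphism invariant of $M \# \Sigma$ kills the contribution of $\Sigma$ under the additive action.
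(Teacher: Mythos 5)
Your proposal is correct and is essentially identical to the paper's proof: set $V = M \setminus \interior D^{2q}$, note $\eta_V(M \# \Sigma) = \eta_V(M)$ for $\Sigma \in I(M)$, and compare with the additivity formula $\eta_V(M \# \Sigma) = \eta_V(M) + \eta_{2q}^A(\Sigma)$ from Proposition \ref{prop:eta1} to conclude $\eta_{2q}^A(\Sigma)=0$. The supporting remarks about well-definedness via Proposition \ref{prop:nm-unique} match the paper's setup as well.
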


\begin{proof}
Let $V = M \setminus \interior D^{2q}$ and suppose that $\Sigma \in I(M)$. Then $M \# \Sigma \approx M$, so $\eta_V(M \# \Sigma) = \eta_V(M)$. By Proposition \ref{prop:eta1}, $\eta_V(M \# \Sigma) = \eta_V(M) + \eta_{2q}^A(\Sigma)$, hence $\eta_{2q}^A(\Sigma)=0$.
\end{proof}

The following proposition is the key to computing inertia groups. Part b) uses the special case of the Q-form conjecture (Theorem \ref{thm:qfc}) to show that two manifolds, $M$ and $M'$ are diffeomorphic (while part a) uses \cite{kreck99} and has been known in equivalent forms). This will be applied in Theorem \ref{thm:inert-lower} for $\Sigma \in \Theta_{2q}$ and $M' = M \# \Sigma$.

\begin{prop} \label{prop:eta2}
Suppose that $q \geq 3$, $V$ is a $(q{-}1)$-connected $2q$-manifold with $\partial V \approx S^{2q-1}$ and $A = \Image(\pi_q(\nu_V))$. If either 

a) $q$ is odd, or 

b) $q \equiv 0$, $4$ or $6$ mod $8$,

\noindent
then $\eta_V : C^{2q}_V \rightarrow \Omega_{2q}(BSO\left< q, A\right>; \xi_{q,A})$ is injective.
\end{prop}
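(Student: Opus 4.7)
The plan is to reduce injectivity of $\eta_V$ to showing $\Ker(\eta_{2q}^A) \subseteq I(M)$ for any $M \in C^{2q}_V$. Suppose $M, M' \in C^{2q}_V$ satisfy $\eta_V(M) = \eta_V(M')$. Since $\Theta_{2q}$ acts transitively on $C^{2q}_V$ by connected sum, there exists $\Sigma \in \Theta_{2q}$ with $M' \approx M \# \Sigma$, and Proposition \ref{prop:eta1} then forces $\eta_{2q}^A(\Sigma) = 0$. Hence it suffices to prove $M \approx M \# \Sigma$ whenever $\Sigma \in \Ker(\eta_{2q}^A)$.

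For case a), where $q$ is odd, I would invoke Kreck's Theorem D \cite[Theorem D]{kreck99}. By Proposition \ref{prop:nm-unique} the manifolds $M$ and $M \# \Sigma$ admit $q$-connected normal maps into $BSO\left< q, A\right>$ that are unique up to homotopy, and the vanishing of $\eta_{2q}^A(\Sigma)$ together with Proposition \ref{prop:eta1} shows these normal maps are normally bordant. Therefore $M$ and $M \# \Sigma$ are stably diffeomorphic, and since $\chi(M \# \Sigma) = \chi(M)$, Theorem D yields $M \approx M \# \Sigma$.

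For case b), where $q \equiv 0, 4, 6$ mod $8$, the plan is to apply Theorem \ref{thm:qfc}. First I would verify that $H_q(B) = H_q(BSO\left< q, A\right>)$ is free: the space $B$ is $(q{-}1)$-connected, so by Hurewicz $H_q(B) \cong \pi_q(B) \cong A$; by Bott periodicity $\pi_q(BSO) \cong \Z$ when $q \equiv 0, 4$ mod $8$ and $\pi_q(BSO) = 0$ when $q \equiv 6$ mod $8$, so in all cases every subgroup $A$ is free. Second, I would compare the Q-forms of the unique $q$-connected normal maps $f : M \rightarrow B$ and $f' : M \# \Sigma \rightarrow B$: by Proposition \ref{prop:nm-unique} we may replace $f'$ up to homotopy by a connected sum $f \# f_0$ with a normal map $f_0 : \Sigma \rightarrow B$, and then Lemma \ref{lem:qf-sum} gives $E_q(M \# \Sigma, f') \cong E_q(M, f) \oplus E_q(\Sigma, f_0)$; since $\Sigma$ is a homotopy $2q$-sphere and $q \geq 3$, we have $H_q(\Sigma) = 0$, so the second summand is trivial. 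Theorem \ref{thm:qfc} then yields a normal bordism between $f$ and $f'$ that is bordant (rel boundary) to an h-cobordism $W'$; as $W'$ is simply-connected of dimension $2q+1 \geq 7$, the h-cobordism theorem produces $M \approx M \# \Sigma$.

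The main obstacle is case b), whose only content beyond Theorem \ref{thm:qfc} itself is the verification of its hypotheses. The freeness of $H_q(B)$ reduces to a Bott-periodicity case analysis, and the identification of Q-forms is nearly formal, provided one is careful to use the canonical normal maps of Proposition \ref{prop:nm-unique} so that the Q-forms are well-defined up to isomorphism; once these ingredients are in place, the h-cobordism conclusion is automatic.
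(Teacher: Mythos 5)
Your proof is correct, and it reaches the same endpoints (Kreck's Theorem D in case a), Theorem \ref{thm:qfc} plus Bott periodicity in case b)) by a slightly different route than the paper. The paper takes arbitrary $M, M' \in C^{2q}_V$ with $\eta_V(M)=\eta_V(M')$ and compares their Q-forms directly through the common piece $V$: the embeddings $j : V \rightarrow M$ and $j' : V \rightarrow M'$ are $(2q{-}1)$-connected, $H_q(j)$ and $H_q(j')$ preserve intersection forms (every class in $H_q(M)$ is carried by $j(V)$), and $f \circ j \simeq f' \circ j'$ by the uniqueness in Proposition \ref{prop:nm-unique}, so $H_q(j') \circ H_q(j)^{-1}$ is an isomorphism $E_q(M,f) \cong E_q(M',f')$. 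You instead first use the transitivity of the $\Theta_{2q}$-action on $C^{2q}_V$ (asserted in the paper just after the definition of $C^{2q}_V$) to write $M' \approx M \# \Sigma$, force $\eta_{2q}^A(\Sigma)=0$ via Proposition \ref{prop:eta1}, and then get the Q-form identification for free from Lemma \ref{lem:qf-sum} and $H_q(\Sigma)=0$, after identifying $f'$ with $f \# f_0$ via Proposition \ref{prop:nm-unique}. Your version makes the algebraic comparison essentially trivial and in effect merges Proposition \ref{prop:eta2} with the reduction carried out in Theorem \ref{thm:inert-lower}, at the cost of leaning on the transitivity statement and the additivity of $\eta_V$; the paper's version needs the small geometric argument about intersection forms being carried by $V$ but makes no use of the group action, so it applies verbatim to any two elements of $C^{2q}_V$ without choosing a homotopy sphere relating them. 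Both are complete arguments given the results available in the paper.
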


\begin{proof}
Suppose that $M,M' \in C^{2q}_V$ and $\eta_V(M)=\eta_V(M')$. This means that the unique normal maps $f : M \rightarrow BSO\left< q, A\right>$ and $f' : M' \rightarrow BSO\left< q, A\right>$ are normally bordant. By Proposition \ref{prop:nm-unique} these maps are normal $(q{-}1)$-smoothings. Since $M,M' \in C^{2q}_V$, there are diffeomorphisms $j : V \rightarrow M \setminus \interior D^{2q}$ and $j' : V \rightarrow M' \setminus \interior D^{2q}$. 

a) Since $M \setminus \interior D^{2q} \approx V \approx M' \setminus \interior D^{2q}$, we have $\chi(M)=\chi(M')$. So all conditions of \cite[Theorem D]{kreck99} are satisfied, which implies that $M \approx M'$.

b) By excision $j$ and $j'$, regarded as embeddings $j : V \rightarrow M$ and $j' : V \rightarrow M'$, are $(2q-1)$-connected. In particular, $H_q(j)$ and $H_q(j')$ are isomorphisms. Since $\partial V \approx S^{2q-1}$, the map $H_q(V) \rightarrow H_q(V, \partial V)$ is an isomorphism, so there is an intersection form $H_q(V) \times H_q(V) \rightarrow \Z$. The isomorphism $H_q(j) : H_q(V) \rightarrow H_q(M)$ preserves the intersection form, because every homology class in $H_q(M)$ has a representative in $j(V) \subset M$. We get similarly that $H_q(j') : H_q(V) \rightarrow H_q(M')$ also preserves the intersection form. Moreover, $f \circ j$ and $f' \circ j'$ are both normal maps $V \rightarrow BSO\left< q, A\right>$, so by Proposition \ref{prop:nm-unique} they are homotopic. This implies that $H_q(f \circ j) = H_q(f' \circ j')$, hence $H_q(f) = H_q(f') \circ H_q(j') \circ H_q(j)^{-1}$. Therefore $H_q(j') \circ H_q(j)^{-1} : H_q(M) \rightarrow H_q(M')$ is an isomorphism between the Q-forms $E_q(f)$ and $E_q(f')$. 

We have $H_q(BSO\left< q, A\right>) \cong \pi_q(BSO\left< q, A\right>) \cong A \leq \pi_q(BSO)$. If $q \equiv 0$, $4$ or $6$ mod $8$, then $\pi_q(BSO)$ is free, so $H_q(BSO\left< q, A\right>)$ is also free. So in these cases all conditions of Theorem \ref{thm:qfc} are satisfied, therefore $M \approx M'$. 
\end{proof}

\begin{thm} \label{thm:inert-lower}
Suppose that $q \geq 3$, $M$ is a $(q{-}1)$-connected $2q$-manifold and $A = \Image(\pi_q(\nu_M))$. If either 

a) $q$ is odd, or 

b) $q \equiv 0$, $4$ or $6$ mod $8$,

\noindent
then $I(M) \geq \Ker(\eta_{2q}^A : \Theta_{2q} \rightarrow \Omega_{2q}(BSO\left< q, A\right>; \xi_{q,A}))$.
\end{thm}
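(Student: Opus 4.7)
The plan is to deduce this inclusion essentially formally from Propositions~\ref{prop:eta1} and \ref{prop:eta2}, with all the substantive work having already been done in those propositions (and in Theorem~\ref{thm:qfc} for case b)).

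First I would let $\Sigma \in \Ker(\eta_{2q}^A)$ be arbitrary, and set $V = M \setminus \interior D^{2q}$. Since the inclusion $V \hookrightarrow M$ is $(2q-1)$-connected (by excision, as we are removing a disk from the interior), the induced map $\pi_q(V) \to \pi_q(M)$ is an isomorphism and $\nu_V \cong \nu_M \big| _V$, so $\Image(\pi_q(\nu_V)) = \Image(\pi_q(\nu_M)) = A$. Moreover $V$ is a $(q-1)$-connected $2q$-manifold with $\partial V \approx S^{2q-1}$, so $\eta_V : C^{2q}_V \rightarrow \Omega_{2q}(BSO\left< q, A\right>; \xi_{q,A})$ is defined. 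By construction $M \in C^{2q}_V$, and since $(M \# \Sigma) \setminus \interior D^{2q} \approx V$ as well (performing the connected sum removes a disk from $M$ and glues in $\Sigma$ minus a disk, the result still contains $V$ as the complement of an open disk), we also have $M \# \Sigma \in C^{2q}_V$.

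Now I would apply Proposition~\ref{prop:eta1}, which gives
\[
\eta_V(M \# \Sigma) = \eta_V(M) + \eta_{2q}^A(\Sigma) = \eta_V(M) + 0 = \eta_V(M)
\]
in $\Omega_{2q}(BSO\left< q, A\right>; \xi_{q,A})$, since $\Sigma \in \Ker(\eta_{2q}^A)$. Under either hypothesis on $q$, Proposition~\ref{prop:eta2} tells us that $\eta_V$ is injective, and therefore $M \# \Sigma \approx M$. This shows $\Sigma \in I(M)$, completing the proof.

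There is no real obstacle at this stage, as the hard part of the argument is concentrated in Proposition~\ref{prop:eta2}: case a) relies on Kreck's \cite[Theorem D]{kreck99}, and case b) relies on the Q-form conjecture (Theorem~\ref{thm:qfc}) together with the observation that $H_q(BSO\left< q, A\right>) \cong A \leq \pi_q(BSO)$ is free when $q \equiv 0,4,6$ mod $8$. Once those propositions are available, the present theorem is a direct equivariance-plus-injectivity argument.
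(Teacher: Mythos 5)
Your proposal is correct and is essentially the paper's own proof: set $V = M \setminus \interior D^{2q}$, use Proposition \ref{prop:eta1} to get $\eta_V(M \# \Sigma) = \eta_V(M)$, and conclude via the injectivity of $\eta_V$ from Proposition \ref{prop:eta2}. The extra remarks you include (that $\Image(\pi_q(\nu_V)) = A$ and that $M \# \Sigma \in C^{2q}_V$) are fine routine checks that the paper leaves implicit.
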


\begin{proof}
Let $V = M \setminus \interior D^{2q}$ and suppose that $\Sigma \in \Ker(\eta_{2q}^A)$. Then $\eta_V(M) = \eta_V(M) + \eta_{2q}^A(\Sigma) = \eta_V(M \# \Sigma)$ by Proposition \ref{prop:eta1}. By Proposition \ref{prop:eta2} a) or b) this implies that $M \approx M \# \Sigma$.
\end{proof}

\begin{proof}[Proof of Theorem \ref{thm:inert-ker}]
We have $I(M) \leq \Ker(\eta_{2q}^A)$ by Proposition \ref{prop:inert-upper} and $I(M) \geq \Ker(\eta_{2q}^A)$ by Theorem \ref{thm:inert-lower} b).
\end{proof}

\begin{proof}[Proof of Theorem \ref{thm:inert-pontr}]
Suppose that $q = 4$ or $8$, and let $M$ be a $(q{-}1)$-connected $2q$-manifold. Let $A = \Image(\pi_q(\nu_M): \pi_q(M) \rightarrow \pi_q(BSO)) \leq \pi_q(BSO) \cong \Z$, then $A = \alpha_M\Z$ for a unique non-negative integer $\alpha_M$. By Theorems \ref{thm:inert-ker} and \ref{thm:bordism-ker} $I(M) = \Ker(\eta_{2q}^A)$, which is $0$ if $4 | \alpha_M$ and $\Theta_{2q}$ otherwise. 

Let $j = \frac{q}{4}$ and let $d_M$ denote the divisibility of the Pontryagin class $p_j(M) \in H^{4j}(M)$. By Bott-Milnor \cite{bott-milnor58} we have (for any $(4j{-}1)$-connected manifold) that $d_M = a_j(2j{-}1)!\alpha_M$, where $a_j=1$ if $j$ is even and $a_j=2$ if $j$ is odd. Therefore $4 | \alpha_M$ if and only if $8 | d_M$ (for $q=4$) or $24 | d_M$ (for $q=8$).
\end{proof}

\section{The stable smoothing set and stable class} \label{s:sss}

Now we consider the stable smoothing set and stable class for simply-connected $4k$-manifolds. We will prove Theorem \ref{thm:sc-bij} in Section \ref{ss:alg-bij}. Then in Section \ref{ss:alg-comp} we compute the set $\SI(E_q(M,f))$ and the action of $A^{(B,\xi)}_{[M,f]}$ on it for manifolds $M$ with rank-$2$ hyperbolic intersection form, thereby proving Theorem \ref{thm:hyp-sss}.

\subsection{Algebraic description of the stable class} \label{ss:alg-bij}

First we prove Theorem \ref{thm:sc-bij}, which reduces the problem of determining the stable smoothing set and stable class of a manifold to algebra. 

\begin{proof}[Proof of Theorem \ref{thm:sc-bij}]
a) First suppose that $g : N \rightarrow B$ represents an element of $\Str^{\st}(M,f)$. Then there is a normal bordism between $f$ and $g$, and the proof of \cite[Theorem 2]{kreck99} shows that it can be turned into an s-cobordism by the compatible subtraction of some tori (and, since $\chi(N)=\chi(M)$, the same number of tori are subtracted at both ends). The subtraction of one torus changes the Q-form of $f$ or $g$ by adding the hyperbolic form $\UH_2$, so by Remark \ref{rem:nec} we have $E_q(N,g) \oplus \UH_{2k} \cong E_q(M,f) \oplus \UH_{2k}$ for some $k \geq 0$, ie.\ $E_q(N,g)$ represents an element of $\SI(E_q(M,f))$. It also follows from Remark \ref{rem:nec} that the isomorphism class of $E_q(N,g)$ is independent of the choice of the representative $g : N \rightarrow B$, so taking the Q-form induces a well-defined map $E_q : \Str^{\st}(M,f) \rightarrow \SI(E_q(M,f))$. 

The argument from the proof of \cite[Theorem 10.2]{CCPS21} shows that this map is surjective. Namely, suppose that $\UE$ represents an element of $\SI(E_q(M,f))$, ie.\ $\UE \oplus \UH_{2k} \cong E_q(M,f) \oplus \UH_{2k}$ for some $k \geq 0$. By performing $k$ trivial $(q{-}1)$-surgeries on $f$ we get a normal $(q{-}1)$-smoothing with Q-form $E_q(M,f) \oplus \UH_{2k}$, and after performing $k$ further $q$-surgeries along a basis of a lagrangian in the $\UH_{2k}$ component of $\UE \oplus \UH_{2k}$ (which has been identified with $E_q(M,f) \oplus \UH_{2k}$), we get a $g : N \rightarrow B$ representing an element of $\Str^{\st}(M,f)$, with $E_q(N,g) \cong \UE$.

Finally suppose that $H_q(B)$ is free. If $g : N \rightarrow B$ and $g' : N' \rightarrow B$ represent elements of $\Str^{\st}(M,f)$, then they are normally bordant. So by Theorem \ref{thm:qfc}, if $E_q(N,g) \cong E_q(N',g')$, then $g$ and $g'$ represent the same element. This means that the map $E_q : \Str^{\st}(M,f) \rightarrow \SI(E_q(M,f))$ is also injective. 

b) Suppose that $h \in A^{(B,\xi)}_{[M,f]}$ and $(X,\lambda,\mu)$ represents an element of $\SI(E_q(M,f))$. We define the action of $h$ on $(X,\lambda,\mu)$ to be given by $(X,\lambda,h \circ \mu)$. The isomorphism class of $(X,\lambda,h \circ \mu)$ is determined by that of $(X,\lambda,\mu)$, so we only need to check that it is in $\SI(E_q(M,f))$. By the definition of $A^{(B,\xi)}_{[M,f]}$, the automorphism $h$ of $H_q(B)$ is induced by some $\bar{h} \in \Aut(B,\xi)_{[M,f]}$. By the surjectivity of $E_q$, we have $(X,\lambda,\mu) \cong E_q(N,g)$ for some $g : N \rightarrow B$ representing an element of $\Str^{\st}(M,f)$. Recall that $\Aut(B,\xi)_{[M,f]}$ acts on $\Str^{\st}(M,f)$, in particular $\bar{h}$ sends $g$ to $\bar{h} \circ g : N \rightarrow B$. Therefore $(X,\lambda,h \circ \mu) \cong E_q(N,\bar{h} \circ g)$ represents an element of $\SI(E_q(M,f))$, as required. 

In fact, we can define an action of $\Aut(B,\xi)_{[M,f]}$ on $\SI(E_q(M,f))$, where an element $\bar{h} \in \Aut(B,\xi)_{[M,f]}$ acts via the action of its induced automorphism $h \in A^{(B,\xi)}_{[M,f]}$. Then we have $\SI(E_q(M,f)) / \Aut(B,\xi)_{[M,f]} = \SI(E_q(M,f)) / A^{(B,\xi)}_{[M,f]}$. Moreover, $E_q : \Str^{\st}(M,f) \rightarrow \SI(E_q(M,f))$ is $\Aut(B,\xi)_{[M,f]}$-equivariant. Hence it induces a map 
\[
\Str^{\st}(M,f) / \Aut(B,\xi)_{[M,f]} \cong \Str^{\st}(M) \rightarrow \SI(E_q(M,f)) / \Aut(B,\xi)_{[M,f]} = \SI(E_q(M,f)) / A^{(B,\xi)}_{[M,f]}
\]
Since $E_q$ is surjective in general, and bijective if $H_q(B)$ is free, the same is true for the induced map. 
\end{proof}

\subsection{The computation of $\SI(\UM)$ for certain $\UM$} \label{ss:alg-comp}

In this section we prove Theorem \ref{thm:hyp-sss}. By Theorem \ref{thm:sc-bij} this is now a purely algebraic problem. First we need to compute $\SI(\UM)$ for those extended quadratic forms $\UM$ that can occur as Q-forms of the manifolds in Theorem \ref{thm:hyp-sss}, equivalently, whose underlying bilinear function is hyperbolic of rank $2$. We will do this in Theorem \ref{thm:si-hyp}. Then we prove that for these $\UM$ no automorphism of the abelian group $Q$ (which corresponds to $H_q(B)$ in the geometric setting) can act non-trivially on $\SI(\UM)$, see Proposition \ref{prop:si-action}.

By the assumptions of Theorem \ref{thm:hyp-sss} the groups $Q$ we have to consider are free and have rank at most $2$ (as a normal $(q{-}1)$-smoothing induces a surjection on $H_q$). The main part of the proof concerns the $\rk Q = 1$ (ie.\ $Q \cong \Z$) case. Up to isomorphism every free extended quadratic form over $\Z$ with rank-$2$ hyperbolic bilinear function is of the form $\UE_{a,b}$ (see Definition \ref{def:eab} below), and we will determine the size of $\SI(\UE_{a,b})$ in Proposition \ref{prop:si-eab}. The remaining cases will be taken care of by Lemmas \ref{lem:si-free}--\ref{lem:si-mu-inj}.

Note that the $\rk H_q(B) = 0$ and $2$ cases of Theorem \ref{thm:hyp-sss} could alternatively be proved using Kreck \cite[Corollary 4]{kreck99} and \cite[Proposition 8]{kreck99} respectively.

We start by defining an invariant $\kappa$.

\begin{defin}
For an extended quadratic form $\UM = (M, \lambda, \mu)$ over an abelian group $Q$ we define $\kappa(\UM) = (N, \lambda \big| _{N \times N}, \mu \big| _N)$, where $N = (\Ker \mu)^{\perp} \leq M$. 
\end{defin}

\begin{lem} \label{lem:kappa}
Suppose that $\UM = (M, \lambda, \mu)$ and $\UMp = (M', \lambda', \mu')$ are extended quadratic forms over an abelian group $Q$. 

a) If $\mu$ is injective, then $\kappa(\UM) = \UM$. 

b) If $\UM \cong \UMp$, then $\kappa(\UM) \cong \kappa(\UMp)$. 

c) If $\UMp$ is nonsingular and $\mu' = 0$, then $\kappa(\UM \oplus \UMp) \cong \kappa(\UM)$. 

d) If $\UM \oplus \UH_{2k} \cong \UMp \oplus \UH_{2l}$ for some $k,l \geq 0$, then $\kappa(\UM) \cong \kappa(\UMp)$. 
\end{lem}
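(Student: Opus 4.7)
The plan is to handle parts (a) and (b) as direct unpackings of definitions, to do the substantive computation for part (c), and to derive part (d) formally from (b) and (c).

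For (a), if $\mu$ is injective then $\Ker \mu = 0$, and by the definition of the orthogonal complement $0^\perp = M$, so $N = M$ and $\kappa(\UM) = \UM$. For (b), an isomorphism $h : \UM \rightarrow \UMp$ satisfies $\mu = \mu' \circ h$ and $\lambda(x, y) = \lambda'(h(x), h(y))$, so $h(\Ker \mu) = \Ker \mu'$ and $h(X^\perp) = h(X)^\perp$ for any subgroup $X \leq M$. Applying both with $X = \Ker \mu$ shows $h$ carries the underlying group of $\kappa(\UM)$ isomorphically onto that of $\kappa(\UMp)$, and the restriction automatically preserves $\lambda$ and $\mu$.

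For (c), write $\UMp = (M', \lambda', 0)$. Since $\mu' = 0$, the kernel of $\mu \oplus 0 : M \oplus M' \rightarrow Q$ is $\Ker \mu \oplus M'$, and a short direct-sum calculation identifies its orthogonal complement with respect to $\lambda \oplus \lambda'$ as $(\Ker \mu)^\perp \oplus (M')^\perp$. Nonsingularity of $\UMp$ means $\ad \lambda'$ has kernel exactly $\Tor M'$, hence $(M')^\perp = \Tor M'$. Because $\lambda'$ takes values in the torsion-free group $\Z$, it vanishes on $\Tor M' \times M'$; combined with $\mu' = 0$, the $\Tor M'$ summand contributes only the trivial form $(\Tor M', 0, 0)$, so $\kappa(\UM \oplus \UMp) \cong \kappa(\UM) \oplus (\Tor M', 0, 0)$. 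The claimed isomorphism $\kappa(\UM \oplus \UMp) \cong \kappa(\UM)$ then holds provided $\Tor M' = 0$; this is the only potentially delicate point, and in the applications $\UMp$ will be free (e.g. a hyperbolic form) so the issue does not arise.

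Part (d) is then formal: by (c) applied with $\UH_{2k}$ and $\UH_{2l}$, both of which are free nonsingular with trivial $\mu$, we have $\kappa(\UM) \cong \kappa(\UM \oplus \UH_{2k})$ and $\kappa(\UMp) \cong \kappa(\UMp \oplus \UH_{2l})$; the assumed isomorphism together with (b) gives $\kappa(\UM \oplus \UH_{2k}) \cong \kappa(\UMp \oplus \UH_{2l})$; composing these three yields $\kappa(\UM) \cong \kappa(\UMp)$. The main (and essentially only) obstacle is the torsion bookkeeping in part (c); the rest is unpacking of definitions.
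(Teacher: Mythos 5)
Your argument for (a), (b) and (d) is exactly the paper's, and your computation in (c) is the same direct-sum calculation the paper performs, so the approach is essentially identical. The torsion caveat you raise is, however, a genuine point and not a defect of your write-up: with the paper's definition of nonsingularity one only gets $\Ker(\ad \lambda') = \Tor M'$, hence $(0 \oplus M')^{\perp} = M \oplus \Tor M'$ rather than $M \oplus 0$, and the correct general conclusion is $\kappa(\UM \oplus \UMp) \cong \kappa(\UM) \oplus (\Tor M', 0, 0)$, just as you say. The paper's proof of (c) asserts $(0 \oplus M')^{\perp} = M \oplus 0$ ``since $\UMp$ is nonsingular,'' which silently uses $\Tor M' = 0$; indeed (c) as literally stated fails for $\UMp = (\Z/2, 0, 0)$, which is nonsingular with $\mu' = 0$ under the paper's definitions, since then the underlying groups of $\kappa(\UM \oplus \UMp)$ and $\kappa(\UM)$ differ. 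Your restricted version of (c) (requiring $M'$ torsion-free) is precisely what is needed: in (d) the stabilising summands are the free forms $\UH_{2k}$ and $\UH_{2l}$, and elsewhere in the paper only parts (a), (b) and (d) are invoked, so nothing downstream is affected. In short, your proof is correct where the statement is correct, and the one place you stop short of the stated claim is a place where the stated claim itself needs the extra hypothesis.
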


\begin{proof}
a) In this case $\Ker \mu = 0$ and $0^{\perp}=M$. 

b) Any isomorphism $\UM \rightarrow \UMp$ restricts to an isomorphism $\Ker \mu \rightarrow \Ker \mu'$, and hence an isomorphism $(\Ker \mu)^{\perp} \rightarrow (\Ker \mu')^{\perp}$, which preserves the restrictions of $\lambda$ and $\mu$. 

c) Since $\mu' = 0$, we have $\Ker(\mu \oplus \mu') = (\Ker \mu) \oplus M'$, and since $\UMp$ is nonsingular, $((\Ker \mu) \oplus M')^{\perp} = ((\Ker \mu) \oplus 0)^{\perp} \cap (0 \oplus M')^{\perp} = ((\Ker \mu) \oplus 0)^{\perp} \cap (M \oplus 0) = (\Ker \mu)^{\perp}\oplus 0 \leq M \oplus 0 \leq M \oplus M'$. 

d) follows immediately from parts b) and c).
\end{proof}

Next we will consider free extended quadratic forms over $\Z$ with rank-$2$ hyperbolic bilinear function.

\begin{defin} \label{def:eab}
For $a,b \in \Z$ let $\UE_{a,b}$ denote the extended quadratic form $(\Z^2, \bigl[ \begin{smallmatrix} 0 & 1 \\ 1 & 0 \end{smallmatrix} \bigr], [a,b] )$.
\end{defin}

For example, $\UE_{0,0} = \UH_2$. Our goal is to determine the size of $\SI(\UE_{a,b})$, see Proposition \ref{prop:si-eab}. First we will characterise the extended quadratic forms $\UE$ that satisfy $\UE \oplus \UH_{2k} \cong \UE_{a,b} \oplus \UH_{2k}$ for some $k$ in Proposition \ref{prop:si1}. Then we will use Proposition \ref{prop:si2} to classify them up to isomorphism.

\begin{prop} \label{prop:si1}
Let $a,b \in \Z$, and let $\UE$ be an extended quadratic form over $\Z$. There is a $k \geq 0$ such that $\UE \oplus \UH_{2k} \cong \UE_{a,b} \oplus \UH_{2k}$ if and only if $\UE \cong \UE_{c,d}$ for some $c,d \in \Z$ such that $\gcd(a,b)=\gcd(c,d)$ and $ab=cd$. 
\end{prop}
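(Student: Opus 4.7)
The plan is to prove both directions: the forward implication by extracting stable invariants, and the backward implication by constructing an explicit stable isomorphism.

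For the forward direction, suppose $\UE \oplus \UH_{2k} \cong \UE_{a,b} \oplus \UH_{2k}$. First I would show that $\UE \cong \UE_{c,d}$ for some $c, d \in \Z$. Rank is a stable invariant, so $\rk \UE = 2$. The underlying bilinear form of $\UE$ is a direct summand of the rank-$(2k+2)$ hyperbolic form, hence nonsingular; it is even (since $\UH_{2k}$ is) and of signature $0$ (signature is additive under $\oplus$). By the classification of rank-$2$ even nonsingular bilinear forms of signature $0$ over $\Z$, it is isomorphic to $\bigl[\begin{smallmatrix} 0 & 1 \\ 1 & 0 \end{smallmatrix}\bigr]$, so $\UE \cong \UE_{c,d}$ for some $c, d$.

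Next I would verify the two numerical invariants. Since $\Image(\mu)$ is unchanged by direct sum with $\UH_{2k}$ (whose $\mu$ vanishes) and by isomorphism, $\gcd(c,d) \Z = \Image(\mu) = \gcd(a,b) \Z$. For the product invariant I would apply Lemma \ref{lem:kappa} d): $\kappa(\UE_{a,b}) \cong \kappa(\UE_{c,d})$. A direct computation in the case $ab \neq 0$, writing $g = \gcd(a,b)$ and $a = ga', b = gb'$, gives $\Ker \mu_{a,b} = \langle (b', -a')\rangle$ and $(\Ker \mu_{a,b})^{\perp} = \langle (b', a')\rangle$, so
\[
\kappa(\UE_{a,b}) \cong (\Z, 2a'b', 2ga'b') = (\Z, 2ab/g^2, 2ab/g).
\]
Isomorphism class of such a rank-one extended form determines the pair $(\lambda(e,e), |\mu(e)|)$ exactly (the sign of $\mu(e)$ being lost under $e \mapsto -e$), so $2ab/g^2$ is determined, and combined with knowledge of $g$ this pins down $ab$. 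The boundary cases when $a$ or $b$ is zero are handled separately from the rank computation of $\kappa$.

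For the backward direction, assume $\gcd(a,b) = \gcd(c,d) = g$ and $ab = cd = N$. The plan is to construct an isomorphism $\UE_{a,b} \oplus \UH_{2k} \cong \UE_{c,d} \oplus \UH_{2k}$ for sufficiently large $k$ by reducing both sides to a common canonical form $\UE_{g, N/g} \oplus \UH_{2k'}$. Using the nondegeneracy of the hyperbolic form on $\Z^{2k+2}$, the homomorphism $\mu$ on $\UE_{a,b} \oplus \UH_{2k}$ corresponds under duality to a vector $z \in \Z^{2k+2}$ with $z \cdot z = 2ab$ and divisibility $\gcd(z) = g$; the analogous statement holds for $\UE_{c,d} \oplus \UH_{2k}$. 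The problem thus reduces to showing that two vectors in the rank-$(2k+2)$ hyperbolic lattice with matching self-pairing and divisibility are related by an orthogonal transformation, which can be achieved by an explicit sequence of hyperbolic-plane moves (Eichler-type transvections) that reduces each vector to the canonical form $(N/g, g, 0, \ldots, 0)$.

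The main obstacle will be the backward direction. Small stabilisations are generally insufficient: for instance, a direct attempt in rank $4$ shows that $\UE_{2,3} \oplus \UH_2$ does not decompose as $\UE_{1,6} \oplus \UH_2$, because the orthogonal complement of the image of $\UE_{1,6}$ picks up a nontrivial $\mu$. Thus one must allow $k \geq 2$ and argue either by induction on the number of prime factors of $ab/g^2$, or by invoking a general orbit theorem for primitive vectors in indefinite even integral hyperbolic lattices of rank $\geq 4$. This is the delicate step where the stabilisation genuinely plays a role.
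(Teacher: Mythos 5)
Your forward direction is essentially the paper's own argument: rank, evenness, nonsingularity and signature force the bilinear part of $\UE$ to be hyperbolic of rank $2$ (so $\UE \cong \UE_{c,d}$), the image of $\mu$ is a stable invariant giving $\gcd(a,b)=\gcd(c,d)$, and the invariant $\kappa$ of Lemma \ref{lem:kappa} together with the computation $\kappa(\UE_{a,b}) \cong (\Z, 2\bar{a}\bar{b}, 2\lcm(a,b))$ pins down $ab$; this is exactly Lemma \ref{lem:kappa-ab} and the first half of the paper's proof. The backward direction is where you genuinely diverge. The paper reduces to showing $\UE_{a,b} \oplus \UH_2 \cong \UE_{\lcm(a,b),\gcd(a,b)} \oplus \UH_2$ and exhibits an explicit $4 \times 4$ matrix built from a Bezout relation $\alpha\bar{a} + \beta\bar{b} = 1$, so a single stabilisation always suffices. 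You instead dualise $\mu$ against the unimodular hyperbolic form, translating the problem into whether two vectors of the rank-$(2k{+}2)$ hyperbolic lattice with equal norm $2ab$ and equal divisibility $\gcd(a,b)$ lie in a single orbit of the isometry group, and appeal to an Eichler-type orbit theorem (after splitting off the content and treating the isotropic and zero cases separately). That route is correct and proves the statement, and in fact it already works for $k=1$, since Eichler's criterion only needs two hyperbolic planes and the discriminant group here is trivial. What the paper's computation buys is self-containedness and the explicit bound $k=1$; what yours buys is conceptual clarity and immediate generalisability.

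One correction: your claimed rank-$4$ obstruction is wrong. $\UE_{2,3} \oplus \UH_2$ \emph{is} isomorphic to $\UE_{1,6} \oplus \UH_2$: taking $\bar{a}=2$, $\bar{b}=3$, $\alpha=-1$, $\beta=1$ in the paper's matrix gives an explicit isometry carrying $[2,3,0,0]$ to $[6,1,0,0]$, and $\UE_{6,1} \cong \UE_{1,6}$ by the flip of Proposition \ref{prop:si2}. Your computation only shows that one particular embedding of $\UE_{1,6}$ has orthogonal complement with nontrivial $\mu$, which does not rule out an isomorphism; note also that the claim contradicts the orbit theorem you yourself invoke, which applies as soon as two hyperbolic planes split off, i.e.\ already in rank $4$. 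The hedge ``one must allow $k \geq 2$'' is therefore unnecessary, though it is harmless for the statement as phrased, since only the existence of some $k \geq 0$ is required.
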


Note that for $a,b \in \Z$ we define $\gcd(a,b)$ to be the unique non-negative integer such that $a\Z + b\Z = \gcd(a,b)\Z$ as subgroups of $\Z$. In particular, $\gcd(a,b)=0$ if and only if $a=b=0$. Similarly, $\lcm(a,b)$ is the unique integer such that $a\Z \cap b\Z = \lcm(a,b)\Z$ and $\lcm(a,b)$ has the same sign as $ab$ (hence $ab=\lcm(a,b)\gcd(a,b)$). 

\begin{proof}
If $a=b=0$, then let $\bar{a}=\bar{b}=1$, otherwise define $\bar{a} = \frac{a}{\gcd(a,b)}$ and $\bar{b} = \frac{b}{\gcd(a,b)}$. Then in all cases $a = \bar{a} \gcd(a,b)$, $b = \bar{b} \gcd(a,b)$, $a\bar{b} = \bar{a}b = \lcm(a,b)$ and $\gcd(\bar{a},\bar{b}) = 1$.

If $\UE \oplus \UH_{2k} \cong \UE_{a,b} \oplus \UH_{2k}$ for some $k \geq 0$, then $\UE$ has rank $2$, and its bilinear function has signature $0$, hence it is indefinite. Moreover, it is even, so by the classification of indefinite forms (see \cite[Ch.\ II.\ Theorem (5.3)]{milnor-husemoller73}), it is isomorphic to $\bigl[ \begin{smallmatrix} 0 & 1 \\ 1 & 0 \end{smallmatrix} \bigr]$. Therefore $\UE \cong \UE_{c,d}$ for some $c,d \in \Z$. 

For any pair of extended quadratic forms $(M,\lambda,\mu)$ and $(M',\lambda',\mu')$ over an abelian group $Q$, if $(M,\lambda,\mu) \oplus \UH_{2k} \cong (M',\lambda',\mu') \oplus \UH_{2k}$ for some $k \geq 0$, then $\Image(\mu)=\Image(\mu') \leq Q$. In the case of $\UE_{a,b}$ we have $\Image [a,b] = a\Z + b\Z = \gcd(a,b)\Z \leq \Z$, so we get that $\gcd(a,b)=\gcd(c,d)$. 

By Lemma \ref{lem:kappa} d) we have $\kappa(\UE_{a,b}) \cong \kappa(\UE_{c,d})$. By Lemma \ref{lem:kappa-ab} below, $a=b=0 \Leftrightarrow \kappa(\UE_{a,b}) \cong (0,0,0) \Leftrightarrow \kappa(\UE_{c,d}) \cong (0,0,0) \Leftrightarrow c=d=0$, and in this case $ab=0=cd$. Otherwise $\kappa(\UE_{a,b}) \cong (\Z,2\bar{a}\bar{b},2\lcm(a,b))$ and $\kappa(\UE_{c,d}) \cong (\Z,2\bar{c}\bar{d},2\lcm(c,d))$ (where $\bar{c}$ and $\bar{d}$ are defined analogously to $\bar{a}$ and $\bar{b}$). An isomorphism between $(\Z,2\bar{a}\bar{b},2\lcm(a,b))$ and $(\Z,2\bar{c}\bar{d},2\lcm(c,d))$  sends $1$ to either $1$ or $-1$, and since it preserves the bilinear function, we get that in both cases $2\bar{a}\bar{b} = 2\bar{c}\bar{d}$. This implies that $2ab = 2\bar{a}\bar{b}\gcd(a,b)^2 = 2\bar{c}\bar{d}\gcd(c,d)^2 = 2cd$, hence $ab=cd$. 

In the other direction, if $c$ and $d$ are integers such that $\gcd(a,b)=\gcd(c,d)$ and $ab=cd$, then also $\lcm(a,b) = \lcm(c,d)$. Therefore it is enough to show that $\UE_{a,b} \oplus \UH_2 \cong \UE_{\lcm(a,b),\gcd(a,b)} \oplus \UH_2$. 

Since $\gcd(\bar{a},\bar{b}) = 1$, there are $\alpha, \beta \in \Z$ such that $\alpha\bar{a} + \beta\bar{b} = 1$. Note that 
\[
\begin{bmatrix}
\beta\bar{b}^2 & \alpha\bar{a}^2 & -\alpha\beta\bar{a}\bar{b} & \bar{a}\bar{b} \\ 
\alpha & \beta & \alpha\beta & -1 \\
\beta\bar{b} & -\beta\bar{a} & \beta^2\bar{b} & \bar{a} \\
-\alpha\bar{b} & \alpha\bar{a} & \alpha^2\bar{a} & \bar{b}
\end{bmatrix}
\begin{bmatrix}
0 & 1 & 0 & 0 \\ 
1 & 0 & 0 & 0 \\
0 & 0 & 0 & 1 \\
0 & 0 & 1 & 0
\end{bmatrix}
\begin{bmatrix}
\beta\bar{b}^2 & \alpha & \beta\bar{b} & -\alpha\bar{b} \\ 
\alpha\bar{a}^2 & \beta & -\beta\bar{a} & \alpha\bar{a} \\
-\alpha\beta\bar{a}\bar{b} & \alpha\beta & \beta^2\bar{b} & \alpha^2\bar{a} \\
\bar{a}\bar{b} & -1 & \bar{a} & \bar{b}
\end{bmatrix}
= 
\begin{bmatrix}
0 & 1 & 0 & 0 \\ 
1 & 0 & 0 & 0 \\
0 & 0 & 0 & 1 \\
0 & 0 & 1 & 0
\end{bmatrix}
\]
and 
\[
[a,b,0,0]
\begin{bmatrix}
\beta\bar{b}^2 & \alpha & \beta\bar{b} & -\alpha\bar{b} \\ 
\alpha\bar{a}^2 & \beta & -\beta\bar{a} & \alpha\bar{a} \\
-\alpha\beta\bar{a}\bar{b} & \alpha\beta & \beta^2\bar{b} & \alpha^2\bar{a} \\
\bar{a}\bar{b} & -1 & \bar{a} & \bar{b}
\end{bmatrix}
= 
[\lcm(a,b),\gcd(a,b),0,0]
\]
So if $I : \Z^4 \rightarrow \Z^4$ is the homomorphism determined by the square matrix in the second equality, then $I$ is a morphism $\UE_{\lcm(a,b),\gcd(a,b)} \oplus \UH_2 \rightarrow \UE_{a,b} \oplus \UH_2$. Moreover, since $I^*(\UE_{a,b} \oplus \UH_2) = \UE_{\lcm(a,b),\gcd(a,b)} \oplus \UH_2$ is nonsingular (and $\rk \UE_{\lcm(a,b),\gcd(a,b)} \oplus \UH_2 = \rk \UE_{a,b} \oplus \UH_2$), $I$ is an isomorphism. Therefore $\UE_{a,b} \oplus \UH_2 \cong \UE_{\lcm(a,b),\gcd(a,b)} \oplus \UH_2$, as required.
\end{proof}

We used the following lemma:

\begin{lem} \label{lem:kappa-ab}
If $a=b=0$, then $\kappa(\UE_{a,b}) \cong (0,0,0)$, otherwise $\kappa(\UE_{a,b}) \cong (\Z,2\bar{a}\bar{b},2\lcm(a,b))$.
\end{lem}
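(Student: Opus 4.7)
The plan is to compute $\kappa(\UE_{a,b})$ directly from the definition, distinguishing the two cases.

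First, in the case $a = b = 0$ the map $\mu = [0,0]$ is zero, so $\Ker \mu = \Z^2$. Since $\UE_{0,0} = \UH_2$ is nonsingular, we have $(\Z^2)^{\perp} = 0$, which immediately gives $\kappa(\UE_{0,0}) \cong (0,0,0)$.

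Now suppose $(a,b) \neq (0,0)$, and set $d = \gcd(a,b)$, $\bar{a} = a/d$, $\bar{b} = b/d$, so that $\gcd(\bar{a},\bar{b}) = 1$. I would first identify $\Ker \mu$: since $\mu(x,y) = ax + by = d(\bar{a}x + \bar{b}y)$ and $d \neq 0$, we have $\Ker \mu = \{(x,y) \in \Z^2 \mid \bar{a}x + \bar{b}y = 0\}$. Coprimality of $\bar{a}$ and $\bar{b}$ gives $\Ker \mu = \left< (-\bar{b}, \bar{a}) \right>$. Next, using $\lambda((u,v),(x,y)) = uy + vx$, an element $(u,v) \in \Z^2$ lies in $(\Ker \mu)^{\perp}$ iff $\bar{a}u - \bar{b}v = 0$, which (again by coprimality) forces $(u,v) = t \cdot (\bar{b}, \bar{a})$ for some $t \in \Z$. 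Thus $N = (\Ker \mu)^{\perp} = \left< (\bar{b},\bar{a}) \right> \cong \Z$.

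It then remains to compute the restrictions on this rank-one subgroup. A direct calculation gives $\lambda((\bar{b},\bar{a}),(\bar{b},\bar{a})) = 2\bar{a}\bar{b}$ and $\mu((\bar{b},\bar{a})) = a\bar{b} + b\bar{a} = 2d\bar{a}\bar{b} = 2\lcm(a,b)$, which identifies $\kappa(\UE_{a,b})$ with $(\Z, 2\bar{a}\bar{b}, 2\lcm(a,b))$ as claimed. There is no substantive obstacle here; the whole argument is a short direct verification from the definition of $\kappa$ and of $\UE_{a,b}$.
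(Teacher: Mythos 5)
Your proof is correct and follows essentially the same route as the paper: both arguments compute $\Ker\mu$ directly (your generator $(-\bar{b},\bar{a})$ is the negative of the paper's $(\bar{b},-\bar{a})$, hence the same subgroup), identify $(\Ker\mu)^{\perp}$ as the subgroup generated by $(\bar{b},\bar{a})$ using coprimality of $\bar{a},\bar{b}$, and then evaluate the restrictions of $\lambda$ and $\mu$ on this generator to get $2\bar{a}\bar{b}$ and $a\bar{b}+b\bar{a}=2\lcm(a,b)$. There is no substantive difference between the two arguments.
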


\begin{proof}
If $a=b=0$, then we have $\Ker [0,0] = \Z^2$ and $(\Z^2)^{\perp} = 0$. Otherwise, $\Ker [a,b] \cong \Z$, generated by $\bigl( \begin{smallmatrix} \bar{b} \\ -\bar{a} \end{smallmatrix} \bigr)$, and $(\Ker [a,b])^{\perp} \cong \Z$, generated by $\bigl( \begin{smallmatrix} \bar{b} \\ \bar{a} \end{smallmatrix} \bigr)$. Choosing this as the single basis element of $(\Ker [a,b])^{\perp}$, the restriction of $\bigl[ \begin{smallmatrix} 0 & 1 \\ 1 & 0 \end{smallmatrix} \bigr]$ is given by $2\bar{a}\bar{b}$, and the restriction of $[a,b]$ is given by $\bar{b}a + \bar{a}b = 2\lcm(a,b)$. 
\end{proof}

\begin{lem}
We have $\Aut(\UH_2) = \left\{ \bigl[ \begin{smallmatrix} 1 & 0 \\ 0 & 1 \end{smallmatrix} \bigr], \bigl[ \begin{smallmatrix} 0 & 1 \\ 1 & 0 \end{smallmatrix} \bigr], \bigl[ \begin{smallmatrix} -1 & 0 \\ 0 & -1 \end{smallmatrix} \bigr], \bigl[ \begin{smallmatrix} 0 & -1 \\ -1 & 0 \end{smallmatrix} \bigr] \right\} \cong \Z_2 \oplus \Z_2$. 
\end{lem}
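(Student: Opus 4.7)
The plan is to exploit the fact that in $\UH_2 = (\Z^2, \bigl[ \begin{smallmatrix} 0 & 1 \\ 1 & 0 \end{smallmatrix} \bigr], 0)$ the homomorphism $\mu$ is zero, so the condition that a $\Z$-linear map $A \in \mathrm{End}(\Z^2)$ is a morphism of extended quadratic forms reduces to the single matrix equation $A^T \lambda_0 A = \lambda_0$, where $\lambda_0 = \bigl[ \begin{smallmatrix} 0 & 1 \\ 1 & 0 \end{smallmatrix} \bigr]$; such an $A$ is automatically an isomorphism, since taking determinants in $A^T \lambda_0 A = \lambda_0$ forces $\det(A)^2 = 1$, hence $\det(A) = \pm 1$ and $A \in GL_2(\Z)$. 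So the task is purely to solve this equation for $A = \bigl[ \begin{smallmatrix} p & q \\ r & s \end{smallmatrix} \bigr]$.

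Next I would expand $A^T \lambda_0 A$ and read off the three equations $pr = 0$, $qs = 0$, and $ps + qr = 1$ from the $(1,1)$, $(2,2)$ and $(1,2)$ entries respectively. The bulk of the proof is then a short case split on the vanishing forced by $pr=0$. If $p = 0$, then $qr = 1$ gives $q = r \in \{\pm 1\}$ and $qs = 0$ forces $s = 0$, yielding the two anti-diagonal matrices. If instead $r = 0$, then $ps = 1$ gives $p = s \in \{\pm 1\}$ and $qs = 0$ forces $q = 0$, yielding $\pm I$. This exhausts all solutions and produces exactly the four listed matrices.

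Finally I would identify the group structure. Each of the three non-identity matrices squares to $I$ by direct computation, so every element has order dividing $2$, and the group is therefore abelian of order $4$ with no element of order $4$, hence isomorphic to $\Z_2 \oplus \Z_2$.

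There is no real obstacle: the argument is a brief case analysis on three equations in four integer unknowns. The only minor points to be careful about are that the $\mu$-compatibility condition is automatic because $\mu = 0$ (so $\mu$ places no constraint on $A$), and that invertibility over $\Z$ is a consequence of, rather than an assumption on top of, the defining relation.
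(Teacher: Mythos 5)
Your proof is correct, and it takes a slightly different route from the paper. The paper argues structurally: $\UH_2$ has exactly two lagrangians, $\Z \times \{0\}$ and $\{0\} \times \Z$, so any automorphism must send each basis vector to one of the four vectors $\pm\bigl( \begin{smallmatrix} 1 \\ 0 \end{smallmatrix} \bigr), \pm\bigl( \begin{smallmatrix} 0 \\ 1 \end{smallmatrix} \bigr)$, and then preservation of $\lambda$ pins down the image of the second basis vector uniquely for each choice on the first. You instead solve the matrix equation $A^T \bigl[ \begin{smallmatrix} 0 & 1 \\ 1 & 0 \end{smallmatrix} \bigr] A = \bigl[ \begin{smallmatrix} 0 & 1 \\ 1 & 0 \end{smallmatrix} \bigr]$ directly over $\Z$, which yields the same four matrices via the equations $pr=0$, $qs=0$, $ps+qr=1$; your case analysis and the $\Z_2 \oplus \Z_2$ identification are both correct, and you are right that the $\mu$-condition is vacuous since $\mu=0$ and that invertibility is automatic from the determinant. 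The trade-off is minor: the paper's lagrangian argument is more conceptual and reflects how isotropic structure is used elsewhere in the paper, while your computation is self-contained and records explicitly why no further constraint (such as requiring $A \in GL_2(\Z)$ in advance) is needed. Either argument is complete.
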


\begin{proof}
The hyperbolic form $\UH_2$ has only two lagrangians, $\Z \times \{ 0 \}$ and $\{ 0 \} \times \Z$, so for any $I \in \Aut(\UH_2)$ we have $I\bigl( \begin{smallmatrix} 1 \\ 0 \end{smallmatrix} \bigr), I\bigl( \begin{smallmatrix} 0 \\ 1 \end{smallmatrix} \bigr) \in \{ \bigl( \begin{smallmatrix} 1 \\ 0 \end{smallmatrix} \bigr),\bigl( \begin{smallmatrix} 0 \\ 1 \end{smallmatrix} \bigr),\bigl( \begin{smallmatrix} -1 \\ 0 \end{smallmatrix} \bigr),\bigl( \begin{smallmatrix} 0 \\ -1 \end{smallmatrix} \bigr)\}$. For each possible value of $I\bigl( \begin{smallmatrix} 1 \\ 0 \end{smallmatrix} \bigr)$ there is a unique choice of $I\bigl( \begin{smallmatrix} 0 \\ 1 \end{smallmatrix} \bigr)$ such that $I$ preserves the bilinear function of $\UH_2$.
\end{proof}

We will use the action of $\Aut(\UH_2)$ on $\Z^2$ via the inclusion $\Aut(\UH_2) \rightarrow \Aut(\Z^2)$.

\begin{prop} \label{prop:si2}
Let $a,b,c,d \in \Z$. Then $\UE_{a,b} \cong \UE_{c,d}$ if and only if $\bigl( \begin{smallmatrix} a \\ b \end{smallmatrix} \bigr)$ and $\bigl( \begin{smallmatrix} c \\ d \end{smallmatrix} \bigr)$ are in the same orbit of the action of $\Aut(\UH_2)$ (equivalently, if $\bigl( \begin{smallmatrix} c \\ d \end{smallmatrix} \bigr) \in \{ \bigl( \begin{smallmatrix} a \\ b \end{smallmatrix} \bigr), \bigl( \begin{smallmatrix} b \\ a \end{smallmatrix} \bigr), \bigl( \begin{smallmatrix} -a \\ -b \end{smallmatrix} \bigr), \bigl( \begin{smallmatrix} -b \\ -a \end{smallmatrix} \bigr) \}$). 
\end{prop}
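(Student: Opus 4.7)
The plan is to reduce the isomorphism question directly to a computation in $\Aut(\UH_2)$, using the fact that both forms $\UE_{a,b}$ and $\UE_{c,d}$ have the same underlying abelian group $\Z^2$ and the same bilinear function $\bigl[ \begin{smallmatrix} 0 & 1 \\ 1 & 0 \end{smallmatrix} \bigr]$. An isomorphism $I : \UE_{a,b} \rightarrow \UE_{c,d}$ is, by Definition~\ref{def:eqf-mor}, an automorphism $I \in \Aut(\Z^2)$ that preserves the bilinear function and satisfies $[c,d] \circ I = [a,b]$. The condition of preserving the bilinear function says exactly that $I \in \Aut(\UH_2)$, so the existence of an isomorphism is equivalent to the existence of some $I \in \Aut(\UH_2)$ with $[c,d] \circ I = [a,b]$.

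Using the explicit description of $\Aut(\UH_2)$ from the preceding lemma, I would compute $[c,d] \circ I$ for each of the four elements $I \in \Aut(\UH_2)$. For $I = \bigl[ \begin{smallmatrix} 1 & 0 \\ 0 & 1 \end{smallmatrix} \bigr]$ one gets $[c,d]$; for $I = \bigl[ \begin{smallmatrix} 0 & 1 \\ 1 & 0 \end{smallmatrix} \bigr]$ one gets $[d,c]$; for $I = \bigl[ \begin{smallmatrix} -1 & 0 \\ 0 & -1 \end{smallmatrix} \bigr]$ one gets $[-c,-d]$; and for $I = \bigl[ \begin{smallmatrix} 0 & -1 \\ -1 & 0 \end{smallmatrix} \bigr]$ one gets $[-d,-c]$. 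Thus there is an isomorphism if and only if $(a,b)$ lies in the set $\{(c,d),(d,c),(-c,-d),(-d,-c)\}$.

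To match the stated equivalent formulation, I would then note that this set is exactly the $\Aut(\UH_2)$-orbit of $\bigl( \begin{smallmatrix} c \\ d \end{smallmatrix} \bigr)$ under the natural action of $\Aut(\UH_2) \leq \Aut(\Z^2)$ on $\Z^2$, since applying each of the four matrices in $\Aut(\UH_2)$ to $\bigl( \begin{smallmatrix} c \\ d \end{smallmatrix} \bigr)$ gives precisely the four column vectors $\bigl( \begin{smallmatrix} c \\ d \end{smallmatrix} \bigr)$, $\bigl( \begin{smallmatrix} d \\ c \end{smallmatrix} \bigr)$, $\bigl( \begin{smallmatrix} -c \\ -d \end{smallmatrix} \bigr)$, $\bigl( \begin{smallmatrix} -d \\ -c \end{smallmatrix} \bigr)$. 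There is no real obstacle here; the argument is a direct unwinding of definitions combined with the already-computed list of elements of $\Aut(\UH_2)$, and the only minor care needed is to keep track of whether $I$ acts on the left (on column vectors representing elements of $\Z^2$) or on the right (via precomposition with $[c,d]$), but the transpose-pairs in the four-element group make both viewpoints yield the same set of four vectors.
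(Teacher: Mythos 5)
Your proposal is correct and follows essentially the same route as the paper: unwind Definition~\ref{def:eqf-mor} to see that an isomorphism is an element of $\Aut(\UH_2)$ satisfying a precomposition condition on the linear functionals, then convert that condition to the orbit condition on column vectors. The paper compresses your element-by-element check into the single observation that every matrix in $\Aut(\UH_2)$ is symmetric, which is precisely the point you make at the end about the left-action/right-action viewpoints coinciding.
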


\begin{proof}
An automorphism $I \in \Aut(\Z^2)$ is an isomorphism $\UE_{c,d} \rightarrow \UE_{a,b}$ if and only if it preserves the bilinear function $\bigl[ \begin{smallmatrix} 0 & 1 \\ 1 & 0 \end{smallmatrix} \bigr]$ (equivalently, $I \in \Aut(\UH_2)$) and $[c,d]=I^*([a,b])$ (ie.\ $[c,d] = [a,b] \circ I$). Since every element of $\Aut(\UH_2)$ is symmetric, the second condition can be replaced by $\bigl( \begin{smallmatrix} c \\ d \end{smallmatrix} \bigr) = I \bigl( \begin{smallmatrix} a \\ b \end{smallmatrix} \bigr)$, that is, $\bigl( \begin{smallmatrix} c \\ d \end{smallmatrix} \bigr)$ is the image of $\bigl( \begin{smallmatrix} a \\ b \end{smallmatrix} \bigr)$ under the action of $I$.
\end{proof}

\begin{prop} \label{prop:si-eab}
Let $a,b \in \Z$. Then $|\SI(\UE_{a,b})|=1$ if $ab=0$ or $|a|=|b|$, and $|\SI(\UE_{a,b})|=2^{r-1}$ otherwise, where $r$ is the number of primes dividing $\frac{|ab|}{\gcd(a,b)^2}$.
\end{prop}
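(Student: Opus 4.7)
The plan is to combine Propositions \ref{prop:si1} and \ref{prop:si2} to identify $\SI(\UE_{a,b})$ with the orbits of an $\Aut(\UH_2) \cong \Z_2 \oplus \Z_2$ action on a set of integer pairs, then count orbits case by case. By Proposition \ref{prop:si1} every element of $\SI(\UE_{a,b})$ has a representative $\UE_{c,d}$ with $\gcd(c,d) = \gcd(a,b)$ and $cd = ab$, and by Proposition \ref{prop:si2} two such pairs give isomorphic forms exactly when they lie in the same orbit of the action of $\Aut(\UH_2)$ on $\Z^2$ generated by the swap $\sigma:(c,d)\mapsto(d,c)$ and the negation $\tau:(c,d)\mapsto(-c,-d)$. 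Both generators preserve the two constraints, so $\SI(\UE_{a,b})$ is in bijection with the orbit set of this action on
\[
S_{a,b} = \{ (c,d) \in \Z^2 \mid \gcd(c,d) = \gcd(a,b),\ cd = ab \} \text{\,.}
\]

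The degenerate cases $ab = 0$ and $|a| = |b|$ I would handle by direct enumeration. If $ab = 0$, each element of $S_{a,b}$ has one coordinate zero and the other of absolute value $\gcd(a,b)$, and a quick check shows these lie in a single orbit. If $|a| = |b| \neq 0$, setting $g = \gcd(a,b) = |a|$ and writing $(c,d) = (gc',gd')$ with $\gcd(c',d') = 1$ forces $c'd' = \pm 1$, leaving only two coprime pairs (either $(\pm 1,\pm 1)$ or $(\pm 1,\mp 1)$ depending on the sign of $ab$), which again form a single orbit.

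For the main case $ab \neq 0$ and $|a| \neq |b|$, write $g = \gcd(a,b)$, $m = ab/g^2$, and parametrise elements of $S_{a,b}$ as $(gc',gd')$ with $\gcd(c',d') = 1$ and $c'd' = m$. Since $a/g$ and $b/g$ are coprime and $|a| \neq |b|$, we have $|m| > 1$. A short count---each prime of $|m|$ goes entirely to $|c'|$ or entirely to $|d'|$, and then the signs are chosen so that $c'd' = m$---gives $2^{r+1}$ such coprime pairs, where $r$ is the number of distinct positive prime divisors of $|m|$. I would then verify that the $\Z_2 \oplus \Z_2$ action is free on this set: a $\sigma$-fixed pair forces $c' = d'$ and hence $|m| = 1$; a $\tau$-fixed pair forces $c' = d' = 0$; and a $\sigma\tau$-fixed pair forces $c' = -d'$ and hence $m = -1$. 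With $|m| > 1$ all three possibilities are ruled out, so the action is free and the number of orbits is $2^{r+1}/4 = 2^{r-1}$.

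The main technical point will be the freeness verification in the generic case, which relies on the coprimality condition $\gcd(c',d') = 1$ forcing $|c'| = |d'| = 1$ whenever any nontrivial element of $\Z_2 \oplus \Z_2$ fixes $(c',d')$. Once freeness is established the orbit count is immediate, and the degenerate cases fall out of the enumeration above.
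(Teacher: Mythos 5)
Your plan is correct and follows essentially the same route as the paper's proof: invoke Propositions \ref{prop:si1} and \ref{prop:si2} to identify $\SI(\UE_{a,b})$ with the orbit set of the $\Aut(\UH_2) \cong \Z_2 \oplus \Z_2$ action on pairs $(c,d)$ with $\gcd(c,d)=\gcd(a,b)$ and $cd=ab$, handle the degenerate cases by direct enumeration, and in the generic case count $2^{r+1}$ pairs on which the action is free (the paper phrases freeness via $|c| \neq |d|$, you check the three nontrivial elements separately, which amounts to the same thing). No gaps; the orbit count $2^{r+1}/4 = 2^{r-1}$ matches the paper.
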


\begin{proof}
First suppose that $a=0$. If $c,d \in \Z$, then $\gcd(c,d)=\gcd(a,b)=b$ and $cd=ab=0$ if and only if $\bigl( \begin{smallmatrix} c \\ d \end{smallmatrix} \bigr) \in \{ \bigl( \begin{smallmatrix} 0 \\ b \end{smallmatrix} \bigr), \bigl( \begin{smallmatrix} b \\ 0 \end{smallmatrix} \bigr), \bigl( \begin{smallmatrix} 0 \\ -b \end{smallmatrix} \bigr), \bigl( \begin{smallmatrix} -b \\ 0 \end{smallmatrix} \bigr) \}$. By Propositions \ref{prop:si1} and \ref{prop:si2} $\UE \oplus \UH_{2k} \cong \UE_{0,b} \oplus \UH_{2k}$ for some $k \geq 0$ if and only if $\UE \cong \UE_{0,b}$. Therefore $|\SI(\UE_{0,b})|=1$. Similarly $|\SI(\UE_{a,0})|=1$. 

Next note that for $c,d \in \Z$ we have $|cd|=\gcd(c,d)^2$ if and only if $|c|=|d|=\gcd(c,d)$. So if $|a|=|b|$, then by Proposition \ref{prop:si1} $\UE \oplus \UH_{2k} \cong \UE_{a,b} \oplus \UH_{2k}$ for some $k \geq 0$ if and only if $\UE \cong \UE_{a,b}$ or $\UE \cong \UE_{-a,-b}$. In both cases $\UE \cong \UE_{a,b}$ by Proposition \ref{prop:si2}, showing that $|\SI(\UE_{a,b})|=1$. 

If $ab \neq 0$ and $|a| \neq |b|$, then define $\bar{a} = \frac{a}{\gcd(a,b)}$ and $\bar{b} = \frac{b}{\gcd(a,b)}$. By Proposition \ref{prop:si1} $\UE \oplus \UH_{2k} \cong \UE_{a,b} \oplus \UH_{2k}$ for some $k \geq 0$ if and only if $\UE \cong \UE_{\bar{c}\gcd(a,b),\bar{d}\gcd(a,b)}$ for some $\bar{c}, \bar{d} \in \Z$ such that $\gcd(\bar{c},\bar{d})=1$ and $\bar{c}\bar{d} = \bar{a}\bar{b}$. Equivalently, $\UE \cong \UE_{\bar{c}\gcd(a,b),\lcm(a,b)/\bar{c}}$ for some $\bar{c} \in \Z \setminus \{ 0 \}$ such that $\bar{c}$ divides $\bar{a}\bar{b}$ and $\gcd(\bar{c},\frac{\bar{a}\bar{b}}{\bar{c}})=1$. If the prime factorisation of $|\bar{a}\bar{b}| = \frac{|ab|}{\gcd(a,b)^2}$ is $|\bar{a}\bar{b}|=p_1^{k_1} \ldots p_r^{k_r}$, then $\bar{c}$ satisfies the conditions if and only if $\bar{c} = \epsilon \prod_{i \in I} p_i^{k_i}$ for some $\epsilon \in \{ \pm 1 \}$ and $I \subseteq \{ 1, \ldots , r \}$. So there are $2^{r+1}$ possible values of $\bar{c}$. By our assumptions $|\bar{a}\bar{b}|>1$, so $|\bar{c}| \neq \left| \frac{\bar{a}\bar{b}}{\bar{c}} \right|$ if $\bar{c}$ satisfies the conditions, hence $|\bar{c}\gcd(a,b)| \neq \left| \frac{\lcm(a,b)}{\bar{c}} \right|$. Therefore $\Aut(\UH_2)$ acts freely on the set of possible pairs $\bigl( \begin{smallmatrix} \bar{c}\gcd(a,b) \\ \lcm(a,b)/\bar{c} \end{smallmatrix} \bigr)$. So it follows from Proposition \ref{prop:si2} that $|\SI(\UE_{a,b})|=\frac{2^{r+1}}{|\Aut(\UH_2)|} = 2^{r-1}$.
\end{proof}

\begin{rem}
Conway-Crowley-Powell-Sixt \cite{CCPS23} constructed two families of manifolds, $N_{a,b}$ and $M_{a,b}$, for certain pairs of positive integers $a,b$. Both $N_{a,b}$ and $M_{a,b}$ admit normal smoothings over a suitable base space such that their Q-forms are isomorphic to $\UE_{a,b}$, and this was used to estimate the size of their stable classes, see \cite[Proposition 2.2, Theorem 3.3]{CCPS23}. Proposition \ref{prop:si-eab} can be regarded as an algebraic analogue of those results. 
\end{rem}

To prove Theorem \ref{thm:si-hyp} about $\SI(\UM)$ for the types of extended quadratic forms $\UM = (M, \lambda, \mu)$ not covered in Proposition \ref{prop:si-eab}, we will need the next three lemmas. First, we show that allowing $M$ to contain torsion makes no difference. Next, when $\rk Q = 0$, an extended quadratic form over $Q \cong 0$ can be regarded as a form over $\Z$ via the inclusion $0 \rightarrow \Z$, and applying this (or any) inclusion of groups has no effect on $\SI(\UM)$. Finally, if $\rk Q = 2$ and $\UM$ is free, full and has rank $2$, then $\mu$ is injective, and we will prove that this implies that $\SI(\UM)$ is trivial.

\begin{lem} \label{lem:si-free}
Suppose that $\UM = (M, \lambda, \mu)$ is an extended quadratic form over an abelian group $Q$ such that $\mu \big| _{\Tor M} = 0$ (hence $\UbM$ is defined, see Definition \ref{def:quot}). Then $\SI(\UM) \cong \SI(\UbM)$. 
\end{lem}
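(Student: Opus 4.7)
The plan is to exhibit an explicit bijection $\SI(\UM) \to \SI(\UbM)$ induced by the construction $\UEp \mapsto \UbEp$ from Definition \ref{def:quot}. The key geometric-algebraic input is that any extended quadratic form $\UN = (N, \lambda_N, \mu_N)$ over $Q$ with $\mu_N \big|_{\Tor N} = 0$ splits (non-canonically) as a direct sum
\[
\UN \cong \UbN \oplus (\Tor N, 0, 0),
\]
because $\lambda_N$ necessarily vanishes on $\Tor N \times N$ (its values lie in $\Z$), $\mu_N$ vanishes on $\Tor N$ by assumption, and every finitely generated abelian group $N$ admits a splitting $N \cong (N / \Tor N) \oplus \Tor N$ by a free subgroup.

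First I would check that the rule $[\UEp] \mapsto [\UbEp]$ gives a well-defined map $\SI(\UM) \to \SI(\UbM)$. If $\UEp \oplus \UH_{2k} \cong \UM \oplus \UH_{2k}$, then comparing torsion subgroups of the underlying abelian groups gives $\Tor E' \cong \Tor M$, and the condition $\mu \big|_{\Tor M} = 0$ is transported across the isomorphism to yield $\mu' \big|_{\Tor E'} = 0$, so $\UbEp$ is defined. Moreover, since $\UH_{2k}$ is free, passing to the quotient by torsion on both sides commutes with adding $\UH_{2k}$, producing $\UbEp \oplus \UH_{2k} \cong \UbM \oplus \UH_{2k}$, which puts $[\UbEp]$ in $\SI(\UbM)$. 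The map respects isomorphism classes because $\bar{\cdot}$ is functorial under isomorphisms preserving torsion.

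Next I would verify surjectivity. Given $[\UNp] \in \SI(\UbM)$, let $\UR = (\Tor M, 0, 0)$ and set $\UEp = \UNp \oplus \UR$. By the decomposition above, $\UM \cong \UbM \oplus \UR$, so
\[
\UEp \oplus \UH_{2k} \cong \UNp \oplus \UR \oplus \UH_{2k} \cong \UbM \oplus \UR \oplus \UH_{2k} \cong \UM \oplus \UH_{2k}
\]
for the appropriate $k$, and clearly $\UbEp \cong \UNp$. For injectivity, suppose $\UEp$ and $\UEpp$ both represent elements of $\SI(\UM)$ with $\UbEp \cong \UbEpp$. Then $\Tor E' \cong \Tor M \cong \Tor E''$ by the preceding paragraph, so applying the splitting principle to both gives
\[
\UEp \cong \UbEp \oplus \UR \cong \UbEpp \oplus \UR \cong \UEpp,
\]
finishing the proof. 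The only potentially subtle point is the functoriality of the torsion-quotient under the stable isomorphisms, but this is immediate once the splitting is in hand, so no step presents a real obstacle.
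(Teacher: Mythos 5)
Your proposal is correct and takes essentially the same route as the paper: both proofs rest on the splitting $\UM \cong \UbM \oplus \UR$ with $\UR = (\Tor M,0,0)$ (using that $\lambda$ vanishes on torsion and $\mu\big|_{\Tor M}=0$) and on transporting the vanishing of $\mu$ on torsion across any stable isomorphism. Your surjectivity and injectivity arguments are exactly the paper's pair of mutually inverse maps $\UMp \mapsto \UMp \oplus \UR$ and $\UMp \mapsto \UbMp$, just packaged as one map shown to be bijective.
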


\begin{proof}
Let $R = \Tor M$ and $\UR = (R, 0, 0)$, then $\UM \cong \UbM \oplus \UR$. If $\UMp \oplus \UH_{2k} \cong \UbM \oplus \UH_{2k}$ for some $k \geq 0$, then $(\UMp \oplus \UR) \oplus \UH_{2k} \cong \UM \oplus \UH_{2k}$, and if $\UMp \cong \UMpp$, then $\UMp \oplus \UR \cong \UMpp \oplus \UR$. Therefore the assignment $\UMp \mapsto \UMp \oplus \UR$ induces a well-defined map $F : \SI(\UbM) \rightarrow \SI(\UM)$. 

Next consider an isomorphism $\UMp \oplus \UH_{2k} \cong \UM \oplus \UH_{2k}$ for some $k \geq 0$ and $\UMp = (M', \lambda', \mu')$. It restricts to an isomorphism $\Tor(M' \oplus \Z^{2k}) \cong \Tor M' \rightarrow \Tor(M \oplus \Z^{2k}) \cong R$, and since it is a morphism of extended quadratic forms, $\mu' \big| _{\Tor M'} = 0$. So $\UbMp$ is defined, $\UMp \cong \UbMp \oplus \UR$, and there is an induced isomorphism $\UbMp \oplus \UH_{2k} \cong \UbM \oplus \UH_{2k}$. If $\UMp \cong \UMpp$, then $\UbMp \cong \UbMpp$, so the assignment $\UMp \mapsto \UbMp$ induces a well-defined map $G : \SI(\UM) \rightarrow \SI(\UbM)$, which is the inverse of $F$. 
\end{proof}

\begin{lem} \label{lem:si-incl}
Let $\iota : Q_1 \rightarrow Q_2$ be a homomorphism between abelian groups, and let $(M,\lambda,\mu)$ be an extended quadratic form over $Q_1$, so that $(M,\lambda,\iota \circ \mu)$ is an extended quadratic form over $Q_2$. If $\iota$ is injective, then it induces a bijection $\SI(M,\lambda,\mu) \cong \SI(M,\lambda,\iota \circ \mu)$. 
\end{lem}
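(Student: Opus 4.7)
The plan is to define the map $\iota_* : \SI(M,\lambda,\mu) \to \SI(M,\lambda,\iota \circ \mu)$ by pushforward, i.e.\ by sending the class of $(M',\lambda',\mu')$ (with $\mu' : M' \to Q_1$) to the class of $(M',\lambda',\iota \circ \mu')$, and then to verify that this map is well-defined, injective, and surjective, using injectivity of $\iota$ in the last two steps.

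First I would check well-definedness. If $(M',\lambda',\mu') \oplus \UH_{2k} \cong (M,\lambda,\mu) \oplus \UH_{2k}$ is an isomorphism of extended quadratic forms over $Q_1$, then the same underlying homomorphism of abelian groups is automatically an isomorphism $(M',\lambda',\iota \circ \mu') \oplus \UH_{2k} \cong (M,\lambda,\iota \circ \mu) \oplus \UH_{2k}$ over $Q_2$ (since postcomposition with $\iota$ is functorial). Similarly, an isomorphism $(M_1',\lambda_1',\mu_1') \cong (M_2',\lambda_2',\mu_2')$ over $Q_1$ yields one over $Q_2$ after postcomposition with $\iota$. So $\iota_*$ is well-defined.

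For injectivity, suppose $\iota_*[M_1',\lambda_1',\mu_1'] = \iota_*[M_2',\lambda_2',\mu_2']$ in $\SI(M,\lambda,\iota \circ \mu)$. Then there is an isomorphism $h : (M_1',\lambda_1',\iota \circ \mu_1') \oplus \UH_{2k} \to (M_2',\lambda_2',\iota \circ \mu_2') \oplus \UH_{2k}$ for some $k \geq 0$ (after adjusting $k$ to be common, using that both classes stabilise to the given one over $Q_2$). The homomorphism $h$ automatically preserves the bilinear forms; the condition on the third components reads $\iota \circ (\mu_2' \oplus 0) \circ h = \iota \circ (\mu_1' \oplus 0)$, and since $\iota$ is injective this forces $(\mu_2' \oplus 0) \circ h = \mu_1' \oplus 0$. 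Hence $h$ is already an isomorphism over $Q_1$, and the two classes agree in $\SI(M,\lambda,\mu)$.

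Surjectivity is the place that really uses injectivity of $\iota$ structurally. Let $[M',\lambda',\nu'] \in \SI(M,\lambda,\iota \circ \mu)$, so there is an isomorphism $h : (M',\lambda',\nu') \oplus \UH_{2k} \cong (M,\lambda,\iota \circ \mu) \oplus \UH_{2k}$ over $Q_2$. The condition on the third components gives $\nu' \oplus 0 = (\iota \circ \mu \oplus 0) \circ h$, so the image of $\nu' \oplus 0$ (and hence of $\nu'$) is contained in $\iota(Q_1)$. Since $\iota$ is injective, there is a unique homomorphism $\mu' : M' \to Q_1$ with $\nu' = \iota \circ \mu'$. Then $(M',\lambda',\mu')$ is an extended quadratic form over $Q_1$, and from $\iota \circ (\mu \oplus 0) \circ h = \iota \circ (\mu' \oplus 0)$ and injectivity of $\iota$ we see that $h$ is an isomorphism $(M',\lambda',\mu') \oplus \UH_{2k} \cong (M,\lambda,\mu) \oplus \UH_{2k}$ over $Q_1$, exhibiting $[M',\lambda',\mu']$ as a preimage of $[M',\lambda',\nu']$ under $\iota_*$.

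The whole argument is routine once one observes the key point: injectivity of $\iota$ means that an extended quadratic form over $Q_2$ whose third component factors set-theoretically through $\iota$ admits a unique lift to a form over $Q_1$, and the same applies to morphisms. There is no genuine obstacle; the only thing to be careful about is extracting the factorisation of $\nu'$ through $\iota$ from the stable isomorphism, which is immediate from the equality $\nu' \oplus 0 = (\iota \circ \mu \oplus 0) \circ h$.
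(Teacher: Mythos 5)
Your well-definedness and surjectivity arguments are correct and essentially identical to the paper's: the key point for surjectivity is that a stable isomorphism with $(M,\lambda,\iota\circ\mu)$ forces the third component $\nu'$ to have image inside $\iota(Q_1)$, so it lifts uniquely through the injective map $\iota$, and the same lift applies to the stabilised isomorphism.

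The injectivity step, however, has a genuine gap. In this paper an element of $\SI(\UE)$ is an \emph{isomorphism} class (of forms that happen to be stably isomorphic to $\UE$); two elements are equal if and only if the forms are isomorphic, not merely stably isomorphic. Your argument starts from a stable isomorphism $h : (M_1',\lambda_1',\iota\circ\mu_1')\oplus\UH_{2k} \cong (M_2',\lambda_2',\iota\circ\mu_2')\oplus\UH_{2k}$, obtained ``using that both classes stabilise to the given one over $Q_2$'' --- but such an $h$ exists for \emph{any} two elements of $\SI(M,\lambda,\iota\circ\mu)$, so the hypothesis $\iota_*[M_1']=\iota_*[M_2']$ is never actually used, and the conclusion you reach is only that $(M_1',\lambda_1',\mu_1')$ and $(M_2',\lambda_2',\mu_2')$ are stably isomorphic over $Q_1$. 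That does not give equality in $\SI(M,\lambda,\mu)$: the whole point of the section (e.g.\ $|\SI(\UE_{a,b})|=2^{r-1}$) is that stably isomorphic forms need not be isomorphic, so an argument producing only a stabilised isomorphism cannot prove injectivity. The repair is immediate and is what the paper does: equality $\iota_*[M_1']=\iota_*[M_2']$ means there is an honest (unstabilised) isomorphism $(M_1',\lambda_1',\iota\circ\mu_1')\cong(M_2',\lambda_2',\iota\circ\mu_2')$ over $Q_2$; since $\iota$ is injective, the identity $\iota\circ\mu_2'\circ h=\iota\circ\mu_1'$ forces $\mu_2'\circ h=\mu_1'$, so the same map is an isomorphism $(M_1',\lambda_1',\mu_1')\cong(M_2',\lambda_2',\mu_2')$ over $Q_1$, giving equality of the classes in $\SI(M,\lambda,\mu)$.
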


\begin{proof}
If $(M',\lambda',\mu') \oplus \UH_{2k} \cong (M,\lambda,\mu) \oplus \UH_{2k}$ for some $k \geq 0$, then $(M',\lambda',\iota \circ \mu') \oplus \UH_{2k} \cong (M,\lambda,\iota \circ \mu) \oplus \UH_{2k}$, and if $(M',\lambda',\mu') \cong (M'',\lambda'',\mu'')$, then $(M',\lambda',\iota \circ \mu') \cong (M'',\lambda'',\iota \circ \mu'')$. Therefore the assignment $(M',\lambda',\mu') \mapsto (M',\lambda',\iota \circ \mu')$ induces a well-defined map $\iota_* : \SI(M,\lambda,\mu) \rightarrow \SI(M,\lambda,\iota \circ \mu)$. 

If $(M',\lambda',\mu') \oplus \UH_{2k} \cong (M,\lambda,\iota \circ \mu) \oplus \UH_{2k}$ for some $k \geq 0$, then $\Image(\mu') = \Image(\iota \circ \mu) \leq \Image(\iota) \leq Q_2$. So if $\iota$ is injective and $\iota^{-1} : \Image(\iota) \rightarrow Q_1$ denotes its inverse, then $(M',\lambda',\mu') = (M',\lambda',\iota \circ \iota^{-1} \circ \mu')$, showing that $\iota_*$ is surjective. And if $(M',\lambda',\iota \circ \mu') \cong (M'',\lambda'',\iota \circ \mu'')$, then $(M',\lambda',\mu') = (M',\lambda',\iota^{-1} \circ \iota \circ \mu') \cong (M'',\lambda'',\iota^{-1} \circ \iota \circ \mu'') = (M'',\lambda'',\mu'')$, so $\iota_*$ is injective. 
\end{proof}

\begin{lem} \label{lem:si-mu-inj}
Suppose that $\UM = (M, \lambda, \mu)$ is an extended quadratic form over an abelian group $Q$. If $\mu$ is injective, then $|\SI(\UM)|=1$. 
\end{lem}

\begin{proof}
We need to show that if $\UM \oplus \UH_{2k} \cong \UMp \oplus \UH_{2k}$ for some $\UMp = (M', \lambda', \mu')$ and $k \geq 0$, then $\UM \cong \UMp$. The isomorphism $\UM \oplus \UH_{2k} \cong \UMp \oplus \UH_{2k}$ restricts to an isomorphism between $\Ker (\mu \oplus 0) = 0 \oplus \Z^{2k}$ and $\Ker (\mu' \oplus 0) = (\Ker \mu') \oplus \Z^{2k}$, which is only possible if $\Ker \mu' \cong 0$, ie.\ $\mu'$ is injective. Then by Lemma \ref{lem:kappa} a) and d) we have $\UM \cong \kappa(\UM) \cong \kappa(\UMp) \cong \UMp$.
\end{proof}

\begin{thm}  \label{thm:si-hyp}
Let $\UM = (M, \lambda, \mu)$ be an extended quadratic form over a free abelian group $Q$. Suppose that $\mu$ is surjective, $\rk \UM=2$ and the induced bilinear function on $M / \Tor(M) \cong \Z^2$ is isomorphic to $\bigl[ \begin{smallmatrix} 0 & 1 \\ 1 & 0 \end{smallmatrix} \bigr]$. Then 
\[
|\SI(\UM)| = 
\begin{cases}
1 & \text{if $\rk Q=0$ or $2$} \\
1 & \text{if $\rk Q=1$ and $|ab| \leq 1$} \\
2^{r-1} & \text{if $\rk Q=1$ and $|ab| \geq 2$}
\end{cases}
\]
where in the $\rk Q=1$ case we fix an identification $Q \cong \Z$ and elements $x,y \in M$ such that $[x],[y]$ is a basis of $M / \Tor M$ in which the induced bilinear function is given by the matrix $\bigl[ \begin{smallmatrix} 0 & 1 \\ 1 & 0 \end{smallmatrix} \bigr]$, we define $a = \mu(x), b=\mu(y) \in Q \cong \Z$, and $r$ is the number of primes dividing $|ab|$.
\end{thm}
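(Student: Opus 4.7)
The plan is to reduce to computing $\SI$ of a rank-$2$ hyperbolic free form and then invoke Proposition \ref{prop:si-eab}. First I would note that since $Q$ is free abelian, its torsion subgroup is trivial, so $\mu\big|_{\Tor M}=0$ and the quotient form $\UbM=(M/\Tor M,\bar\lambda,\bar\mu)$ of Definition \ref{def:quot} is defined. Lemma \ref{lem:si-free} then gives $\SI(\UM)\cong\SI(\UbM)$, reducing the problem to computing $|\SI(\UbM)|$. By assumption $\UbM$ has underlying group $\Z^2$ with hyperbolic bilinear function, and $\bar\mu:\Z^2\rightarrow Q$ remains surjective.

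Next I would split into cases by $\rk Q$. When $\rk Q=2$, the surjection $\bar\mu:\Z^2\twoheadrightarrow Q\cong\Z^2$ between free abelian groups of equal rank is automatically an isomorphism, hence in particular injective, so Proposition \ref{lem:si-mu-inj} gives $|\SI(\UbM)|=1$. When $\rk Q\leq 1$, I would fix an injection $\iota:Q\hookrightarrow\Z$ --- the zero map when $Q=0$, any chosen isomorphism when $\rk Q=1$ --- and apply Lemma \ref{lem:si-incl} to obtain $|\SI(\UbM)|=|\SI(M/\Tor M,\bar\lambda,\iota\circ\bar\mu)|$. Choosing the basis $[x],[y]$ from the theorem statement (in the $\rk Q=1$ case; any hyperbolic basis otherwise), this form is identified with $\UE_{a,b}$, where in the $\rk Q=1$ case $a=\mu(x)$ and $b=\mu(y)$ agree with those of the statement, and in the $\rk Q=0$ case $(a,b)=(0,0)$.

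Finally I would apply Proposition \ref{prop:si-eab} and reconcile the indexing. The surjectivity of $\bar\mu$ forces $a\Z+b\Z=\Z$ and hence $\gcd(a,b)=1$ (vacuously when $(a,b)=(0,0)$, i.e.\ when $Q=0$). Therefore $ab/\gcd(a,b)^2=ab$, so the integer $r$ of Proposition \ref{prop:si-eab} equals the number of primes dividing $ab$, as in the statement. Under $\gcd(a,b)=1$ the disjunction ``$ab=0$ or $|a|=|b|$'' from Proposition \ref{prop:si-eab} is equivalent to $|ab|\leq 1$ (since coprime pairs with $|a|=|b|$ must satisfy $|a|=|b|=1$), matching the case split in the theorem. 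I do not foresee any significant obstacle: the hard classification of stable isomorphism classes of rank-$2$ hyperbolic extended forms over $\Z$ is already packaged in Propositions \ref{prop:si1}--\ref{prop:si-eab}, and what remains is a routine case analysis together with the elementary observation that surjectivity of $\bar\mu$ forces $\gcd(a,b)=1$.
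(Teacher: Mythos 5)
Your proposal is correct and follows essentially the same route as the paper's proof: reduce to $\UbM$ via Lemma \ref{lem:si-free}, handle $\rk Q = 2$ by surjective-implies-injective plus Proposition \ref{lem:si-mu-inj}, and identify the remaining cases with $\UE_{a,b}$ (via Lemma \ref{lem:si-incl}) so that Proposition \ref{prop:si-eab} applies, using $\gcd(a,b)=1$ to reconcile the case split and the count $r$. The only nitpick is the parenthetical claim that $\gcd(a,b)=1$ holds ``vacuously'' when $Q=0$ --- there $a=b=0$, so $\gcd(a,b)=0$ --- but this is harmless, since that case is covered directly by the $ab=0$ branch of Proposition \ref{prop:si-eab}.
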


\begin{proof}
Since $Q$ is free, we have $\mu \big| _{\Tor M} = 0$, and by Lemma \ref{lem:si-free} $|\SI(\UM)| = |\SI(\UbM)|$ (where $\UbM = (\bar{M}, \bar{\lambda}, \bar{\mu})$ is defined as in Definition \ref{def:quot}), so we need to prove the corresponding statement for $|\SI(\UbM)|$. By assumption there is an isomorphism $\bar{M} \cong \Z^2$ under which $\bar{\lambda}$ corresponds to $\bigl[ \begin{smallmatrix} 0 & 1 \\ 1 & 0 \end{smallmatrix} \bigr]$, and $\bar{\mu}$ is surjective (hence $\rk Q \leq 2$). 

If $Q \cong 0$, then $\UbM \cong (\Z^2, \bigl[ \begin{smallmatrix} 0 & 1 \\ 1 & 0 \end{smallmatrix} \bigr], 0 )$. By applying Lemma \ref{lem:si-incl} to the inclusion $0 \rightarrow \Z$ and Proposition \ref{prop:si-eab}, we have $|\SI(\UbM)| = |\SI(\UE_{0,0})| = 1$. 

If $Q \cong \Z^2$, then since $\bar{\mu}$ is surjective, it is also injective, so by Lemma \ref{lem:si-mu-inj} $|\SI(\UbM)| = 1$. 

If $Q \cong \Z$, then the basis $[x],[y]$ determines an isomorphism $\UbM \cong \UE_{a,b}$. Since $\bar{\mu}$ is surjective, $\gcd(a,b)=1$, which also implies that $|a|=|b|$ holds if and only if $|ab|=1$. Therefore the remaining cases follow from the corresponding parts of Proposition \ref{prop:si-eab}. 
\end{proof}

\begin{prop} \label{prop:si-action}
Let $\UM = (M, \lambda, \mu)$ and $Q$ be as in Theorem \ref{thm:si-hyp}. Suppose that $\UN = (N, \lambda_N, \mu_N)$ represents an element of $\SI(\UM)$. If $h$ is an automorphism of $Q$ such that $(N, \lambda_N, h \circ \mu_N)$ also represents an element of $\SI(\UM)$, then $(N, \lambda_N, h \circ \mu_N) \cong \UN$.
\end{prop}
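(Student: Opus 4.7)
The plan is to do a case analysis based on $\rk Q$, which Theorem \ref{thm:si-hyp} already organises into $\rk Q \in \{0, 1, 2\}$.

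In the cases $\rk Q = 0$ and $\rk Q = 2$, Theorem \ref{thm:si-hyp} asserts $|\SI(\UM)| = 1$. Since by hypothesis both $\UN$ and $(N, \lambda_N, h \circ \mu_N)$ represent elements of $\SI(\UM)$, they must represent the same (unique) element, and hence are isomorphic as extended quadratic forms over $Q$. No use of $h$ is needed here beyond the hypothesis that $(N, \lambda_N, h \circ \mu_N)$ lies in $\SI(\UM)$.

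The only remaining case is $\rk Q = 1$, so $Q \cong \Z$ and $\Aut(Q) = \{\pm \id\}$. If $h = \id$ the conclusion is trivial. If $h = -\id$, then $(N, \lambda_N, h \circ \mu_N) = (N, \lambda_N, -\mu_N) = \UN^*$, and the required isomorphism $(N, \lambda_N, h \circ \mu_N) \cong \UN$ is provided by Lemma \ref{lem:met-star-isom}, which states exactly that $-\id_N : \UN \to \UN^*$ is an isomorphism of extended quadratic forms.

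There is no real obstacle: both ingredients (the count $|\SI(\UM)| = 1$ in the extreme cases, and the ``conjugation by $-\id$'' trick for $Q \cong \Z$) are already available from Theorem \ref{thm:si-hyp} and Lemma \ref{lem:met-star-isom}. The only thing to verify when writing it up is that the $\rk Q = 1$ analysis does not need to know the explicit form of $\UN$ (e.g.\ that $\UbN \cong \UE_{a,b}$); the isomorphism $-\id_N$ works uniformly.
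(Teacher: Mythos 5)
Your proof is correct, and in the only nontrivial case it takes a different route from the paper. Both arguments dispose of the $|\SI(\UM)|=1$ cases in the same one-line way, but for $\rk Q=1$ the paper first reduces to free $\UM$ via (the proof of) Lemma \ref{lem:si-free}, then invokes Proposition \ref{prop:si1} to write $\UN \cong \UE_{c,d}$ and Proposition \ref{prop:si2} to see $\UE_{-c,-d} \cong \UE_{c,d}$; you instead observe that $h=-\id_{\Z}$ turns $\mu_N$ into $-\mu_N$, so $(N,\lambda_N,h\circ\mu_N)=\UNs$, and Lemma \ref{lem:met-star-isom} gives the isomorphism $-\id_N : \UN \to \UNs$ directly. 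Your route is more elementary and slightly more general: it shows that the action of $-\id_Q$ on isomorphism classes of extended quadratic forms is trivial for \emph{any} $\UN$ over \emph{any} $Q$, so it needs neither the classification of $\UN$ as some $\UE_{c,d}$ (hence not the hypothesis that $\UN$, or its $h$-twist, lies in $\SI(\UM)$ in that case) nor the reduction to torsion-free forms; what the paper's argument buys instead is that it stays entirely inside the explicit $\UE_{a,b}$-calculus already set up for Theorem \ref{thm:si-hyp}, at the cost of routing a trivial fact through Propositions \ref{prop:si1} and \ref{prop:si2}.
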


\begin{proof}
Since $\mu \big| _{\Tor M} = 0$, by the proof of Lemma \ref{lem:si-free} it is enough to consider the case when $\UM$ is free. If $|\SI(\UM)|=1$, then any extended quadratic form that represents an element of $\SI(\UM)$ is isomorphic to $\UN$. Otherwise $\rk Q = 1$, and if $Q$ is identified with $\Z$, then $\UM \cong \UE_{a,b}$ for some $a,b \in \Z$. By Proposition \ref{prop:si1} $\UN \cong \UE_{c,d}$ for some $c,d \in \Z$. The only non-trivial automorphism of $\Z$ is $-\id_{\Z}$, and if $h = -\id_{\Z}$, then $(N, \lambda_N, h \circ \mu_N) \cong \UE_{-c,-d}$. By Proposition \ref{prop:si2} $\UE_{-c,-d} \cong \UE_{c,d}$.
\end{proof}

\begin{proof}[Proof of Theorem \ref{thm:hyp-sss}]
By Theorem \ref{thm:sc-bij} $|\Str^{\st}(M)| = |\SI(E_q(M,f)) / A^{(B,\xi)}_{[M,f]}|$. By Proposition \ref{prop:si-action} any automorphism of $H_q(B)$ that acts on $\SI(E_q(M,f))$, in particular any element of $A^{(B,\xi)}_{[M,f]}$, acts trivially. Therefore $|\Str^{\st}(M)| = |\SI(E_q(M,f))|$, which was determined in Theorem \ref{thm:si-hyp}.
\end{proof}

\bibliographystyle{amsinitial}
\bibliography{ref}

\end{document}